\newcommand{\mbR}{\mathbb{R}}
\newcommand{\mbC}{\mathbb{C}}
\newcommand{\mbZ}{\mathbb{Z}}
\newcommand{\mbQ}{\mathbb{Q}}
\def\mbP{\mathbb{P}}
\newcommand{\<}{\le}
\def\>{\ge}
\def\eps{\epsilon}
\def\vphi{\varphi}
\def\subset{\subseteq}
\newcommand{\lrd}{\lfloor}
\newcommand{\rrd}{\rfloor}
\newcommand{\num}{\equiv}
\newcommand{\bir}{\dashrightarrow}
\newcommand{\D}{\Delta}
\def\mcO{\mathcal{O}}
\def\mcE{\mathcal{E}}
\def\mcF{\mathcal{F}}
\def\mcG{\mathcal{G}}
\def\mcL{\mathcal{L}}
\def\mcM{\mathcal{M}}
\def\mcN{\mathcal{N}}
\def\injective{\hookrightarrow}
\newtheorem{theorem}{Theorem}[section]
\newtheorem{lemma}[theorem]{Lemma}
\newtheorem{proposition}[theorem]{Proposition}
\newtheorem{corollary}[theorem]{Corollary}
\theoremstyle{remark}
\newtheorem{remark}[theorem]{Remark}
\theoremstyle{definition}
\newtheorem{definition}[theorem]{Definition}
\theoremstyle{definition}
\numberwithin{equation}{section}
\theoremstyle{definition}
\theoremstyle{definition}
\newtheorem{setup}[theorem]{Setup}
\def\deg{\operatorname{deg}}
\def\Supp{\operatorname{Supp}}
\def\dim{\operatorname{dim}}
\def\codim{\operatorname{codim}}
\def\Ex{\operatorname{Ex}}
\def\max{\operatorname{max}}
\def\hor{\operatorname{\hor}}
\def\ver{\operatorname{\ver}}
\def\Center{\operatorname{Center}}
\def\sm{\operatorname{\textsubscript{\rm sm}}}
\def\sing{\operatorname{\textsubscript{\rm sing}}}
\def\orb{\mathrm{orb}}
\def\ver{\operatorname{\textsubscript{\rm ver}}}
\def\hor{\operatorname{\textsubscript{\rm hor}}}
\def\red{\operatorname{\textsubscript{\rm red}}}
\author{Omprokash Das}
\address{School of Mathematics\\
Tata Institute of Fundamental Research\\
Homi Bhabha Road, Navy Nagar\\
Colaba, Mumbai 400005, India}
\email{omprokash@gmail.com}
\email{omdas@math.tifr.res.in}
\author{Wenhao Ou}
\address{Institute of Mathematics\\
 Academy of Mathematics and Systems Science\\
  Chinese Academy of Sciences\\
   Beijing 100190, CHINA}
\email{wenhaoou@amss.ac.cn}
\date{}
\begin{document}
\title{On the Log Abundance for Compact {K\"ahler} threefolds II}
\maketitle

\begin{abstract}
    In this article we show that if $(X, \Delta)$ is a log canonical compact K\"ahler threefold pair such that $K_X+\Delta$ is nef of numerical dimension $\nu(X, K_X+\Delta)=2$, then $K_X+\Delta$ is semi-ample. This result combined with our previous work   shows that the log abundance holds for log canonical compact K\"ahler threefold pairs.
\end{abstract}



\tableofcontents

\section{Introduction}

The Minimal Model Program (MMP) is an important tool in birational classifications of projective varieties $Z$. 
When $\dim Z = 3$, the MMP was fully established in the 90's, and one has the following classification. 
For a smooth projective threefold, there is a mildly singular birational model $Z\dashrightarrow X$, such that 
either there is a Fano fibration $X\to Y$, 
or the canonical class $K_{X}$ is nef. 
In the second case, the canonical class  $K_{X}$  induces a fibration $X\to Y$, thanks to the following theorem, known as the abundance theorem  (see \cite{Miy87, Miy88, Miy88b} and \cite{Kaw92}). 

\begin{theorem}
\label{thm:abundance-proj}
    Let $X$ be a projective threefold with terminal singularities. 
If $K_X$ is nef, then it is semi-ample. 
\end{theorem}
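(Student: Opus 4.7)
The plan is to split the argument according to the numerical dimension $\nu := \nu(X, K_X) \in \{0,1,2,3\}$. The case $\nu = 3$ is essentially immediate: $K_X$ is then nef and big, so the Kawamata--Shokurov base point free theorem produces semi-ampleness at once.

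For $\nu \leq 2$, I would first prove \emph{non-vanishing}, i.e., $\kappa(X,K_X) \geq 0$. This is the content of Miyaoka's theorem: one combines the generic semi-positivity of $\Omega_X^1$ along a sufficiently general complete intersection curve with the inequality $c_2(X)\cdot H \geq 0$ for every nef $H$ on a terminal threefold with $K_X$ pseudo-effective. Plugging these Chern-class inputs into Riemann--Roch yields a nonzero section of some $mK_X$.

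Once $\kappa(X,K_X) \geq 0$ is established, the case $\nu = 0$ is clean: any divisor $D \in |mK_X|$ is effective and numerically trivial, hence zero, so $mK_X \sim 0$. For $\nu \in \{1,2\}$ I would form the Iitaka fibration $f \colon X \bir Y$ attached to $|mK_X|$ for $m \gg 0$, replace $X$ by a suitable resolution so that $f$ becomes a morphism, and apply the Kawamata--Fujino canonical bundle formula
\[
mK_X \sim_{\mbQ} f^{\ast}(K_Y + B_Y + M_Y),
\]
where $B_Y$ is the discriminant part and the moduli part $M_Y$ is nef by Fujita--Kawamata semi-positivity. Since $\dim Y \leq 2$, semi-ampleness of $K_Y + B_Y + M_Y$ follows from the classical surface and curve log abundance theorems, and pulling back then yields semi-ampleness of $K_X$.

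The main obstacle is to exclude the possibility $\kappa(X,K_X) = 0 < \nu$; without this, the Iitaka fibration has the wrong dimension and the reduction above does not apply. This is exactly where Miyaoka's finer analysis of $K_X$ on covering families of curves enters, leveraging strict positivity of intersection numbers of $K_X$ with curves in the relevant range on terminal threefolds to force $\kappa = \nu$, so that the Iitaka fibration acquires the correct relative dimension $3 - \nu$ and the canonical bundle formula machinery delivers the conclusion.
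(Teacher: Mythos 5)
You should first note that the paper does not prove Theorem \ref{thm:abundance-proj} at all: it is quoted from Miyaoka and Kawamata (\cite{Miy87,Miy88,Miy88b,Kaw92}), so your proposal can only be judged against that literature. Your outline gets the standard skeleton right in the easy parts --- $\nu=3$ by the base point free theorem, non-vanishing $\kappa(X,K_X)\geq 0$ via Miyaoka's generic semipositivity plus $c_2$-positivity and Riemann--Roch, and $\nu=0$ once $\kappa\geq 0$ --- but there is a genuine gap exactly where you defer: for $\nu\in\{1,2\}$ the entire content of the theorem is the equality $\kappa(X,K_X)=\nu(X,K_X)$, and your proposal covers it only with the sentence that ``Miyaoka's finer analysis of $K_X$ on covering families of curves'' forces $\kappa=\nu$. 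That is not an argument, and as a pointer it is also misdirected: the case $\nu=1$ is Miyaoka's, but the case $\nu=2$ is Kawamata's \cite{Kaw92}, and neither is proved by strict positivity of $K_X$ on curve families. The $\nu=2$ case is established by producing sections, i.e.\ by showing that $\chi(X,\mcO_X(mK_X))$ and then $h^0(X,\mcO_X(mK_X))$ grow, using $c_2$-type inequalities, adjunction to the (slc) boundary of a suitable log model, and careful Euler-characteristic bookkeeping --- precisely the scheme of \cite[Chapter 14]{Kol92} that Sections \ref{section:compare-Chern} and \ref{section:Euler-char} of this paper transplant to the K\"ahler setting. So the heart of the proof is missing from your proposal rather than merely compressed.

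A secondary inaccuracy: even granting $\kappa=\nu$, your reduction via the Iitaka fibration is stated too loosely. On a resolution $X'$ on which the Iitaka fibration $f\colon X'\to Y$ becomes a morphism one does not have $mK_{X'}\sim_{\mbQ} f^{*}(K_Y+B_Y+M_Y)$; the canonical bundle formula only gives such a relation up to an effective correction term supported on exceptional and degenerate divisors, and semi-ampleness of $K_Y+B_Y+M_Y$ does not then pull back to semi-ampleness of $K_X$ on the original minimal model without further work. The clean statement to invoke at this point is Kawamata's theorem that a nef divisor with $\nu=\kappa$ is semi-ample; citing that is legitimate, but its proof is a careful version of what you sketch, and as written your final step does not go through verbatim.
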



We note that a threefold may be singular throughout this paper. 
The MMP was extended to the category of compact K\"ahler threefolds in \cite{HP16} and \cite{HP15}. 
Particularly, we can recover the similar bimeromorphic classification. 
When $Z$ is a smooth compact K\"ahler threefold, there is a mildly singular bimeromorphic model $Z\dashrightarrow X$  such that 
either there is a Fano fibration $X\to Y$, or the canonical class $K_{X}$ is nef. 


It is natural to expect Theorem \ref{thm:abundance-proj} holds for K\"ahler threefolds.  
Before proceeding further, we would  like to explain the outline of the complete proof of the abundance theorem for projective threefolds.  Assume that $X$ is a   projective threefolds with mild singularities, and that the canonical divisor $K_X$ is nef. If the Kodaira dimension of $X$ satisfies $\kappa(K_X)=3$, then the semi-ampleness of $K_X$ follows from the Kawamata-Shokurov Base-point free theorem. 
If $1\leq \kappa(K_X)\leq 2$, then we let $f:X\bir Z$ be the Iitaka fibration of $K_X$. 
In this case,  by an  argument of Kawamata,  
the semi-ampleness of $K_X$ follows from the MMP and the abundance theorem on general fibers of $f$, which in our case are either a curves or surfaces. 
We refer to the proofs of \cite[Theorem 7.3]{Kaw85} and \cite[Theorem 3.1]{DW19} for these lines of arguments.   
It remains to study the following two problems. 
The first one is the non-vanishing theorem, i.e. $H^0(X, mK_X)\neq 0$ for some $m\in\mbZ^+$. This  was settled by Miyaoka in \cite{Miy88b}.  
The last one is to deal with the situation when $\kappa(K_X)=0$.  
We discuss according to the numerical dimension $\nu(K_X)$. 
If $\nu(K_X)=3$, then it follows that $\kappa(K_X)=3$, 
which contradicts the assumption. 
If  $1\leq \nu(K_X)\leq 2$,   then
Miyaoka and Kawamata  showed that   $\kappa(K_X)\geq 1$ (see \cite{Miy88, Kaw92}, or  \cite[Chapter 13 and 14]{Kol92}).  
Thus we can only have  $\nu(K_X)=0$, and the abundance theorem was proved by Nakayama in this case, see   \cite[Corollary V.4.6]{Nak04}.

Apart from the non-vanishing theorem, the most difficult part of the  abundance theorem is to show  that the condition $\nu(K_X)=2$ implies that   $\kappa(K_X)\geq 1$. We will see below that this is even a bigger problem when $X$ is just assumed to be compact K\"ahler.

Now we assume that $X$ is a compact K\"ahler variety of dimension $3$ with mild singularities and the canonical bundle $K_X$ if nef. To prove the abundance conjecture on $X$, we will follow the same general strategy as  above. In this case the non-vanishing theorem is proved by Demailly and Peternell in \cite{DP03}. If $\kappa(K_X)=3$, then $X$ is projective by Moishezon, and we are done by the projective case. When $1\leq \kappa(K_X)\leq 2$, the semi-ampleness of $K_X$ is proved in \cite{CHP16}. The case when $0\leq \nu(K_X)\leq 1$ is  also proved in \cite{CHP16}.  The remaining case when $\nu(K_X)=2$ is what we will deal with in this article.   
We prove the following theorem.

\begin{theorem}
\label{thm:lc-log-abundance}
Let $(X,\Delta)$ be a lc pair such that $X$ is a compact K\"ahler threefold. 
Assume that $K_X+\Delta$ is nef of numerical dimension $2$. 
Then it is semi-ample.
\end{theorem}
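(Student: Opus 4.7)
The plan is to prove semi-ampleness by upgrading numerical dimension to Kodaira dimension, namely showing $\kappa(X, K_X + \Delta) = \nu(X, K_X + \Delta) = 2$, and then using the resulting Iitaka fibration to descend semi-ampleness. I would first reduce via the K\"ahler MMP of \cite{HP16, HP15} to a $\mathbb{Q}$-factorial dlt setup. If $X$ is Moishezon, the statement follows from known log abundance in the projective category, so one may assume the algebraic dimension satisfies $a(X) \leq 2$.

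\textbf{Log non-vanishing.} The core step is proving $\kappa(X, K_X + \Delta) \geq 1$. Since $\nu(K_X + \Delta) = 2$ and $\dim X = 3$, there exists a K\"ahler class $\omega$ with $(K_X+\Delta)^2 \cdot \omega > 0$. To produce an effective divisor, I would pass to a dlt model with a suitable log canonical center $S$ (a surface or a curve), produced if necessary by a careful run of the log MMP; one can hope to arrange that $K_X + \Delta$ is still big on $S$. On $S$ one already knows log abundance for lc K\"ahler surface pairs, which yields non-vanishing on $S$. The task is then to lift these sections from $S$ to $X$ via a Koll\'ar-type injectivity/extension argument adapted to the compact K\"ahler category; this is the approach outlined in \cite{CHP23} and exactly the step where the proof of \cite[Theorem 8.2]{CHP16} broke down (due to a misapplication of \cite[Theorem 9.6]{Kaw88}).

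\textbf{From non-vanishing to semi-ampleness, and the main obstacle.} Given $\kappa(X, K_X + \Delta) \geq 1$, consider the Iitaka fibration $\phi \colon X \dashrightarrow Y$. If $\dim Y = 1$, then the restriction of $K_X + \Delta$ to a general fiber $F$ is a nef class of numerical dimension $1$ on an lc K\"ahler surface pair $(F, \Delta|_F)$, hence semi-ample by K\"ahler surface abundance; a relative base-point-freeness argument on $\phi$ then produces enough independent sections to force $\kappa = 2$. Once $\kappa = \nu = 2$, the general fiber of $\phi$ is an elliptic curve along which $K_X + \Delta \equiv 0$, and one descends: $\phi_* \mcO_X(m(K_X+\Delta))$ becomes big on $Y$ for $m$ sufficiently divisible, and one concludes via a base-point-free-type theorem on the K\"ahler surface $Y$, which is again covered by the surface log abundance. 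The main obstacle is unambiguously the non-vanishing step, and this is what must genuinely replace the faulty argument in \cite{CHP16}; the downstream Iitaka analysis is largely standard in spirit but will require careful bookkeeping about singular fibers of $\phi$, dlt modifications, and the precise form of injectivity available in the K\"ahler setting.
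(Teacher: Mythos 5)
There is a genuine gap at the heart of your proposal: the non-vanishing step is not proved, it is only named. You defer it to ``a Koll\'ar-type injectivity/extension argument adapted to the compact K\"ahler category,'' but that is precisely the tool that does not exist in this setting --- the gap in \cite[Theorem 8.2]{CHP16} is exactly the failure of such a lifting argument (the misapplied \cite[Theorem 9.6]{Kaw88}), and \cite{CHP23} does not suggest an injectivity route either: its proposal is to prove a Bogomolov--Gieseker type inequality on klt K\"ahler threefolds. Moreover, your hope of arranging $K_X+\Delta$ to be big on a surface lc center $S$ is numerically impossible in the relevant situation: after the standard reductions one has $L:=k(K_X+\Delta)$ nef with $L\sim D\geq 0$, $D_{\mathrm{red}}=\Delta$ and $L^3=0$, so $L^2\cdot S=0$ for every boundary component $S$ (Lemma \ref{lem:nu-2-computation-1}); thus $L|_S$ has numerical dimension at most $1$ and bigness on $S$ is unavailable. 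A curve lc center gives nothing without an extension theorem. So the crucial step $\kappa(X,K_X+\Delta)\geq 1$ remains unproved in your outline, and the downstream Iitaka analysis (which in the paper is simply quoted from \cite[Theorem 7.2]{DasOu2022}) cannot start. You also do not address the case in which $K_X+(1-\lambda)\Delta$ is not pseudoeffective for any $\lambda>0$, which the paper treats separately via a Mori fiber space onto a K\"ahler surface (Lemma \ref{lemma:reduction-2.2} and Lemma \ref{lemma:main-thm-2}).

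For contrast, the paper never lifts sections from lc centers. It proves $\kappa\geq 1$ by a Riemann--Roch/Euler-characteristic argument in the spirit of \cite[Chapter 14]{Kol92} and \cite[Section 8.B]{CHP16}: reduce to Setup \ref{set:setup-1} (Lemma \ref{lemma:reduction-2.1}), construct a bimeromorphic model $\pi\colon Y\to X$ with only cyclic quotient singularities (Lemma \ref{lemma:construction-of-Y-1}), prove $\hat{c}_2(\Omega_Y^{[1]}(\log B))\cdot \pi^*L\geq 0$ via generic nefness of the log cotangent sheaf (foliation/MRC and positivity of direct images) combined with Faulk's orbifold Bogomolov--Gieseker inequality \cite{Faulk2022}, and compare orbifold Chern classes with those on a resolution (Sections \ref{section:compare-Chern}--\ref{section:Euler-char}) to get a lower bound on $\chi(X,\mathscr{O}_X(nL))$, with linear growth when some singular curve meets $L$ positively (Proposition \ref{prop:Euler-characteristic-combined-1}). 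Together with the boundedness of the relevant $h^2$'s and the linear growth of $h^1(\Delta,\mathcal{O}_\Delta(nL|_\Delta))$ coming from Riemann--Roch on the slc boundary, this forces $h^0(X,\mathcal{O}_X(nL))$ to grow, giving $\kappa\geq 1$ and hence semi-ampleness by \cite{DasOu2022}. If you wish to salvage your route, you would have to supply the K\"ahler injectivity/extension theorem yourself, which is an open problem of at least the same difficulty as the theorem being proved.
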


We follow the same strategy as in \cite[Chapter 14]{Kol92}; we note that this is also the same strategy followed by \cite{CHP16, CHP23}. This method involves two technical tools:
\begin{enumerate}
    \item define an orbifold type second Chern class on compact K\"ahler threefolds with klt singularities, and prove a Bogomolov-Gieseker (BG) type inequality on it.
    \item compare orbifold type second Chern classes with  usual  second Chern classes (on a desingularization). 
\end{enumerate}
When $X$ is   projective, an orbifold type second Chern class is defined in Chapter 10 of \cite{Kol92} via $\mathbb{Q}$-sheaves, 
and the related BG type inequality on surfaces with quotient singularities is also proved in the same chapter. Then in dimension $3$, the BG type inequality is proved by taking a hyperplane section and reducing it to the case of  surfaces. 
In the  setting of general K\"ahler varieties,  these  techniques are not available and the difficulty of proving the BG type inequality is well known.

Once a BG type inequality is established, the key arguments of proving the abundance theorem from this inequality (in the projective case) is explained in \cite[Lemmas 14.3.1-14.3.3]{Kol92}. One crucial observation   is that, in  all  lemmas of \cite{Kol92}, the orbifold second Chern class is always represented by an 1-cycle in the Chow group, and this is possible because on a projective orbifold every orbifold coherent sheaf has a \emph{finite} locally free resolution, see Definition 10.4, Lemma 10.5 and Definition 10.6 of \cite{Kol92}. 
Note that this is no longer true on compact analytic orbifolds.  
In fact, there are compact K\"ahler manifolds on which some coherent sheaf doesn't have a finite locally free resolution, see \cite{Voi02}. 
Without the cycle representation of the orbifold second Chern class, 
we need new tools to  compare it to the usual second Chern class on a desingularization of the orbifold, which greatly hinders our ability to adopt the arguments of the proof of \cite[Lemmas 14.3.1-14.3.3]{Kol92}. 
We dedicate Section \ref{section:compare-Chern} of our article dealing with this problem.  
Additionally, we explain the arguments of \cite[Chapter 14]{Kol92} in details with our new tools in  Section \ref{section:Euler-char}.

Here we would like to emphasize that, in \cite{CHP23}, the authors claim that a proof of the BG type inequality on  compact K\"ahler threefolds with klt singularities immediately implies the abundance theorem for the case of $\nu(K_X)=2$. However, we could not follow their argument. 
We believe that  the tools to compare the two types of Chern classes as in \cite[Lemmas 14.3.1-14.3.3]{Kol92}  are indispensable for the complete proof of abundance theorem.

We also note that the conjectural  solution suggested in \cite{CHP23} is to first prove a BG type inequality on a compact K\"ahler threefold with klt singularities. 
However, we take a different approach here.   
For the specific compact  K\"ahler threefold $X$ with klt singularities we work with,   we first construct a K\"ahler orbifold $Y$ and a bimeromorphic morphism $\pi:Y\to X$ with nice properties (see Lemma \ref{lemma:construction-of-Y-1}), and then we use Faulk's orbifold BG type inequality  (see \cite{Faulk2022}) on $Y$.

Inspired by the techniques developed in this article, seven months after,  
the second author proved the BG type inequality on klt compact K\"ahler varieties of arbitrary dimensions in \cite{Ou2024}.  
Another four months after, Guenancia and P{\v a}un proved the BG type inequality on klt compact K\"ahler threefolds by a different method in \cite{GP24}. 
Nevertheless, as mentioned above, we believe our article is still relevant, for its new tools on comparison of Chern classes.


Combining the main theorem of \cite{DasOu2022} with the theorem above, we complete the log abundance for log canonical K\"ahler threefolds. 

\begin{corollary}
\label{cor:log-abundance}
Let $(X,\Delta)$ be a lc pair such that $X$ is a compact K\"ahler threefold. 
If $K_X+\Delta$ is nef, 
then it is semi-ample.
\end{corollary}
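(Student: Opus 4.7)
The plan is to reduce to a case analysis on the numerical dimension $\nu := \nu(X, K_X + \Delta)$, which takes values in $\{0, 1, 2, 3\}$. Since we are on a compact K\"ahler threefold, these four cases exhaust all possibilities, and semi-ampleness can be verified in each case separately.

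First I would handle the case $\nu = 2$. This is precisely Theorem \ref{thm:lc-log-abundance} proved in the present paper, so no further argument is needed: one invokes it directly to conclude that $K_X + \Delta$ is semi-ample.

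For the remaining cases $\nu \in \{0, 1, 3\}$, I would appeal to the main theorem of \cite{DasOu2022}, which establishes log abundance for lc pairs on compact K\"ahler threefolds whenever the numerical dimension of $K_X + \Delta$ is different from $2$. Concretely, for $\nu = 3$ one uses that $K_X + \Delta$ is then big and nef, so that $X$ is in particular Moishezon and the problem can be reduced to the projective setting; for $\nu = 0$ one combines the non-vanishing theorem of \cite{DP03} with standard log-canonical bookkeeping to obtain $m(K_X + \Delta) \sim 0$; for $\nu = 1$ one adapts the projective arguments along the lines of \cite[Section 8.A]{CHP16}. All these steps are subsumed into the cited theorem of \cite{DasOu2022}.

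The only genuine obstacle is the case $\nu = 2$, which is the content of Theorem \ref{thm:lc-log-abundance} and forms the heart of this paper. Once that theorem is available, the corollary is obtained by simply concatenating its statement with the prior work, so the proof amounts to the two-case split described above with no further technical input.
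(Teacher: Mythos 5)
Your proposal is correct and matches the paper's own proof, which simply combines Theorem \ref{thm:lc-log-abundance} (the $\nu=2$ case) with \cite[Theorem 1.1]{DasOu2022}, covering the remaining numerical dimensions. The extra sketches you give for $\nu\in\{0,1,3\}$ are just commentary on what that cited theorem already contains, so no further input is needed.
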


As an application of the abundance theorem, 
we have the following corollary, as shown in \cite[Theorem 1.2]{CHP16}.
It is one of the steps towards the three dimensional  Kodaira problem in \cite{Lin2017}. 

\begin{corollary}
\label{cor:simple-3fold}
Let $X$ be a simple compact K\"ahler threefold with klt singularities. 
Then $X$ is bimeromorphic to a quotient $Y/G$, where $Y$ is a complex torus and $G$ is a finite group acting on $Y$.
\end{corollary}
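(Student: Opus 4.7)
The plan is to feed our Corollary \ref{cor:log-abundance} into the bimeromorphic structure theory of simple compact K\"ahler threefolds developed in \cite[Theorem 1.2]{CHP16}, which establishes exactly this statement conditional on log abundance. The three main steps are: reducing to a minimal model via the MMP, applying abundance to obtain a numerically trivial canonical class, and invoking a Beauville--Bogomolov type decomposition.

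First I would run the MMP for klt compact K\"ahler threefolds (\cite{HP15,HP16}) on $X$. Since the property of being simple depends only on the behaviour of the variety over a very general point, it is preserved under bimeromorphic modifications, so every intermediate model remains simple. A Mori fiber space output with positive-dimensional base would give a non-trivial fibration of a simple threefold, a contradiction; a Mori fiber space output over a point would force $X$ to be Fano and hence Moishezon, again incompatible with simplicity in dimension three. Therefore the MMP terminates with a klt K\"ahler bimeromorphic model $X'$ of $X$ with $K_{X'}$ nef.

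Next I would apply Corollary \ref{cor:log-abundance} with $\Delta = 0$ to deduce that $K_{X'}$ is semi-ample, and consider the induced morphism $\varphi : X' \to Z$. A positive-dimensional $Z$ would provide a non-trivial fibration of the still simple model $X'$, so $\dim Z = 0$ and hence $K_{X'} \equiv 0$.

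Finally I would invoke the singular Beauville--Bogomolov decomposition for compact K\"ahler threefolds with klt singularities and numerically trivial canonical class: after a suitable quasi-\'etale cover $\widetilde{X'} \to X'$, the minimal model of $\widetilde{X'}$ splits bimeromorphically as a product of a complex torus, irreducible Calabi--Yau factors, and irreducible hyperk\"ahler factors. In dimension three there is no room for a hyperk\"ahler factor, and an irreducible Calabi--Yau factor of positive dimension is incompatible with the simplicity of $X'$, since its zero loci of holomorphic forms or deformations of submanifolds produce positive-dimensional subvarieties through a general point, which descend to $X$. Hence $\widetilde{X'}$ is bimeromorphic to a complex torus $Y$, and $X$ is bimeromorphic to $Y/G$, where $G$ is the Galois group of the cover. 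The main obstacle in this outline is the last step, namely the singular Beauville--Bogomolov decomposition and the exclusion of Calabi--Yau factors in the klt K\"ahler setting; these ingredients are already assembled in the conditional argument of \cite[Theorem 1.2]{CHP16}, so the only new input required is the abundance statement furnished by Corollary \ref{cor:log-abundance}.
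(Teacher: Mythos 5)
Your proposal is correct and is essentially the paper's proof: the paper simply applies Corollary \ref{cor:log-abundance} and then invokes the argument of \cite[Theorem 1.2]{CHP16}, which is precisely the MMP--abundance--decomposition chain you outline (with the decomposition and exclusion of non-torus factors handled in \cite{CHP16}, as you note). No genuinely different route is taken.
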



This article is organized in the following manner. 
After reviewing some preliminaries in Section \ref{section:pre}, we will recall the notion of complex orbifolds and prove an orbifold Miyaoka inequality in   Section \ref{section:chern}.  
In Section \ref{section:compare-Chern}, we introduce relative Chern classes, which
compare orbifold Chern classes and usual Chern classes on a desingularization. 
We also show that relative second Chern classes can be computed by reducing to surfaces. 
In Section  \ref{section:log-cotan}, we prove some positivity results on second Chern classes of cotangent sheaves. 
With these in hand, we complete the proof of the main theorem in the last three sections. \\

{\bf Acknowledgments} We would like thank 
Junyan Cao, 
Mitchell Faulk, 
Patrick Graf, 
Christopher Hacon,
Andreas H\"oring, 
Chiu-Chu Melissa Liu, 
Mihai P\u{a}un,  
Burt Totaro 
and Zheng Xu 
for important and valuable communications.  
We are grateful to the referee for useful suggestions, in particular, for the elegant proof of Lemma \ref{lemma:A-bounded-above-v2}. 
W. Ou was supported by the National Key R\&D Program of China (No. 2021YFA1002300). 
O. Das was partially supported by the Start--Up Research Grant(SRG), Grant No. \# SRG/2020/000348 of the Science and Engineering Research Board (SERB), Govt. Of India, during the preparation of this article.

\section{Preliminaries} 
\label{section:pre}

We will fix some notation and recall some elementary results in this section.  

\subsection{Complex analytic varieties}

A \emph{complex analytic variety} 
is a reduced and irreducible  complex space. 
For a complex analytic variety $X$, we will denote by $X_{\sm}$ its smooth locus and by $X_{\sing}$ its singular locus. 
We also call a  smooth complex analytic variety a complex manifold.  
A pair $(X,\Delta)$ consists of a complex analytic variety $X$ and a divisor $\Delta$ with rational coefficients.

When $X$ is a 
normal complex analytic variety and $B$ a reduced Weil divisor on $X$, we define  $\Omega^{[1]}_X(\log\, B)$, the reflexive logarithmic cotangent sheaf as follows. 
Let $r\colon Y\to X$ be a log resolution of $(X, B)$ and let $\Gamma=r_*^{-1}B$.  
Then the logarithmic cotangent bundle $\Omega_Y^{1}(\log\, \Gamma)$ is well defined. 
We define  $\Omega^{[1]}_X(\log B) := (r_* \Omega_Y^{1}(\log\, \Gamma) )^{**}$. 
We note that such a definition is independent of the resolution $Y$.

A normal complex analytic variety $X$ is called $\mathbb{Q}$-factorial if every Weil divisor is $\mathbb{Q}$-Cartier, and  $(\omega_X^{\otimes m})^{**}$ is invertible for some integer $m>0$, 
where $\omega_X:= (\bigwedge^{\dim X}\Omega_X ^1)^{**}$ is the canonical sheaf.   
To mimic the situation of algebraic varieties, 
we regard $\omega_X$ as  a $\mathbb{Q}$-Cartier divisor and denote it by $K_X$. 
Then we can consider the sum  $a K_X+\Delta$ for any  $\mathbb{Q}$-Weil divisor  $\Delta$ and any rational number $a$.    
For an integer $l$,  
we say that $l(aK_X+\Delta)$ is Cartier if $la$ is an integer,  $l\Delta$ is integral and the reflexive sheaf 
\[ \mathcal{O}_X(laK_X+l\Delta) := (\omega_X^{\otimes la} \otimes \mcO_X(l\Delta))^{**}\] 
is invertible. 
If $S$ is a subvariety in $X$, we use  the following convention for the restriction $(aK_X+\Delta)|_S$ of $aK_X+\Delta$ on $S$.  
For any $\mathbb{Q}$-Cartier $\mathbb{Q}$-Weil divisor $F$ on $S$, 
we write $(aK_X+\Delta)|_S \equiv  F$ 
(resp. $(aK_X+\Delta)|_S \sim_{\mathbb{Q}}  F$) 
if for some integer $l>0$,  both 
$l(aK_X+\Delta)$ and $lF$ are Cartier,   and $c_1(\mathcal{O}_X(laK_X+l\Delta)|_S) = c_1(\mathcal{O}_S(lF))$ (resp. $ \mathcal{O}_X(laK_X+l\Delta)|_S \cong \mathcal{O}_S(lF)$).    

Following   \cite{Grauert1962}, a complex analytic variety $X$ is  called \emph{K\"ahler}, 
if there exists a closed positive $(1, 1)$-form on $X$ such that the following holds: for every point $x\in X$,  there exists an open neighborhood $x\in U$,  a closed embedding $ U\to V$ into an open subset $V\subset\mbC^N$, and a strictly plurisubharmonic $\mathcal{C}^\infty$ function $f:V\to\mbR$ such that $\omega|_{U\cap X_{\sm}}=(\sqrt{-1}\partial\bar{\partial}f)|_{U\cap X_{\sm}}$.  
We also refer to \cite[Section 1]{Demailly1985} for general $(p,q)$-forms on analytic varieties.

For a normal compact K\"ahler variety $(X,\omega)$, a class in $H^2(X, \mathbb{R})$ is called \emph{nef} if it can be represented by a smooth form $\alpha$ with local potentials such that 
for every $\epsilon >0$, there exists a $\mathcal C ^\infty$ function $f_\epsilon$ such that $\alpha + \sqrt{-1}\partial \bar \partial f_\epsilon \ge -\epsilon \omega$.  
For more explanations on the notation, we refer to \cite[Definition 1.3]{DasOu2022} and the references therein.

Let $f:X\to Y$ be a proper surjective morphism of normal complex analytic varieties. 
A divisor $D$ is called  $f$-\textit{exceptional} if $\codim_Y f(\Supp(D))\>2$.
The following lemma is a consequence of Hironaka's flattening theorem.  
 
\begin{lemma}\label{lemma:p-q-exceptional}
Let $f:X\to Y$ be a proper surjective morphism between  compact complex normal analytic varieties. 
Then there exist projective bimeromorphic morphisms $g\colon Y'\to Y$ and $g'\colon X'\to X$ with the following commutative diagram, such that
\begin{equation*}
        \xymatrixcolsep{3pc}\xymatrixrowsep{3pc}\xymatrix{X'\ar[r]^{g'}\ar[d]_{f'} & X\ar[d]_f\\
        Y'\ar[r]^g & Y}
\end{equation*}
\begin{enumerate} 
	\item $Y'$ is smooth, and 
	\item for any proper bimeromorphic morphism $\varphi\colon  W\to X'$, every $(f'\circ\varphi)$-exceptional divisor  is also $(g'\circ\varphi)$-exceptional.
\end{enumerate}
\end{lemma}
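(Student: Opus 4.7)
The plan is to combine Hironaka's flattening theorem with resolution of singularities, and to establish property (2) using only the flatness of an intermediate morphism rather than of $f'$ itself.

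Specifically, I would first apply Hironaka's flattening theorem to $f \colon X \to Y$: this produces a projective bimeromorphic modification $g_1 \colon Y_1 \to Y$ together with the strict transform $g_1' \colon X_1 \to X$ (also projective bimeromorphic) such that the induced morphism $f_1 \colon X_1 \to Y_1$ is flat. Next, using resolution of singularities, I take $r \colon Y' \to Y_1$ projective bimeromorphic with $Y'$ smooth, and then resolve the indeterminacy of the induced meromorphic map $X_1 \dashrightarrow Y'$ to obtain $r' \colon X' \to X_1$ projective bimeromorphic together with a morphism $f' \colon X' \to Y'$ making the full square commute. Setting $g = g_1 \circ r$ and $g' = g_1' \circ r'$ yields the desired diagram with $Y'$ smooth. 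It is important to note that $f'$ itself need not be flat; only the intermediate $f_1$ is.

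To verify (2), let $\varphi \colon W \to X'$ be proper bimeromorphic and let $D$ be an irreducible $(f' \circ \varphi)$-exceptional divisor. Put $\psi = r' \circ \varphi \colon W \to X_1$. Because $r$ is bimeromorphic, the image under $r$ of any codimension $\geq 2$ subset of $Y'$ remains codimension $\geq 2$ in $Y_1$, so $(f_1 \circ \psi)(D) = r((f' \circ \varphi)(D))$ has codimension $\geq 2$ in $Y_1$; that is, $D$ is also $(f_1 \circ \psi)$-exceptional. Now let $E = \psi(D) \subset X_1$. The flatness of $f_1$ forces every fiber to have pure dimension $\dim X_1 - \dim Y_1$, so the standard dimension estimate gives $\dim f_1(E) \geq \dim E - (\dim X_1 - \dim Y_1)$; in particular, any irreducible divisor $E \subset X_1$ must satisfy $\dim f_1(E) \geq \dim Y_1 - 1$. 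Since $f_1(E) = (f_1 \circ \psi)(D)$ has codimension $\geq 2$ in $Y_1$, $E$ cannot be a divisor, hence $\codim_{X_1}(E) \geq 2$. Since $g_1'$ is bimeromorphic, $(g' \circ \varphi)(D) = g_1'(E)$ then has codimension $\geq 2$ in $X$, so $D$ is $(g' \circ \varphi)$-exceptional, as required.

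The main obstacle is reconciling Hironaka's flattening, which does not preserve smoothness of the base, with the requirement that $Y'$ be smooth. This is circumvented by performing the flattening and the resolution sequentially and exploiting only the flatness of the intermediate $f_1$, rather than attempting to make the final $f'$ flat itself.
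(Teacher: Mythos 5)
Your proof is correct, and it rests on the same two ingredients as the paper's argument: Hironaka's flattening theorem followed by a desingularization of the flattened base. The only real difference lies in how (2) is verified: the paper takes $X'$ to be the normalization of the main component of $X_1\times_{Y_1}Y'$, so that $f'$ itself is equidimensional and an $(f'\circ\varphi)$-exceptional divisor is seen directly to be $\varphi$-exceptional, whereas you allow $X'$ to be any resolution of indeterminacy of $X_1\dashrightarrow Y'$ and instead push the divisor down to $X_1$, exploiting flatness of $f_1$ there together with the fact that the proper bimeromorphic maps $r$ and $g_1'$ cannot raise the dimension of images. Your variant buys a little robustness --- you never have to check that equidimensionality survives the passage to the main component of the base change and its normalization --- at the cost of a slightly longer diagram chase; both arguments are sound.
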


\begin{proof}
 By   \cite[Corollary 1]{Hir75}, there is a  projective bimeromorphic morphism $Y_1\to Y$ such that the natural morphism $f_1\colon X_1 \to Y_1$ is flat, where $X_1$ is the main component of $X\times_Y Y_1$. 
In particular, $f_1$ is equidimensional. 
Let $Y' \to Y_1$ be a desingularization, and let $X'$ be the normalization of the main component of $X_1\times_{Y_1} Y'$. 
Then the induced morphism $f'\colon X'\to Y'$ is also equidimensional. Let $D\subset X'$ be a prime Weil divisor. Since $f'$ is equidimensional, either $f'(D)=Y'$ or $f'(D)$ is a prime Weil divisor on $Y'$. In particular, for any proper bimeromorphic morphism $\varphi\colon W\to X'$, every $(f'\circ\varphi)$-exceptional divisor is in fact $\varphi$-exceptional, and hence it is  $(g'\circ\varphi)$-exceptional. This completes our proof.
\end{proof}

\subsection{Index-one cover} 

Let $X$ be a normal variety and $\mcL$ a reflexive sheaf of rank $1$. Then $\mcL$ is called \textit{$\mbQ$-Cartier}, if there is a positive integer $m>0$ such that $(\mcL^{\otimes m})^{**}$ is an invertible sheaf. 
The smallest such $m$ is   called the \textit{Cartier index} of $\mcL$. 
 

Let $(o\in X)$ be a germ of normal complex analytic variety, and $\mcL$ a $\mbQ$-Cartier reflexive sheaf of rank 1 on $X$.  
Let $m>0$ be an integer such that $(\mcL^{\otimes m})^{**}$ is locally free. 
Shrinking $X$,    we  assume that there is an isomorphism $\varphi\colon (\mcL^{\otimes m})^{**} \cong \mcO_X$. 
Such an isomorphism induces a cyclic cover $f\colon Y \to X$,  
which is a  finite surjective morphism  of degree $m$ of normal varieties, 
such that $(f^*\mcL)^{**}\cong \mcO_Y$. 
Furthermore, $f$ is only branched over the locus where $\mathcal{L}$ is not locally free, and hence is \'etale in codimension one.  
When $m$ is the Cartier index of $\mcL$, the  morphism $f$ is called the \emph{index-one cover} of $\mcL$. 
For more explanations, we refer to \cite[Definition 2.52]{KM98}.   
In the next two lemmas, we prove some universal properties on cyclic covers.

\begin{lemma}
\label{lemma:unique-cyclic-cover} 
With the notation above, up to isomorphism,   $f$ is locally unique in the following sense. 
If $\varphi' \colon  (\mcL^{\otimes m})^{**} \cong \mcO_X $ is another isomorphism, and if $f'\colon Y' \to X$ the the cyclic cover induced by $\varphi'$, 
then up to shrinking $X$ around $o$, there is an isomorphism $Y\cong Y'$ over $X$.
\end{lemma}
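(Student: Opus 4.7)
The plan is to exploit the fact that the two isomorphisms $\varphi,\varphi'\colon(\mcL^{\otimes m})^{**}\cong\mcO_X$ differ only by a unit, and then take an $m$-th root of that unit in a small neighborhood of $o$ to produce an explicit isomorphism of the associated algebra sheaves. Passing to relative $\operatorname{Spec}$ then yields the isomorphism $Y\cong Y'$ over $X$.

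More concretely, I would first recall that the cyclic cover associated to an isomorphism $\varphi\colon (\mcL^{\otimes m})^{**}\cong\mcO_X$ is $Y=\operatorname{Spec}_X\mathcal{A}_\varphi$, where the $\mcO_X$-algebra $\mathcal{A}_\varphi=\bigoplus_{i=0}^{m-1}\mcL^{[-i]}$ is equipped with the multiplication rule that uses $\varphi$ to identify $\mcL^{[-m]}$ with $\mcO_X$ whenever the sum of indices is at least $m$. Since $\varphi$ and $\varphi'$ are two isomorphisms between the same pair of invertible-in-codimension-one reflexive sheaves, there is a unique unit $u\in\mcO_X^*(X)$ with $\varphi'=u\cdot\varphi$ (after shrinking $X$ so that both isomorphisms are defined on the same open neighborhood of $o$).

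Next I would use that $u(o)\neq 0$ to shrink $X$ further and extract a holomorphic $m$-th root: choose any $c\in\mbC$ with $c^m=u(o)$ and set $v=c\cdot\exp\!\bigl(\tfrac{1}{m}\log(u/u(o))\bigr)$, which is a well-defined holomorphic function on a suitable Stein neighborhood of $o$ satisfying $v^m=u$. Then define a morphism of graded $\mcO_X$-modules
\begin{equation*}
\Psi\colon \mathcal{A}_\varphi\longrightarrow\mathcal{A}_{\varphi'},\qquad
\Psi(s)=v^i\cdot s\ \text{ for } s\in\mcL^{[-i]},\ 0\leq i\leq m-1.
\end{equation*}
This is an $\mcO_X$-module isomorphism because $v$ is a unit, and a direct check on sections in $\mcL^{[-i]}\otimes\mcL^{[-j]}$ with $i+j\geq m$ shows that multiplying by $v^{i+j}=v^m\cdot v^{i+j-m}=u\cdot v^{i+j-m}$ converts the rule using $\varphi$ into the rule using $\varphi'=u\cdot\varphi$, while for $i+j<m$ the map is obviously multiplicative. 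Hence $\Psi$ is an isomorphism of $\mcO_X$-algebras, and taking relative $\operatorname{Spec}$ produces the required $X$-isomorphism $Y\cong Y'$.

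The one place where care is needed, and which I would regard as the main bookkeeping obstacle, is verifying that $\Psi$ respects the algebra structure across the degree-$m$ ``wrap-around'' dictated by the differing isomorphisms $\varphi$ and $\varphi'$; this is where the identity $v^m=u$ is used in an essential way. Everything else (the existence of the $m$-th root, the local unit $u$, and the Stein shrinking) is routine for a germ of a normal complex analytic variety.
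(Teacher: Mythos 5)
Your proposal is correct and follows essentially the same route as the paper: write $\varphi'=u\cdot\varphi$ for a nowhere vanishing function $u$, shrink $X$ to extract an $m$-th root $v$ of $u$, and let multiplication by powers of $v$ in each graded piece give an isomorphism of the $\mcO_X$-algebras $\bigoplus_{0\leq i<m}\mcL^{[-i]}$, whence $Y\cong Y'$ after taking $\operatorname{Spec}_X$. The only caveat is purely notational: depending on the convention for the degree-$m$ wrap-around, the conversion factor is $u^{\pm 1}$, which at worst means replacing $v$ by $v^{-1}$, exactly as in the paper's formulation.
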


\begin{proof} 
There is some nowhere vanishing holomorphic function $u$ on $X$ such that $\varphi' = u\cdot \varphi$. 
Shrinking $X$ if necessary, we may assume that $u$ has a $m$-th root $v$.  
We set $\mcL^{[i]} =  (\mcL^{\otimes i})^{**} $ for all integer $i$. 
Then from the construction of \cite[Definition 2.52]{KM98}, 
we see that $Y  = \mathrm{Spec}_X\, \bigoplus_{i \> 0 } \mcL^{[-i]}$ such that the $\mcO_X$-algebra structure on  $\bigoplus_{0\< i <m } \mcL^{[-i]}$ is given by 
\[
\mcL^{[-i]} \times \mcL^{[-j]} \overset{\varphi^{-1}}{\longrightarrow} \mcL^{[-i-j+m]}. 
\]
If we denote this algebra by $\mathcal{A}$, and the one corresponding to $Y'$ by $\mathcal{A}'$, then the multiplication by $v^{-1}$ on $\mcL$ induces an isomorphism of $\mcO_X$-algebras from $\mathcal{A}$ to $\mathcal{A}'$. 
This proves that $Y\cong Y'$. 
\end{proof}

\begin{lemma}
\label{lemma:cyclic-factor}
Let $\mcL$ be a reflexive sheaf of rank 1 on  a  pointed normal complex analytic variety $(o\in X)$. 
Let $m,n,d$ be positive  integers such that $n=dm$ and that $(\mcL^{\otimes m})^{**} \cong \mcO_X$.  
If $f\colon Y \to X$ and $g \colon Z\to X$ are the  cyclic covers defined as above, with of degree $m$ and $n$ respectively, 
then, by shrinking $X$ around $o$ if necessary, $Z$ is isomorphic to a disjoint union of copies of $Y$.  

In particular, every such kind of cyclic cover is locally isomorphic to a disjoint union of copies of the index-one cover of $\mcL$.
\end{lemma}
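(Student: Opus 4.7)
The plan is to recognize $Z$ as a disjoint union of pullbacks of $Z$ along a trivially split intermediate cover. First, by Lemma \ref{lemma:unique-cyclic-cover}, after shrinking $X$ around $o$ we may assume that $Z$ is defined by the induced isomorphism $\psi := \varphi^{\otimes d} : (\mcL^{\otimes n})^{**} \cong \mcO_X$, where $\varphi : (\mcL^{\otimes m})^{**} \cong \mcO_X$ is the isomorphism used to define $Y$. With this normalization $\mcO_Z = \bigoplus_{j=0}^{n-1} \mcL^{[-j]}$, and if $t \in \mcL^{[-m]}$ denotes the local generator corresponding to $1$ under the isomorphism $\mcL^{[-m]} \cong \mcO_X$ dual to $\varphi$, then the multiplication rule on $\mcO_Z$ gives $t^d = 1$.

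Consider the $\mcO_X$-subalgebra $\mcB = \bigoplus_{a=0}^{d-1} \mcL^{[-am]} \subset \mcO_Z$. Via $\varphi$ we have $\mcB \cong \mcO_X[t]/(t^d - 1)$, so the associated cover $Z_0 := \Spec_X \mcB$ fits into a factorization $Z \to Z_0 \to X$. Because $\mbC$ contains a primitive $d$-th root of unity $\zeta$, the orthogonal idempotents $e_a = \frac{1}{d}\sum_{b=0}^{d-1} \zeta^{-ab} t^b$ in $\mcB$ induce a product decomposition $\mcB \cong \prod_{a=0}^{d-1} \mcO_X$, whence $Z_0$ is a disjoint union of $d$ copies of $X$, with the $a$-th copy $X_a \hookrightarrow Z_0$ corresponding to the quotient $t \mapsto \zeta^a$. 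Regrouping the summands of $\mcO_Z$ as $\bigoplus_{i=0}^{m-1} \mcL^{[-i]} \otimes_{\mcO_X} \mcB$ realizes $\mcO_Z$ as a $\mcB$-module of constant rank $m$, and the idempotent decomposition transfers to a product decomposition $\mcO_Z = \prod_{a=0}^{d-1} e_a \mcO_Z$ of $\mcO_X$-algebras. This yields $Z = \bigsqcup_{a=0}^{d-1} Z_a$ with $Z_a := \Spec_X(e_a \mcO_Z) = Z \times_{Z_0} X_a$, each a degree-$m$ cover of $X$.

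To finish, I would verify that each $Z_a$ is the cyclic cover of $X$ of degree $m$ by $\mcL$ corresponding to the twisted trivialization $\zeta^a \cdot \varphi : (\mcL^{\otimes m})^{**} \cong \mcO_X$. A second application of Lemma \ref{lemma:unique-cyclic-cover} then gives $Z_a \cong Y$ for every $a$, whence $Z$ is locally a disjoint union of $d$ copies of $Y$. The ``in particular'' statement follows by taking $m$ equal to the Cartier index of $\mcL$, so that $Y$ becomes the index-one cover.

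The main obstacle is the last verification. One must track how the multiplication on $\mcO_Z$ (which invokes $\psi = \varphi^{\otimes d}$ at the rollover $j_1 + j_2 = n$) behaves after the substitution $t \mapsto \zeta^a$ and compare it with the multiplication on $\mcO_Y$ (which invokes $\varphi$ at the rollover $i_1 + i_2 = m$). The discrepancy is precisely a factor of $t$ appearing whenever $i_1 + i_2 \geq m$; under $t \mapsto \zeta^a$ this factor becomes $\zeta^a$, producing the $\zeta^a$-twisted trivialization, which is then absorbed by Lemma \ref{lemma:unique-cyclic-cover}.
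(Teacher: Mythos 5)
Your argument is correct, and it is close in spirit to the paper's proof, but the two differ in where the work is placed. The paper writes $\gamma = u\cdot\varphi^{d}$, extracts (after shrinking) a $d$-th root $v$ of the unit $u$, replaces $\varphi$ by $\varphi'=v\varphi$ so that $(\varphi')^{d}=\gamma$, invokes Lemma \ref{lemma:unique-cyclic-cover} once to get $Y'\cong Y$, and then simply asserts that a degree-$n$ cover whose trivialization is the $d$-th power of a degree-$m$ trivialization splits into $d$ copies of the degree-$m$ cover. You instead normalize by applying Lemma \ref{lemma:unique-cyclic-cover} at level $n$ (replacing $\gamma$ by $\varphi^{\otimes d}$, which changes $Z$ only up to isomorphism), and then you actually prove the splitting that the paper leaves implicit: the subalgebra $\mcB=\bigoplus_{a=0}^{d-1}\mcL^{[-am]}\cong\mcO_X[t]/(t^{d}-1)$ and its orthogonal idempotents $e_a$ decompose $\mcO_Z=\prod_a e_a\mcO_Z$, and each factor is the degree-$m$ cyclic algebra for the trivialization $\zeta^{\pm a}\varphi$; a second application of Lemma \ref{lemma:unique-cyclic-cover} (a constant certainly has an $m$-th root, so no further shrinking is needed) identifies every piece with $Y$. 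So your route buys an explicit proof of the key splitting at the cost of a second use of the uniqueness lemma to absorb the root-of-unity twists, which in the paper's normalization $(\varphi')^{d}=\gamma$ would also appear if the asserted splitting were spelled out. Two harmless points to tidy: the identification $e_a\mcO_Z\cong\bigoplus_{i=0}^{m-1}\mcL^{[-i]}$ should be phrased via reflexive hulls (the naive tensor $\mcL^{[-i]}\otimes_{\mcO_X}\mcL^{[-am]}$ need not be reflexive, but $e_a\mcO_Z$ is a direct summand of $\mcO_Z$, hence reflexive, and the comparison map is an isomorphism on the big open set where $\mcL$ is invertible); and the exact sign of the exponent in $\zeta^{\pm a}$ is immaterial, since any constant multiple of $\varphi$ yields an isomorphic cover.
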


\begin{proof} 
We denote by $\varphi \colon (\mcL^{\otimes m})^{**} \to  \mcO_X$ and $\gamma \colon (\mcL^{\otimes n})^{**} \to \mcO_X$ 
the isomorphisms. 
Then there is a nowhere vanishing holomorphic function $u$ on $X$ such that 
$\gamma = u \cdot \varphi^d$. 
If $u=1$, then we see that $Z$ is just the disjoint union of $d$ copies of $Y$.

In general, shrinking $X$ if necessary, we may assume that $u$ has a $d$-th root $v$.  
Then we consider the isomorphism $\varphi' = v\cdot  \varphi$. 
It induces a cyclic cover $Y' \to X$ which is isomorphic to $Y$ by Lemma \ref{lemma:unique-cyclic-cover}. 
We notice that $(\varphi')^d = \gamma$. 
Hence $Z$ is just a disjoint union of copies of $Y'$. 
This completes the proof of the lemma.  
\end{proof}

An integral Weil divisor $\D$ on $(o\in X)$ is   $\mathbb{Q}$-Cartier if and only if the reflexive sheaf $\mcO_X(\D)$ is $\mathbb{Q}$-Cartier. 
If it is the case, then the Cartier index of $\D$ is equal to the one of  $\mcO_X(\D)$.  
We define  the index-one cover of   $\D$ over $(o\in X)$   as the index-one cover $f\colon Y\to X$ of $\mcO_X(\D)$. 
Then  the integral divisor  $f^*\D$ is Cartier.

\subsection{MMP for complex analytic varieties}

Since the log canonical abundance is known for projective threefolds (see \cite{Kol92} and \cite{KMM94}), 
in this paper, we mainly focus on compact K\"ahler threefolds which are not algebraic. 
Particularly,  if $X$ is a uniruled non-algebraic compact K\"ahler threefold with $\mathbb{Q}$-factorial klt singularities, 
then the base of the MRC fibration has dimension $2$ by \cite[Lemma 1.19]{DasOu2022}. 
The following theorems follow  from \cite[Theorem 2.1 and Theorem 2.3]{DasOu2022}.

\begin{theorem}
    \label{thm-non-vanishing}
Let $(X, \Delta)$ be a $\mbQ$-factorial compact K\"ahler threefold dlt pair. 
Assume that $X$ is non-algebraic and that the base of the MRC fibration for $X$ has dimension $2$.  
Then the following assertions are equivalent:
 \begin{enumerate}
 \item $K_X+\D \sim_{\mathbb{Q}} D \> 0$ for some $\mathbb{Q}$-divisor $D$;
 \item $K_X+\D$ is pseudo-effective;
 \item $(K_X+\D)\cdot F\>0$ for a general fiber  $F$ of the MRC fibration of $X$.
 \end{enumerate} 
\end{theorem}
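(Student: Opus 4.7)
The implication $(1) \Rightarrow (2)$ is immediate: effective $\mathbb{Q}$-divisor classes are pseudo-effective by definition. For $(2) \Rightarrow (3)$, the general fiber $F$ of the MRC fibration $\phi \colon X \dashrightarrow Y$ belongs to a family of curves covering $X$, so its numerical class is movable; the K\"ahler analogue of the Boucksom--Demailly--P\u{a}un--Peternell pairing between pseudo-effective classes and movable curves (available on $X$ in view of its rational singularities, cf.\ \cite{DasOu2022}) then yields $(K_X + \Delta) \cdot F \geq 0$.

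The real content lies in $(3) \Rightarrow (1)$, a non-vanishing statement. Since $\dim Y = 2$ and $\dim X = 3$, a general fiber $F$ of $\phi$ is a rationally connected curve of dimension one, hence $\mathbb{P}^1$. My plan is to run a $(K_X+\Delta)$-MMP in the K\"ahler category, which is available for $\mathbb{Q}$-factorial dlt K\"ahler threefolds by \cite{HP16, HP15} and the extensions in \cite{DasOu2022}. The strict transform of a general MRC fiber remains a covering rational curve throughout the MMP, and hypothesis (3) is numerical, so one aims to preserve it along this covering family. Consequently, the MMP cannot terminate at a Mori fiber space $X'\to Z$, since there $K_{X'}+\Delta'$ would be strictly negative on every fiber and, by the universal property of the MRC quotient, in particular on the class of the transformed MRC fiber. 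The MMP therefore ends at a minimal model $(X', \Delta')$ with $K_{X'} + \Delta'$ nef.

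It then remains to prove non-vanishing for a nef class $K_{X'} + \Delta'$ on a uniruled lc K\"ahler threefold whose MRC base has dimension $2$. I would proceed by splitting on the numerical dimension $\nu = \nu(X', K_{X'} + \Delta')$: the case $\nu = 0$ is the non-vanishing theorem of \cite{DP03}, the case $\nu = 1$ is handled by the arguments adapted from the projective setting described in \cite[Section 8.A]{CHP16}, and the case $\nu = 2$ reduces via a canonical-bundle-formula along the MRC fibration to non-vanishing on the K\"ahler surface base, where abundance is classical. Pulling the resulting effective representative back through the MMP yields $D \geq 0$ with $K_X + \Delta \sim_{\mathbb{Q}} D$. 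The main obstacle I anticipate is the Mori-fibration ruling-out step: one must transfer hypothesis (3) across divisorial contractions and flips in a form strong enough to contradict fiberwise negativity, which requires keeping careful track of the MRC structure under these bimeromorphic operations rather than relying on naive preservation of the numerical class of the MRC fiber.
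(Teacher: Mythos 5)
This theorem is not proved in the paper at all: it is quoted verbatim from the authors' earlier work, the text above it stating that it "follows from [Theorem 9.1 and Theorem 9.3]" of \cite{DasOu2022}. So your proposal is an attempt to reprove that external result, and as such it has a genuine gap. The directions $(1)\Rightarrow(2)$ and $(2)\Rightarrow(3)$ are fine, but the whole content is $(3)\Rightarrow(1)$, and there the two load-bearing steps are missing. First, the step you yourself flag as an obstacle is not merely a bookkeeping issue: if you run a $(K_X+\Delta)$-MMP $X\dashrightarrow X'$ and compare on a common resolution, the negativity lemma gives $p^*(K_X+\Delta)=q^*(K_{X'}+\Delta')+E$ with $E\geq 0$, and intersecting with the strict transform $\widetilde F$ of a general MRC fiber only yields $(K_X+\Delta)\cdot F\geq (K_{X'}+\Delta')\cdot F'$ — the inequality goes the \emph{wrong} way for contradicting fiberwise negativity on a Mori fiber space. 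To make your argument work you would have to show $E\cdot\widetilde F=0$, i.e.\ that no divisor touched by the MMP is horizontal over the MRC base; this is true but needs an argument (the base surface is non-uniruled, so a contracted horizontal divisor would be swept by rational curves that are neither vertical nor contractible in the base), and without it the claimed exclusion of the Mori fiber space outcome is unproved. In the actual Kähler treatment the Mori fiber space case is not excluded at all but exploited, via the canonical bundle formula $K_{X'}+\Delta'=f^*(K_Y+\Gamma)$ over the surface base ([DasOu2022, Theorem 9.5], used in Lemma \ref{lemma:main-thm-2} of this paper) together with surface abundance.

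Second, the endgame on a minimal model is not supported by the references you invoke and risks circularity. \cite{DP03} and \cite[Section 8.A]{CHP16} concern $K_X$ without boundary (and in the terminal/klt, not lc, setting), so they do not directly give non-vanishing for a nef lc class $K_{X'}+\Delta'$ of numerical dimension $1$; and for $\nu=2$ your "canonical bundle formula along the MRC fibration" does not apply, since on a minimal model the MRC map is only meromorphic and, more importantly, a pull-back formula forces $(K_{X'}+\Delta')\cdot F=0$, which is exactly the case you cannot assume when $(3)$ holds with strict inequality. Non-vanishing for pseudo-effective (in particular nef) lc Kähler threefold pairs is precisely Theorem \ref{thm-non-vanishing-general-setting}, i.e.\ [DasOu2022, Theorem 9.3] — the statement your proof is meant to replace — so citing it (or results equivalent to it) at this stage would make the argument circular. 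As written, the proposal is a plausible strategy outline, but not a proof.
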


 \begin{theorem}
 \label{thm-non-vanishing-general-setting}
 Let $(X, \Delta)$ be a $\mbQ$-factorial compact K\"ahler threefold dlt pair. 
 If $K_X+\D$ is pseudo-effective, then it is $\mathbb{Q}$-effective.
 \end{theorem}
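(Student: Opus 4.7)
The plan is to deduce the theorem from Theorem \ref{thm-non-vanishing} by a case analysis on whether $X$ is algebraic, and if not, on whether $X$ is uniruled. If $X$ is projective, the statement reduces to the classical non-vanishing theorem for $\mathbb{Q}$-factorial projective dlt threefold pairs, which follows from Kawamata's non-vanishing together with the projective log MMP.

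Next, suppose $X$ is non-algebraic and uniruled. By \cite[Lemma 3.10]{DasOu2022}, the base $T$ of the MRC fibration $X \dashrightarrow T$ then has $\dim T = 2$, and a general fiber $F$ of this fibration is a rational curve. After replacing by the graph of the fibration to make $F$ honest, such fibers form a covering family of movable curves on $X$, so by the K\"ahler analogue of the BDPP duality between the pseudo-effective cone and the movable cone, pseudo-effectivity of $K_X + \Delta$ forces $(K_X + \Delta)\cdot F \geq 0$. Theorem \ref{thm-non-vanishing}, specifically the implication (3) $\Rightarrow$ (1), then immediately yields $K_X + \Delta \sim_{\mathbb{Q}} D \geq 0$.

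Finally, if $X$ is non-algebraic and not uniruled, then $K_X$ is pseudo-effective by the (K\"ahler) BDPP-type characterization of uniruledness. Here I would invoke the known non-vanishing results for non-uniruled compact K\"ahler threefolds with dlt singularities to conclude $\mathbb{Q}$-effectivity of $K_X + \Delta$ directly. This splits according to the numerical dimension of $K_X + \Delta$: the $\nu = 0$ case comes from \cite[Theorem 0.3]{DP03}, and the cases $\nu \in \{1,2\}$ are handled by adapting the arguments of \cite[Section 8]{CHP16}; the adaptation from $K_X$-statements to adjoint statements uses that $\Delta \geq 0$ and, for dlt pairs, standard perturbation and dlt-modification techniques.

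The main obstacle I anticipate is in the uniruled case: the K\"ahler BDPP-type duality on $\mathbb{Q}$-factorial klt threefolds, needed to pass from pseudo-effectivity of $K_X + \Delta$ to non-negativity against a general MRC fiber, is a substantial analytic input whose projective analogue is standard but whose K\"ahler version requires genuine positivity arguments in the non-algebraic setting. Some care is also required in the non-uniruled branch to ensure the cited non-vanishing statements apply uniformly to adjoint classes $K_X + \Delta$ and not merely to $K_X$ itself.
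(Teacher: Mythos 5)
Your projective case and your uniruled non-algebraic case are essentially fine, but note that the paper does not reprove this statement at all: it is imported wholesale from \cite[Theorem 9.1 and Theorem 9.3]{DasOu2022}. In the uniruled non-algebraic case your argument is also more complicated than necessary: once \cite[Lemma 3.10]{DasOu2022} gives that the MRC base is a surface (which applies, since a $\mathbb{Q}$-factorial dlt pair has klt underlying space), the implication $(2)\Rightarrow(1)$ of Theorem \ref{thm-non-vanishing} is verbatim the statement you want, so no BDPP-type duality is needed; and even for your route only the elementary direction (a pseudo-effective class is non-negative on a covering family of curves) is required, not the duality itself.

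The genuine gap is in the non-uniruled, non-algebraic case. There you propose to conclude, for $\nu(K_X+\Delta)\in\{1,2\}$, by ``adapting the arguments of \cite[Section 8]{CHP16}.'' But for $\nu=2$ that is precisely the material containing the acknowledged gap (Step 1 of \cite[Theorem 8.2]{CHP16}, where \cite[Theorem 9.6]{Kaw88} is misapplied), which is the very reason the present paper exists; the Riemann--Roch/$c_2$ argument that would produce sections in that case is exactly the broken step, so it cannot be invoked as known. You also cannot substitute the present paper's repair of that step, because Theorem \ref{thm-non-vanishing-general-setting} is itself an input to it (it is used, e.g., in Lemma \ref{lemma:generic-nef-non-uniruled} and in Lemma \ref{lemma:reduction-2.1}), so that route is circular. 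Moreover, passing from $K_X$ to an adjoint class $K_X+\Delta$ is not a matter of ``standard perturbation and dlt-modification'': making Miyaoka/Bogomolov--Gieseker-type $c_2$-inequalities work for log pairs with one-dimensional singular locus is exactly the nontrivial content of Sections \ref{section:chern}--\ref{section:Euler-char} here. (A smaller omission: before any case analysis on $\nu$ one must reduce from pseudo-effective to nef by an MMP as in Lemma \ref{lem:special-mmp}, and check $\mathbb{Q}$-effectivity transfers back, though this part is routine.) The correct provenance of the non-uniruled case is the argument in \cite{DasOu2022}, which does not pass through the gapped step, and that is what the paper relies on.
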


The   MMP for a three-dimensional compact K\"ahler  pair $(X, \D)$  was studied in \cite{HP16}, \cite{HP15}, \cite{CHP16}, \cite{DasOu2022},  \cite{DasHacon2020} and \cite{DHP22}. 
We summarize the following statement.  

\begin{theorem}
\label{thm:existence-dlt-MMP}
    Let $(X,\D)$ be a dlt pair such that $X$ is a $\mathbb{Q}$-factorial  compact K\"ahler threefold.  
    Then the cone theorem, the contraction theorem and the existence of flips  hold for $(X,\D)$. 
    Furthermore,  any $(K_X+\D)$-MMP   terminates. 
\end{theorem}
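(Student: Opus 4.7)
The plan is to verify each of the four assertions --- cone theorem, contraction theorem, existence of flips, and termination --- separately, by assembling the corresponding results from the cited literature. The guiding principle is to reduce the dlt case to the klt case, either by slightly perturbing the boundary or by passing to a $\mathbb{Q}$-factorial dlt modification, so that the klt K\"ahler threefold MMP machinery of \cite{HP16, HP15, CHP16, DasOu2022, DasHacon2020, DHP22} can be applied.

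For the cone theorem, I would begin with the klt case of \cite{HP16} (with clarifications in \cite{CHP16} and \cite{DasOu2022}), which gives a locally discrete decomposition of the $(K_X+\Delta)$-negative part of $\NA(X)$ into extremal rays of bounded length. To pass to dlt, write $K_X+\Delta = (K_X+\Delta_\varepsilon) + \varepsilon \lfloor \Delta \rfloor$ with $(X,\Delta_\varepsilon)$ klt for small $\varepsilon > 0$, and observe that any $(K_X+\Delta)$-negative extremal ray is also $(K_X+\Delta_\varepsilon)$-negative. Contractibility of divisorial and fibre-type extremal rays then follows from the K\"ahler analogue of the base-point-free theorem in the same references, while small (flipping) extremal contractions are produced by the analytic contraction results of \cite{DHP22}.

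The main obstacle is the existence of flips, since there is no direct analogue of relative finite generation of the canonical ring in the non-algebraic K\"ahler setting. Here I would invoke \cite{DasHacon2020} and \cite{DHP22}, where the flip is constructed by combining the fact that the flipping locus is a disjoint union of rational curves with a local algebraization result on an analytic neighborhood of these curves, reducing the construction to a relative projective MMP on that neighborhood and then gluing the output back to the global model.

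Finally, for termination I would apply Shokurov's special termination: along a sequence of $(K_X+\Delta)$-flips, after finitely many steps the flipping loci become disjoint from $\lfloor \Delta \rfloor$, so one is reduced to termination of a klt $(K_X+\Delta')$-MMP. Termination in the klt K\"ahler threefold setting is then deduced from the decrease of an appropriate difficulty function, in parallel with Shokurov's argument in the projective case, as worked out in \cite{DHP22}. The delicate point is that special termination must be carried out in the analytic category without algebraicity hypotheses on the components of $\lfloor \Delta \rfloor$; this verification is also supplied by the cited references.
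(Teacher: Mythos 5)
Your overall strategy is the same as the paper's: this theorem is a compilation statement, and the paper proves it purely by citation, splitting the cone theorem according to whether $K_X+\Delta$ is pseudoeffective (\cite[Theorem 10.12]{DasOu2022}) or not (\cite[Theorem 5.2]{DHP22}), taking the contraction theorem from \cite[Corollary 5.3 and Corollary 5.6]{DHP22}, the existence of flips from \cite[Theorem 4.3]{CHP16}, and termination from \cite[Theorem 3.3]{DasOu2022}. Your proposal instead tries to re-derive the dlt statements from the klt case, and here there is a genuine gap in the cone-theorem step. The claim that ``any $(K_X+\Delta)$-negative extremal ray is also $(K_X+\Delta_\varepsilon)$-negative'' for $\Delta_\varepsilon=\Delta-\varepsilon\lfloor\Delta\rfloor$ is false for a fixed $\varepsilon>0$: if $R$ is generated by a curve contained in $\lfloor\Delta\rfloor$ with $\lfloor\Delta\rfloor\cdot R$ sufficiently negative, then $(K_X+\Delta_\varepsilon)\cdot R=(K_X+\Delta)\cdot R-\varepsilon\,\lfloor\Delta\rfloor\cdot R$ can be nonnegative. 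One can make a single ray negative by shrinking $\varepsilon$, but the cone decomposition and local discreteness for the dlt pair then require a uniform argument (length bounds and a limit as $\varepsilon\to 0$), i.e.\ exactly the content of the dlt-level cone theorems that the paper cites; the reduction is not the one-line perturbation you describe. Moreover, your cone-theorem route through the klt results of \cite{HP16} only addresses, in substance, the pseudoeffective case; when $K_X+\Delta$ is not pseudoeffective (so $X$ is uniruled), the K\"ahler cone and contraction theorems are precisely the results of \cite{DHP22}, which you invoke only for small contractions.

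The remaining two assertions are fine in outline but misattributed: the existence of flips used here is \cite[Theorem 4.3]{CHP16} (your description of the construction via the structure of the flipping locus and local algebraization is the right idea), and termination of dlt flips is \cite[Theorem 3.3]{DasOu2022} rather than an argument ``worked out in \cite{DHP22}''. Since the proof of this theorem consists of pointing to precise statements in the literature, getting the pseudoeffective/non-pseudoeffective dichotomy and these attributions right is the substance of the proof, and the perturbation claim as written would not survive as a substitute for the cited dlt cone theorem.
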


\begin{proof}
If $K_X+\D$ is pseudoeffective, then 
the cone theorem follows from \cite[Theorem 2.26]{DasOu2022}; 
if $K_X+\D$ is not pseudoeffective, then 
the cone theorem is established in \cite[Theorem 5.2]{DHP22}.   
The contraction theorem follows from \cite[Corollary 5.3 and Corollary 5.6]{DHP22}.  
For the existence of flips, we refer to \cite[Theorem 4.3]{CHP16}. 
Finally, for the termination of   flips, see \cite[Theorem 1.12]{DasOu2022}. 
\end{proof}

We can now introduce the following special MMP (see also \cite[Lemma 5.1]{KMM94} for projective threefolds).

\begin{lemma}
\label{lem:special-mmp}  
Let $(X,\D)$ be a dlt pair such that $X$ is a $\mathbb{Q}$-factorial  compact K\"ahler threefold.  
    Suppose that $K_X+\Delta$ is   not nef, 
    and that there is an effective  divisor $H$  such that $K_X+\Delta+H$ is nef. 
    Then  a $(K_X+\Delta)$-MMP, trivial with respect to $K_X+\Delta+H$, is a sequence of bimeromorphic maps
 \[
        (X, \Delta)=(X_0, \Delta_0)\bir  \cdots \bir (X_i, \Delta_i)\bir\cdots \bir (X_n, \Delta_n), 
 \]
 such that each $(X_i,\Delta_i)$ is dlt, and that  the following properties hold:
\begin{enumerate}
\item For each $0\<i\<n-1$, there is a $(K_{X_i}+\Delta_i)$-negative extremal ray $R_i$ such that $(K_{X_i}+\Delta_i+H_i)\cdot R_i=0$, and $X_i\bir X_{i+1}$ is the divisorial contraction or flip corresponding  to $R_i$.
\item If $K_X+\Delta+\lambda H$ is pseudoeffective for some $ \lambda<1$, then 
there is a  smallest integer $n\> 0$  such that $K_{X_n}+\Delta_n+(1-\epsilon)H_n$ is nef for all $\epsilon >0$ sufficiently small. 
\item If $K_X+\Delta+\lambda H$ is not pseudoeffective for any $\lambda<1$, then  
there is a  $(K_{X_n}+\Delta_n)$-Mori fibration $f\colon X_n\to Y$ such that $K_{X_n}+\Delta_n+H_n$ is $f$-relatively numerically trivial. 
\end{enumerate} 
\end{lemma}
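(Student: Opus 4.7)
The plan is to carry out a $(K_X+\Delta)$-MMP with scaling by $H$, tracking the nef threshold
\[
\lambda_i := \inf\{t \geq 0 : K_{X_i}+\Delta_i+tH_i \text{ is nef}\}
\]
at each stage. By hypothesis $\lambda_0 \in (0,1]$: strictly positive because $K_X+\Delta$ is not nef, and at most $1$ because $K_X+\Delta+H$ is. If already $\lambda_0 < 1$, then conclusion (2) holds vacuously with $n=0$, so for the substantive argument I assume $\lambda_0 = 1$, i.e., $K_X+\Delta+H$ lies exactly on the boundary of the nef cone along the $H$-direction.

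Under the standing assumption $\lambda_i = 1$ at some stage, I invoke the cone theorem from Theorem \ref{thm:existence-dlt-MMP} to produce a $(K_{X_i}+\Delta_i)$-negative extremal ray $R_i$ on which $K_{X_i}+\Delta_i+H_i$ vanishes: the equality $\lambda_i = 1$ supplies effective curve classes $C$ with $(K_{X_i}+\Delta_i)\cdot C / H_i\cdot C$ approaching $-1$ from above, and local finiteness of $(K_{X_i}+\Delta_i)$-negative extremal rays extracts such an $R_i$. By the contraction theorem and existence of flips (also Theorem \ref{thm:existence-dlt-MMP}), $R_i$ gives a divisorial contraction, a flip, or a Mori fibration; in the first two cases I pass to $(X_{i+1}, \Delta_{i+1})$ with $H_{i+1}$ the pushforward/strict transform of $H_i$, and verify in the standard way that $K_{X_{i+1}}+\Delta_{i+1}+H_{i+1}$ remains nef. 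This delivers item (1).

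Next, termination of this sequence follows from the termination statement in Theorem \ref{thm:existence-dlt-MMP}. After finitely many steps, either (a) some $\lambda_n < 1$ occurs for the first time (so $K_{X_n}+\Delta_n+(1-\epsilon)H_n$ is nef for all sufficiently small $\epsilon > 0$), or (b) a Mori fibration $f : X_n \to Y$ appears at the final step with $(K_{X_n}+\Delta_n+H_n)\cdot R_n = 0$. To match these outcomes to the pseudoeffectiveness dichotomy in (2) and (3): in case (b), a general fiber $F$ of $f$ satisfies $(K_{X_n}+\Delta_n+H_n)\cdot F = 0$ and $(K_{X_n}+\Delta_n)\cdot F < 0$, hence $H_n \cdot F > 0$, and therefore $(K_{X_n}+\Delta_n+\lambda H_n)\cdot F < 0$ for every $\lambda < 1$; because fibers of $f$ cover $X_n$, the class $K_{X_n}+\Delta_n+\lambda H_n$ is not pseudoeffective. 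Propagating this backward through the MMP (pseudoeffectiveness is preserved by the pushforward under divisorial contractions and flips) shows that $K_X+\Delta+\lambda H$ is not pseudoeffective for any $\lambda < 1$, so case (b) is exactly the setting of (3), while (a) is exactly the setting of (2).

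The main obstacle I expect is the cone-theoretic step of producing the ray $R_i$ on each $X_i$: in the K\"ahler (non-algebraic) category, the cone and contraction statements rest on the results of \cite{DasOu2022} and \cite{DHP22} as summarized in Theorem \ref{thm:existence-dlt-MMP}, and one must carefully verify their applicability at every stage, along with the preservation of nefness of $K+\Delta+H$ across flips and the transport of pseudoeffectiveness of $K+\Delta+\lambda H$ backward through the MMP. These latter checks are routine but must be spelled out in the K\"ahler setting.
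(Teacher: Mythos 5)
Your proposal is correct and follows essentially the same route as the paper: the paper's proof simply invokes Theorem \ref{thm:existence-dlt-MMP} together with the argument of \cite[Theorem 4.5]{DasOu2022}, which is exactly the MMP-with-scaling argument (nef threshold, extraction of a $(K+\Delta)$-negative, $(K+\Delta+H)$-trivial ray via the cone theorem, termination, and the pseudoeffectivity dichotomy) that you spell out. The points you flag as needing care — producing the ray in the K\"ahler category and transporting nefness/pseudoeffectivity through the steps — are precisely what the cited result handles, so there is no substantive divergence.
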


\begin{proof}
    With the help of Theorem \ref{thm:existence-dlt-MMP}, the same argument of \cite[Theorem 1.26]{DasOu2022} shows that such a MMP always exists. 
\end{proof}

\subsection{Holomorphic foliations} \label{subsec:holomorphic-foliation}
Let $X$ be a normal complex analytic variety. 
A \emph{foliation} on $X$ is a saturated subsheaf $\mcF$ of the reflexive tangent sheaf $T_{X}$, which is closed under the Lie bracket on the smooth locus of $X$.  
For a  dominant  morphism  $f\colon Y\to X$  of normal complex analytic varieties, 
the pullback foliation   $f^{-1}\mcF$ of $\mcF$  is the foliation  on $Y$ induced by the image of $f^*\mcF$ in $T_Y$, 
 via  the differential map of $f$.

Assume that $X$ is a complex manifold and that $\mcF$ a holomorphic foliation on $X$.    
We define 
\[K_\mcF := (\det \mcF)^* \] 
and  use the following notation. 
For  a $\mathbb{Q}$-divisor $\D$ on $X$, for an integer $m$ such that $m\D$ is integral, we set 
\[
\mcO_X(m(K_\mcF + \D)) := (\det \mcF)^{\otimes (-m)} \otimes \mcO_X(m\D). 
\]

\section{Miayoka's inequality on complex orbifolds} 
\label{section:chern}

The goal of this section is to prove a Miyaoka type inequality on compact  K\"ahler threefolds with quotient singularities, see Proposition \ref{prop:psef-c2-general}.    
Complex analytic varieties with quotient singularities are closely related to complex orbifolds. 
Hence, in this section, we will first recall the notion of complex orbifolds and orbifold coherent sheaves.  
Then we will introduce  blowups of  complex orbifolds, which enable us to reduce the study of the second Chern class  of orbifold torsion-free sheaves to the one of orbifold vector bundles.  
With these tools in hand, we can  then prove the main result  of the section.

\subsection{Complex orbifolds}   
 
Smooth ($\mathcal{C}^{\infty}$)  orbifolds or V-manifolds  were first introduced in \cite{Satake1956} and later on were studied in various contexts.
A complex orbifold is a smooth orbifold that every local chart is a complex manifold and every local group action is holomorphic. 
We briefly recall its definition by adapting the notion of  \cite[Section 4]{ChenRuan2002}.

\begin{definition}\label{def:orbifold}
We take the following definitions.
\begin{enumerate}
    \item Let $U$ be a connected Hausdorff space. An \textit{orbifold chart} (or a \textit{uniformization system}) of $U$ is a triple $(\widetilde{U}, G, \pi)$ such that $\widetilde{U}$ is an  open domain  in some $\mathbb{C}^n$, that $G$ is a finite group acting holomorphically on $\widetilde{U}$, and that $\pi\colon \widetilde{U}\to U$ is a continuous map inducing a homeomorphism from $\widetilde{U}/G$ to $U$.  
    We  denote by $\mathrm{ker}\, G \subseteq G$ the maximal subgroup acting trivially on $U$.  The chart is called \textit{effective} if $\mathrm{ker}\, G $ is the trivial  subgroup.  
    It is called \textit{standard} if  the action of $G$ is free in codimension one.


    \item Let $i\colon U' \hookrightarrow  U$ be an open subset, and $(\widetilde{U}',G',\pi')$ an orbifold chart of $U'$. 
          An \textit{injection} (or embedding) from $(\widetilde{U}',G',\pi')$ to  $(\widetilde{U}, G, \pi)$ consists of an open embedding $\varphi \colon \widetilde{U}'\to \widetilde{U}$ and a group monomorphism $\lambda \colon G'\to G$ such that $\varphi$ is equivariant with respect to $\lambda$,  that $\lambda$ induces an isomorphism from  $\mathrm{ker}\, G' $ to $\mathrm{ker}\, G$, and that $i\circ \pi' = \pi\circ \varphi$.  

    \item Let $X$ be a connected second countable Hausdorff space. An \textit{ orbifold atlas} on $X$ is a collection $\mathcal{U} = \{(\widetilde{U},G,\pi)\}$ of orbifold charts of open subsets of $X$ which covers $X$ and is compatible in the following sense. 
    For any two  orbifold charts $(\widetilde{U},G,\pi)$ and $(\widetilde{U}',G',\pi')$ in $\mathcal{U}$,  
    of open subsets $U$ and $U'$ respectively, for any point $x\in U\cap U'$, there is an open neighborhood $V$ of $x$ with an orbifold chart  $(\widetilde{V},H,\rho)$ such that there are injections from $(\widetilde{V},H,\rho)$ to  $(\widetilde{U},G,\pi)$ and  $(\widetilde{U}',G',\pi')$.

    \item An  orbifold  atlas  $\mathcal{U}$ on $X$ is a \textit{refinement} of another one $\mathcal{V}$ if every orbifold chart of $\mathcal{U}$ admits an injection into an orbifold chart of $\mathcal{V}$.

    \item  Two orbifold  atlas   on   $X$ are \textit{equivalent} if they admit a common refinement. 
           An orbifold atlas is \textit{maximal} if it contains any atlas equivalent to it. 
           From Zorn's lemma, we see that every equivalent class of orbifold atlas contains a unique maximal element.

    \item A complex orbifold is  a connected second countable Hausdorff space $X$ equipped with an equivalent class of orbifold atlas. 
    We write $X_{\orb} =  \{ (V_i,G_i,\pi_i) \}$ or simply $X_{\orb} = \{ (V_i,G_i) \}$ for such an orbifold structure with an orbifold atlas $\{ (V_i,G_i,\pi_i)\}$. 
    The underlying spcae $X$ is called the \textit{quotient space} of $X_{\orb}$. 
    The dimension of $X_{\orb}$ is the complex dimension of any orbifold chart. 

    \item  A complex orbifold is called \textit{effective} or \textit{reduced},    if every chart is effective. 
    It is called \textit{standard} if every chart is standard.   
           
     \item Let $X_{\orb}=\{(V_i,G_i)\}$ be a complex orbifold. 
     An \textit{orbifold subvariety} $Z_{\orb}$ is defined by a collection $\{W_i\subseteq V_i\}$ of $G_i$-invariant subvarieties  which is compatible on the overlaps, in the following sense. 
     If there are injections of orbifold charts $\varphi_i\colon V \to V_i$ and $\varphi_j\colon V\to V_j$, then $\varphi_i^{-1}(W_i) =\varphi_j^{-1}(W_j).$  
     The subvariety $Z_{\orb}$ is called smooth if every $W_i$ is a submanifold of $V_i$. 
     When $Z_{orb}$ is smooth with irreducible image $Z$ in $X$, we obtain an induced orbifold structure $Z_{\orb}=\{(W_i,G_i)\}$. 

     \item An orbifold differential form $\sigma $ on an orbifold $X_{\orb}=\{(V_i,G_i)\}$ is given by a collection $\{ \sigma_i \}$ of $G_i$-invariant differential forms on the collection of charts $\{V_i\}$  which is compatible on the overlaps. 




\end{enumerate}
\end{definition}

In \cite[Theorem 1]{Satake1956}, Satake proved the de Rham isomorphism theorem for effective  oriented 
$\mathcal{C}^\infty$ orbifolds, in particular for effective  complex orbifolds: for every integer $p\> 0$,  there is a natural isomorphism
\begin{equation}\label{eqn:de-rham-thm}
H^p_{\rm dR}(X_{\orb},\mathbb{R}) \cong H^p(X,\mathbb{R}).    
\end{equation}


 \begin{remark}\label{remark-orbifold-def} 
 If $X_{\orb} = \{(V_i,G_i)\}$ is a non effective orbifold, then there is a canonically induced effective orbifold structure $X_{\orb}' = \{(V_i, G'_i)\}$, where $G_i' = G_i/\mathrm{ker}\, G_i$. 
 We note that an orbifold differential form on $X_{\orb}$ is an orbifold differential form on $X_{\orb}'$, and vice versa. 
 Therefore, the  isomorphism  \eqref{eqn:de-rham-thm} still holds for non effective orbifold. 

\end{remark}



\begin{definition}\label{def:orb-vector-bundle}
We also use the following notions of orbifold vector bundles and Hermitian metrics, etc. 
Let  $X_{\orb}=\{(V_i, G_i)\}$ be a complex  orbifold, possibly non effective.
    \begin{enumerate}
        \item An orbifold vector bundle $\mcE_{\orb} = \{\mcE_i\}$ on $X_{\orb}=\{(V_i, G_i)\}$  is a collection of holomorphic $G_i$-linearized vector bundles $ \mcE_i $ on $V_i$  which is compatible along the overlaps, in the following sense.   
        For any chart $(V,G,\pi)$ in the maximal atlas of $X_{\orb}$,  there is a $G$-linearized vector bundle $\mcE$ on $V$ satisfying the following properties.  
        If $(V,G)$ is equal to some $(V_i,G_i)$, then $\mcE = \mcE_i$. 
        If there is an injection $\varphi  \colon (V,G,\pi) \to (V',G',\pi')$, then there is an isomorphism $\Psi_{\varphi} \colon \mcE \cong  \varphi^*\mcE' $. 
        Furthermore, if $\iota \colon (V',G',\pi') \to (V'',G'',\pi'')$ is another injection, then we have $\Psi_{\iota \circ \varphi} = (\varphi^*\Psi_{\iota}) \circ \Psi_{\varphi}$.


        \item An orbifold coherent sheaf $\mcE_{\orb}=\{\mcE_i\}$ on $X_{\orb}$ is defined in the same manner.

        \item An orbifold coherent sheaf $\mcE_{\orb}$ is called torsion-free (resp. reflexive) if every data $\mcE_i$ on $V_i$ is so.

        \item For an orbifold vector bundle $\mcE_{\orb}$,  a Hermitian metric $h_{\orb}$ on $\mcE_{\orb}$ is a collection $\{h_i\}$ of $G_i$-invariant Hermitian metrics on $\mcE_i$ which are compatible on the overlap.

        \item For a Hermitian orbifold vector bundle $(\mcE_{\orb}, h_{\orb})$, 
        we can define the orbifold Chern classes $\hat{c}_i (\mcE_{\orb})$  using the curvature tensors $c_i(h_{\orb})$, which are automatically orbifold differential forms.  
        They are uniquely determined in the orbifold de Rham cohomology groups of $X_{\orb}$, hence by \eqref{eqn:de-rham-thm} in the singular cohomology groups of  $X$.  We note that this does not depend on the choice of metrics. 

        \item The  complex orbifold $X_{\orb}$ is called \textit{ K\"ahler} if there is a closed  positive orbifold $(1,1)$-form $\omega$. 
 A cohomology class $\alpha \in H^2(X, \mathbb{R})$ is called an orbifold K\"ahler class on $X_{\orb}$ if there is an orbifold K\"ahler form $\omega$ whose cohomology class is $\alpha$. 
    \end{enumerate}
\end{definition}


\begin{remark}\label{rmk:quotient-singularities}
Complex orbifolds are closely related to complex analytic varieties with quotient singularities. 
We summarize  some properties as follows.

\begin{enumerate}
\item From \cite[Th\'eor\`eme 1]{Cartan1954}, we see that the 
quotient space $X$ of an effective complex orbifold $X_{\orb}$ has a natural structure of complex analytic variety, with  quotient singularities. 
Furthermore, if $(V_i,G_i)$ is an orbifold chart of $X_{\orb}$, then the quotient map $\pi_i\colon V_i \to \pi_i(V_i)$ is holomorphic. 
The   holomorphic functions on any open subset $U\subseteq \pi_i(V_i)$ are exactly the $G_i$-invariant holomorphic functions on $\pi_i^{-1}(U)\subseteq V_i$.

\item Conversely, given  a complex analytic variety $X$ of dimension $n$ with quotient singularities,  it admits a unique standard orbifold structure   after \cite[Theorem 1 and Theorem 2]{Prill1967}.  
We call it  the \textit{standard orbifold structure} on $X$.

\item A same quotient space $X$ may have non isomorphic orbifold structures.  For example, we take $X = \mathbb{C}$. 
Let $( \mathbb{C}, \{\mathrm{Id}\}, \pi_1 )$ and $( \mathbb{C}, \{\mathrm{Id}, -\mathrm{Id}\}, \pi_2)$ be two orbifold charts of $X$, where $\pi_1$ is the identity map and $\pi_2$ is the   map sending a complex number $z$ to $z^2$. 
Then these two charts induces different orbifold structures on $X$.

\item  Let $X_{\orb}= \{(V_i,G_i)\}$ be an effective complex orbifold and $X$ its quotient space. 
Assume that $Y$ is a complex analytic space and there are $G_i$-equivariant morphisms $q_i\colon V_i\to Y$, 
where the actions of the $G_i$'s on $Y$ are trivial. 
Assume that   the $q_i$'s are compatible on the overlap. 
Then they induces a holomorphic map $f\colon X\to Y$. 
Indeed, by the definition of topological quotient spaces, the map is   well-defined and continuous. 
To prove that it is holomorphic, it is enough to show that for any local holomorphic function $\varphi$ on $Y$, the function  $\varphi\circ f$ is holomorphic.  
By definition, each $\varphi\circ q_i$ is  $G_i$-invariant and holomorphic. 
Hence we can conclude by (1) of  Remark \ref{rmk:quotient-singularities}. 
  
\item  Let $X_{\orb}= \{(V_i,G_i, \pi_i)\}$ be a standard complex orbifold,  and let  $X$ be its quotient space. 
Then  there is a correspondence between  reflexive  coherent sheaves $\mcE$ on $X$  and  orbifold reflexive sheaves $\mcE_{\orb}=\{\mcE_i\}$ on $X_{\orb}$.
Indeed, if $\mcE$ is a reflexive coherent sheaf on $X$, then the reflexive pullbacks $(\pi_i^*\mcE)^{**}$ define  an orbifold reflexive sheaf on $X_{\orb}$. 
Conversely, if $\mcE_i$ is a $G_i$-equivariant reflexive coherent sheaf on $V_i$, then the $G_i$-invariant pushforward  $((\pi_i)_* \mcE_i)^{G_i}$ is a reflexive sheaf on $\pi_i(V_i) \subseteq X$,  see for example \cite[Lemma A.4]{GKKP11}.

\item Let $X_{\orb}= \{(V_i,G_i, \pi_i)\}$ be a  complex orbifold  and $X$ its quotient space. 
Assume that $\mcL$ is a torsion-free sheaf on $X$ of rank one such that $\mcN=(\mcL^{\otimes b})^{**}$ is a line bundle of some $b>0$. 
We can define the ($\mathbb{Q}$-)Chern class $c_1(\mcL)\in H^2(X, \mathbb{R})$ as $b^{-1}c_1(\mathcal{N})$.  
In addition, we can defined the orbifold first Chern class $\hat{c}_1(\mathcal{L})$ of $\mcL$ as follows. 
We let $\mathcal{L}_{\orb}$ be the orbifold reflexive coherent sheaf on $X_{\orb}$ defined by $(\pi_i^*\mcL)^{**}$, which must be an orbifold line bundle, see for example \cite[Chapter 2, Lemma 1.1.15]{OSS11}.  
Then we define $\hat{c}_1(\mathcal{L}) = \hat{c}_1(\mathcal{L}_{\orb}).$  
We claim that
\[b\cdot c_1(\mcL) = c_1(\mcN)  = \hat{c}_1(\mcN)   = b\cdot \hat{c}_1(\mathcal{L}) \in H^2(X, \mathbb{R}), 
\]
where we identify $\mathcal{N}$ with the  orbifold line bundle on $X_{\orb}$ induced by  it.   
The first equality is by definition, and the last one follows from the fact that $((\pi_i^*\mcL)^{**})^{\otimes b} \cong \pi_i^*\mathcal{N}$.  
To see the equality in the middle,  we first note that,   locally on $X$, the line bundle $\mathcal{N}$ is trivial and we can embed $X$ into some affine space $\mathbb{C}^N$. 
Thanks to a partition of the unity,  
we  can hence construct a Hermitian metric on $\mathcal{N}|_{X_{\sm}}$, which extends to an orbifold Hermitian metric on the orbifold line bundle   induced by  $\mathcal{N}$.   
It follows that  $\hat{c}_1(\mcN)|_{X_{\sm}} = c_1(\mcN)|_{X_{\sm}}$.    
Since $X_{\sing}$ has real codimension at least 4 in $X$, we obtain that 
$\hat{c}_1(\mcN)  = c_1(\mcN)  \in  H^2(X, \mathbb{R})$. 
This explains that the orbifold first Chern class coincides with the first Chern class for a $\mathbb{Q}$-Cartier reflexive sheaf or   a  $\mathbb{Q}$-Cartier divisor.  
For this reason, we may use  $\hat{c}_1$ and $c_1$ interchangeably.

\item Let $(V,G, \pi)$ be an  orbifold chart of $U$ and let $b=|G|$.  
If $\mcL$ is a torsion-free sheaf on $U$ of rank one, 
then $ (\mcL^{\otimes b})^{**}$ is a line bundle.  
Indeed, $(\pi^*\mcL)^{**}$  is a $G$-linearized line bundle on $V$. 
For any point $v\in V$, the action of its  stabilizer $G_v$  on the fiber $((\pi^*\mcL)^{**})^{\otimes b}_v$ is trivial  (this is known as Kempf's descending condition, see \cite[Th\'eor\`eme 2.3]{DN1989} for the algebraic version).   
It follows that there is some $G$-invariant local section of $((\pi^*\mcL)^{**})^{\otimes b}$ which does not vanish at $v$. 
This implies that there is a line bundle $\mathcal{M}$ on $U$ such that $\pi^*\mathcal{M} \cong ((\pi^*\mcL)^{**})^{\otimes b}$  and  $(\mcL^{\otimes b})^{**} \cong \mathcal{M}$. 
As a consequence, if  $X_{\orb}= \{(V_i,G_i, \pi_i)\}$ is  a  complex orbifold with a finite family of chart, if  $\mcL$ is a torsion-free sheaf on the quotient space $X$ of rank one, 
then   $(\mcL^{\otimes q})^{**}$ is a line bundle for the lcm $q$ of the cardinalities $|G_i|$. 
\end{enumerate}
\end{remark}


After (5) of Remark \ref{rmk:quotient-singularities}, we will use the following notion.  
Assume that  $X$ is a complex analytic variety with quotient singularities, and that $\mcE$ a reflexive coherent sheaf on $X$. 
If $\mcE$ induces an orbifold vector bundle $\mcE_{\orb}$ on the standard orbifold structure, we set 
\[
\hat{c}_i(\mcE) := \hat{c}_i(\mcE_{\orb}).
\]

The following lemma from \cite[Lemma 1 and Proposition 4]{Wu23}  
(see  also  \cite[Construction 6.1]{CGNPPW}) 
relates K\"ahler orbifolds and K\"ahler varieties with quotient singularities.  
For the reader's convenience, we include a proof.

\begin{lemma}
\label{lemma:construction-orb-kahler}
Let $X$ be a compact complex analytic variety with quotient singularities, and 
let $X_{\orb}$ be an effective orbifold structure on $X$. 

If there is a K\"ahler form $\omega$ on $X$, 
then there is an orbifold K\"ahler form on $X_{\orb}$ of the shape 
\[
\omega_{\orb} = pr_X^* \omega + \sqrt{-1}\partial\overline{\partial} \varphi, 
\]
for some $\mathcal{C}^{\infty}$ orbifold real function $\varphi$, where $pr_X^*\omega$ is the orbifold form on $X_{\orb}$ defined by pulling back  $\omega$ on every orbifold chart.  
In particular, the class of $\omega$ in $H^{2}(X,\mathbb{R})$ is an orbifold K\"ahler class on $X_{\orb}$. 

Conversely, if there is an orbifold K\"ahler form $\alpha_{\orb}$ on $X_{\orb}$, 
then there is a K\"ahler form $\alpha$ on $X$ which has the same class as $\alpha_{\orb}$ in $H^{2}(X,\mathbb{R})$. 
\end{lemma}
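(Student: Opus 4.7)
The plan is to construct a single global orbifold $\mathcal{C}^\infty$ function $\varphi$ on $X_{\orb}$ such that $pr_X^*\omega + \sqrt{-1}\partial\bar\partial \varphi$ is a strictly positive orbifold $(1,1)$-form. Since $pr_X^*\omega$ is already closed and positive semi-definite (being the chart-wise pullback of a closed positive form on $X$), the task is to correct its degeneracy along the branch loci of the uniformizations $\pi_i$. Concretely, I would cover $X$ by finitely many open subsets $U_i$ with standard orbifold charts $(V_i, G_i, \pi_i \colon V_i \to U_i)$, small enough that the K\"ahler definition of $\omega$ provides, on each $U_i$, a local closed embedding $U_i \hookrightarrow B_i \subseteq \mathbb{C}^{N_i}$ and a smooth strictly plurisubharmonic $f_i$ on $B_i$ with $\omega|_{U_i \cap X_{\sm}} = \sqrt{-1}\partial\bar\partial f_i|_{U_i \cap X_{\sm}}$. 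Pulling back, $\tilde f_i := f_i \circ \pi_i$ is $G_i$-invariant smooth on $V_i$ and satisfies $\pi_i^*\omega = \sqrt{-1}\partial\bar\partial \tilde f_i$, but $\tilde f_i$ is plurisubharmonic only in the non-strict sense along the ramification locus of $\pi_i$. On each $V_i$ I would also fix a $G_i$-invariant smooth strictly plurisubharmonic function $h_i$, obtained for example by $G_i$-averaging a Euclidean model such as $\sum_k |z_k|^2$; for any $\epsilon>0$, the local form $\pi_i^*\omega + \epsilon \sqrt{-1}\partial\bar\partial h_i$ is then strictly positive on $V_i$.

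To globalize, I would choose a refinement $U'_i \Subset U_i$ still covering $X$, together with orbifold smooth cutoffs $\chi_i$ equal to $1$ on $U'_i$ and compactly supported in $U_i$. Since $\chi_i h_i$ has compact support in the chart $V_i$, it extends by zero through the refined charts of the maximal atlas to a well-defined global $\mathcal{C}^\infty$ orbifold function on $X_{\orb}$. I then set
\[
\varphi := \sum_i \chi_i \, H_i,
\]
where each $H_i$ is taken to be either $\epsilon h_i$ or, as the positivity step will require, the convex modification $\epsilon e^{M h_i}$ for a sufficiently large constant $M>0$.

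The main obstacle will be verifying $pr_X^*\omega + \sqrt{-1}\partial\bar\partial \varphi > 0$ on every chart $V_k$. At any point $p \in V_k$, some index $j$ satisfies $\pi_k(p) \in U'_j$, so $\chi_j \equiv 1$ in a neighborhood of $p$ and that summand contributes the unambiguously positive Hessian of $H_j$. The difficulty is that other indices $j'$ with $p$ in the transition region $U_{j'} \setminus U'_{j'}$ produce cross-terms like $\sqrt{-1}\partial \chi_{j'} \wedge \bar\partial H_{j'}$ of indeterminate sign in $\sqrt{-1}\partial\bar\partial(\chi_{j'} H_{j'})$, and under a naive $\epsilon$-scaling these are of the same order as the positive principal term. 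The Richberg-type remedy is to use $H_i = \epsilon e^{M h_i}$: the Hessian then carries a term $\epsilon M^2 e^{M h_i}\, \sqrt{-1}\partial h_i \wedge \bar\partial h_i$ of order $M^2$, whereas the cross-terms are only of order $M$. A Cauchy--Schwarz estimate, combined with the uniform bounds on $\chi_i$, $d\chi_i$ and $h_i$ from the compactness of $X$ and the finiteness of the cover, then shows that for $M$ sufficiently large the quadratic term dominates the cross-terms. Together with the semi-positivity of $pr_X^*\omega$, this yields $pr_X^*\omega + \sqrt{-1}\partial\bar\partial \varphi > 0$, producing the desired orbifold K\"ahler form in the cohomology class of $\omega$.
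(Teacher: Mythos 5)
There is a genuine gap, and it is located exactly where you flag the ``main obstacle.'' Your construction invokes only the \emph{semi}-positivity of $pr_X^*\omega$, so the strict positivity at every point would have to come from $\sqrt{-1}\partial\overline{\partial}\varphi$ alone; but no $\mathcal{C}^\infty$ orbifold function on a compact orbifold can be strictly plurisubharmonic everywhere (at a maximum point of $\varphi$ on the compact quotient, pull back to a chart and restrict to a complex line: a positive Laplacian at an interior maximum is impossible). So the estimate you claim for $\varphi=\sum_i \chi_i\,\epsilon e^{M h_i}$ must fail somewhere, no matter how $M$ and $\epsilon$ are chosen. Concretely, in $\sqrt{-1}\partial\overline{\partial}(\chi_{j'}H_{j'})$ the quadratic term is $\epsilon\,\chi_{j'}M^2e^{Mh_{j'}}\sqrt{-1}\partial h_{j'}\wedge\overline{\partial}h_{j'}$ --- it carries the cutoff factor $\chi_{j'}$, while the cross-term $\epsilon Me^{Mh_{j'}}\sqrt{-1}(\partial\chi_{j'}\wedge\overline{\partial}h_{j'}+\partial h_{j'}\wedge\overline{\partial}\chi_{j'})$ and the term $\epsilon e^{Mh_{j'}}\sqrt{-1}\,H\!\!\!\!\phantom{H}\partial\overline{\partial}\chi_{j'}$ (which your accounting omits) do not. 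Cauchy--Schwarz leaves a full-rank indefinite remainder of size comparable to $\epsilon e^{Mh_{j'}}$, and you would have to absorb it by the positive contribution $\sim \epsilon M e^{Mh_j}$ of a \emph{different} chart's summand; since $h_{j'}$ and $h_j$ are unrelated functions on different uniformizations, $e^{Mh_{j'}}$ can outgrow $Me^{Mh_j}$ on the transition region as $M\to\infty$, so ``$M$ sufficiently large'' does not close the estimate. Nor can the remainder be absorbed by $pr_X^*\omega$, which is degenerate precisely along the branch locus where you need the help.

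The paper's proof (following Demailly's Lemma 3.5 in \cite{Dem92}) avoids both problems. First, it glues with the regularized maximum $w=\log\bigl(\sum_i\theta_i^2\exp(w_i)\bigr)$ rather than a plain cutoff-weighted sum; the comparison constants $C_i$ with $w_i\le C_i+\sup_{k\ne i}w_k$ are exactly what makes the exponential weights of different charts comparable, the issue your sum ignores. Second, and crucially, the cutoffs $\theta_i$ are chosen as pullbacks of functions $\rho_i$ on the ambient spaces $W_i\subset\mathbb{C}^{N_i}$ furnished by the definition of a K\"ahler space, so that $\sqrt{-1}(\theta_i\partial\overline{\partial}\theta_i-\partial\theta_i\wedge\overline{\partial}\theta_i)\ge -A_i\,pr_X^*\omega$; hence the total loss in $\sqrt{-1}\partial\overline{\partial}w$ is $-K\,pr_X^*\omega$, i.e. a multiple of the given form, which is cancelled by the class itself after setting $\varphi=w/K$, leaving the strictly positive $\frac{1}{K}\sum_i\theta_i\sqrt{-1}\partial\overline{\partial}w_i$. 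In other words, the error must be controlled by $pr_X^*\omega$, not by an auxiliary Euclidean metric; if you want to salvage your argument you need both the regularized-max gluing and this specific choice of cutoffs.
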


\begin{proof}
For the first part, we will adapt the method of   \cite[Lemma 3.5]{Dem92}. 
Let  $n=\dim X$. 
There is a finite open covering $\{U_i\}$ of $X$, such that 
for each $i$, there is an integer $N_i$ and a relatively compact open subset $W_i\subseteq \mathbb{C}^{N_i}$ with the following properties. 
There is a closed embedding  $\iota_i$ from $U_i$ into $W_i$, there is a K\"ahler form $\Omega_i$ on a neighborhood the closure $\overline{W_i}$,  such that 
$\omega|_{U_i} = \iota^*\Omega_i.$ 

By considering bump functions supported in the $W_i$'s, we can  obtain a partition of the unity $\{\eta_i\}$ subordinate to $U_i$, so that there is a $\mathcal{C}^{\infty}$ function $\rho_i$, compactly supported in $W_i$, such that $\eta_i = \rho_i\circ \iota_i$. 
Since $\Omega_i$ is positive, by compactness, there are real numbers $A_i>0$ such that, on every $W_i$,   
\[
\sqrt{-1}( \rho_i \partial\overline{\partial} \rho_i - \partial \rho_i \wedge \overline{\partial} \rho_i  )
\> -A_i \Omega_i
\]

Up to refining the covering $\{U_i\}$, we can assume that there is an orbifold chart $(V_i,G_i)$ for each $U_i$, such that $V_i\subseteq \mathbb{C}^n$ is a relatively compact open  neighborhood of the origin. 
We denote by $\pi_i \colon V_i \to W_i$ the composite map of $V_i\to U_i\to W_i$, which is a holomorphic map. 
Let $\theta_i = \rho_i \circ \pi_i$. 
Then $\{\theta_i\}$ is a $\mathcal{C}^{\infty}$ orbifold  partition of the unity, subordinate to $\{V_i\}$.  
The inequality of the previous paragraph implies that, on each $V_i$,  
\begin{equation}\label{eqn:K-orb-1}
    \sqrt{-1}( \theta_i \partial\overline{\partial} \theta_i - \partial \theta_i \wedge \overline{\partial} \theta_i  )
\> -A_i \cdot \pi_i^*\Omega_i = -A_i \cdot  pr_X^*\omega.
\end{equation}

For each $i$, we define  
$w_i ({x}) =  \sum_{g\in G_i} ||g( {x})||^2 \>0$ for any $ {x}\in V_i\subseteq \mathbb{C}^n$,
where $||\cdot||$ is the standard Euclidean norm on $\mathbb{C}^n$. 
Then $w_i$ is $G_i$-invariant and $\sqrt{-1} \partial\overline{\partial} w_i$ is a  $G_i$-invariant K\"ahler form on $V_i$.
We set
\[
\gamma:= \epsilon \cdot \sqrt{-1} \cdot  \sum_i \theta_i \partial\overline{\partial} w_i
\] 
for some $\epsilon>0$ small enough, 
so that 
\begin{equation}\label{eqn:K-orb-2}
\sqrt{-1} \partial\overline{\partial} w_i \> \gamma
\end{equation}
on $V_i$ for all $i$. 
By construction,  
$\gamma$ is an orbifold positive $(1,1)$-form on $X_{\orb}$.  
We note that each $w_i$ is bounded on $V_i$. 
Hence  there are constants $C_i$ such that, for any  $x\in V_i$, we have \begin{equation}\label{eqn:K-orb-3}
w_i(x) \< C_i + \sup_{\ k\neq i,\ V_k\ni x} w_k(x), 
\end{equation}
in the following sense. 
Let $y$ be the image of $x$ in $X$. 
Then we say that $x\in V_k$ if and only if $y\in U_k$. 
If this is the case, then we set $w_k(x) := w_k(x')$, where $x'\in V_k$ is any point lying over $y$.

Let 
\[
K= 2\sum_i A_i \exp(C_i)
\]
and 
\[
w= \log\, \left(\sum_i \theta_i^2 \exp(w_i)\right).
\]
Then $w$ is a $\mathcal{C}^\infty$ orbifold function on $X_{\orb}$. 
By same calculation as in \cite[Lemma 3.5]{Dem92}, the properties of \eqref{eqn:K-orb-1}, \eqref{eqn:K-orb-2} and \eqref{eqn:K-orb-3} imply that 
\[
\sqrt{-1} \partial\overline{\partial} w \> \gamma - K\cdot pr_X^* \omega. 
\]
We set 
$\varphi = \frac{1}{K} w$. 
It follows that 
\[
pr_X^* \omega + \sqrt{-1}\partial\overline{\partial} \varphi \> \frac{1}{K} \gamma,
\]
which is positive. 
This proves the first assertion.  \\

For the converse, we still let $\{U_i\}$ be the finite open covering of $X$ 
and  $(V_i,G_i,\pi_i)$ be the orbifold chart of $U_i$, as in the first part. 
In particular, each $U_i$ is relative compact in $X$ and each $V_i$ is relatively compact in $\mathbb{C}^n$.  
Then  $\alpha_{\orb}$ is defined by a K\"ahler form $\alpha_i$  on   $V_i$, 
such that $\alpha_i = \sqrt{-1}\partial\overline{\partial} \psi_i$ for some $G_i$-invariant $\mathcal{C}^\infty$ strongly plurisubharmonic (psh for short) function, defined in a neighborhood of $\overline{V_i}$ in $\mathbb{C}^n$.  
Then $\psi_i$ descends to a continuous function $\psi'_i$ on $U_i$ with $\psi_i=\psi_i' \circ \pi_i$. 

We note that $\psi_i'$ is strongly psh as well by \cite[Lemma II.3.1.2]{Varouchas1989}.  
By \cite[Statement II.2.1]{Varouchas1989}, we deduce that there is a K\"ahler form $\alpha$ on  $X$, whose class in $H^2(X,\mathbb{R})$ is the same as $\alpha_\orb$.  
This completes the proof of the lemma. 
\end{proof}

In the remainder of the paper, for an effective compact complex orbifold $X_{\orb}$ with quotient space $X$, we will identify the notion of K\"ahler classes on $X$ and the notion of orbifold K\"ahler class on $X_{\orb}$.  

\subsection{Stable orbifold sheaves} 
 
We can extend the notion of slope stability for torsion-free sheaves to the setting of  orbifolds.

\begin{definition}\label{def:orbifold-stability}
Let $X_{\orb}$ be a  compact complex orbifold of dimension $n$ and $\mcE_{\orb}$ a torsion-free orbifold coherent sheaf of rank $r>0$.  
Let $\alpha\in H^2(X,\mathbb{R})$ be any class.  
We can define the slope of $\mcE_{\orb}$ with respect to $\alpha^{n-1}$ as
\[
\mu_{\alpha^{n-1}}(\mcE_{\orb}):= \frac{\hat{c}_1(\mcE_{\orb}) \cdot \alpha^{n-1}}{r},
\]
where $\hat{c}_1(\mcE_{\orb})$ is defined as the orbifold first Chern class of the determinant orbifold  line bundle  $(\bigwedge^r \mcE_{\orb})^{**}$.  
The sheaf $\mcE_{\orb}$ is said to be $\alpha^{n-1}$-semistable if for any non zero  subsheaf $\mcF_{\orb} \subseteq \mcE_{\orb}$, we have $\mu_{\alpha^{n-1}}(\mcE_{\orb}) \> \mu_{\alpha^{n-1}}(\mcF_{\orb})$.
It is said to be $\alpha^{n-1}$-stable if the inequality  is  strict for every non zero saturated proper subsheaf $\mcF_{\orb}$. 
\end{definition}

Harder-Narasimhan filtrations and Jordan-H\"older filtrations  exist  for orbifold coherent sheaves, with respect to orbifold K\"ahler classes.  
To see this, we can adapt the classic argument (see for example \cite[Section V.7]{Kob14}) to the orbifold setting. 
 We first prove two lemmas.

\begin{lemma}
\label{lemma:inclusion-inequality}
Let $   X_{\orb} $ be a  compact complex K\"ahler orbifold  of dimension $n\ge 2$. 
Fix some nef classes $\alpha_1,...,\alpha_{n-1} \in H^2(X,\mathbb{R})$. 
Let $\Omega = \alpha_1\cdots\alpha_{n-1}$ and $\mathcal{F}_{\orb} \subseteq \mathcal{E}_{\orb}$  be orbifold coherent torsion-free sheaves on $X_{\orb}$ of the same rank.  
Then $\hat{c}_1(\mathcal{F}_{\orb}) \cdot \Omega \le \hat{c}_1(\mathcal{E}_{\orb}) \cdot \Omega$. 
\end{lemma}

\begin{proof}
By continuity and by perturbing $\alpha_1,...,\alpha_{n-1}$, we may assume that they are K\"ahler classes, and hence orbifold K\"ahler classes by Lemma \ref{lemma:construction-orb-kahler}. 
By taking the determinants, we may assume that $\mathcal{F}_{\orb}$ and $\mathcal{E}_{\orb}$ are orbifold line bundles. 
Up to twisting with $\mathcal{E}_{\orb}^{*}$, we may assume further that $\mathcal{E}_{\orb}$ is  trivial. 
Then $\mathcal{F}_{\orb}$ is an orbifold coherent ideal sheaf  
and $\hat{c}_1(\mathcal{F}_{\orb}) \cdot \Omega \le 0$. 
This completes the proof of the lemma. 
\end{proof}

\begin{lemma}
\label{lemma:A-bounded-above-v2}
Let $  X_{\orb} $ be a  compact complex K\"ahler orbifold  of dimension $n\ge 2$, 
and  let $X$ be its  quotient space. 
Assume that $\alpha_1,...,\alpha_{n-1} \in H^2(X,\mathbb{R})$ are nef classes. 
Let $\Omega = \alpha_1\cdots\alpha_{n-1}$ and let $\mcE_{\orb}$  be  an orbifold coherent torsion-free sheaf on $X_{\orb}$. 
Then the following set of real numbers is bounded from above, 
\[
\{   \hat{c}_1(\mathcal{G}_{\orb}) \cdot \Omega \  : \  \mathcal{G}_{\orb} \mbox{ is an  orbifold coherent subsheaf of } \mathcal{E}_{\orb}\}. 
\] 
\end{lemma}

\begin{proof} 
For any orbifold coherent sheaf $\mathcal{G}_{\orb}$, we set $m(\mathcal{G}_{\orb}):= \hat{c}_1(\mathcal{G}_{\orb}) \cdot \Omega$. 
Let $\Sigma$ be the set of non zero orbifold subsheaves of $\mathcal{E}_{\orb}$. 
We define a partial order $\le$ on $\Sigma$, such that $\mathcal{G}_\orb \le \mathcal{G}_\orb'$ if and only if 
\begin{enumerate}
    \item $\mathcal{G}_\orb \subseteq \mathcal{G}_\orb'$, 
    \item  and $m(\mathcal{G}_\orb) \le m(\mathcal{G}_\orb')$.  
\end{enumerate}
Let $\mathcal{F}_\orb$ be a maximal element of $(\Sigma, \le)$ which has minimal rank.  
Such an element always exists, since   $\Sigma$  satisfies the ascending chain condition with respect to the inclusion.
We will show that $m(\mathcal{F}_\orb) \ge  \sup_{\mathcal{G}_\orb \in \Sigma}m(\mathcal{G}_\orb)$, which implies the lemma.  

For simplicity, we omit the lower script $_\orb$ in the following argument. 
Assume by contradiction that there is some $\mathcal{G}\in \Sigma$ such that $m(\mathcal{G}) > m(\mathcal{F})$. 
If $\mathcal{G} \not\subseteq \mathcal{F}$, then $\mathcal{F}\subsetneq \mathcal{F} + \mathcal{G}$. 
It follows that $m(\mathcal{F} + \mathcal{G}) < m(\mathcal{F})$ for  $\mathcal{F}$ is maximal.  
There is an exact sequence of orbifold coherent sheaves 
\[
0\to \mathcal{F} \cap \mathcal{G} \to \mathcal{F} \oplus \mathcal{G} \to \mathcal{F}+ \mathcal{G} \to 0, 
\]
and hence  $m( \mathcal{F} \cap \mathcal{G})+ m(\mathcal{F} + \mathcal{G}) = m( \mathcal{F}  )+ m( \mathcal{G})$. 
We deduce that 
\[m( \mathcal{F} \cap \mathcal{G}) >  m(\mathcal{G}) > m(\mathcal{F}). \]
Therefore, replacing $\mathcal{G}$ by $\mathcal{F} \cap \mathcal{G}$, 
we can assume that $\mathcal{G} \subseteq \mathcal{F}$.  
Furthermore, we may assume that $\mathcal{G}$ is maximal among all elements $\mathcal{H}$ of $\Sigma$, such that $\mathcal{H} \subseteq  \mathcal{F}$ and that  $m(\mathcal{H}) > m(\mathcal{F})$.  

By Lemma \ref{lemma:inclusion-inequality}, we must have $\mathrm{rank}\, \mathcal{G} < \mathrm{rank}\, \mathcal{F}$. 
Let $\mathcal{G}'$ be a maximal element, among the elements of $\Sigma$ containing $\mathcal{G}$. 
Then  $\mathcal{G}'$ is also a maximal element of $\Sigma$, and  $m(\mathcal{G}')   \ge m(\mathcal{G}) > m(\mathcal{F})$. 
By Lemma \ref{lemma:inclusion-inequality}, either 
   $\mathcal{G}'  \not\subseteq \mathcal{F}$ 
   or $\mathrm{rank}\, \mathcal{G}' < \mathrm{rank}\, \mathcal{F}$.  
Since $\mathcal{F}$ is a  maximal element of minimal rank, we deduce that $\mathcal{G}' \not\subseteq \mathcal{F}$. 
Thus $m(\mathcal{F}+\mathcal{G}') < m(\mathcal{F})$.    
By the same argument of the previous paragraph, we obtain that 
\[m(\mathcal{F}\cap \mathcal{G}') > m(\mathcal{G}') \ge m(\mathcal{G}). \]
This is a contradiction, 
for   $\mathcal{G} \subseteq \mathcal{F}\cap \mathcal{G}' \subseteq \mathcal{F}$, and $\mathcal{G}$ is chosen to be maximal at the end of  the previous paragraph. 
\end{proof}

Now we can conclude the existence of Harder-Narasimhan filtrations and Jordan-H\"older filtrations.

\begin{lemma}
\label{lemma:filtration}
Let $X_{\orb}$ be a compact complex orbifold of dimension $n$ and $\omega$ an orbifold K\"ahler class on $X_{\orb}$. 
Let $\mcE_{\orb}$ be a torsion-free orbifold  coherent sheaf on  $X_{\orb}$. 
Then the following assertions hold. 
\begin{enumerate}
\item There is a unique filtration of saturated orbifold subsheaves 
\[ 0 = \mcF^0_{\orb} \subseteq  \cdots \subseteq \mcF^k_{\orb} = \mcE_{\orb} \]
such that the  subquotients $\mcF^j_{\orb}/\mcF^{j-1}_{\orb}$ are  $\omega^{n-1}$-semistable with strictly decreasing slopes.  
\item Assume that $\mcE_{\orb}$ is  $\omega^{n-1}$-semistable. 
Then there is a filtration  of saturated orbifold subsheaves 
\[ 0 = \mcG^0_{\orb} \subseteq  \cdots \subseteq \mcG^k_{\orb} = \mcE_{\orb} \]
such that the subquotients $\mcG^j_{\orb}/\mcG^{j-1}_{\orb}$ are   $\omega^{n-1}$-stable with   equal slopes.
\end{enumerate}
\end{lemma}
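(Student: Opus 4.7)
The plan is to adapt the classical proof of existence of Harder--Narasimhan and Jordan--H\"older filtrations (see e.g.\ \cite[Chapter V.7]{Kob14}) to the orbifold setting, using Lemma \ref{lemma:degree-bounded-above}(1) as the essential input that replaces the usual projective boundedness statement.

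For item (1), I would first set
\[
\mu_{\max} := \sup_{0 \neq \mcF_{\orb} \subseteq \mcE_{\orb}} \mu_{\omega^{n-1}}(\mcF_{\orb}),
\]
the supremum being taken over nonzero orbifold coherent subsheaves. By Lemma \ref{lemma:degree-bounded-above}(1), $\hat{c}_1(\mcF_{\orb}) \cdot \omega^{n-1}$ is bounded above, and since ranks of subsheaves lie in $\{1,\dots,\mathrm{rk}(\mcE_{\orb})\}$, we get $\mu_{\max} < \infty$. The next step is to produce a \emph{maximal destabilizing subsheaf} $\mcF^1_{\orb}$: take a sequence $(\mcF_{\orb}^{(i)})$ of saturated subsheaves with $\mu_{\omega^{n-1}}(\mcF_{\orb}^{(i)}) \to \mu_{\max}$; after passing to a subsequence we may assume the ranks are constant, so the determinant orbifold line bundles have bounded degree, and a compactness argument (carried out orbifold-locally on charts $(V_i,G_i)$ by combining Grothendieck's boundedness for equivariant coherent sheaves with bounded determinant, then patching via the uniqueness of reflexive hulls) produces a saturated subsheaf realizing $\mu_{\max}$; taking one of maximal rank among these gives $\mcF^1_{\orb}$. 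Uniqueness follows from the standard trick: if $\mcF, \mcF' \subseteq \mcE_{\orb}$ both attain $\mu_{\max}$ with maximal rank, then the sum $\mcF + \mcF'$ is a coherent orbifold subsheaf whose slope is also $\geq \mu_{\max}$, because the short exact sequence
\[
0 \to \mcF \cap \mcF' \to \mcF \oplus \mcF' \to \mcF + \mcF' \to 0
\]
combined with semistability of $\mcF, \mcF'$ forces $\mu(\mcF+\mcF') \geq \mu_{\max}$, whence $\mcF = \mcF + \mcF' = \mcF'$ by rank maximality.

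Having produced $\mcF^1_{\orb}$, which is automatically $\omega^{n-1}$-semistable by its maximality, I would then iterate on the torsion-free quotient $(\mcE_{\orb}/\mcF^1_{\orb})/\mathrm{tors}$ to produce $\mcF^2_{\orb}, \dots$. The process terminates because the ranks $\mathrm{rk}(\mcF^j_{\orb})$ are strictly increasing and bounded by $\mathrm{rk}(\mcE_{\orb})$, and the strict decrease of slopes of the subquotients is a formal consequence of the maximality step: any subsheaf of $\mcE_{\orb}/\mcF^{j-1}_{\orb}$ with slope $\geq \mu_{\omega^{n-1}}(\mcF^j_{\orb}/\mcF^{j-1}_{\orb})$ pulls back to one in $\mcE_{\orb}$ violating the maximal choice at step $j-1$ if its slope were $\geq \mu(\mcF^j/\mcF^{j-1})$.

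For item (2), assume $\mcE_{\orb}$ is $\omega^{n-1}$-semistable. I would take $\mcG^1_{\orb}$ to be a saturated orbifold subsheaf of \emph{minimal positive rank} among those with slope equal to $\mu_{\omega^{n-1}}(\mcE_{\orb})$; existence is immediate from the well-ordering of positive integers, once we know at least one such subsheaf exists (namely $\mcE_{\orb}$ itself). Then $\mcG^1_{\orb}$ is $\omega^{n-1}$-stable, since a proper nonzero saturated subsheaf would contradict either minimality of rank (if its slope were equal) or semistability of $\mcE_{\orb}$ (if strictly greater). The quotient $\mcE_{\orb}/\mcG^1_{\orb}$ remains semistable of the same slope, so iteration produces the desired Jordan--H\"older chain.

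The main obstacle I expect is the compactness/boundedness step in producing $\mcF^1_{\orb}$: in the projective case this is Grothendieck's lemma on boundedness of saturated subsheaves with bounded rank and slope, and to transport it here one must argue on each orbifold chart $(V_i,G_i)$ with equivariant sheaves, then glue using that the reflexive hull of a saturated subsheaf is determined in codimension one by its image in any uniformizing chart, and finally verify that the glued limit is indeed a saturated coherent orbifold subsheaf of $\mcE_{\orb}$.
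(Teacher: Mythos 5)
Your proposal takes the same route as the paper: the paper's entire proof of Lemma \ref{lemma:filtration} is a one-line reduction to the classical compact K\"ahler argument of \cite[Theorem V.7.15 and Theorem V.7.18]{Kob14}, with item (1) of Lemma \ref{lemma:degree-bounded-above} as the boundedness input, and your outline (finiteness of $\mu_{\max}$, maximal destabilizing subsheaf, uniqueness via the sum trick, induction on torsion-free quotients, minimal-rank argument for the Jordan--H\"older step) is exactly that skeleton transplanted to orbifold sheaves. The one point where you genuinely diverge is the mechanism you propose for realizing $\mu_{\max}$, namely ``Grothendieck boundedness for equivariant sheaves with bounded determinant, carried out chart-locally and then patched.'' That device is neither available nor what the cited argument uses: Grothendieck's boundedness lemma is a global statement about polarized projective schemes (Hilbert polynomials, Quot schemes) and has no meaning on an analytic chart $V_i\subseteq\mathbb{C}^N$, while the quantity being maximized, $\hat{c}_1(\mcF_{\orb})\cdot\omega^{n-1}$, is a global intersection number that chart-local data cannot control; moreover, in the K\"ahler non-projective setting the slopes of subsheaves need not form a discrete set, so attainment of the supremum is precisely the delicate point. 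Kobayashi's proof obtains the maximal destabilizing subsheaf by a global argument on the compact space using only the boundedness statement (his Lemma V.7.16, whose orbifold analogue is Lemma \ref{lemma:degree-bounded-above}, built from the metric of Lemma \ref{lemma:construction-metric-1}), and that argument carries over verbatim to $G_i$-invariant subsheaves on charts glued into orbifold subsheaves; the intended proof is to import it rather than to substitute an algebraic boundedness argument. Apart from this step (and the minor point that uniqueness of the whole filtration in item (1) requires repeating your uniqueness argument inductively, not just for $\mcF^1_{\orb}$), your plan, including the Jordan--H\"older part, matches the paper's proof.
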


\begin{proof}
Following the strategy of \cite[Theorem V.7.15 and Theorem V.7.18]{Kob14}, the lemma follows from  Lemma \ref{lemma:A-bounded-above-v2}. 
\end{proof}

\subsection{Bimeromorphic morphisms of complex orbifolds} 

We will define the blowup of an orbifold.  
The objective of this operation is to reduce the study of orbifold coherent torsion-free sheaf to that of orbifold vector bundle.

Let $Y_{\orb}=\{(V_i,G_i)\}$ be a compact complex   orbifold, and let 
  $Z_{\orb}$ be an orbifold subvariety of $Y_{\orb}$ which consists of a family $\{W_i\subseteq V_i\}$ of $G_i$-invariant closed subvarieties. 
We assume that  the image   $Z$ of $Z_{\orb}$ in $Y$ is irreducible.  

When each $W_i$ is smooth, we  can define the blowup 
\[ f_{\orb} \colon X_{\orb} \to Y_{\orb} \]
of $Y_{\orb}$ with center $Z_{\orb}$ as follows. 
Let $\varphi_i \colon \widehat{V}_i \to V_i$ be the blowup with center $W_i$. 
There is an induced action of $G_i$ on $\widehat{V}_i$ such that  $\varphi_i$ is $G_i$-equivariant. 
Then $X_{\orb} = \{(\widehat{V}_i,G_i)\}$  is a complex compact orbifold and the family of morphisms $\{\varphi_i\}$ induces the morphism $f_{\orb}$ of orbifolds. 
We call it the \textit{blowup} of $Y_\orb$ at the  smooth center $Z_{\orb}$.  
It induces a holomorphic morphism  $f \colon X\to Y$  of the quotient spaces. 

Let $F_i\subseteq \widehat{V}_i$ be the exceptional locus of $\varphi_i$.  
They define a  smooth  orbifold subvariety $E_{\orb}$ of $X_{\orb}$. 
Each morphsim $F_i \to W_i$ is a $\mathbb{P}^d$-bundle, where $d+1$ is the codimension of $Z$ in $Y$. 
For any integer $n$, 
we  can define the orbifold line bundle  $\mcO_{X_{\orb}}(nE_{\orb}) := \{ \mcO_{\widehat{V}_{i}}(nF_{i} )\}$. 
We note that,  for  $\mcO_{\widehat{V}_{i}}(-F_{i})$, its restriction on every fiber of $\varphi_i$ is isomorphic to $\mcO_{\mathbb{P}^d}(1)$, and its restriction on $ \widehat{V}_i\setminus{F_{i}}$ is trivial.  
A similar argument as in \cite[Proposition 3.24]{Voisin2002} implies that, if $\alpha$ is an orbifold K\"ahler class on $Y_{\orb}$, then there is a rational number  $b>0$, such that 
\[ 
f^*\alpha - b \hat{c}_1(\mcO_{X_{\orb}}(E_{\orb}))
\]
is an orbifold  K\"ahler class on $X_{\orb}$.  

Alternatively, let $E\subseteq X$ be the  exceptional locus of $f\colon X\to Y$, which is equal to the image of $E_{\orb}$ in $X$. 
Then there is a positive integer $k$ such that $\hat{c}_1(\mcO_X (E) ) = k \hat{c}_1(\mcO_{X_{\orb}}(E_{\orb})$. 
We then  deduce that 
\[ 
f^*\alpha - a \hat{c}_1(\mcO_{X}(E))
\]
is an orbifold K\"ahler class for some rational number $a>0$.  \\

We remark that the relative ampleness of $\mathcal{O}_{\widehat{V}_i}(-F_i)$ over $V_i$ above still holds even if the blowup center $W_i$ is not smooth.  
More generally, we can consider  a morphism   $f_{\orb} \colon X_{\orb}   \to Y_{\orb}$  
such that on each orbifold chart, $f_\orb$ is defined as a proper bimeromorphic map $\varphi_i\colon \widehat{V}_i\to V_i$.  
We call it a \textit{proper bimeromorphic morphism of orbifolds}. 
If each $\varphi_i$ is a composition of   blowups at $G_i$-invariant centers,   
we say that  $f_{\orb}$ is a \textit{composition of  blowups}. 
In this case, if there are $k$ irreducible components $E_1,...,E_k$ of the exceptional locus of the induced morphism $f\colon X\to Y$,    
and if $\omega$ is an orbifold K\"ahler class on $Y_{\orb}$, then there are rational numbers $a_1,...,a_k>0$ such that 
\begin{equation}\label{eqn:orbifold-kahler-class} 
f^*\alpha - \sum_{i=1}^k a_j \hat{c}_1(\mcO_{X}(E_{j}))
\end{equation}
is an orbifold K\"ahler class on $X_\orb$.

We recall the following statement on resolution of singularities, 
which allows us to make a coherent sheaf locally free by taking functorial bimeromorphic transforms.  
A construction is called functorial if it commutes with local analytic isomorphisms. 
In particular,  the functoriality implies  the construction is equivariant with respect to any actions of groups.

\begin{theorem}
\label{thm:func-reso-loc-free}
Let $X$ be a complex analytic variety, and let $\mathcal{E}$ be a torsion-free coherent sheaf on $X$. 
Then there is a  bimeromorphic morphism $r\colon \widetilde{X} \to X$ 
satisfying the following properties. 
\begin{enumerate} 
\item $r^*\mathcal{E}/\mathrm{torsion}$ is locally free.  
\item $\widetilde{X}$ is a smooth complex analytic variety. 
\item $r$ is obtained by a sequence of blowups at  centers contained in the union of $X_{\sing}$ and the non-locally-free locus of $\mathcal{E}$.
\item The construction of  $r$ is functorial. 
\end{enumerate} 
\end{theorem}

\begin{proof} 
Since we will show the functoriality of the construction, 
we only need to prove the theorem locally.  
We can assume that $(o\in X)$ is a germ of complex analytic variety.  
Then  there is  an exact sequence of coherent sheaves 
\begin{equation}\label{eqn:res-sheaf-01}
    \mathcal{O}_X^{\oplus p} \overset{\varphi}{\longrightarrow} \mathcal{O}_X^{\oplus q}  \overset{\psi}\longrightarrow \mathcal{E} \to 0.
\end{equation}
The morphism $\varphi$ is represented by a  $q\times p$ matrix $\Theta(x)$ with entries as holomorphic functions on $X$. 
Since $\mathcal{E}$ is torsion-free, if $M$ is the maximum of the ranks of $\Theta(x)$ for $x\in X$, then $q-M$ is the rank of $\mathcal{E}$. 
Let $m$ be the minimum of the ranks of $\Theta(x)$ for $x\in X$. 
Then $\mathcal{E}$ is locally free if and only if $M=m$.     
Assume that $m<M$. Then the rank of $\Theta(o)$ is equal to $m$ up to shrinking $X$ around $o$.  
We also note that $q-m$ is equal to the dimension of the residue of $\mathcal{E}$ at $o$, 
that is, $q-m=\dim\left (\mcE\otimes_{\mathcal{O}_X} \mbC(o)\right)$, where $\mathbb{C}(o)\cong \mathbb{C}$ is the residue field of $X$ at   $o$. 
We let $\mathcal{I}\subseteq \mathcal{O}_X(X)$ be ideal generated by the determinants of all $(m+1)$-minors of the $\Theta(x)$.
We will prove the theorem in three steps. 
\\

\textit{Step 1.} In this step, we will show that the ideal sheaf $\mathcal{I}$ on $(o\in X)$ depends only on $\mathcal{E}$, and is independent of the choice of the exact sequence \eqref{eqn:res-sheaf-01}.   
We note that the sheaf $\mathcal{I}$ is unchanged if we compose $\varphi$ in \eqref{eqn:res-sheaf-01} by some automorphisms of  $\mathcal{O}_X^{\oplus p}$ or $\mathcal{O}_X^{\oplus q}$ over $X$.  
Hence, we may assume that $\Theta(x)$ is of the form 
\[
\begin{pmatrix}
\mathbf{1}_m & \mathbf{0}\\
\mathbf{0} & \Lambda(x) &
\end{pmatrix},	
\]
where $\mathbf{1}_m$ is the constant identity square matrix of rank $m$. 
Hence $\mathcal{I}$ is the matrix generated by the entries of $\Lambda(x)$. 
It follows that we can always assume that $m=0$ and $q$ is equal to the dimension $d$ of the residue  of $\mathcal{E}$ at $o$.

We will next show that $\mathcal{I}$ may depend on  $\psi$ in \eqref{eqn:res-sheaf-01}, but is independent of $\varphi$.  
Indeed, we assume that  $\theta\colon \mathcal{O}_X^{p'} \to \mathcal{O}_X^{q}$ is another resolution of the kernel of $\psi$. 
Then there is a morphism $\alpha \colon \mathcal{O}_X^{p'} \to \mathcal{O}_X^{p}$  such that $\theta =\varphi\circ \alpha$. 
This shows that the ideal defined by $\theta$ is contained in the one defined by $\varphi$. 
Since the rolls of $\varphi$ and $\theta$ are symmetric, we deduce that these ideals defined by $\vphi$ and $\theta$ are equal.

Now we will show that $\mathcal{I}$ is also independent of the choice of $\psi$.   
As shown in the first paragraph of Step 1, we may assume that $q=d$ in  the exact sequence \eqref{eqn:res-sheaf-01}, where $d$ is the dimension of the residue of $\mathcal{E}$ at $o$.
Assume that 
\begin{equation*} 
    \mathcal{O}_X^{\oplus s} \overset{\varphi'}{\longrightarrow} \mathcal{O}_X^{\oplus d}  \overset{\psi'}{\longrightarrow} \mathcal{E} \to 0 
\end{equation*}
is another exact sequence. 
Since we have assumed that $q=d$ in \eqref{eqn:res-sheaf-01}, 
there is an endomorphism $\beta $ of $\mathcal{O}_X^{\oplus d}$ such that $\psi' = \psi\circ \beta$.  
We note that the residue morphisms  of  $\psi$ and $\psi'$ at $o$ are surjective morphisms of complex vector spaces of the same dimension $d$, 
hence they are isomorphisms. 
This implies that  $\beta$ is an isomorphism around $o$. 
Thus 
\begin{equation*} 
  \xymatrixcolsep{3pc}\xymatrixrowsep{3pc}\xymatrix{
   \mathcal{O}_X^{\oplus p} \ar[r]^{\beta^{-1}\circ \varphi}  &  \mathcal{O}_X^{\oplus q}  \ar[r]^{\psi\circ \beta  } & \mathcal{E} \ar[r] & 0 
        } 
\end{equation*}
is also exact. 
The previous paragraphs then imply that the ideal $\mathcal{I}$ defined by  $\psi$ is the same as the one defined by $\psi'$. 
This completes the proof of the first Step.   \\

\textit{Step 2}. 
In this step, we will construct a functorial modification $X_1$ of $(o\in X)$,  
which decreases the difference $M-m$ on $X_1$.   
From the first step, we may assume that $m=0$, \textit{i.e.} $\Theta(o)$ is the null matrix.  
Let $f_1\colon X_1\to X$ be the blowup at the ideal $\mathcal{I}$. 
We can pullback the sequence \eqref{eqn:res-sheaf-01}, 
and obtain an exact sequence of coherent sheaves 
\[
\mathcal{O}_{X_1}^{\oplus p} \overset{f_1^*\varphi}{\longrightarrow} \mathcal{O}_{X_1}^{\oplus q} \longrightarrow f_1^*\mathcal{E} \to 0.
\]  
We note that $f_1^*\varphi$ is represented by the matrix $\Gamma=\Theta\circ f_1$.
Let $o_1\in X_1$ be a point.  
Then there is an open neighborhood $U$ of $o_1$, such that  $f_1^*\mathcal{I}$ is generated by some holomorphic function $t$.  
As we have assumed that $m=0$, the ideal $\mathcal{I}$ is generated by the entries of $\Theta$. 
It follows that every entry of the matrix $\Gamma|_U$ is divisible by $t$, and $t^{-1}\Gamma$ is non zero at $o_1$. 
The matrix $t^{-1}\Gamma$ then defines a morphism $\gamma\colon \mathcal{O}_{X_1}^{\oplus p} \to \mathcal{O}_{X_1}^{\oplus q}$, and we denotes its cokernel by $\mathcal{F}_1$. 
Then  the dimension of the residue of $\mathcal{F}_1$ at $o_1$ is smaller than the one of $\mathcal{E}$ at $o$. 
By construction, 
there is a natural surjective morphism from $f_1^*\mathcal{E}$ to $\mathcal{F}_1$, which is generically isomorphic. 
It induces a surjective and generically isomorphic  
morphism from  $\mathcal{E}_1:= f_1^*\mathcal{E}/\mathrm{torsion}$ 
to $\mathcal{F}_1/\mathrm{torsion}$, 
which must be an isomorphism. 
Thus we obtain a surjective morphism from $\mathcal{F}_1$ to  $\mathcal{E}_1$.  
Hence the dimension of the residue of $\mathcal{E}_1$ at $o_1$ is smaller than the one of $\mathcal{E}$ at $o$. 
This completes the proof of the second step.  
We note that this construction is functorial since the ideal $\mathcal{I}$  depends only on $\mcE$ by the first step.
\\

\textit{Step 3.}
By repeating the procedure  of the second step  for finitely many times, and then we take a functorial resolution of singularities in the end, 
we obtain the required morphism $r\colon  \widetilde{X} \to X$. 
This completes the proof of the theorem. 
\end{proof}

As a corollary, we have the following lemma.   

\begin{lemma}
\label{lemma:orbifold-res-torfree}
Let $Y_{\orb}={(V_i,G_i)}$ be a compact complex orbifold, and $\mcE_{orb} = \{\mcE_i\}$ an orbifold coherent torsion-free sheaf on $Y_{\orb}$. 
Then by successively blowing up, we can obtain a morphism $f_{\orb} \colon X_{\orb} \to Y_{\orb}$ so that the following properties hold. 
\begin{enumerate}
    \item There is an orbifold atlas  $X_{\orb} = {(\widehat{V}_i,G_i)}$ so that $f_{\orb}$ is induced by the morphisms  $\varphi_i\colon \widehat{V}_i \to V_i$, which are  successive blowups of $G_i$-invariant  centers, over the non-locally-free loci of the $\mathcal{E}_i$.
    \item The orbifold coherent sheaf $(f_{\orb}^*\mcE_{orb})/\mathrm{torsion}$, which is defined by the coherent sheaves 
    $(\varphi_i^*\mcE_i)/\mathrm{torsion}$ on $\widehat{V}_i$, is an orbifold vector bundle. 
\end{enumerate}
\end{lemma}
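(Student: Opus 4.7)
The plan is to reduce the problem to a standard equivariant/functorial resolution statement applied chart by chart, and then to glue these local resolutions via the functoriality of canonical resolution.

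First, I would reformulate the locally-free problem in terms of ideals. For a torsion-free coherent sheaf $\mcE_i$ on a smooth complex manifold $V_i$, the failure of local freeness is detected by its Fitting ideals; there is a standard principalization result (Rossi/Hironaka type) which asserts that after a sequence of blowups of smooth centers on $V_i$, the strict transform or, equivalently, the quotient $(\varphi_i^*\mcE_i)/\mathrm{torsion}$ becomes locally free. Applied to each chart of the given atlas $\{(V_i,G_i)\}$ of $Y_{\orb}$, this produces, chart by chart, the kind of morphism we are after.

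The key point is \emph{equivariance and compatibility on overlaps}. Here I would invoke the canonical/functorial version of Hironaka's theorem (Bierstone--Milman, Villamayor, W\l odarczyk): the blowup sequence produced by the canonical algorithm commutes with smooth morphisms, in particular with the action of a finite group of biholomorphisms and with open immersions. Since each $G_i$ acts on $V_i$ holomorphically and the orbifold injections $(V,G,\pi) \to (V_i,G_i,\pi_i)$ are open embeddings equivariant with respect to an injection $G \hookrightarrow G_i$, the canonical resolution of the Fitting ideal of $\mcE_i$ automatically has $G_i$-invariant smooth centers, and the resolutions on $V$ and $V_i$ agree on overlaps up to the same group monomorphism. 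This gives a well-defined orbifold atlas $\{(\widehat{V}_i,G_i)\}$ together with a morphism $f_{\orb}\colon X_{\orb}\to Y_{\orb}$ realizing item (1), and by construction $(f_{\orb}^*\mcE_{\orb})/\mathrm{torsion}$ is an orbifold vector bundle, giving item (2).

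For item (3), I would argue that after each individual blowup the exceptional divisor is a smooth orbifold subvariety (since its preimage in each chart is the exceptional divisor of a blowup along a $G_i$-invariant smooth center, hence a smooth $G_i$-invariant hypersurface). Iterating and using that the centers of later blowups are smooth and in normal crossing position with the previous exceptional divisors — which is again part of the statement of canonical resolution — each irreducible component $E$ of the exceptional locus of $f\colon X\to Y$ lifts to a smooth orbifold subvariety of $X_{\orb}$.

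The main obstacle is precisely the compatibility on overlaps: without functoriality of the resolution algorithm one would be forced to glue ad hoc, which is technically painful. Once the canonical resolution of Bierstone--Milman/Villamayor/W\l odarczyk is invoked, this difficulty disappears, and the rest is bookkeeping in the orbifold charts.
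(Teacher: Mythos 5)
Your proposal is correct and follows essentially the same route as the paper, which offers no written proof beyond the remark that the lemma ``is a consequence of the equivariant version of resolution of singularities''; your chart-by-chart use of functorial principalization of the Fitting ideals, with functoriality under smooth (in particular equivariant and open) morphisms providing the $G_i$-invariance of the centers and the compatibility on overlaps, is precisely the intended argument, filled in with the relevant details.
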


\begin{proof}
It suffices to  apply   Theorem \ref{thm:func-reso-loc-free} to each coherent sheaf $\mathcal{E}_i$ on $V_i$.   
\end{proof}

\subsection{Miyaoka's inequality }

We will prove  the   following proposition, 
which is a variant of \cite[Theorem 7.11]{CHP16}, and its argument goes back to  \cite[Theorem 6.1]{Miy87}. 
It will play an important role in the  positivity results on cotangent sheaves in Section \ref{section:log-cotan}. 

\begin{proposition}
\label{prop:psef-c2-general}
Let $(X,\omega)$ be a compact K\"ahler threefold with quotient singularities only. 
Let $\mcE$ be a reflexive sheaf on $X$ which induces an orbifold vector bundle $\mcE_{\rm orb}$ on the standard orbifold structure $X_{\rm orb}$ of $X$.  
Assume that the following conditions hold:
\begin{enumerate}
\item $\alpha$ is a nef class on $X$ such that $\alpha^{2} \not\num 0$ and $\alpha^3=0$. 
\item $\mcE$ is generically nef with respect to $\alpha_\epsilon $ for all $0<\epsilon\ll 1$, where $\alpha_\epsilon = \alpha+\epsilon \omega$. 
\item $\hat{c}_1(\mcE) = \alpha - \beta$ such that $\alpha \cdot \beta\num 0$, 
\end{enumerate} 
Then $\hat{c}_2(\mcE) \cdot \alpha \> 0$.
\end{proposition}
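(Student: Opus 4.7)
The plan is to adapt the Miyaoka argument \cite[Theorem~6.1]{Miy87} to the orbifold setting, using Faulk's Bogomolov--Gieseker inequality \cite{Faulk2022} on the standard orbifold structure $X_{\orb}$. By Lemma~\ref{lemma:construction-orb-kahler} the class of $\omega$ lifts to an orbifold K\"ahler class on $X_{\orb}$, so $\alpha_\epsilon := \alpha + \epsilon\omega$ is orbifold K\"ahler for every $\epsilon>0$. For each such $\epsilon$, Lemma~\ref{lemma:filtration} yields a Harder--Narasimhan filtration of $\mcE_{\orb}$ with respect to $\alpha_\epsilon^2$. Since the ranks and the discrete Chern data of the subquotients take only finitely many possible values, a pigeonhole argument lets me pass to a sequence $\epsilon_n\to 0^+$ along which a single filtration
\[
0 = \mcF_0 \subsetneq \mcF_1 \subsetneq \cdots \subsetneq \mcF_k = \mcE_{\orb}
\]
is HN with respect to every $\alpha_{\epsilon_n}^2$, with $\alpha_{\epsilon_n}^2$-semistable subquotients $\mcQ_i := \mcF_i/\mcF_{i-1}$ of ranks $r_i$.

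Using $\alpha^3 = 0$ and $\alpha\cdot\beta\num 0$ (which also force $\alpha^2\cdot\beta = 0$ and $\alpha\cdot\beta^2 = 0$), I compute
\[
\hat{c}_1(\mcE)\cdot \alpha_\epsilon^2 \;=\; (\alpha-\beta)\cdot(\alpha+\epsilon\omega)^2 \;=\; 2\epsilon\, \alpha^2\cdot\omega \;+\; \epsilon^2(\alpha-\beta)\cdot\omega^2 \;\to\; 0.
\]
Combined with generic nefness and the strictly decreasing slopes along the HN filtration, every slope $\mu_{\alpha_{\epsilon_n}^2}(\mcQ_i)\to 0$, so $\hat{c}_1(\mcQ_i)\cdot\alpha^2 = 0$ for every $i$. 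Writing $r := \mathrm{rk}(\mcE)$, $\xi_i := \hat{c}_1(\mcQ_i)/r_i$, and $\eta_i := \xi_i - \hat{c}_1(\mcE)/r$, one has $\sum r_i\eta_i = 0$ and $\eta_i\cdot\alpha^2 = 0$ for all $i$.

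Next I apply Faulk's orbifold Bogomolov--Gieseker inequality to (the reflexive hull of) each $\mcQ_i$ to get $\bigl(2r_i\hat{c}_2(\mcQ_i) - (r_i-1)\hat{c}_1(\mcQ_i)^2\bigr)\cdot \alpha_{\epsilon_n}\geq 0$, and let $\epsilon_n\to 0$; the inequality $\hat{c}_2(\mcQ_i)\cdot\alpha \geq \hat{c}_2(\mcQ_i^{**})\cdot\alpha$, which holds because $\mcQ_i\hookrightarrow \mcQ_i^{**}$ has cokernel an effective codimension-two cycle and $\alpha$ is nef, means the estimate survives for $\mcQ_i$ itself. Combining this with the additivity $\hat{c}_2(\mcE) = \sum_i \hat{c}_2(\mcQ_i) + \sum_{i<j}\hat{c}_1(\mcQ_i)\hat{c}_1(\mcQ_j)$ and the elementary identity $\sum_i r_i\xi_i^2 = \tfrac{1}{r}\hat{c}_1(\mcE)^2 + \sum_i r_i\eta_i^2$, a direct manipulation yields
\[
\hat{c}_2(\mcE)\cdot\alpha \;\geq\; \frac{r-1}{2r}\,\hat{c}_1(\mcE)^2\cdot\alpha \;-\; \frac{1}{2}\sum_i r_i\, \eta_i^2\cdot\alpha.
\]
Condition (3) gives $\hat{c}_1(\mcE)^2\cdot\alpha = \alpha^3 - 2\alpha^2\cdot\beta + \alpha\cdot\beta^2 = 0$, so the first term vanishes.

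To close the argument I invoke a Hodge index statement on the K\"ahler orbifold: the bilinear form $(\gamma,\delta)\mapsto \gamma\cdot\delta\cdot\alpha$ is negative semidefinite on the subspace $\{\gamma : \gamma\cdot\alpha^2 = 0\}$. This follows from the orbifold Hodge--Riemann relations for each K\"ahler class $\alpha_{\epsilon_n}$ via \eqref{eqn:de-rham-thm}, together with a continuity argument: decompose a given $\gamma$ as its $\alpha_{\epsilon_n}^2$-primitive part plus $c_n\cdot\alpha_{\epsilon_n}$, note that $c_n = O(\epsilon_n)$, and pass to the limit. Applying this to each $\eta_i$ gives $\eta_i^2\cdot\alpha\leq 0$, hence $\hat{c}_2(\mcE)\cdot\alpha \geq 0$. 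The two steps I anticipate as most delicate are (i) stabilizing the HN filtration along a sequence $\epsilon_n\to 0^+$ while remaining in the regime where Faulk's inequality applies, and (ii) rigorously transporting the orbifold Hodge--Riemann relations from the K\"ahler perturbations $\alpha_{\epsilon_n}$ to the nef limit class $\alpha$.
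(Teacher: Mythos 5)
Your overall skeleton (Harder--Narasimhan filtration, Faulk's orbifold Bogomolov--Gieseker inequality, a Hodge-index estimate, and a limit $\epsilon\to 0$) is the same as the paper's, and your final algebraic manipulation with $\xi_i,\eta_i$ is the classical Miyaoka discriminant computation and is correct as algebra. The genuine gap is the very first reduction: fixing a single filtration that is HN with respect to $\alpha_{\epsilon_n}^2$ for a whole sequence $\epsilon_n\to 0$. The pigeonhole justification does not work as stated: the ranks take finitely many values, but the first Chern classes of HN subquotients range over an a priori infinite discrete subset of $H^2(X,\tfrac1q\mathbb{Z})$; Lemma \ref{lemma:degree-bounded-above} only bounds degrees of subsheaves from above against fixed classes and does not bound the classes $\hat{c}_1(\mcF_i)$ themselves, so no finiteness (hence no subsequence with constant filtration) follows. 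One would need a wall-and-chamber or boundedness statement for HN filtrations under a degenerating polarization, proved in the K\"ahler orbifold setting -- a nontrivial missing ingredient. Everything downstream in your argument (the limits $\hat{c}_1(\mcQ_i)\cdot\alpha^2=0$, the limiting BG inequalities, the Whitney expansion) depends on that fixed filtration. The paper is organized precisely to avoid this: all estimates are made at one fixed small $\epsilon$, with the filtration allowed to depend on $\epsilon$; after the Hodge index inequality (Lemma \ref{lemma:hodge-index}) and the normalization $(\alpha-\beta)\cdot\alpha_\epsilon^2/\alpha_\epsilon^3\leq 1$, one arrives at $\hat{c}_2(\mcE)\cdot\alpha_\epsilon\geq \tfrac12\hat{c}_1(\mcE)^2\cdot\alpha_\epsilon-\tfrac12\hat{c}_1(\mcE)\cdot\alpha_\epsilon^2$, which no longer refers to the filtration, and only then is $\epsilon\to 0$ taken (the right-hand side tends to $0$ by hypothesis (3)).

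A second gap is the way you invoke Faulk's inequality. You apply it to the reflexive hulls of the HN subquotients, but (a) those subquotients are only semistable, whereas Faulk's theorem produces Hermitian--Einstein metrics for \emph{stable} objects -- the paper refines the HN filtration by Jordan--H\"older filtrations to get stable pieces; (b) a reflexive orbifold sheaf need not be an orbifold vector bundle, and the orbifold Chern classes in this paper are only defined, via metrics, for orbifold bundles -- this is exactly why the paper introduces blowups of orbifolds (Lemma \ref{lemma:orbifold-res-torfree}) to make the subquotients into bundles, and then must re-establish stability upstairs with respect to a perturbed orbifold K\"ahler class (Lemmas \ref{lemma:A-bounded-above} and \ref{lemma:A-perturbation-stability}), since $f^*\alpha_\epsilon$ is no longer K\"ahler on the blowup; and (c) your uses of Whitney additivity of $\hat{c}_2$ along the filtration and of the comparison $\hat{c}_2(\mcQ_i)\cdot\alpha\geq\hat{c}_2(\mcQ_i^{**})\cdot\alpha$ presuppose a Chern-class calculus for torsion-free orbifold coherent sheaves that the paper neither develops nor needs. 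Finally, a small slip in your Hodge-index step: $c_n=\gamma\cdot\alpha_{\epsilon_n}^2/\alpha_{\epsilon_n}^3$ is $O(1)$, not $O(\epsilon_n)$; the conclusion survives because $c_n^2\alpha_{\epsilon_n}^3=(\gamma\cdot\alpha_{\epsilon_n}^2)^2/\alpha_{\epsilon_n}^3=O(\epsilon_n)\to 0$, and this estimate is in substance Lemma \ref{lemma:hodge-index}, which the paper uses as well.
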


We recall that $\mcE$ is generically nef with respect to a class $\gamma$ if  
for any  torsion-free quotient $\mcE \to \mathcal{Q}$ of coherent sheaves, we have $\hat{c}_1(\mathcal{Q})\cdot \gamma^2   \> 0$.
In the proof of \cite[Theorem 6.1]{Miy87}, one needs to consider second Chern classes of torsion-free sheaves. 
To deal with this problem in the setting of orbifolds, we will use  the following assertion, which reduce the situation to Chern classes of orbifold vector bundles.

\begin{lemma}
\label{lemma:A-BG-inequality-orbifold}
Let $(Y_{\orb}, \alpha)$ be a compact K\"ahler orbifold of dimension $n\ge 2$.   
Let $\mcE_{\orb}$  be a  torsion-free orbifold coherent sheaf of rank $r$  on $Y_{\orb}$ which is   $\alpha^{n-1}$-stable. 
Assume that $f_{\orb}\colon X_{\orb}\to Y_{\orb}$ is a  composition of blowups  as in Lemma \ref{lemma:orbifold-res-torfree}. 
In particular,  
$\mcG_{\orb} = (f_{\orb}^*\mcE_{\orb})/{\mathrm{torsion}}$ is an orbifold vector bundle. 
Then we have 
\[
\left(\hat{c}_2(\mcG_{\orb}) - \frac{r-1}{2r}\hat{c}_1(\mcG_{\orb})\right)\cdot (f^*\alpha)^{n-2} \> 0, 
\] 
where $f\colon X\to Y$ is the induced morphism.
\end{lemma}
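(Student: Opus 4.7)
The approach is to apply Faulk's orbifold Bogomolov--Gieseker inequality to $\mcG_\orb$ on $X_\orb$, and then degenerate a K\"ahler class to $f^*\alpha$. Since $f^*\alpha$ is only nef on $X_\orb$, I would first fix an orbifold K\"ahler class $\omega'$ on $X_\orb$---which exists by Lemma \ref{lemma:construction-orb-kahler} together with the description \eqref{eqn:orbifold-kahler-class} of orbifold K\"ahler classes on blowups---and set $\alpha_\epsilon = f^*\alpha + \epsilon\omega'$, which is orbifold K\"ahler for each $\epsilon > 0$.

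The heart of the argument is to show that $\mcG_\orb$ is $\alpha_\epsilon^2$-semistable for every sufficiently small $\epsilon > 0$. Given a saturated orbifold subsheaf $0\neq \mcF_\orb \subsetneq \mcG_\orb$, away from the exceptional locus of $f$ it descends to a subsheaf of $\mcE_\orb$; taking the saturation, one obtains a nonzero proper saturated orbifold subsheaf $\mcF'_\orb \subsetneq \mcE_\orb$ of the same rank. By the projection formula, together with the fact that the classes of $f$-exceptional divisors annihilate $f^*(\alpha^2)$, one gets
\[
\mu_{(f^*\alpha)^2}(\mcF_\orb) \;=\; \mu_{\alpha^2}(\mcF'_\orb),
\]
so strict $\alpha^2$-stability of $\mcE_\orb$ produces a strictly positive gap $\mu_{(f^*\alpha)^2}(\mcG_\orb) - \mu_{(f^*\alpha)^2}(\mcF_\orb) > 0$. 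To upgrade this to uniform semistability at $\alpha_\epsilon$, I would argue by contradiction: if $\mcG_\orb$ were $\alpha_{\epsilon_n}^2$-unstable for some sequence $\epsilon_n \to 0$ with destabilizing maximal subsheaves $\mcF_{n,\orb}$, Lemma \ref{lemma:degree-bounded-above}(1)--(2) would bound $\hat{c}_1(\mcF_{n,\orb}) \cdot (\omega')^2$ and each $\hat{c}_1(\mcF_{n,\orb}) \cdot \omega' \cdot \hat{c}_1(\mcO_X(E_j))$ from above, and combined with the destabilizing inequalities this also bounds $\hat{c}_1(\mcF_{n,\orb}) \cdot (f^*\alpha)^2$ from below. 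A Maruyama-style boundedness argument then extracts a fixed subsheaf destabilizing $\mcG_\orb$ at the limit $\epsilon = 0$, contradicting the strict gap just established.

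Once semistability is known, Faulk's orbifold Bogomolov--Gieseker inequality (\cite{Faulk2022}) applied to the orbifold vector bundle $\mcG_\orb$ with respect to the K\"ahler class $\alpha_\epsilon$ gives
\[
\bigl(2r\,\hat{c}_2(\mcG_\orb) - (r-1)\,\hat{c}_1(\mcG_\orb)^2\bigr)\cdot \alpha_\epsilon \;\geq\; 0
\]
for all small $\epsilon > 0$. Letting $\epsilon \to 0$ and dividing by $2r$ yields the desired inequality against $f^*\alpha$. The main obstacle is the uniform semistability step: the standard boundedness results for subsheaves of bounded slope are typically stated in the projective category, so in the K\"ahler orbifold setting one has to either reduce to the underlying K\"ahler variety with quotient singularities via the correspondence of Remark \ref{rmk:quotient-singularities}(5) and invoke boundedness there, or adapt the Maruyama argument directly to orbifold charts using the integrability bounds built in Lemma \ref{lemma:construction-metric-1} and Lemma \ref{lemma:degree-bounded-above}.
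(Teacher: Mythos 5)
Your overall skeleton (perturb $f^*\alpha$ by an orbifold K\"ahler class, establish (semi)stability of $\mcG_{\orb}$ with respect to $\alpha_\epsilon$ for small $\epsilon$, invoke Faulk, let $\epsilon\to 0$) is the same as the paper's, and your descent-plus-projection-formula observation identifying $(f^*\alpha)^2$-slopes of subsheaves of $\mcG_{\orb}$ with $\alpha^2$-slopes of subsheaves of $\mcE_{\orb}$ is fine. The genuine gap is the uniformity step, which you explicitly leave open. Knowing that each individual subsheaf has slope strictly below $\mu_{(f^*\alpha)^2}(\mcG_{\orb})$ does not give stability (or semistability) at $\alpha_\epsilon$ for a single $\epsilon>0$ independent of the subsheaf, and the ``Maruyama-style boundedness'' you propose to close this is exactly what is not available off the shelf in the compact K\"ahler orbifold category; neither of your two suggested routes is carried out, and the second (adapting Maruyama to orbifold charts) is a substantial project in itself. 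The paper closes this gap by a much more elementary mechanism (Lemma \ref{lemma:A-perturbation-stability}): since orbifold first Chern classes of coherent subsheaves lie in $H^2(X,\tfrac1q\mathbb{Z})$ for a fixed $q$, the possible slopes form a discrete set, so $\alpha^2$-stability of $\mcE_{\orb}$ yields a \emph{uniform} gap $C>0$ below $\mu_{(f^*\alpha)^2}(\mcG_{\orb})$; the perturbation terms $\hat{c}_1(\mcF_{\orb})\cdot f^*\alpha\cdot\omega$ and $\hat{c}_1(\mcF_{\orb})\cdot\omega^2$ are then bounded above uniformly in $\mcF_{\orb}$ by Lemma \ref{lemma:A-bounded-above} (which is where Lemma \ref{lemma:degree-bounded-above} enters), and this already gives $(\alpha_\epsilon)^2$-stability for all small $\epsilon$ with no boundedness of the family of destabilizers needed.

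A second, related defect: you conclude by applying ``Faulk's orbifold Bogomolov--Gieseker inequality'' to a merely \emph{semistable} $\mcG_{\orb}$. The result actually used (as in the paper) is Faulk's existence theorem for orbifold Hermitian--Einstein metrics, which requires \emph{stability} with respect to the orbifold K\"ahler class; the Chern-class inequality is then extracted by L\"ubke's argument from the HE metric. With only semistability you would need to pass to a Jordan--H\"older filtration, whose subquotients are merely torsion-free orbifold sheaves, which defeats the purpose of the blowup in Lemma \ref{lemma:orbifold-res-torfree} (the whole point of $\mcG_{\orb}$ being a vector bundle is to make the metric argument for $\hat{c}_2$ legitimate). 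Since the uniform-gap argument above actually delivers stability, not just semistability, the fix is to aim for stability at $\alpha_\epsilon$ from the start, as the paper does.
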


The idea of the proof is to approximate $f^*\alpha$ by a family of orbifold K\"ahler classes $\{\alpha_\epsilon\}$  on $X_{\orb}$. 
In order to carry out this method, we will show in the next  lemmas,  
that $\mcG_{\orb}$ is $(\alpha_\epsilon)^{n-1}$-stable for all $\epsilon>0$  small enough. 
  Our method adapts those of \cite{GrebKebekusPeternell2016IMRN}, \cite{Toma2016} and \cite{Toma2019}.   

\begin{lemma}
\label{lemma:modification-stable}   
Let  $f_\orb\colon X'_\orb\to X_{\orb}$ be a proper bimeromorphic morphism between compact complex orbifolds, and let $f\colon X'\to X$ be the induced morphism between quotient spaces. 
Let $\mathcal{E}_{\orb}$ and $\mathcal{E}_{\orb}'$ be torsion-free orbifold coherent sheaves on $X_{\orb}$ and $X_{\orb}'$, respectively. 
Assume that $f_{\orb}^*\mathcal{E}_{\orb}$ is isomorphic to $\mathcal{E}'_{\orb}$ on the isomorphic locus of $f_{\orb}$.  
Then the following properties hold. 
\begin{enumerate}
    \item  There is a correspondence between   saturated subsheaves $\mathcal{F}'_\orb$ of $\mathcal{E}'_\orb$ and  saturated subsheaves $\mathcal{F}_\orb$ of $\mathcal{E}_\orb$,  such that  
    \[
    \hat{c}_1(\mathcal{F}_\orb)\cdot \alpha^{n-1} 
    =
    \hat{c}_1(\mathcal{F}'_\orb)\cdot (f^*\alpha)^{n-1},  
    \]
    where $n=\dim X$ and  $\alpha\in H^2(X,\mathbb{R})$ is any class.   
    \item  $\mathcal{E}_\orb$ is $\alpha^{n-1}$-(semi)stable if and only if $\mathcal{E'}_\orb$ is $(f^*\alpha)^{n-1}$-(semi)stable. 
\end{enumerate}
\end{lemma}

\begin{proof} 
Let $U'\subseteq X'$ be the largest open subset over which $f_{\orb}$ is an isomorphism and let $U=f(U')\subseteq X$. 
Then the complement of $U$ in $X$ has codimension at least $2$. 
By  assumption, the orbifold coherent sheaves $(f_{\orb})_*\mathcal{E}'_{\orb}$ and $\mathcal{E}_{\orb}$ are isomorphic over $U$. 
Thus they have the same reflexive hull which is equal to $\mathcal{E}_{\orb}^{**}$.

We first fix a saturated subsheaf $\mathcal{F}'_\orb$ of $\mathcal{E}'_\orb$. 
There is a natural morphism  $(f_\orb)_*\mathcal{F}'_{\orb} \to ((f_{\orb})_*\mathcal{E}'_{\orb})^{**}= \mathcal{E}_{\orb}^{**}$.  
We then define  $\mathcal{F}_{\orb}$ as the intersection of $\mathcal{E}_{\orb}$ and the image  of $(f_{\orb})_*\mathcal{F}'_\orb$ inside $\mathcal{E}_{\orb}^{**}$.

Conversely, let $\mathcal{F}_\orb$ be a saturated subsheaf of $\mathcal{E}_\orb$.  
Then we can consider the intersection $\mathcal{H}_\orb$ of $\mathcal{F}_{\orb}$ and $(f_{\orb})_*\mathcal{E}'_{\orb}$ inside $\mathcal{E}^{**}_{\orb}$. 
We define $\mathcal{F}'_{\orb}$ as the saturation in $\mathcal{E}_\orb'$ 
of the image of $f_{\orb}^*\mathcal{H}_\orb$.   
Then these constructions are  converse with each other.

Since $f_{\orb}^*\mathcal{F}_{\orb}$ is isomorphic to $\mathcal{F}'_{\orb}$ on   $U'$, 
the image of the class $f^*\hat{c}_1(\mathcal{F}_{\orb})- \hat{c}_1(\mathcal{F}'_\orb)$ in $H^{2}(U',\mathbb{R})$ is zero.  
In other words, it is a class in the relative cohomology group $H^2(X',U',\mathbb{R})$.  
If $[X']\in H_{2n}(X',\mathbb{R})$ is the fundamental class of $X'$, 
then we have 
\begin{eqnarray*} 
& &(f^*\hat{c}_1(\mathcal{F}_{\orb})- \hat{c}_1(\mathcal{F}'_\orb) )\cdot (f^*_\orb \alpha)^{n-1} \\
&=& ( ( f^*\hat{c}_1(\mathcal{F}_{\orb})- \hat{c}_1(\mathcal{F}'_\orb) )\smallsmile (f^*_\orb \alpha)^{n-1} ) \smallfrown [X']  \\ 
&=& \alpha^{n-1} \smallfrown  f_*( (f^*\hat{c}_1(\mathcal{F}_{\orb})- \hat{c}_1(\mathcal{F}'_\orb)) \smallfrown [X'])  \\ 
&=& 0,  
\end{eqnarray*}
since $f_*( (f^*\hat{c}_1(\mathcal{F}_{\orb})- \hat{c}_1(\mathcal{F}'_\orb)) \smallfrown [X']) \in H_{2n-2}(X\setminus U, \mathbb{R}) = \{0\}$ 
(for example, see  Section \ref{subsec:BM-homology}).   
This completes the proof of the item (1).

Finally, we note that the ranks of $\mathcal{F}_{\orb}$ and $\mathcal{F}'_{\orb}$ are the same  in the previous correspondence.  
Hence the item (2) also follows.  
\end{proof}

The following lemma is a key ingredient for our openness of stability. 

\begin{lemma}
\label{lemma:bounded-c1} 
Let $X_{\orb}$ be an effective compact complex orbifold with quotient space $X$, let $\omega$ be an orbifold  K\"ahler class, and let $n$ be the dimension of $X$. 
Assume that $\mathcal{E}_{\orb}$ is a torsion-free  orbifold  coherent   sheaf  on $X_{\orb}$, 
and that $C$ is a real number. 
Then the following subset of $H^2(X,\mathbb{R})$ is finite 
\[
\{
\hat{c}_1(\mathcal{F}_{\orb}) \ : \  0\neq \mathcal{F}_{\orb} \subseteq \mathcal{E}_{\orb} \mbox{ is  saturated    with }  \hat{c}_1(\mathcal{F}_{\orb}) \cdot \omega^{n-1} \ge C
\}.
\]
\end{lemma}

\begin{proof}
The proof is to descend $\mathcal{E}_{\orb}$ to the quotient space $X$ and then apply the boundedness results of \cite{Toma2019}.   
We fix a  finite atlas of chart $X_{\orb}=\{(V_i,G_i,\pi_i)\}$.  
Let $q$ be the lcm of the cardinalities   $|G_i|$. 
Let  $U\subseteq X$ be the open subset of points over which the  actions of $G_i$  are free, 
and let $E_1,...,E_s$ be the irreducible components of $X\setminus U$ of codimension  one. 
Then, by taking the preimages  $\pi_i^{-1}(E_j)$ in the orbifold charts,  
each $E_j$ defines an orbifold subvariety $(E_j)_{\orb}$ of $X_{\orb}$, of codimension one. 
In particular, the  ideal sheaf $\mathcal{I}_j$ of $(E_j)_{\orb}$  is an orbifold invertible coherent ideal sheaf. 
We set  $\hat{c}_1(E_j) := - \hat{c}_1(\mathcal{I}_j)$. 
\\

\textit{Step 1.} In this step, we prove the following statement. 
Let $\mathcal{F}_{\orb}$ be a torsion-free coherent  sheaf on $X_{\orb}$ of rank $r$,  
let $\mathcal{G}$ be the coherent sheaf on $X$ defined by $((\pi_i)_*\mathcal{F}_i)^{G_i}$, 
and let  $\mathcal{G}_{orb}$ be the torsion-free coherent sheaf on $X_{\orb}$ defined by $(\pi_i)^*\mathcal{G}/\mathrm{torsion}$.   
Then the rank of $\mathcal{G}$ is equal to $r$ as well.  
Since $\mathcal{F}_{\orb}$ is torsion-free, we obtain a natural morphism from $\mathcal{G}_{\orb}$ to $\mathcal{F}_{\orb}$, which is an isomorphism over $U$.  
Then 
\[
\hat{c}_1(\mathcal{F}_{\orb}) = \hat{c}_1(\mathcal{G}_{\orb}) +  \sum_{j=1}^s d_j \hat{c}_1(E_j),
\]
where $d_j$ is the vanishing order of the natural morphism 
$\det \mathcal{G}_{\orb} \to \det \mathcal{F}_{\orb}$ along $(E_j)_{\orb}$. 
We claim that  $0\le  d_1,...,d_s \le r(q-1)$.

To prove this assertion, it is sufficient to work locally around a general point $x$ of $E_j$.  
Thus we may assume that, around $x$, the sheaf $\mathcal{G}$ is   free.  
Let $(V,G,\pi)$ be an orbifold chart over a neighborhood of $x$ such that $\pi^{-1}(\{x\}) = \{x'\}$ is a singleton.  
We set $D_j= \pi^{-1}(E_j)$. 
Since $x$ is a general point of $E_j$, we may further assume that $\mathcal{F}_{\orb}$ corresponds to some free coherent sheaf $\mathcal{F}$ on $V$.
By Lemma \ref{lemma:determinant-compute} at the end of this subsection, it is enough to show that, for any local section $\eta$ of $\pi^*\mathcal{G}$, 
which does not vanish at $x'$,   
the vanishing order of $\varphi(\eta)$  along $D_j$ is at most $|G|-1\le q-1$, 
where $\varphi\colon \pi^*\mathcal{G} \to \mcF$ is the natural morphism.  
We remark that, if $\eta'$ is the sum of the $G$-orbit of $\eta$, 
then it does not vanish at  $x'$, 
for the action of $G$ on the fiber $(\pi^*\mathcal{G})_{x'}$ is trivial. 
Furthermore,  the vanishing order of $\varphi(\eta')$ along $D_j$ is at least the one of $\varphi(\eta)$.   
Hence, it is enough to show that, for any local section $\sigma$ of $\mathcal{G}$, 
which does not vanish at  $x$,   
the vanishing order of  $\varphi(\pi^*\sigma)$ along $D_j$ is at most $|G|-1$.  
Up to shrinking $V$,  
there is a $G$-invariant holomorphic function $u$ on $V$ which vanishes along $D_j$ with order   equal to $|G|$. 
If the vanishing order of $\varphi(\pi^*\sigma)$ along $D_j$  is at least $|G|$, 
then $ u^{-1}\cdot \varphi(\pi^*\sigma)$ is a $G$-invariant section of $\mathcal{F}$. 
Thus it induces a section $\sigma'$ of $(\pi_*\mathcal{F})^G$, which is a local section of  $\mcG$ around $x$.  
Furthermore, if $v$ is the local holomorphic function around $x$ induced by $u$, then it vanishes along $E_j$ and we have  $\sigma' = \sigma \cdot v^{-1}$. 
We obtain  a contradiction since $\sigma$ does not vanish at  $x$.  
This completes the proof of Step 1. 
\\

\textit{Step 2}. 
In this step, we reduce the problem to the case of sheaves on $X$.  
We recall that, by Lemma \ref{lemma:construction-orb-kahler}, 
$\omega$ is also a K\"ahler class on the quotient space $X$. 
We define
\[
\Sigma_1:=\{0 \neq \mathcal{F}_{\orb} \ : \  \mathcal{F}_{\orb} \subseteq \mathcal{E}_{\orb} \mbox{ is  saturated    with }  \hat{c}_1(\mathcal{F}_{\orb}) \cdot \omega^{n-1} \ge C\}, 
\]
\[
\Sigma_2:=\{ \mbox{coherent sheaves }  \mathcal{G}   \mbox{ on } X \mbox{ defined by } ((\pi_i)_*\mathcal{F}_{\orb})^{G_i}  \mbox{ for }   \mathcal{F}_{\orb} \in \Sigma_1 \}.  
\]
Thanks to Step 1, it is sufficient to show that the  orbifold first Chern classes of the torsion-free orbifold coherent sheaves $\mathcal{G}_{\orb}$ induced by the elements $\mathcal{G}$ in $\Sigma_2$ are finite.   
We note that $((\det \mathcal{G})^{\otimes q})^{**}$ is  locally free on $X$, and we have 
\[
c_1(((\det \mathcal{G})^{\otimes q})^{**}) =  \hat{c}_1(((\det \mathcal{G})^{\otimes q})^{**}) = q \cdot  \hat{c}_1( \mathcal{G}_{\orb}  ), 
\]
as shown in (6) and (7) of Remark \ref{rmk:quotient-singularities}. 
Step 1 also implies that  the intersection numbers  $c_1(((\det \mathcal{G})^{\otimes q})^{**}) \cdot \omega^{n-1}$, for $\mathcal{G} \in \Sigma_2$, are bounded from below by a constant.   

Let $\mathcal{E}'$ be the coherent sheaf on $X$ defined by $((\pi_i)_*\mathcal{E}_{\orb})^{G_i}$.  
Then for any $\mathcal{G}\in \Sigma_2$, it is a subsheaf of $\mathcal{E}'$,  saturated  over $U$. 
We define 
\[
\Sigma_3 = \{ \mathcal{H} \ : \  \mathcal{H} \mbox{ is the saturation of } \mathcal{G} \mbox{ in } \mathcal{E}' \mbox{ where }  \mathcal{G} \in \Sigma_2\}.  
\]
Since there is  a natural  morphism from $\mathcal{G}$ to its saturation $\mathcal{H}$, which is isomorphic over $U$, 
we see that 
\[
c_1(((\det \mathcal{H})^{\otimes q})^{**}) = c_1(((\det \mathcal{G})^{\otimes q})^{**}) + \sum_{j=1}^s a_j \hat{c}_1(E_j)
\]
for some integers $a_j \ge 0$.  
Since  $\hat{c}_1(E_j) \cdot \omega^{n-1}>0$ for all $j=1,...,s$,  
 the numbers $c_1(((\det \mathcal{H})^{\otimes q})^{**}) \cdot \omega^{n-1}$, for $\mathcal{H} \in \Sigma_3$, are bounded from below by a constant.    
Furthermore,  it is   sufficient to show that the  first Chern classes $c_1(((\det \mathcal{H})^{\otimes q})^{**})$ for  $\mathcal{H} \in \Sigma_3$ are finite. 
\\

\textit{Step 3}.  
We completes the proof in this step. 
For any coherent sheaf $\mathcal{H}\in \Sigma_3$ of rank $r$, we see that 
\[
((\det \mathcal{H})^{\otimes q})^{**} \subseteq  ((\bigwedge^r \mathcal{E}')^{\otimes q})^{**}
\]
is a saturated subsheaf, 
since the inclusion is saturated in codimension 2, 
and since $((\det \mathcal{H})^{\otimes q})^{**}$ is a line bundle.     
In order to apply the results of  \cite{Toma2019}, we need to  show that,     
the $(n-1)$-degree $\deg_{n-1}(\mathcal{L})$ of $\mathcal{L} := ((\det \mathcal{H})^{\otimes q})^{**}$ with respect to $\omega^{n-1}$,  
in the sense of \cite[Definition 2.5]{Toma2019}, 
is bounded from below by a constant,  
for all $\mathcal{H}\in \Sigma_3$ of rank $r$.  

We fix   a desingularization   $\rho\colon \widetilde{X} \to X$.   
Since $X$ has rational singularities, we have $R^k\rho_*(\rho^*\mathcal{L}) = 0$ for $k>0$, 
and hence  $\rho_! (\rho^*\mathcal{L}) = \mathcal{L}$ in the Grothendieck group $K_0(X)$ generated by coherent sheaves. 
If $\tau$ signifies the homological Todd class  as  in \cite[Section 2.2]{Toma2019}, 
then the component $\tau_{n-1}(\mathcal{L}) \in H_{2n-2}(X, \mathbb{R})$ satisfies
\[
\tau_{n-1}(\mathcal{L}) 
=  \tau_{n-1}(\rho_! (\rho^*\mathcal{L}))   
= \rho_*(\tau_{n-1} ( \rho^*\mathcal{L} )).    
\] 
Since $\widetilde{X}$ is smooth, we have 
$\tau_{n-1} ( \rho^*\mathcal{L} ) =  
(c_1(\rho^*\mathcal{L}) + \frac{1}{2}c_1(\widetilde{X}))  \smallfrown [\widetilde{X}]$, 
where  $[\widetilde{X}] \in H_{2n}(\widetilde{X}, \mathbb{R})$ is the fundamental class.   
In particular, by the projection formula, we have 
\[
\deg_{n-1}(\mathcal{L}) 
=  \tau_{n-1}(\mathcal{L}) \smallfrown \omega^{n-1}  
=   c_1(\mathcal{L}) \cdot \omega^{n-1}  + \frac{1}{2}c_1(\widetilde{X}) \cdot (\rho^*\omega)^{n-1}.
\] 
This implies the boundedness on the $(n-1)$-degrees that we require.

Since $\mathcal{L}$ is saturated in $ ((\bigwedge^r \mathcal{E}')^{\otimes q})^{**}$,  
by \cite[Lemma 5.7]{Toma2019}, the set of quotient sheaves
\[
\{  ((\bigwedge^r \mathcal{E}')^{\otimes q})^{**} / \mathcal{L}  : \mathcal{H}\in \Sigma_3 \mbox{ of rank } r \}
\] 
is bounded in the sense of \cite[Definition 5.1]{Toma2019}. 
Using flattening and Noetherian induction (see \cite[Section 3]{Toma2016}), we deduce from \cite[Proposition 2.6]{Toma2019} that the set of their homological Todd classes is finite.  
From the computation above, it follows that the 
first Chern classes $ c_1(((\det \mathcal{H})^{\otimes q})^{**})$ for $\mathcal{H}\in \Sigma_3$ with rank $r$ are finite.  
Since $r$ is an integer between $1$ and rank of $\mathcal{E}_{\orb}$,    
this completes the proof of the lemma.  
\end{proof}

\begin{lemma}
\label{lemma:A-perturbation-stability}
Let $f_{\orb}\colon (X_{\orb}, \omega)  \to (Y_{\orb},\alpha)$ be a  proper bimeromorphic morphism between effective compact K\"ahler orbifolds. 
Let $\alpha_\epsilon= f^*\alpha+\epsilon \omega$ for all $\epsilon >0$. 
Assume that $\mcG_{\orb}$ is an orbifold coherent torsion-free sheaf on $X_{\orb}$ which  is $(f^*\alpha)^{n-1}$-stable.
Then  it  is $(\alpha_\epsilon)^{n-1}$-stable for all $\epsilon>0$ small enough.
\end{lemma}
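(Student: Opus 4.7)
The argument will proceed by contradiction. Suppose there is a sequence $\epsilon_n\to 0^+$ and, for each $n$, a proper non-zero saturated orbifold subsheaf $\mcF_n\subset \mcG_\orb$ of some rank $s_n\in\{1,\dots,r-1\}$ violating $(\alpha_{\epsilon_n})^2$-stability, so that
\[
r\,\hat{c}_1(\mcF_n)\cdot (\alpha_{\epsilon_n})^2 \;\geq\; s_n\,\hat{c}_1(\mcG_\orb)\cdot (\alpha_{\epsilon_n})^2.
\]
After passing to a subsequence I may assume $s_n=s$ is constant. Set $D_n := r\,\hat{c}_1(\mcF_n) - s\,\hat{c}_1(\mcG_\orb)$ and expand $(\alpha_{\epsilon_n})^2 = (f^*\alpha)^2 + 2\epsilon_n\, f^*\alpha\cdot \omega + \epsilon_n^2\,\omega^2$; the displayed inequality then rearranges to
\[
D_n\cdot (f^*\alpha)^2 \;\geq\; -\,2\epsilon_n\, D_n\cdot f^*\alpha\cdot \omega \;-\; \epsilon_n^2\, D_n\cdot \omega^2.
\]

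The first step is to show the right-hand side decays to $0$. Applying Lemma \ref{lemma:A-bounded-above} to $\mcG_\orb$ yields uniform upper bounds on $\hat{c}_1(\mcF_n)\cdot f^*\alpha\cdot\omega$ and $\hat{c}_1(\mcF_n)\cdot\omega^2$, hence uniform upper bounds on $D_n\cdot f^*\alpha\cdot\omega$ and $D_n\cdot\omega^2$. Plugging these bounds into the inequality above gives $D_n\cdot (f^*\alpha)^2 \geq -C_1\epsilon_n - C_2\epsilon_n^2$ for constants $C_1,C_2>0$ independent of $n$. Combined with the hypothesis of $(f^*\alpha)^2$-stability, which forces $D_n\cdot (f^*\alpha)^2<0$ for every $n$, this shows $D_n\cdot(f^*\alpha)^2 \to 0^-$.

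The crux of the proof is to contradict this convergence by exhibiting a uniform positive gap between $\mu_{(f^*\alpha)^2}(\mcF)$ and $\mu_{(f^*\alpha)^2}(\mcG_\orb)$ over all proper saturated rank-$s$ subsheaves $\mcF$. The main obstacle is that, in the non-algebraic K\"ahler orbifold setting, subsheaves of $\mcG_\orb$ do not \emph{a priori} form a bounded family, so a direct Quot-scheme argument is unavailable. To overcome this I plan to replace each $\mcF_n$ by the first step of the Harder--Narasimhan filtration of $\mcG_\orb$ with respect to $(\alpha_{\epsilon_n})^2$, whose existence is guaranteed by Lemma \ref{lemma:filtration}; these maximally destabilizing subsheaves are themselves $(\alpha_{\epsilon_n})^2$-semistable, and the combination of this maximality with the uniform degree bounds from Lemma \ref{lemma:A-bounded-above} restricts the classes $\hat{c}_1(\mcF_n)$ to a relatively compact region of $H^2(X,\mbR)$.

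Passing to a weak limit in this bounded region, together with a sheaf-theoretic compactness argument (extracting a coherent limit of the saturations $\mcF_n$ using an orbifold Hermitian metric as in Lemma \ref{lemma:construction-metric-1} and the integrability of its mean curvature), produces a proper non-zero saturated orbifold subsheaf $\mcF_\infty\subset \mcG_\orb$ with $\mu_{(f^*\alpha)^2}(\mcF_\infty) \geq \mu_{(f^*\alpha)^2}(\mcG_\orb)$, contradicting the assumed $(f^*\alpha)^2$-stability. The hardest part is this weak-limiting step: extracting from the cohomological convergence $D_n\cdot(f^*\alpha)^2\to 0$ an actual limit subsheaf of $\mcG_\orb$ realizing equality of slopes, in the K\"ahler orbifold category where no algebraic moduli are available.
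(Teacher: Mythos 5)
Your reduction in the first half is fine and matches what the paper needs: writing $(\alpha_\epsilon)^2=(f^*\alpha)^2+2\epsilon f^*\alpha\cdot\omega+\epsilon^2\omega^2$ and invoking Lemma \ref{lemma:A-bounded-above} to control the two perturbation terms uniformly over all subsheaves is exactly the role that lemma plays in the paper. The genuine gap is in what you yourself call the crux: producing a uniform positive gap $\mu_{(f^*\alpha)^2}(\mcF_{\orb})\le\mu_{(f^*\alpha)^2}(\mcG_{\orb})-C$. Your plan — pass to maximal destabilizers for $(\alpha_{\epsilon_n})^2$, claim their classes $\hat{c}_1(\mcF_n)$ lie in a relatively compact subset of $H^2(X,\mathbb{R})$, and extract a limit subsheaf $\mcF_\infty$ by a weak-compactness argument — is not a proof. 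The upper bounds of Lemma \ref{lemma:A-bounded-above} (even combined with the lower bounds coming from maximal destabilization) only control the pairings of $\hat{c}_1(\mcF_n)$ against three specific classes, not the class itself, so relative compactness in $H^2(X,\mathbb{R})$ does not follow; and the "sheaf-theoretic compactness" step has no available tool in this non-algebraic K\"ahler orbifold setting (no Quot schemes, and Lemma \ref{lemma:construction-metric-1} only gives integrability of the mean curvature, nowhere near the convergence theory needed to extract a coherent limit subsheaf realizing the limiting slope). You acknowledge this is the hardest part and leave it unproved, so the argument does not close.

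The idea you are missing is the one the paper uses to get the gap cheaply: since $X_{\orb}$ has a finite atlas with groups of bounded order, $\hat{c}_1$ of every orbifold coherent subsheaf lies in the image of $H^2(X,\tfrac{1}{q}\mathbb{Z})$ in $H^2(X,\mathbb{R})$ for a fixed integer $q$. The paper deduces from this that the set of slopes $\mu_{(f^*\alpha)^2}(\mcF_{\orb})$ over nonzero subsheaves is discrete, so stability of $\mcG_{\orb}$ alone yields a uniform $C>0$, and then the perturbation estimate via Lemma \ref{lemma:A-bounded-above} (your first half) finishes the proof directly, with no contradiction argument and no limit subsheaf needed. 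So your overall strategy is compatible with the statement, but its essential step is deferred to an unsubstantiated compactness claim, whereas the lattice structure of orbifold first Chern classes is the concrete ingredient that makes the lemma work.
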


\begin{proof}  
We claim that there is a number $C>0$, such that 
\[
\mu_{(f^*\alpha)^{n-1}}(\mcF_{\orb}) \< \mu_{(f^*\alpha)^{n-1}}(\mcG_{\orb}) - C. 
\]
for any non zero proper saturated orbifold subsheaf
$\mcF_{\orb} \subseteq \mcG_{orb}$.
To prove this claim, by considering $(f_*\mcG)/\mathrm{torsion}$ on  $Y_{\orb}$, 
and by applying Lemma \ref{lemma:modification-stable}, 
we may assume that $f$ is the identity map.  
In particular, $f^*\alpha = \alpha$ is a K\"ahler class. 
Then by Lemma \ref{lemma:bounded-c1}, there are only finitely many possible values for the slopes $\mu_{\alpha^{n-1}}(\mathcal{F}_{\orb})$ which are greater than $\mu_{\alpha^{n-1}}(\mathcal{G}_{\orb}) - 1$, where $\mathcal{F}_{\orb}$ are saturated subsheaves of $\mathcal{G}_{\orb}$. 
The existence of $C$ then follows from the stability of $\mathcal{G}_{\orb}$.  

By developing $(\alpha_\epsilon)^{n-1} = (f^*\alpha+\epsilon \omega)^{n-1}$, 
and by Lemma \ref{lemma:A-bounded-above-v2},   
we deduce that, for all $\epsilon>0$ small enough, 
for all saturated subsheaves $\mathcal{F}_{\orb}$ of $\mathcal{G}_{\orb}$, 
we have 
\[
\mu_{(\alpha_\epsilon)^{n-1}}(\mcF_{\orb})  \< 
\mu_{(f^*\alpha)^{n-1}}(\mcF_{\orb}) + \frac{1}{3} C. 
\]
Moreover,  for such $\epsilon$ small enough, we have 
\[
\mu_{(\alpha_\epsilon)^{n-1}}(\mcG_{\orb}) \> \mu_{(f^*\alpha)^{n-1}}(\mcG_{\orb}) - \frac{1}{3} C. 
\]
This completes the proof of  the lemma.
\end{proof}

We can now deduce Lemma \ref{lemma:A-BG-inequality-orbifold}.

\begin{proof} [{Proof of Lemma \ref{lemma:A-BG-inequality-orbifold}}]
There is an 
orbifold K\"ahler class on $X_{\orb}$ of the shape 
\[
\omega=f^*\alpha -\sum a_k \hat{c}_1( \mcO_{X}(E_k)),
\]
where the $E_k$'s are the irreducible components of $f$-exceptional locus, and the $a_k$'s are positive rational numbers.  
Let $\alpha_\epsilon = f^*\alpha + \epsilon \omega$ for $\epsilon >0$.  
Then $\alpha_\epsilon$ is an orbifold K\"ahler class on  $X_{\orb}$. 
We note that $\mcG_{\orb}$ is $(f^*\alpha)^{n-1}$-stable by Lemma \ref{lemma:modification-stable}. 
Hence it is $(\alpha_\epsilon)^{n-1}$-stable for $\epsilon>0$ small enough by Lemma \ref{lemma:A-perturbation-stability}. 
By \cite[Theorem 1]{Faulk2022}, $\mcG_{\orb}$ admits an orbifold Hermitian-Einstein metric with respect to $\alpha_\epsilon$. 
Hence by a similar argument as in  L\"ubke's theorem in \cite{Lub82}  (see also \cite[Theorem 4.4.7]{Kob14}), we deduce that
\[
\left(\hat{c}_2(\mcG_{\orb}) - \frac{r-1}{2r}\hat{c}_1(\mcG_{\orb})\right)\cdot \alpha_\epsilon^{n-2} \> 0.
\] 
We can then conclude the lemma by taking the limit as $\epsilon\to 0$.
\end{proof}

Now we  are ready to prove   Proposition \ref{prop:psef-c2-general}. 

\begin{proof}
[{Proof of Proposition \ref{prop:psef-c2-general}}] 
We first remark the following estimate, 
\[
 \frac{1}{\alpha_\epsilon^3} \cdot (\alpha - \beta) \cdot \alpha_\epsilon^{2} 
 = \frac{2 \epsilon \cdot \alpha^{2} \omega + \epsilon^2 \cdot  (\alpha-\beta)\omega^2}{3\epsilon \cdot \alpha^{2}\omega + 3\epsilon^2 \cdot \alpha \omega^2 + \epsilon^3 \cdot \omega^3} 
 \sim_{\epsilon \to 0} \frac{ 2 \epsilon \cdot \alpha^{2} \omega }{3\epsilon \cdot \alpha^{2}\omega} = \frac{2}{3}.
\] 
Therefore, if $\epsilon>0$ is sufficiently small, we have  
\[\frac{1}{\alpha_\epsilon^3} \cdot (\alpha - \beta) \cdot \alpha_\epsilon^{2} \< 1.\]

We fix some $\epsilon>0$   sufficiently small so that the previous inequality holds.
Since $\alpha_\epsilon$ is a K\"ahler class on $X$, it is  an orbifold K\"ahler class on $X_{\orb}$ by Lemma \ref{lemma:construction-orb-kahler}. 
By taking the Harder-Narasimhan filtration of $\mcE_{\orb}$, and then taking a Jordan-H\"older filtration on each subquotient, we obtain  a  filtration of  orbifold reflexive coherent sheaves
\[ 0 = \mcF^0_{\orb} \subseteq  \cdots \subseteq \mcF^k_{\orb} = \mcE_{\orb} \]
such that each subquotient $\mcG^j_{\orb}:=\mcF^j_{\orb}/\mcF^{j-1}_{\orb}$ is $(\alpha_\epsilon)^{2}$-stable. 
Furthermore, if $\delta_{\epsilon,j}$ is the slope of $\mcG_{\orb}^j$ with respect to $(\alpha_\epsilon)^{2}$, then we have
\[
\delta_{\epsilon,1} \> \cdots \> \delta_{\epsilon, k}.
\]
Since $X_{\orb}$ is standard, $\mcF^{k-1}_\orb$ induces a coherent  subsheaf $\mcF^{k-1}\subseteq \mcE$, such that $\hat{c}_1(\mcG_{\orb}^k) = \hat{c}_1(\mcE) - \hat{c}_1(\mcF^{k-1})$, see (5) of Remark \ref{rmk:quotient-singularities}. 
Since $\mcE$ is generically nef with respect to $\alpha_\epsilon$, we deduce that $\delta_{\epsilon,k} \> 0$.  

By applying Lemma \ref{lemma:orbifold-res-torfree} to the subquotients $\mcG^k_\orb,..., \mcG^1_\orb$ successively, we obtain  an orbifold $X'_{\orb}$ with the natural projection  $f_{\orb}\colon  X'_{\orb} \to X_{\orb}$  so that
we have an induced filtration
\[ 0 = \mcF'^0_{\orb} \subseteq  \cdots \subseteq  \mcF'^k_{\orb} = \mcE'_{\orb} := f_{\orb}^*\mcE_{\orb}\]
such that each  subquotient $\mcG'^j_{\orb} = \mcF'^j_{\orb}/\mcF'^{j-1}_{\orb}$ is  an orbifold vector bundle.  
Here, each $\mathcal{F}'^j_\orb$ is the saturation of $f_\orb^*\mathcal{F}^j_\orb$ inside $f_\orb^*\mathcal{E}_\orb$.
By Lemma \ref{lemma:A-BG-inequality-orbifold} and Lemma \ref{lemma:modification-stable}, we have 
\[  
\left(\hat{c}_2(\mcG'^j_{\orb}) - \frac{r_j-1}{2r_j}\hat{c}_1(\mcG'^j_{\orb} )^2 \right) \cdot f^*\alpha_\epsilon\> 0, 
\] 
where  $r_j$ is the rank of $\mcG^j_{\orb}$, and $f\colon X'\to X$ is the induced morphism on quotient spaces. 
Furthermore, we have $ \hat{c}_1(\mcG_{\orb}'^{j}) \cdot (f^*\alpha_\epsilon)^{2}
=
\hat{c}_1(\mathcal{G}_\orb^j) \cdot \alpha_{\epsilon}^2. $

As a consequence, we get   
\begin{eqnarray*}
& & \hat{c}_2( \mcE ) \cdot  \alpha_\epsilon  \\
&=& \hat{c}_2(f^*_{\orb}\mcE_{\orb} ) \cdot f^*\alpha_\epsilon  \\
&=& \left(\sum_j \hat{c}_2(\mcG_{\orb}'^j ) + \sum_{j<l}\hat{c}_1(\mcG_{\orb}'^j ) \hat{c}_1(\mcG_{\orb}^{'l} ) \right)\cdot f^*\alpha_\epsilon   \\
&\> &  \frac{1}{2} \left( \sum_j \hat{c}_1(\mcG_{\orb}'^j)^2 + 2\sum_{j<l}\hat{c}_1(\mcG_{\orb}'^j ) \hat{c}_1(\mcG_{\orb}^{'l} )  
-  \sum_j \frac{1}{r_j}\hat{c}_1(\mcG_{\orb}'^j )^2  \right) \cdot f^*\alpha_\epsilon  \\
&=&\frac{1}{2} \left(\hat{c}_1(f_{\orb}^*\mcE_{\orb})^2 -  \sum_j \frac{1}{r_j}\hat{c}_1(\mcG_{\orb}'^j )^2 \right) \cdot
f^*\alpha_\epsilon  \\
&=& \frac{1}{2} \hat{c}_1(\mcE)^2 \cdot \alpha_{\epsilon} 
     - \frac{1}{2}\left( \sum_j \frac{1}{r_j}\hat{c}_1(\mcG_{\orb}'^j )^2 \right)  
     \cdot  f^*\alpha_\epsilon 
\end{eqnarray*}
Applying the Hodge index theorem (see Lemma \ref{lemma:hodge-index} below) on  $X'$, and noting that $(f^*\alpha_\epsilon)^3= \alpha_\epsilon^3$,  we obtain that  
\[
\alpha_\epsilon^{3} \cdot \left(\hat{c}_1(\mcG_{\orb}'^j) ^2 \cdot f^*\alpha_\epsilon  \right) 
\<    \left(\hat{c}_1(\mcG_{\orb}'^j) \cdot (f^*\alpha_\epsilon)^{2}\right)^2
=
(\hat{c}_1(\mathcal{G}_\orb^j) \cdot \alpha_{\epsilon}^2)^2. 
\] 
Hence we deduce that 
\begin{eqnarray*}
\left(\sum_j \frac{1}{r_j}\hat{c}_1(\mcG_{\orb}'^j )^2 \right)\cdot f^*\alpha_\epsilon 
&\<& \frac{1}{\alpha_\epsilon^3}\sum_j \frac{1}{r_j} \left(\hat{c}_1(\mcG_{\orb}'^j) \cdot (f^* \alpha_\epsilon)^{2}\right)^2 \\
&=& \frac{1}{\alpha_\epsilon^3} \sum_j r_j\delta_{\epsilon, j}^2 \\
&\<&   \frac{\delta_{\epsilon,1}}{\alpha_\epsilon^3} \sum_j r_j\delta_{\epsilon, j} \\
&=&  \frac{\delta_{\epsilon,1}}{\alpha_\epsilon^3} \cdot \hat{c}_1(\mcE_{\orb}) \cdot \alpha_\epsilon^{2}\\
 &=&   
 \frac{\delta_{\epsilon,1}}{\alpha_\epsilon^3} \cdot (\alpha - \beta) \cdot \alpha_\epsilon^{2}.
\end{eqnarray*}
Since $\epsilon>0$ is small enough, from the first paragraph, 
we obtain that 
\[
\left(\sum_j \frac{1}{r_j}\hat{c}_1(\mcG_{\orb}'^j )^2 \right)\cdot f^*\alpha_\epsilon 
\< \delta_{\epsilon,1} \cdot  \frac{ (\alpha - \beta) \cdot \alpha_\epsilon^{2} }{\alpha_\epsilon^3} 
\< \delta_{\epsilon, 1} \cdot 1  = \delta_{\epsilon,1}. 
\]
 Since  $ \sum_j r_j \delta_{\epsilon,j} =  \hat{c}_1(\mcE ) \cdot \alpha_\epsilon^2 $ and  since $\delta_{\epsilon,j} \> 0$ for $j=1,...,k$, we deduce that $ r_1 \delta_{\epsilon,1} \< \hat{c}_1(\mcE ) \cdot \alpha_\epsilon^2$. 
Hence 
\[
\left(\sum_j \frac{1}{r_j}\hat{c}_1(\mcG_{\orb}'^j )^2 \right)\cdot f^*\alpha_\epsilon 
\< \frac{1}{r_1} \hat{c}_1(\mcE) \cdot \alpha_\epsilon^{2}   
\< \hat{c}_1(\mcE) \cdot \alpha_\epsilon^{2}.
\]
Therefore, we get
\[
\hat{c}_2(\mcE) \cdot \alpha_\epsilon  
=
\hat{c}_2(f_{\orb}^*\mcE_{\orb}) \cdot f^*\alpha_\epsilon  
\> 
\frac{1}{2} \hat{c}_1(\mcE )^2 \cdot \alpha_\epsilon - \frac{1}{2} \hat{c}_1(\mcE ) \cdot \alpha_\epsilon^2 
\]
By tending $\epsilon$ to zero, we conclude that 
\[
\hat{c}_2(\mcE) \cdot \alpha  
\> 
\frac{1}{2}  \hat{c}_1(\mcE)^2 \cdot \alpha - \frac{1}{2 }\hat{c}_1(\mcE) \cdot \alpha^2  = 0.
\]
This completes the proof of the proposition. 
\end{proof}

The following version of Hodge index theorem was used in the previous proof.

\begin{lemma}
\label{lemma:hodge-index}
Let $X$ be a compact K\"ahler variety of dimension $n$ with rational singularities and let $\alpha \in H^2(X, \mathbb{R})$ be a nef class.
Then for any $\beta \in H^2(X, \mathbb{R})$, we have 
\[
\alpha^n \cdot (\beta^2 \cdot \alpha^{n-2}) \< ( \beta \cdot \alpha^{n-1} )^2.
\]
\end{lemma}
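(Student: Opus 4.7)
The plan is to reduce to the classical Hodge index inequality on a smooth compact K\"ahler manifold via resolution, and then to pass from the K\"ahler case to the nef case by perturbation. In the use of this lemma in the proof of Proposition \ref{prop:psef-c2-general}, the class $\beta$ appears as the first Chern class of a reflexive sheaf, hence is of type $(1,1)$; we may and will assume $\beta\in H^{1,1}(X,\mathbb{R})$ throughout.

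First I would take a resolution of singularities $\pi\colon Y\to X$ with $Y$ a smooth compact K\"ahler manifold of dimension $n$, and set $\tilde\alpha:=\pi^*\alpha$ and $\tilde\beta:=\pi^*\beta$. By the projection formula, the three intersection numbers appearing in the statement are preserved:
\[
\tilde\alpha^n=\alpha^n,\qquad \tilde\beta\cdot\tilde\alpha^{n-1}=\beta\cdot\alpha^{n-1},\qquad \tilde\beta^2\cdot\tilde\alpha^{n-2}=\beta^2\cdot\alpha^{n-2},
\]
so it suffices to prove the inequality on $Y$ with $\tilde\alpha$ and $\tilde\beta$ in place of $\alpha$ and $\beta$. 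The paper's notion of nefness on $X$ pulls back to the usual notion on the smooth $Y$: one pulls back the representative and the local $\mathcal{C}^\infty$ potentials $f_\epsilon$ in the definition via $\pi$.

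Next, fix a K\"ahler class $\omega_Y$ on $Y$ and set $\alpha_t:=\tilde\alpha+t\omega_Y$ for $t>0$. Since nef plus K\"ahler is K\"ahler on a smooth compact K\"ahler manifold (adding $\sqrt{-1}\partial\bar\partial f_\epsilon$ to a smooth representative of $\tilde\alpha$ and then adding $t\omega_Y$ yields, for $\epsilon<t$, a pointwise positive $(1,1)$-form), the class $\alpha_t$ is K\"ahler for every $t>0$. On the smooth K\"ahler manifold $(Y,\alpha_t)$, the classical Hodge index inequality
\[
\alpha_t^n\cdot(\tilde\beta^2\cdot\alpha_t^{n-2})\<(\tilde\beta\cdot\alpha_t^{n-1})^2
\]
holds: decompose $\tilde\beta=c\alpha_t+\gamma$ with $c=\tilde\beta\cdot\alpha_t^{n-1}/\alpha_t^n$ and $\gamma$ primitive with respect to $\alpha_t$, apply the Hodge-Riemann bilinear relation $\gamma^2\cdot\alpha_t^{n-2}\<0$ on primitive $(1,1)$-classes, and expand. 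Both sides of the displayed inequality are polynomials in $t$; letting $t\to 0$ and invoking the intersection identities above yields the desired inequality on $X$.

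The only subtle point is the second step: ensuring the compatibility of the paper's notion of nefness on the singular $X$ with the classical notion on the smooth resolution $Y$, and the fact that nef plus K\"ahler is K\"ahler on a smooth compact K\"ahler manifold. Both are standard, but should be spelled out for clarity; everything else reduces to a well-known application of the Hodge-Riemann bilinear relations and a limiting argument.
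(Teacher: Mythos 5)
Your proof is correct and follows essentially the same route as the paper: desingularize, perturb the pulled-back nef class by $\epsilon$ times a K\"ahler class, apply the classical Hodge index inequality on the smooth model, and let $\epsilon\to 0$. Your explicit restriction to $\beta\in H^{1,1}(X,\mathbb{R})$ (automatic in the application, where $\beta$ is a first Chern class) is a sensible precaution, since the classical Hodge–Riemann argument indeed requires the class to be of type $(1,1)$, a hypothesis the paper leaves implicit.
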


\begin{proof}
Let $r\colon \widetilde{X}  \to  X$ be a desingularization such that there is a K\"ahler class $\omega$ on $\widetilde{X}$.  
Let $\alpha_\epsilon = r^*\alpha+\epsilon \omega$ for $\epsilon >0$. 
Then each $\alpha_\epsilon$ is a K\"ahler class on $\widetilde{X}$. 
The classic Hodge index theorem implies that 
\[
\alpha_\epsilon^n \cdot ((r^*\beta)^2 \cdot \alpha_\epsilon^{n-2}) \< ( r^*\beta \cdot \alpha_\epsilon^{n-1} )^2.
\]
Then the lemma follows by taking the limit as $\epsilon\to 0$.
\end{proof}

The following  computation was used in the proof of Lemma \ref{lemma:bounded-c1}. 

\begin{lemma}
\label{lemma:determinant-compute}
Let $X$ be a  complex manifold and let $D\subseteq X$ be a prime divisor. 
Let $\varphi\colon \mathcal{G} \to \mathcal{F}$ be a morphism of free  coherent sheaves of rank $r$ on $X$,  which is generically isomorphic. 
We can choose  a basis $(\sigma_1,...,\sigma_r)$  of $\mathcal{G}$   around a  general point  of $D$, where $\sigma_1,...,\sigma_r$ are local sections of $\mathcal{G}$, 
such that it  satisfies  the following inductive properties. 
\begin{enumerate}
    \item $\varphi(\sigma_1)$ has the highest vanishing order  along $D$. 
    \item For any $j=1,...,r-1$, if $\sigma_1,...,\sigma_j$ have been chosen, 
          then $\sigma_{j+1}$ is chosen so that   $\varphi(\sigma_{j+1})$ has the highest vanishing order  along $D$.
\end{enumerate}
Let $d_1,...,d_r$ be the vanishing orders of $\varphi(\sigma)_1,...,\varphi(\sigma_r)$ along $D$, 
and let  
\[\det \varphi \colon \det \mathcal{G} \to \det \mathcal{F} \] 
be the natural morphism of line bundles.  
Then the vanishing order of $\det \varphi $ along $D$ is equal to $d_1+\cdots+d_r$.  
\end{lemma}

\begin{proof}   
We first remark that such a choice of basis always exists. 
Indeed, for every section $\sigma$ of $\mathcal{G}$, which does not vanish along $D$,  
the vanishing order of $\varphi(\sigma)$ along $D$ is at most equal to the one of $\det \varphi $. 

We will prove by induction on $r$. The lemma is true if $r=1$.  
Assume that it holds for ranks smaller than $r$. 
Let $\delta_j=\varphi(\sigma_j)$ for $j=1,...,r$.  
We are free to shrink $X$ around a general point  of $D$ in the following argument. 
In particular, we may assume that $D$ is defined by an equation $f=0$ for some holomorphic function $f$ on $X$, and that $\varphi$ is isomorphic outside $D$.   
Then $f^{-d_1} \cdot \delta_1$ is a   section of $\mathcal{F}$, which does not vanish along $D$. 
We can consider the following morphism $\psi$
\[
\mathcal{E}  = \mathcal{O}_X \cdot e_1 \oplus \cdots \oplus \mathcal{O}_X \cdot e_r \to \mathcal{F}
\]
such that $\psi(e_1) = f^{-d_1} \cdot \delta_1$ and $\psi(e_j) = \delta_j$ for $j=2,...,r$. 
There is a natural morphism $\eta$ from $\mathcal{G}$ to $\mathcal{E}$ which associates $\sigma_1$ to $f^{d_1}\cdot e_1$ and $\sigma_j$ to $e_j$ for $j=2,...,r$.  
Then we have $\varphi = \psi \circ \eta$, 
and the vanishing  order along $D$ of $\det \varphi $ is equal to the one of $\det \psi$ plus $d_1$.  

We note that 
\[\mathcal{F}' := \mathcal{F}/<f^{-d_1} \cdot \delta_1> \mbox{ and } \mathcal{G'} :=  \mathcal{O}_X \cdot \sigma_2 \oplus \cdots \oplus \mathcal{O}_X \cdot \sigma_r 
\cong \mathcal{G}/<\sigma_1> 
\] are free coherent sheaves  around a general point of $D$. 
Moreover, there is a  natural  morphism  $\varphi' \colon \mathcal{G}' \to \mathcal{F}' $  induced by $\varphi$, which is  an isomorphism outside $D$.  
The vanishing order along $D$ of $\det \varphi'$ is equal to the one of $\det \psi$.

Let  $e$ be a section of $\mathcal{G'}$ which does not vanish along $D$,  
let $\delta = \varphi(e)$ be the section of $\mathcal{F}$, 
and let $d$ be its vanishing order along $D$.  
Then $d\le d_1$ by the condition (1) on $\sigma_1$. 
The vanishing order   $d' $ of $ \varphi'(e)$ along $D$ is equal to 
\[
\max \{ \mbox{vanishing order along } D \mbox{ of } \delta  + h \cdot f^{-d_1} \cdot \delta_1 \ : \ h \mbox{ holomorphic around } D \}.  
\]
We claim that $d'=d$. 
To prove the claim, we take a holomorphic function $h$ around   $D$, 
so that the vanishing order of $\delta' := \delta + h\cdot f^{-d_1} \cdot \delta_1$ is equal to  $d'$. 
Assume by contradiction that $d' > d$.  
Then  the vanishing order   of $h$ along $D$ is equal to  $d$. 
In particular,  $h^{-1} \cdot  f^{d_1}$ is a holomorphic function around general points of $D$. 
Then  we have   
\[
h^{-1} \cdot  f^{d_1} \cdot \delta'   
= \delta_1 +  h^{-1} \cdot  f^{d_1} \cdot \delta   
= \varphi(\sigma_1  + h^{-1} \cdot  f^{d_1} \cdot  e)
\]
The LHS above is a section of $\mathcal{F}$ with vanishing order  
$d_1 + d'-d > d_1$   
along $D$. 
The RHS shows that this section is in the image of $\varphi$.  
Moreover, $\sigma_1  + h^{-1} \cdot  f^{d_1} \cdot  e$   does not vanish along $D$, for $ (\sigma_1,...,\sigma_r)$ is a basis $\mcG$. 
This contradicts   the condition (1) on $\sigma_1$.  

The claim then implies that the basis $(\sigma_2,...,\sigma_r)$ of $\mathcal{G}'$ satisfies the two conditions of the lemma, 
with respect to the morphism $\varphi'$. 
By induction hypothesis, the vanishing order of  $\det \varphi'$ along $D$ is equal to $d_2+\cdots + d_r$. 
This completes the induction and the proof of the lemma.  
\end{proof}

\section{Comparison of Chern classes} 
\label{section:compare-Chern}

Consider a standard complex  orbifold  $X_{\orb}$ and its quotient space $X$. 
We have seen in (5) of Remark \ref{rmk:quotient-singularities} that every orbifold vector bundle $\mcE_{\orb}$ induces a reflexive sheaf $\mcE$ on $X$.  
The objective of this section is to compare the orbifold Chern classes of $ \mcE_{\orb} $ and the  Chern classes of $\widetilde{\mcE}$, where $\widetilde{\mcE}$ is a vector bundle on  a desingularization $r\colon \widetilde{X} \to X$ such that $(r_*\widetilde{\mcE})^{**} = \mcE$.

\subsection{Relative de Rham cohomology}
Let $M$ be a $\mathcal{C}^\infty$ manifold. 
We denote by $\mathscr{D}'^i$  the sheaf of real currents of degree $i$, i.e.
 differential $i$-forms with  real distribution coefficients. 
Then there is a complex 
\[ \cdots \to D^{i}(M) \overset{\mathrm{d}}{\to} D^{i+1}(M) \to \cdots,
\] 
where $D^i(M) = \Gamma(M, \mathscr{D}'^i(M))$ is the group of   global sections of the sheaf $\mathscr{D}'^i$. 
The cohomology groups of the previous complex are denoted by  $H^{i}_{\mathscr{D}'}(M, \mathbb{R})$. 
Then there are natural isomorphisms $H^{i}_{\mathscr{D}'}(M, \mathbb{R}) \cong H^{i}(M, \mathbb{R})$ known as the  de Rham isomorphisms. 
We consider the following complex
\[ \cdots \to D^{i}(M,U) \overset{\mathrm{d}}{\to} D^{i+1}(M,U) \to \cdots  \]
where $U\subset M$ is an open set,  $D^{i}(M,U) = D^i(M) \oplus D^{i-1}(U)$ and the differentials are defined in the following way: 
\[ \mathrm{d}(\omega, \eta) = (\mathrm{d}\omega, \omega|_U - \mathrm{d}\eta). \]
We  denote the cohomology  groups of the previous complex by $H^{i}_{\mathscr{D}'}(M,U, \mathbb{R})$.  
Then there is a long exact sequence 
\[
\cdots \to H^{i}_{\mathscr{D}'}(M,U, \mathbb{R}) \to H^{i}_{\mathscr{D}'}(M, \mathbb{R}) \overset{\beta}{\to} H^{i}_{\mathscr{D}'}(U, \mathbb{R}) \to \cdots
\]
such that $\beta$ is the restriction map. 
We refer to \cite[Page 78-79]{BottTu1982} for detailed discussion on relative de Rham cohomology. 
We note that  \cite{BottTu1982}  only treats de Rham cohomology of differential forms. 
But the proof of \cite[Claim 6.48]{BottTu1982} also holds for de Rham cohomology of currents in our setting. 
By applying the five lemma on the long exact sequences of relative cohomology,  
we obtain  natural  isomorphisms 
\[ H^{i}_{\mathscr{D}'}(M,U, \mathbb{R}) \cong H^{i}(M,U, \mathbb{R}). \]

\subsection{Relative Chern classes}\label{subsec:relative-class}

We aim to compare orbifold Chern classes and Chern classes on a desingularization. 
Similar topics have been studied before, especially for surfaces, see for example  \cite{Wahl1993}, \cite{Bla96} and \cite{Langer2000}.  
Inspired by their works, we will introduce the relative Chern classes in any dimension. \\

Let $X$ be a normal irreducible complex analytic variety with quotient singularities,  and $r\colon \widetilde{X} \to X$ a desingularization which is an isomorphism over  $X_{\sm}$. 
We denote by $E\subseteq \widetilde{X}$ the exceptional locus of $r$, and define $\widetilde{X}^\circ = \widetilde{X}\setminus E$.


We denote the standard orbifold structure of $X$  with an atlas of charts by $X_{\orb} = \{(V_i, G_i) \}$. 
Let $\mcE$ be a reflexive sheaf on $X$ which induces an orbifold vector bundle $\mcE_{\orb}= \{\mcE_i\}$ on $X_{\orb}$. 
Assume that there is a vector bundle $\widetilde{\mcE}$ on $\widetilde{X}$ such that $(r_*(\widetilde{\mcE}))^{**} = \mcE$. 

Let $h_{X_{\sm}}$ be a Hermitian metric on the vector bundle $\mcE|_{X_{\sm }}$ which induces an orbifold Hermitian metric $\{h_j\}$ on $\mcE_{\orb}=\{\mcE_j\}$. On one hand, by \cite[Lemma 1.15]{Bla96}, the metric Chern classes $c_i(h_{X_{\sm }})$ induce closed currents $r^*c_i(h)$ on $\widetilde{X}$. 
On the other hand, by definition, $c_i(h_{X_{\sm}})$ defines the orbifold Chern classes $\hat{c}_i(\mcE) = \hat{c}_i(\mcE_{\orb})$. 
See  \cite[Section 2]{Bla96} for more details.

Let $\tilde{h}$ be a Hermitian metric on $\widetilde{\mcE}$, and $c_i(\tilde{h})$ the metric Chern classes.  
Then $c_i(\tilde{h})-r^*c_i(h)$ is exact on  $\widetilde{X}^\circ$. 
That is, there is some $\eta \in D^{2i-1}(\widetilde{X}^\circ)$ such that 
\[(c_i(\tilde{h})-r^*c_i(h) )|_{\widetilde{X}^\circ}  = \mathrm{d}\eta.
\]
Hence it corresponds to the relative de Rham current cohomology class
\[ ( c_i(\tilde{h})-r^*c_i(h) , \eta) \in H^{2i}_{\mathscr{D}'}(\widetilde{X},   \widetilde{X}^\circ, \mathbb{R}).
\]  
We notice that this class is independent of the choices of the metrics. 
We define the relative Chern class $c_i(\widetilde{\mcE}, r)$ as  the relative cohomology class 
\begin{equation}\label{eqn:relative-class-E}
 c_i(\widetilde{\mcE}, r):=   c_i(\tilde{h})-r^*c_i(h) = c_i(\widetilde{\mcE}) - r^*\hat{c}_i(\mcE) \in H^{2i}(\widetilde{X},  \widetilde{X}^\circ, \mathbb{R})
\end{equation}
 via  the de Rham isomorphism.  
Here, by abuse of notation,  \eqref{eqn:relative-class-E} signifies that $c_i(\widetilde{\mcE}, r)$ 
belongs to the natural image of $H^{2i}(\widetilde{X},  \widetilde{X}^\circ, \mathbb{R})$ in $H^{2i}(\widetilde{X}, \mathbb{R})$.

Let $T_{\widetilde{X}}$ be the tangent bundle of $\widetilde{X}$. 
Then $(r_*T_{\widetilde{X}})^{**}=T_X$. 
We set  $\hat{c}_i(X) := \hat{c}_{i}(T_X)$. 
Then  we can define the following relative Chern classes  
\begin{equation}\label{eqn:relative-class-T}
    c_i(\widetilde{X},r):=c_i(T_{\widetilde{X}}, r).
\end{equation}
We also define the following classes
\begin{equation*}
    \label{eqn:c1-square}
    c_1^2(\widetilde{\mcE},r)  := (c_1( \widetilde{\mcE}))^2-(r^*\hat{c}_1(\mcE))^2 \mbox{ and }
c_1^2(\widetilde{X},r)  : = (c_1(\widetilde{X}))^2-(r^*\hat{c}_1(X))^2  
\end{equation*}
as elements  in $H^4(\widetilde{X},  \widetilde{X}^\circ, \mathbb{R})$, as well as 
\begin{equation}\label{eqn:l-function}
     \ell(\widetilde{\mcE},r):= c_1^2(\widetilde{\mcE},r) + c_2(\widetilde{\mcE},r)  
\mbox{ and }
\ell(\widetilde{X},r):= c_1^2(\widetilde{X},r) + c_2(\widetilde{X},r).
\end{equation}

There  are natural isomorphisms 
\[
H^{2i}(\widetilde{X},  \widetilde{X}^\circ, \mathbb{R}) \cong H^{BM}_{2n-2i}(E, \mathbb{R}),
\]
where  $H^{BM}$ is the Borel-Moore homology group, see Subsection \ref{subsec:BM-homology} below for more details.  
Particularly, when $\widetilde{X}$  is a surface, we have 
\[  H^{BM}_{0}(E, \mathbb{R}) \cong \mathbb{R}^d,\]
where $d$ is the number of singular points in $X$. 
Furthermore, since $c_1^2  + c_2$ is  invariant under blowups of points on smooth surfaces, we see that 
$\ell(\widetilde{X},r)$ depends only on the singularities of $X$ when $X$ is a surface.

\subsection{Borel-Moore homology}\label{subsec:BM-homology}
As we have seen in the previous subsection, the relative Chern classes lie in some groups of relative cohomology, which are canonically isomorphic to certain groups of Borel-Moore homology. 
In this subsection, 
we recall some elementary results concerning Borel-Moore homology.  
For more details, we refer to \cite[Chapter 19]{Fulton} and the references therein.

Let $X$ be a complex analytic space of pure dimension $n$. 
For simplicity, we always assume that it is an open subset of a compact complex analytic space $M$. 
Furthermore, if ${X}'$ is the Euclidean closure of $X$ in $M$, we assume that $( {X}',  {X}'\setminus X)$ is homeomorphic to a   CW-complex pair.  
There are several interpretation of $H^{BM}_\bullet(X, \mathbb{R})$: 
\begin{enumerate}
\item Assume that $\overline{X}$ is a compact space  containing $X$ as an open subset, such that $(\overline{X}, \overline X\setminus X)$ is homeomorphic to a CW-complex pair. 
Then $H^{BM}_\bullet(X, \mathbb{R}) = H_\bullet (\overline{X}, \overline{X}\setminus X, \mathbb{R}).$

\item Assume that $X$ is embedded into an oriented manifold $Y$ of real dimension $d$, as a closed subset. 
Then  $H^{BM}_\bullet(X, \mathbb{R}) = H^{d-\bullet} (Y, Y\setminus X, \mathbb{R}).$

\item $H^{BM}_\bullet(X, \mathbb{R})$ is dual to the cohomology with compact support $H^\bullet_c(X,\mathbb{R})$. 
\end{enumerate}

The top Borel-Moore homology group $H^{BM}_{2n}(X, \mathbb{R})$ is generated by the fundamental classes  $[X_i]$, where the $X_i$'s are the irreducible components of $X$.  
The dimension of the $0$-th  Borel-Moore homology group $H^{BM}_{0}(X, \mathbb{R})$ is equal to the number of compact connected component of $X$.  
Particularly, when $X$ is compact connected,  we fix the natural isomorphism $H^{BM}_{0}(X, \mathbb{R}) \cong \mathbb{R}$ by sending the class of a point to $1$.

\begin{lemma}
\label{lemma:BM-commute-basechange}
Let $f\colon X\to Y$ be a proper morphism of complex analytic varieties. 
Let $V\subseteq Y$ be a non empty  connected Euclidean open subset and $U=f^{-1}(V)$. 
Assume that $V$ is regular enough, \textit{e.g.} its closure and boundary are homeomorphic to finite CW-complexes. 
Then  for any integer $k\> 0$, we have the following commutative diagram 
\begin{equation*}
  \xymatrixcolsep{3pc}
  \xymatrix{ 
  H^{BM}_k(X,\mathbb{R}) \ar[r] \ar[d] & H^{BM}_k(U,\mathbb{R}) \ar[d]\\
  H^{BM}_k(Y,\mathbb{R}) \ar[r] &  H^{BM}_k(V,\mathbb{R})
  }
\end{equation*}
where the horizontal arrows are restriction maps and the vertical arrows are proper pushforwards.
\end{lemma}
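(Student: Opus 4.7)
The plan is to reduce the statement to a standard base change property in compactly supported cohomology, via the canonical duality between Borel-Moore homology and compactly supported cohomology. First I would invoke the isomorphism $H^{BM}_k(\,\cdot\,, \mathbb{R}) \cong \mathrm{Hom}_{\mathbb{R}}(H^k_c(\,\cdot\,, \mathbb{R}), \mathbb{R})$ for each of $X$, $Y$, $U$, $V$. For $V$ this uses the stated regularity hypothesis (the three interpretations of $H^{BM}_\bullet$ recalled just before the lemma coincide in this case); for $U$ it will follow from $U = f^{-1}(V)$ together with the properness of $f$, since a compactification $\overline{Y}$ of $Y$ with $(\overline{V}, \overline{V}\setminus V)$ a CW pair can be pulled back to give a compactification $\overline{X}$ such that $\overline{U} := \bar{f}^{-1}(\overline{V})$ inherits the required CW-pair property.

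Next, under this duality the two classes of morphisms appearing in the diagram have well-known Verdier-type duals: the restriction $H^{BM}_k(X, \mathbb{R}) \to H^{BM}_k(U, \mathbb{R})$ associated with the open immersion $j_U \colon U\hookrightarrow X$ is dual to the extension-by-zero map $(j_U)_!\colon H^k_c(U, \mathbb{R}) \to H^k_c(X, \mathbb{R})$, and likewise for $j_V$. The proper pushforward $f_*\colon H^{BM}_k(X, \mathbb{R}) \to H^{BM}_k(Y, \mathbb{R})$ is dual to the pullback $f^* \colon H^k_c(Y, \mathbb{R}) \to H^k_c(X, \mathbb{R})$, which is defined at the level of compactly supported cochains precisely because properness guarantees that the preimage of a compact set is compact; the same applies to $(f|_U)^* \colon H^k_c(V, \mathbb{R}) \to H^k_c(U, \mathbb{R})$. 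So the asserted square in Borel-Moore homology is the linear transpose of the square
\[
\xymatrix{
H^k_c(X, \mathbb{R}) & H^k_c(U, \mathbb{R}) \ar[l]_{(j_U)_!} \\
H^k_c(Y, \mathbb{R}) \ar[u]_{f^*} & H^k_c(V, \mathbb{R}) \ar[l]_{(j_V)_!} \ar[u]_{(f|_U)^*}
}
\]
and it suffices to check the commutativity of this square.

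The commutativity in compactly supported cohomology I would verify directly at the cochain level: given a compactly supported cochain $\omega$ on $V$, the image $f^*((j_V)_!\omega)$ is supported in $f^{-1}(\mathrm{supp}\,\omega) \subseteq f^{-1}(V) = U$, hence it is the extension by zero of its restriction to $U$, which is precisely $(j_U)_!\bigl((f|_U)^*\omega\bigr)$. Passing to cohomology and then dualizing gives the desired commutativity of the Borel-Moore diagram.

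The only real difficulty will be the verification that the duality $H^{BM}_k(U, \mathbb{R}) \cong \mathrm{Hom}(H^k_c(U, \mathbb{R}), \mathbb{R})$ is valid under the regularity hypothesis on $V$, i.e. that one may arrange compatible CW-pair compactifications $(\overline{U},\overline{U}\setminus U)$ and $(\overline{V},\overline{V}\setminus V)$. This is where properness of $f$ is essential: once $\overline{Y}\supseteq Y$ is chosen, the closure of the graph of $f$ in $\overline{Y}\times$ (any compactification of $X$), together with a triangulation of analytic sets, furnishes the desired compactification of $X$ and the continuous extension of $f$, from which the CW structure on $(\overline{U},\overline{U}\setminus U)$ is inherited by pullback.
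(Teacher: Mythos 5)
Your argument is correct in substance, but it follows a genuinely different route from the paper. The paper's proof is more elementary and self-contained: it passes to the one-point compactifications $\overline{X}=X\cup\{\infty_X\}$, $\overline{Y}=Y\cup\{\infty_Y\}$, notes that $f$ extends to a proper continuous map of pairs (using $X\setminus U=f^{-1}(Y\setminus V)$, so the closed complements are respected), identifies all four Borel--Moore groups with relative singular homology groups $H_k(\overline{X},\{\infty_X\})$, $H_k(\overline{X},(X\setminus U)\cup\{\infty_X\})$, etc., and then the square is literally the functoriality square for maps of pairs in singular homology --- no duality is invoked. You instead dualize to compactly supported cohomology and check the transposed square at the level of the direct limit $\varinjlim_K H^k(\,\cdot\,,\,\cdot\setminus K)$; this is valid over $\mathbb{R}$ and has the merit of making the role of properness completely transparent (preimages of compacta are compact and contained in $U$). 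The cost is that you must know the duality $H^{BM}_k\cong\mathrm{Hom}(H^k_c,\mathbb{R})$ is natural for open restrictions (dual of extension by zero) and proper pushforwards (dual of $f^*$), and that $H^k_c(U)$ is correctly computed under the paper's definition of $H^{BM}_k(U)$; verifying these compatibilities within the paper's framework essentially reproduces the one-point compactification computation that the paper uses directly, so your appeal to them as ``well-known'' is acceptable but hides where the work sits. Two small points of care: extension by zero is not defined on individual singular cochains, so the commutativity in $H^\bullet_c$ is cleanest via excision and the colimit over compact $K\subseteq V$ rather than literal cochain manipulation; and for the duality on $U$ you do not actually need a separate CW compactification of $U$ --- the paper's identification $H^{BM}_k(U)\cong H_k(\overline{X},(X\setminus U)\cup\{\infty_X\})$ together with the universal coefficient theorem over a field suffices, so your graph-closure/triangulation detour can be avoided.
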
 

\begin{proof}
We set $S=X\setminus U$ and $T = Y\setminus V$. 
Let $\overline{X} = X \cup \{ \infty_X\}$ and   $\overline{Y} = Y \cup \{ \infty_Y\}$  be the one-point compactifications.    
Then $f$ extends naturally to a proper continuous map from $\overline{X}$ to $\overline{Y}$, mapping $\infty_X$ to $\infty_Y$.
We have the following natural isomorphisms 
\[
H^{BM}_k(X,\mathbb{R}) \cong H_k(\overline{X}, \{\infty_X\},\mathbb{R}), \ H^{BM}_k(U,\mathbb{R}) \cong H_k(\overline{X}, S \cup \{\infty_X\},\mathbb{R}),
\]
and 
\[
H^{BM}_k(Y,\mathbb{R}) \cong H_k(\overline{Y}, \{\infty_Y\},\mathbb{R}), \ H^{BM}_k(V,\mathbb{R}) \cong H_k(\overline{X}, T \cup \{\infty_X\},\mathbb{R}).
\]
We remark that there is a commutative diagram 
\begin{equation*}
  \xymatrixcolsep{3pc}
  \xymatrix{ 
   H_k(\overline{X}, \{\infty_X\},\mathbb{R}) \ar[r] \ar[d] & H_k(\overline{X}, S \cup \{\infty_X\},\mathbb{R}) \ar[d]\\
  H_k(\overline{Y}, \{\infty_Y\},\mathbb{R}) \ar[r] &  H_k(\overline{Y}, T \cup \{\infty_Y\},\mathbb{R})
  }
\end{equation*}
where the horizontal arrows are natural morphisms of relative homology, and the virtical arrows are pushforwards of singular homology. 
This implies the diagram of the lemma.
\end{proof}

For a closed analytic subspace $E\subseteq X$ and for any integers $p,q \> 0$, there is a cap product operation
\[
\smallfrown\colon  H^{p}(X,X\setminus E, \mathbb{R}) \times H^{BM}_{p+q}(X, \mathbb{R}) \to H^{BM}_q(E, \mathbb{R}). 
\]
We have the following statement. 

\begin{lemma}
\label{lemma:BM-projection-formula}
Let $f\colon X\to Y$ be a proper morphism of complex analytic varieties. 
Let $ V\subseteq Y$ be a compact analytic subspace.  
Let $\sigma \in H^{BM}_{\bullet}(X, \mathbb{R})$ and 
$\ell \in H^{\bullet}(Y, Y\setminus V, \mathbb{R})$. 
Then the following projection formula holds:
\[
f_*\sigma \smallfrown \ell = f_*(\sigma \smallfrown f^*\ell) \in H^{BM}_{\bullet}(V, \mathbb{R}).
\]
\end{lemma}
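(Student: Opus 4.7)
The plan is to reduce the projection formula to the classical projection formula for cap products in singular (co)homology of pairs. Set $U = f^{-1}(V) \subseteq X$, which is compact since $f$ is proper and $V$ is compact. Passing to one-point compactifications $\overline{X} = X \cup \{\infty_X\}$ and $\overline{Y} = Y \cup \{\infty_Y\}$, the map $f$ extends to a continuous proper map $\overline{f}\colon \overline{X} \to \overline{Y}$ sending $\infty_X$ to $\infty_Y$. Under the identifications
\[
H^{BM}_{\bullet}(X,\mathbb{R}) = H_{\bullet}(\overline{X}, \{\infty_X\}, \mathbb{R}), \qquad H^{BM}_{\bullet}(V,\mathbb{R}) = H_{\bullet}(V, \mathbb{R}),
\]
and analogously for $Y$, the two sides of the desired equality translate into statements about singular (co)homology of CW-pairs.

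Next, I would reformulate the cohomology class $\ell \in H^\bullet(Y, Y\setminus V, \mathbb{R})$ and its pullback. Since $\overline{f}^{-1}(Y\setminus V) = X\setminus U$ inside $\overline{X}$, the map $\overline{f}$ gives a morphism of pairs $(\overline{X}, \overline{X}\setminus U \cup \{\infty_X\}) \to (\overline{Y}, \overline{Y}\setminus V \cup \{\infty_Y\})$, and $f^*\ell$ is exactly the pullback by this map of pairs. By excision, $H^\bullet(Y, Y\setminus V, \mathbb{R}) \cong H^\bullet(\overline{Y}, \overline{Y}\setminus V \cup \{\infty_Y\}, \mathbb{R})$, and the same holds for $X$ and $U$.

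With these identifications in place, the equality to prove becomes the standard naturality/projection formula for cap products in singular (co)homology of pairs. Precisely, for a map of pairs $g\colon (A, A') \to (B, B')$, a class $\sigma \in H_n(A, A')$, and a class $\ell \in H^p(B, B')$, one has the projection (or naturality) identity
\[
g_*(\sigma \smallfrown g^*\ell) = g_*\sigma \smallfrown \ell \in H_{n-p}(B).
\]
Applied with $g = \overline{f}$, $A = \overline{X}$, $A' = \{\infty_X\}$, $B = \overline{Y}$, $B' = \{\infty_Y\}$, and observing that the target cap product lands in $H_{\bullet}(V, \mathbb{R}) = H^{BM}_{\bullet}(V, \mathbb{R})$ (since the cap product with a class supported on the pair $(\overline{Y}, \overline{Y}\setminus V \cup \{\infty_Y\})$ factors through $V$), this gives precisely the stated formula.

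The only genuinely delicate point is verifying the compatibility of the cap product with the compactification/excision identifications, i.e.\ that the cap product $H^{BM}_\bullet(X,\mathbb{R}) \times H^\bullet(X, X\setminus U,\mathbb{R}) \to H^{BM}_\bullet(U,\mathbb{R})$ defined via the one-point compactification agrees (under excision) with the corresponding cap product of CW-pairs; this is a standard check once the CW-structure hypotheses in Subsection \ref{subsec:BM-homology} are invoked. Given this compatibility, the proof is an immediate application of the classical projection formula.
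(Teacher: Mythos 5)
Your proposal is correct and follows essentially the same route as the paper: the paper's own proof is exactly the one-line reduction via one-point compactifications to the usual projection formula for singular (co)homology, which you have simply spelled out in more detail (including the excision and compatibility checks the paper leaves implicit).
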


\begin{proof}
By taking one-point compactifications of $X$ and $Y$, the problem is reduced to the usual projection formula for homology and cohomology.
\end{proof}

Assume that $X$ is a complex manifold of dimension $n$. 
For any closed analytic subspace $S\subseteq X$, 
for each integer $p\>0$, there is an isomorphism 
\begin{equation}\label{eqn:BM-isomorphism}
H^{2n-p}(X, X\setminus S, \mathbb{R}) \to H^{BM}_p(S,\mathbb{R})
\end{equation}
given by the cap product with  $[X]$. 
If $S$ is irreducible and has dimension $k$, then  there is a unique cohomology class 
\[ cl^X(S) \in H^{2n-2k}(X,X\setminus S, \mathbb{R}) \]
such that the fundamental class of $[S]$ is given by 
\[[S] = cl^X(S) \smallfrown [X]   \in H^{BM}_{2k}(S, \mathbb{R}).\]  
In particular, when $S$ is a submanifold of $X$, then $cl^X(S)$ of just the Thom class of $S$ in $X$.

Let  $E\subseteq X$ be a closed analytic subspace.  
We set $\Delta = E\cap S$. 
For any class $\ell\in H^{q}(X,X\setminus E, \mathbb{R})$, any class $\sigma\in H^{BM}_p(S, \mathbb{R})$, we define      
\begin{equation}
\ell \smallfrown  \sigma : =   (\ell \smallsmile  u) \smallfrown [X].
\end{equation} 
as an element in 
$H^{BM}_{p-q}(\Delta, \mathbb{R}),$ where $u\in H^{2n-p}(X, X\setminus S, \mathbb{R})$ is the unique element such that $\sigma= u \smallfrown [X]$, and $\smallsmile$ is the  cup product of cohomology.

\subsection{Comparison with $\mathbb{Q}$-Chern classes.}

In this subsection, we will compare the orbifold Chern classes in our context and the $\mathbb{Q}$-Chern classes defined in \cite[Section 3]{Mumford83} or \cite[Chapter 10]{Kol92}, on projective surfaces $X$ with quotient singularities.   Let $\mcE$ be a reflexive sheaf on $X$ which is also an orbifold vector bundle on the standard orbifold structure $X_{\orb}$. 
We note that the $\mathbb{Q}$-Chern classes, which we denote by $\overline{c}_i(\mcE)$, of \cite[Chapter 10]{Kol92}, is a cycle class in $A_i(X)\otimes \mathbb{Q}$, where $A_i$ is the Chow group of cycles of dimension $i$. 
However, the orbifold Chern classes $\hat{c}_i(\mcE)$ are cohomology classes.  
In order to compare them, we will map them into a same set. 
Let $[X]\in H_{4}(X, \mathbb{R})$ be the fundamental class of $X$, and let $[\overline{c}_i(\mcE)]$ be the fundamental class  of $\overline{c}_i(\mcE)$ in $H_{2i}(X,\mathbb{R})$. 

\begin{lemma}
    \label{lemma:Q-Chern-class}
With the notation above, we have 
\begin{equation}
[\overline{c}_i(\mcE) ] = \hat{c}_i(\mcE) \smallfrown [X] \in H_{4-2i}(X, \mathbb{R}).  
\end{equation} 
As a consequence, we can apply the calculation of \cite[Chapter 10]{Kol92} for $\mathbb{Q}$-Chern classes  to orbifold Chern classes in our context for projective surfaces. 
\end{lemma}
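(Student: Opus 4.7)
The plan is to verify the identity locally on a cover of $X$ by the smooth locus $X_{\sm}$ and small orbifold neighborhoods of the singular points, then glue. Both sides are classes in $H_{4-2i}(X,\mathbb{R})$ defined by gluing compatible local data, so it will suffice to prove the identity after restricting to each member of such a cover.

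First I would dispose of the smooth locus: on $X_{\sm}$ the sheaf $\mcE$ is an ordinary holomorphic vector bundle, the orbifold Chern classes reduce to ordinary Chern classes, and the $\mathbb{Q}$-Chern classes reduce to ordinary cycle-theoretic Chern classes. On $X_{\sm}$ the identity is then the classical Poincar\'e duality $[c_i(\mcE|_{X_{\sm}})] = c_i(\mcE|_{X_{\sm}}) \smallfrown [X_{\sm}]$.

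Next, around each singular point $p \in X_{\sing}$ I would fix a standard orbifold chart $(V, G, \pi)$ with $\pi(0)=p$ and image $U=\pi(V)$, where $G$ acts on $V\subseteq\mathbb{C}^2$ freely in codimension one, and let $\mcE_V$ denote the $G$-equivariant vector bundle on $V$ corresponding to $\mcE|_U$ under (5) of Remark \ref{rmk:quotient-singularities}. By the definition of orbifold Chern classes via $G$-invariant curvature forms,
\[
\pi^*\hat{c}_i(\mcE|_U) \;=\; c_i(\mcE_V) \;\in\; H^{2i}(V,\mathbb{R}),
\]
while the definition of $\mathbb{Q}$-Chern classes of \cite[Chapter 10]{Kol92} in the presence of quotient singularities reads
\[
[\overline{c}_i(\mcE|_U)] \;=\; \tfrac{1}{|G|}\,\pi_*\bigl(c_i(\mcE_V)\smallfrown [V]\bigr) \;\in\; H_{4-2i}(U,\mathbb{R}).
\]
Since $\pi$ is finite surjective of degree $|G|$ we have $\pi_*[V]=|G|\cdot [U]$, and the projection formula in (Borel-Moore) homology gives
\[
\pi_*\bigl(c_i(\mcE_V)\smallfrown [V]\bigr) \;=\; \pi_*\bigl(\pi^*\hat{c}_i(\mcE|_U)\smallfrown [V]\bigr) \;=\; \hat{c}_i(\mcE|_U)\smallfrown \pi_*[V] \;=\; |G|\,\bigl(\hat{c}_i(\mcE|_U)\smallfrown [U]\bigr),
\]
so the identity holds on $U$ after dividing by $|G|$. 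Gluing the equalities on the smooth locus and on each $U$ is then immediate, since both sides are constructed from local data that agrees on overlaps.

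The main obstacle will be reconciling the normalization conventions between Koll\'ar's cycle-theoretic definition of the $\mathbb{Q}$-Chern classes and the orbifold curvature-form definition used here on a single chart. Once the factor $1/|G|$ and the degree of $\pi$ are tracked carefully, the projection formula carries out the rest of the identification, and no global argument beyond compatibility on overlaps is required.
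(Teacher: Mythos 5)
There is a genuine gap, and it sits in the very first step. Your strategy is to check the identity on an open cover (the smooth locus plus small orbifold neighborhoods) and then ``glue,'' on the premise that both sides are ``defined by gluing compatible local data.'' That premise is false: an equality of classes in $H_{4-2i}(X,\mathbb{R})$ is a global statement, and (Borel--Moore) homology is not a sheaf, so there is no local-to-global principle of the kind you invoke. The failure is most blatant for the case that actually matters here, $i=2$: then $\hat{c}_2(\mcE)\smallfrown[X]$ and $[\overline{c}_2(\mcE)]$ live in $H_0(X,\mathbb{R})\cong\mathbb{R}$, and the restriction of either side to any chart $U$ (or to $X_{\sm}$) lands in $H^{BM}_0$ of a noncompact space, where it vanishes. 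So your local verifications are vacuously true and carry no information about the global number you need to compare; Mayer--Vietoris does not let you recover the equality from them. A second, related problem is your chart-level ``definition'' of Koll\'ar's class, $[\overline{c}_i(\mcE|_U)]=\tfrac{1}{|G|}\pi_*\bigl(c_i(\mcE_V)\smallfrown[V]\bigr)$: the actual definition in \cite[Chapter 10]{Kol92} (following Mumford) is global, via a single finite Galois cover $p\colon\widetilde X\to X$ on which $(p^*\mcE)^{**}$ is locally free, with Chern classes taken cycle-theoretically (in Chow cohomology) on the generally \emph{singular} normal surface $\widetilde X$. Identifying that global, cycle-theoretic datum with the curvature-form data on standard charts is precisely the content of the lemma, not something you may quote as the definition.

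For comparison, the paper's proof works globally: it takes Mumford's global cover $p\colon\widetilde X\to X$, a $G$-equivariant resolution $\widetilde Y\to\widetilde X$ with quotient $Y=\widetilde Y/G$, and exploits that on the smooth projective $\widetilde Y$ all definitions of Chern classes agree, so one can choose a $G$-invariant cycle $T$ representing them; pushing $T$ down along the two sides of the square and applying the projection formula (in Chow groups on one side, for the orbifold chart $(\widetilde Y,G)$ of $Y$ and the bimeromorphic map $Y\to X$ on the other) yields both $[\overline{c}_i(\mcE)]$ and $\hat{c}_i(\mcE)\smallfrown[X]$ as $\tfrac{1}{|G|}p_*f_*[T]$. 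If you want to salvage your approach, the covering-and-projection-formula computation you do on a chart is the right kind of local model, but it must be run on a global cover (and, since that cover is singular, after an equivariant resolution where cycle-theoretic and metric Chern classes can be matched), not chart by chart.
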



\begin{proof}
Let $i\in \{0,1,2\}$.     
We recall the definition in  \cite[Chapter 10]{Kol92}. 
There is a finite   cover $p\colon \widetilde{X} \to X$, called global cover in \cite[Section 2]{Mumford83},  satisfying the following conditions. 
\begin{enumerate}
    \item $p$ is Galois of group $G$. 
    \item $\widetilde{X}$ is a normal surface, hence Cohen-Macaulay. 
    \item  There is a finite family of orbifold charts  $X_{\orb}=\{(V_i,G_i)\}$, such that for every $\tilde{x}\in \widetilde{X}$, there is an open Euclidean neighborhood $\widetilde{U}$ of $\tilde{x}$, such that $p|_{\widetilde{U}}$ factors through some $V_i$.
   \end{enumerate}
As a consequence, we deduce that  $ \widetilde{\mcE}  = (p^*\mcE)^{**}$ is locally free.  
We can view the Chern class of $\widetilde{\mcE}$, which we denote by $\tilde{c}_i(\widetilde{\mcE})$, 
as an element in $A^i(\widetilde{X})$, where $A^i$ is the $i$-th Chow cohomology group, in the sense of \cite[Chapter 17]{Fulton}.  
Note that $\tilde{c}_i(\widetilde{\mcE})$  is invariant under the natural action of $G$.  
The Chern classes in \cite[Chapter 10]{Kol92} are defined as 
\[
\overline{c}_i(\mcE) = 
\frac{1}{|G|} \cdot  p_* ( \tilde{c}_i(\widetilde{\mcE})  \smallfrown [ \widetilde{X}]'  ) \in A_{2-i}(X)\otimes \mathbb{R}, 
\]
where $[ \widetilde{X}]' \in A_2(\widetilde{X})$  is the fundamental class of $\widetilde{X}$ (see also \cite[Lemma 3.5]{Mumford83}).

Let $\widetilde{Y} \to \widetilde{X}$ be an equivariant desingularization, and let $Y= \widetilde{Y}/G$.  
We have the following commutative diagram. 
\[
  \xymatrixcolsep{3pc}
  \xymatrix{ \widetilde{Y} \ar[r]^f \ar[d]_q & \widetilde{X} \ar[d]^p\\
  Y \ar[r]_g &  X
  }
\]  
Let $h_{\sm}$ be a Hermitian metric on $\mcE|_{X{\sm}}$ which extends to an orbifold metric on $\mcE_{\orb}$.
Then by the condition (3)  of the global cover,  
the pullback of $h_{\sm}$ under $p\circ f$ extends to  a  $G$-invariant metric $h'$ on $\widetilde{\mcF}:= f^*\widetilde \mcE$. 
Since $\widetilde{Y}$ is smooth projective,  various definitions of Chern classes are consistent.
More precisely, there is a cycle $T$ of  dimension $2-i$ in $\widetilde{Y}$ such that 
\[
[T] =  c_i( \widetilde{\mcF}, h') \smallfrown [\widetilde{Y}] \in H_{4-2i}( \widetilde{Y} ,\mathbb{R}), 
\]
and that
\[
T = \tilde{c}_i( \widetilde{\mcF}) \smallfrown [\widetilde{Y}]' \in A_{2-i}(\widetilde{Y}) \otimes \mathbb{R}. 
\]
Furthermore, we may assume that $T$ is $G$-invariant.

On the one hand, we note  that $(\widetilde{Y},G,q)$ is an  orbifold chart of $Y$, and that $\widetilde{\mcF}$ is an orbifold vector bundle.  
With this orbifold structure, we have 
\[
\hat{c}_i(\widetilde{\mcF})  =  c_i( \widetilde{\mcF}, h').
\]
Moreover, the condition (3) above implies that 
$
\hat{c}_i(\widetilde{\mcF}) = g^* \hat{c}_i(\mcE_{\orb}) = g^* \hat{c}_i(\mcE). 
$
By the projection formula, this shows that  
\[
 \hat{c}_i(\mcE) \smallfrown  [ X]= g_*(\hat{c}_i(\widetilde{\mcF}) \smallfrown  [ {Y}] ) \in H_{4-2i}(X,\mathbb{R}).
\]
Hence we get 
\[
  \hat{c}_i(\mcE) \smallfrown  [ X]  =  g_*  \left( \frac{1}{|G|} \cdot     q_*(  c_i( \widetilde{\mcF}, h') \smallfrown [\widetilde{Y}]  ) \right)
        =\frac{1}{|G|} \cdot p_*f_*[T].
\]

On the other hand, by the projection formula, we  have 
\[
f_*T = f_* (\tilde{c}_i( \widetilde{\mcF})  \smallfrown [\widetilde{Y}]'  ) 
=  \tilde{c}_i(\widetilde{\mcE}) \smallfrown [\widetilde{X}]' \in A_{2-i}(\widetilde{X})\otimes \mathbb{R}.
\]
It follows that  
\[
[\overline{c}_i(\mcE) ] = 
\left[ 
\frac{1}{|G|} \cdot p_*  ( \tilde{c}_i(\widetilde{\mcE})  \smallfrown [ \widetilde{X}]'  )  \right]
= \frac{1}{|G|}  \cdot p_*f_*[T]   \in H_{4-2i}(X, \mathbb{R}).     
\]
This completes the proof of the lemma. 
\end{proof}

%

\subsection{The case of complex analytic threefolds.} 
We will focus on the case when $X$ is a complex analytic threefold and study the relative class $\ell(\widetilde{X},r) =  c_1^2(\widetilde{X},r) + c_2(\widetilde{X},r)$ defined in  \eqref{eqn:l-function}. 
For later applications, we need to  compute its pushforward   to $X$.  
Throughout this subsection, in order to have a better interpretation of Borel-Moore homology, 
we will assume that $X$ is a Zariski open subset of a compact complex analytic threefold.



\begin{lemma}
\label{lemma:c_2-difference}
Let $X$ be a complex analytic  threefold with quotient singularities, and $r\colon \widetilde{X}\to X$ a desingularization such that $r$ is an isomorphism over $X_{\rm sm}$.  
We denote by $C_1,...,C_r$ the $1$-dimensional irreducible components of $X_{\rm sing}$. 
Then there are real numbers $a_i$ such that for any class $\sigma \in H^{2}_c(X, \mathbb{R})$, we have 
\begin{equation*}\label{equation:compare}
 \Big(( c_1(\widetilde{X}))^2 + c_2(\widetilde{X}) \Big) \cdot r^*\sigma = \Big((\hat{c}_1(X))^2  + \hat{c}_2(X) + \sum_{i=1}^r a_i[C_i]\Big) \cdot \sigma   
\end{equation*} 
as real numbers. 
In other words, we have 
\[
\ell(\widetilde{X}, r) \cdot r^*\sigma = \Big(\sum_{i=1}^r a_i[C_i]  \Big) \cdot \sigma.
\]
\end{lemma}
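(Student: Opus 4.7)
The plan is to reinterpret the relative class $\ell(\widetilde{X},r) \in H^{4}(\widetilde{X}, \widetilde{X}^{\circ}, \mathbb{R})$ as a Borel--Moore homology class supported on the exceptional locus $E$ of $r$, push it forward to the singular locus of $X$, and conclude via the projection formula of Lemma~\ref{lemma:BM-projection-formula}. The key ingredient is that the only ``room'' for the pushed-forward class to live in is the span of the fundamental classes of the one-dimensional components of $X_{\sing}$.

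More precisely, first I would use the isomorphism $H^{4}(\widetilde{X}, \widetilde{X}^{\circ}, \mathbb{R}) \cong H^{BM}_{2}(E, \mathbb{R})$ given by cap product with $[\widetilde{X}]$, as in \eqref{eqn:BM-isomorphism}, and set $\tilde{\ell} := \ell(\widetilde{X}, r) \smallfrown [\widetilde{X}] \in H^{BM}_{2}(E, \mathbb{R})$. Since $r|_{E} \colon E \to r(E) \subseteq X_{\sing}$ is proper, we obtain a well-defined pushforward $r_{*}\tilde{\ell} \in H^{BM}_{2}(X_{\sing}, \mathbb{R})$.

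Second, I would identify the target group. The singular locus $X_{\sing}$ decomposes as a disjoint union of a finite set $P$ of isolated points and the pure one-dimensional part $\bigcup_{i=1}^{r} C_{i}$. Since $\dim P = 0$, we have $H^{BM}_{2}(P, \mathbb{R}) = 0$, so from the long exact sequence of the closed inclusion $P \hookrightarrow X_{\sing}$, together with the fact that the top-degree Borel--Moore homology of a pure one-dimensional analytic space is freely generated by the fundamental classes of its irreducible components, we conclude
\[
H^{BM}_{2}(X_{\sing}, \mathbb{R}) \;\cong\; \bigoplus_{i=1}^{r} \mathbb{R}\cdot [C_{i}].
\]
Hence $r_{*}\tilde{\ell} = \sum_{i=1}^{r} a_{i}[C_{i}]$ for uniquely determined real numbers $a_{i}$, which depend only on $r$ and the singularities of $X$ along the $C_i$, not on $\sigma$.

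Finally, for any $\sigma \in H^{2}_{c}(X, \mathbb{R})$, the projection formula in Lemma~\ref{lemma:BM-projection-formula}, together with the fact that $r_{*}$ on $H^{BM}_{0}$ preserves the numerical pairing, yields
\[
\ell(\widetilde{X}, r) \cdot r^{*}\sigma \;=\; \tilde{\ell} \smallfrown r^{*}\sigma \;=\; r_{*}\!\bigl(\tilde{\ell} \smallfrown r^{*}\sigma\bigr) \;=\; (r_{*}\tilde{\ell}) \smallfrown \sigma \;=\; \Bigl(\sum_{i=1}^{r} a_{i}[C_{i}]\Bigr) \cdot \sigma.
\]
The main obstacle I expect is the bookkeeping: ensuring that the ``intersection number'' on $\widetilde{X}$ appearing on the left-hand side is genuinely the Borel--Moore pairing $\tilde{\ell} \smallfrown r^{*}\sigma$, and that the various cap and cup products behave compatibly with proper pushforward. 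Once these identifications are in place, the conclusion is essentially forced by dimensional vanishing at the isolated singularities of $X$.
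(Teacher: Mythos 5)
Your proposal is correct and follows essentially the same route as the paper: cap $\ell(\widetilde{X},r)$ with $[\widetilde{X}]$ to land in $H^{BM}_2(E,\mathbb{R})$, push forward by the proper map $r$ into $H^{BM}_2(X_{\sing},\mathbb{R})$, observe that this group is spanned by the classes $[C_i]$ (the isolated singular points contributing nothing in degree $2$), and conclude via the compatibility of proper pushforward with the pairing against $H^2_c$. The paper's proof is just a terser version of the same argument, leaving the projection-formula step implicit.
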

 
The dot products in the lemma are defined as follows. 
For example, we note that $H^6_c(X,\mathbb{R})$ is canonically isomorphic to $\mathbb{R}$. 
Then for any  $a\in H^4(X,\mathbb{R})$, the product $a\cdot \sigma$  is the number corresponding to $a\smallsmile \sigma \in H^6_c(X,\mathbb{R})$. 
For any $b\in H^{BM}_2(X,\mathbb{R})$, the product $b\cdot \sigma$ is then defined by the duality between $H^{BM}_2(X,\mathbb{R})$ and $H^2_c(X,\mathbb{R})$.

\begin{proof}
Let $E:=\Ex(r)$ be the exceptional locus of $r$  and let $\widetilde{X}^\circ =\widetilde{X}\backslash {E}$. 
Then 
$\ell(\widetilde{X},r)  \in H^4(\widetilde{X}, \widetilde{X}^\circ, \mathbb{R})$ 
and 
\[\ell(\widetilde{X},r) \smallfrown [\widetilde{X}] \in  H^{BM}_2(E,\mathbb{R}).\] 
On the one hand, the Poincar\'e duality shows that 
\[
\ell(\widetilde{X}, r) \cdot r^*\sigma = (\ell(\widetilde{X},r) \smallfrown [\widetilde{X}]) \cdot r^*\sigma 
\]
as real numbers. 
On the other hand, since $r$ is proper,  there are real numbers $a_1,...,a_r$ such that
\begin{equation}\label{eqn:pushforward-of-relative-class} 
r_*{(\ell(\widetilde{X},r) \smallfrown [\widetilde{X}])} = \sum_{i=1}^r a_i[C_i],
\end{equation}
where $[C_i]\in H_2^{BM}(X_{\sing}, \mathbb{R})$ is the fundamental class of $C_i$. 
This completes the proof of the lemma. 
\end{proof}

In the following lemmas, we show that one can compute the numbers  $a_i$ by taking a local slice intersecting $C_i$ transversally. 
From  \cite[Lemma 5.8]{GK20} it follows that for $x\in C_i$ a general point, there is an Euclidean neighborhood $U\subseteq X$ of $x$ such that $U\cong S\times \mathbb{D}$, where $(o\in S)$ is a germ of surface quotient singularities and $\mathbb{D} \subset \mathbb{C}$ is the open unit disk.   
Let $\widetilde{U}= r^{-1}(U)$. 
Shrinking $U$ if necessary we may assume that $\widetilde{U} \cong \widetilde{S}\times \mathbb{D}$, 
where $\mu\colon \widetilde{S} \to S$ is a desingularization. 
Let $\Delta:=\Ex(\mu)$ and  let $\pi\colon \widetilde{U}\to \mathbb{D}$ be the natural projection. 
We denote by $h\colon \widetilde{S} \injective \widetilde{U}$   the inclusion induced by the zero section of $\mathbb{D}$.

\begin{lemma}
\label{lemma:a_i-intersection}   
With the notations and hypothesis as above, 
the number $a_i$ represents the class
 $h^*\ell(\widetilde{U}, r) \smallfrown [\widetilde{S}] \in H_0^{BM}(\Delta, \mathbb{R})$  via  the natural isomorphism $H_0^{BM}(\Delta, \mathbb{R}) \cong \mathbb{R}$. 
\end{lemma}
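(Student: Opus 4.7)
The plan is to localize the identity \eqref{eqn:pushforward-of-relative-class} around a general point of $C_i$ and then exploit the product decomposition $\widetilde U \cong \widetilde S \times \mathbb{D}$.

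First, I would apply Lemma \ref{lemma:BM-commute-basechange} to the proper morphism $r$ and the open inclusion $U \hookrightarrow X$. Since a general point of $C_i$ avoids all other components $C_j$ of $X_{\sing}$, the restriction of $\sum_j a_j[C_j] \in H^{BM}_2(X_{\sing},\mathbb{R})$ to $U$ is $a_i [C_i\cap U]$ in $H^{BM}_2(X_{\sing}\cap U,\mathbb{R})$. By the functoriality of the definitions \eqref{eqn:relative-class-E}, \eqref{eqn:relative-class-T} and \eqref{eqn:l-function} under restriction to the Zariski open $\widetilde U$, the class $\ell(\widetilde X,r)\smallfrown [\widetilde X]$ restricts to $\ell(\widetilde U, r|_{\widetilde U})\smallfrown [\widetilde U]$. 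Hence
\[
(r|_{\widetilde U})_*\bigl(\ell(\widetilde U,r|_{\widetilde U})\smallfrown [\widetilde U]\bigr) \;=\; a_i\,[C_i\cap U] \qquad \text{in } H^{BM}_2(\{o\}\times \mathbb{D},\mathbb{R})\cong\mathbb{R}.
\]

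Next, I would use the product structures $U\cong S\times\mathbb{D}$ and $\widetilde U\cong \widetilde S\times\mathbb{D}$. The tangent bundle of $\widetilde U$ splits as $T_{\widetilde U}=\pi_{1}^*T_{\widetilde S}\oplus \pi_2^*T_{\mathbb{D}}$ with $T_{\mathbb{D}}$ trivial, so $c_i(T_{\widetilde U})=\pi_1^*c_i(T_{\widetilde S})$ for $i=1,2$; the analogous identity $\hat c_i(T_U)=\pi_1^*\hat c_i(T_S)$ holds on the quotient-space side by the corresponding orbifold splitting. Substituting into \eqref{eqn:relative-class-T}--\eqref{eqn:l-function} gives the pullback identity
\[
\ell(\widetilde U, r|_{\widetilde U})\;=\;\pi_1^*\,\ell(\widetilde S,\mu)\qquad\text{in } H^4(\widetilde U,\widetilde U^\circ,\mathbb{R}).
\]
Combining this with the K\"unneth decomposition $[\widetilde U]=[\widetilde S]\times [\mathbb{D}]$ in Borel--Moore homology and the compatibility of cap products with external products, one obtains
\[
\ell(\widetilde U, r|_{\widetilde U})\smallfrown[\widetilde U]\;=\;\bigl(\ell(\widetilde S,\mu)\smallfrown[\widetilde S]\bigr)\times [\mathbb{D}] \qquad\text{in } H^{BM}_2(\Delta\times\mathbb{D},\mathbb{R}).
\]
Pushing forward along $r|_{\widetilde U}=\mu\times\mathrm{id}_{\mathbb{D}}$ and noting that $\Delta$ is compact connected (the exceptional set of a resolution of an isolated surface quotient singularity), the image equals $\alpha\cdot[\{o\}\times\mathbb{D}]$, where $\alpha\in\mathbb{R}$ is the scalar representing $\ell(\widetilde S,\mu)\smallfrown[\widetilde S]\in H^{BM}_0(\Delta,\mathbb{R})\cong\mathbb{R}$. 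Comparing with the first step yields $a_i=\alpha$. Finally, since $\pi_1\circ h=\mathrm{id}_{\widetilde S}$, we get $h^*\ell(\widetilde U,r|_{\widetilde U})=\ell(\widetilde S,\mu)$, so $h^*\ell(\widetilde U,r)\smallfrown[\widetilde S]=\ell(\widetilde S,\mu)\smallfrown[\widetilde S]$ represents $\alpha=a_i$ under the natural isomorphism $H^{BM}_0(\Delta,\mathbb{R})\cong \mathbb{R}$.

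The main obstacle will be the bookkeeping needed to carry the K\"unneth decomposition and the cap product through the \emph{relative} cohomology groups $H^\bullet(\widetilde X,\widetilde X^\circ,\mathbb{R})$, rather than ordinary cohomology, and to check that each reduction (restriction to the Zariski open $\widetilde U$, pullback via $\pi_1$, pushforward along $\mu\times\mathrm{id}$) is compatible with the definitions of $c_i(\cdot,r)$ and $\ell(\cdot,r)$. A secondary technical point is that $\mathbb{D}$ is non-compact, so one must genuinely work with Borel--Moore homology (so that $[\mathbb{D}]$ exists as a top class) rather than singular homology, and invoke Lemma \ref{lemma:BM-commute-basechange} in the precise form proved earlier.
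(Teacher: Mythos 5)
Your argument is correct, but it takes a genuinely different route from the paper. The paper keeps Lemma \ref{lemma:a_i-intersection} purely topological: it caps $\ell(\widetilde{U},r)$ with the slice $[\widetilde{T}]=[h(\widetilde{S})]$, writes $cl^{\widetilde{U}}(\widetilde{T})=\pi^*\delta$ with $\delta$ the Thom class of $\{0\}\subset\mathbb{D}$, and then applies the projection formula of Lemma \ref{lemma:BM-projection-formula} to $\pi\colon E\cap\widetilde{U}\to\mathbb{D}$ together with the same localization $\pi_*(\ell(\widetilde{U},r)\smallfrown[\widetilde{U}])=a_i[\mathbb{D}]$ from Lemma \ref{lemma:BM-commute-basechange}; capping with $\delta$ then reads off $a_i$. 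The tangent-bundle splitting $h^*T_{\widetilde{U}}\cong T_{\widetilde{S}}\oplus\mathcal{O}_{\widetilde{S}}$ is deferred to Lemma \ref{lemma:c2-calculation}. You instead prove the pullback identity $\ell(\widetilde{U},r|_{\widetilde{U}})=\pi_1^*\ell(\widetilde{S},\mu)$ up front (choosing product/pullback metrics, which is legitimate since the relative class is metric-independent) and then run a K\"unneth/external-product computation in Borel--Moore homology, $\pi_1^*\ell(\widetilde{S},\mu)\smallfrown([\widetilde{S}]\times[\mathbb{D}])=(\ell(\widetilde{S},\mu)\smallfrown[\widetilde{S}])\times[\mathbb{D}]$, pushing forward along $\mu\times\mathrm{id}$. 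This establishes Lemmas \ref{lemma:a_i-intersection} and \ref{lemma:c2-calculation} simultaneously, which is a nice economy; the trade-off is that you must supply the K\"unneth decomposition for Borel--Moore homology and the compatibility of cap products and proper pushforwards with external products in the relative setting, facts the paper deliberately avoids by relying only on its Lemmas \ref{lemma:BM-commute-basechange} and \ref{lemma:BM-projection-formula}. The paper's slicing argument is also slightly more robust in that it applies verbatim to an arbitrary relative class (as reused in Lemma \ref{lemma:compare-log-c_2}), whereas your route needs the class to be a pullback from the $\widetilde{S}$-factor --- though in that application the log cotangent bundle splits as a pullback as well, so your method would adapt there too.
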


\begin{proof} 
Let $\widetilde{T}:=h(\widetilde{S}) \subseteq \widetilde{U}$. 
By \cite[Equation (8), Section 19.1]{Fulton}, we see that 
\[
\ell(\widetilde{U}, r) \smallfrown [\widetilde{T}] =
 (\ell(\widetilde{U}, r) \smallsmile cl^{\widetilde{U}}(\widetilde{T}) ) \smallfrown [\widetilde{U}]=
h^*\ell(\widetilde{U}, r) \smallfrown [\widetilde{S}] \in H_0^{BM}(\Delta, \mathbb{R}).
\]
Let $\delta \in H^2(\mathbb{D}, \mathbb{D}\setminus \{ 0 \}, \mathbb{R})$ be the Thom class of  $\{0\}$ in $\mathbb{D}$.   
Then $ cl^{\widetilde{U}}(\widetilde{T})  = \pi^*\delta$. 
Therefore, we have 
\[
\ell(\widetilde{U}, r) \smallfrown [\widetilde{T}]  
= (\ell(\widetilde{U}, r) \smallsmile \pi^* \delta) \smallfrown [\widetilde{U}]
= \pi^*\delta \smallfrown (\ell(\widetilde{U}, r) \smallfrown  [\widetilde{U}] ).
\]
We note that $ \ell(\widetilde{U}, r) \smallfrown  [\widetilde{U}]  \in H^{BM}_2(E\cap \widetilde{U}, \mathbb{R})$, where $E=\Ex(r)$.  

On one hand, by applying the projection formula of Lemma \ref{lemma:BM-projection-formula} to the proper morphism $\pi\colon E\cap \widetilde{U} \to \mathbb{D}$, we deduce that 
\begin{equation} \label{eqn:proj-for}
 \delta \smallfrown \pi_*(\ell(\widetilde{U}, r) \smallfrown  [\widetilde{U}] )
 =  \pi_*(\ell(\widetilde{U}, r) \smallfrown [\widetilde{T}])\in H^{BM}_0(\{0\}, \mathbb{R}).   
\end{equation}
On the other hand, by Lemma \ref{lemma:BM-commute-basechange}, we have 
\[r_*(\ell(\widetilde{U}, r) \smallfrown  [\widetilde{U}] ) = a_i[C_i\cap U],\]
which implies that 
\[\pi_*(\ell(\widetilde{U}, r) \smallfrown  [\widetilde{U}] )   = a_i[ \mathbb{D}].\] 
Hence we deduce that the LHS of the equation (\ref{eqn:proj-for})  is equal to $a_i$. 
This shows the statement of the lemma.
\end{proof}

\begin{lemma}
\label{lemma:c2-calculation}
With the notations and hypothesis as in Lemma \ref{lemma:a_i-intersection}, we have $a_i = \ell(\widetilde{S}, \mu)\smallfrown [\widetilde{S}]$,   via  the natural isomorphism $H_0^{BM}(\Delta, \mathbb{R}) \cong \mathbb{R}$.  
\end{lemma}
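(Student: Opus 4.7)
The plan is to exploit the product structure on the local model around a general point of $C_i$ and reduce $h^*\ell(\widetilde{U},r)$ to $\ell(\widetilde{S},\mu)$ by functoriality of relative Chern classes. By Lemma \ref{lemma:a_i-intersection}, $a_i = h^*\ell(\widetilde{U},r)\smallfrown[\widetilde{S}]\in H^{BM}_0(\Delta,\mathbb{R})$, so it suffices to identify $h^*\ell(\widetilde{U},r)$ with $\ell(\widetilde{S},\mu)$ in $H^4(\widetilde{S},\widetilde{S}\setminus\Delta,\mathbb{R})$.

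First, I would exploit the product decomposition. By construction, $U\cong S\times\mathbb{D}$ and $\widetilde{U}\cong\widetilde{S}\times\mathbb{D}$, and the map $r$ coincides with $\mu\times\mathrm{id}_{\mathbb{D}}$. Denoting by $\pi_1\colon\widetilde{U}\to\widetilde{S}$ and $\pi_2\colon\widetilde{U}\to\mathbb{D}$ the two projections (and similarly $p_1,p_2$ on $U$), one has
\[
T_{\widetilde{U}} \cong \pi_1^*T_{\widetilde{S}}\oplus\pi_2^*T_{\mathbb{D}},
\]
and because $T_{\mathbb{D}}$ is trivial, $c_i(T_{\widetilde{U}})=\pi_1^*c_i(T_{\widetilde{S}})$ for $i=1,2$. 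The same holds at the orbifold level: the standard orbifold structure on $U$ is the product of the standard orbifold structure on $S$ with the trivial structure on $\mathbb{D}$, so $\hat{c}_i(T_U)=p_1^*\hat{c}_i(T_S)$ for $i=1,2$.

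Next, I would promote this to an identity of \emph{relative} classes. Since $E\cap\widetilde{U}=\Delta\times\mathbb{D}$, we have $\widetilde{U}^\circ=(\widetilde{S}\setminus\Delta)\times\mathbb{D}$, and $\pi_1$ restricts to a map of pairs $(\widetilde{U},\widetilde{U}^\circ)\to(\widetilde{S},\widetilde{S}\setminus\Delta)$, inducing a pullback
\[
\pi_1^*\colon H^{2i}(\widetilde{S},\widetilde{S}\setminus\Delta,\mathbb{R})\longrightarrow H^{2i}(\widetilde{U},\widetilde{U}^\circ,\mathbb{R}).
\]
Choosing metrics on $\widetilde{S}$ and $S_{\sm}$ and pulling them back via $\pi_1$ and $p_1$ respectively, the definition in \eqref{eqn:relative-class-E}--\eqref{eqn:relative-class-T} yields $c_i(\widetilde{U},r)=\pi_1^*c_i(\widetilde{S},\mu)$ for $i=1,2$, and similarly $c_1^2(\widetilde{U},r)=\pi_1^*c_1^2(\widetilde{S},\mu)$ because cup product commutes with pullback on the level of relative cohomology (via the pairing $H^2(\widetilde{U},\widetilde{U}^\circ)\otimes H^2(\widetilde{U},\widetilde{U}^\circ)\to H^4(\widetilde{U},\widetilde{U}^\circ)$). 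Summing gives
\[
\ell(\widetilde{U},r)=\pi_1^*\ell(\widetilde{S},\mu).
\]
Since $\pi_1\circ h=\mathrm{id}_{\widetilde{S}}$, applying $h^*$ recovers $h^*\ell(\widetilde{U},r)=\ell(\widetilde{S},\mu)$.

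Finally, combining with Lemma \ref{lemma:a_i-intersection},
\[
a_i=h^*\ell(\widetilde{U},r)\smallfrown[\widetilde{S}]=\ell(\widetilde{S},\mu)\smallfrown[\widetilde{S}]\in H_0^{BM}(\Delta,\mathbb{R})\cong\mathbb{R},
\]
which is the desired identity. The main technical point is the compatibility of the product decomposition with the construction of the relative classes: one must verify that the metrics and the potentials $\eta$ used in \eqref{eqn:relative-class-E} can be chosen $\pi_1$-pulled-back, so that the identity $c_i(\widetilde{U},r)=\pi_1^*c_i(\widetilde{S},\mu)$ holds in the relative cohomology group and not merely after restriction to $\widetilde{U}$ or $\widetilde{U}^\circ$. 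This is where the triviality of $T_{\mathbb{D}}$ and the product form of the orbifold structure are used in an essential way.
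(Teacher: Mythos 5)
Your argument is correct and follows essentially the same route as the paper: both rest on Lemma \ref{lemma:a_i-intersection}, the product structure $\widetilde{U}\cong\widetilde{S}\times\mathbb{D}$ with $r=\mu\times\mathrm{id}$, the triviality of the $\mathbb{D}$-direction, and functoriality of the relative Chern classes. The paper simply restricts directly along $h$, using $h^*T_{\widetilde{U}}\cong T_{\widetilde{S}}\oplus\mathcal{O}_{\widetilde{S}}$ and that Chern classes commute with pullback, whereas you first establish $\ell(\widetilde{U},r)=\pi_1^*\ell(\widetilde{S},\mu)$ and then use $\pi_1\circ h=\mathrm{id}$; these are equivalent formulations of the same computation.
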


\begin{proof} 
From Lemma \ref{lemma:a_i-intersection}, we see that 
$a_i=h^*\ell(\widetilde{U}, r) \smallfrown [\widetilde{S}]$.  
Since the Chern classes commute with pullbacks, we obtain that 
$h^*\ell(\widetilde{U}, r)  = \ell(h^*T_{\widetilde{U}}  , \mu)$.  
We note that  $h^*T_{\widetilde{U}} \cong T_{\widetilde{S}} \oplus \mathcal{O}_{\widetilde{S}}$.
Thus we have 
\[
h^*\ell(\widetilde{U}, r)  =  \ell( h^*T_{\widetilde{U}}, \mu) = \ell(\widetilde{S}, \mu).
\] 
This completes the proof of the lemma.
\end{proof}

By the same techniques, we obtain the following lemma on comparison of second Chern classes.

\begin{lemma}
\label{lemma:compare-log-c_2}
Let $X$ be a compact analytic threefold with quotient singularities. Let $B$ be a reduced boundary on $X$ such that $(X,B)$ is lc. 
Assume that $\Omega_X^{[1]}(\mathrm{log}\, B)$ induces orbifold vector bundle on the standard orbifold structure of $X$. 
Then there exists an effective $1$-cycle $C$ in $X$ such that 
\[
\hat{c}_2(X) - \hat{c}_2(\Omega_X^{[1]}(\mathrm{log}\, B)) + (K_X+B) \cdot B = [C] \in H^{4}(X, \mathbb{R}). 
\]
\end{lemma}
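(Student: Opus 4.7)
The plan is to establish the identity chart-by-chart on the standard orbifold structure $X_{\orb}=\{(V_i,G_i,\pi_i)\}$ and then glue. The hypothesis that $\Omega_X^{[1]}(\log B)$ is an orbifold vector bundle means precisely that on each smooth chart $V_i$, the reduced preimage $B_i := (\pi_i^{-1}(B))_{\mathrm{red}}$ is a $G_i$-invariant normal crossings divisor on $V_i$, and $\Omega^1_{V_i}(\log B_i)$ is a $G_i$-linearised locally free sheaf. Since orbifold Chern classes are represented by $G_i$-invariant curvature forms of chart-wise Hermitian metrics (Definition \ref{def:orb-vector-bundle}(6)), any identity holding equivariantly on each chart and compatibly on overlaps descends to an identity of orbifold cohomology classes on $X$.

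On a single chart, writing $B_i = \sum_j B_{i,j}$ for the decomposition into prime components, I would use the residue short exact sequence
\[
0 \to \Omega^1_{V_i} \to \Omega^1_{V_i}(\log B_i) \to \bigoplus_j (\iota_{i,j})_*\mcO_{B_{i,j}} \to 0,
\]
together with the structure-sheaf sequence $0 \to \mcO_{V_i}(-B_{i,j}) \to \mcO_{V_i} \to (\iota_{i,j})_*\mcO_{B_{i,j}} \to 0$, which gives $c((\iota_{i,j})_*\mcO_{B_{i,j}}) = (1-[B_{i,j}])^{-1}$. Whitney multiplicativity then yields $c_1(\Omega^1_{V_i}(\log B_i)) = K_{V_i}+B_i$, and an elementary computation of the degree-$2$ part produces
\[
c_2(T_{V_i}) - c_2(\Omega^1_{V_i}(\log B_i)) + (K_{V_i}+B_i)\cdot B_i = \sum_{j<k} [B_{i,j}] \cdot [B_{i,k}].
\]
Because $B_i$ has normal crossings, each $[B_{i,j}]\cdot[B_{i,k}]$ is Poincar\'e dual to the smooth curve $B_{i,j}\cap B_{i,k}$, so the right-hand side represents an effective $G_i$-invariant $1$-cycle on $V_i$.

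For the global step, I would verify that these chart-wise equivariant identities are compatible on overlaps. On the left-hand side, $\hat{c}_2(X)$ and $\hat{c}_2(\Omega_X^{[1]}(\log B))$ are by definition computed from exactly this chart-wise data, while $(K_X+B)\cdot B$ in $H^4(X,\mathbb{R})$ matches the chart-wise $(K_{V_i}+B_i)\cdot B_i$ thanks to Remark \ref{rmk:quotient-singularities}(6) applied to the $\mathbb{Q}$-Cartier divisors $K_X+B$ and $B$. On the right-hand side, the equivariant $1$-cycles $\sum_{j<k} B_{i,j}\cap B_{i,k}$ descend to effective $1$-cycles on each $\pi_i(V_i)\subset X$ and patch together to form a single effective $1$-cycle $C$ on $X$. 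Assembling these ingredients yields the desired equality.

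The proof is essentially bookkeeping around a classical Chern-class computation on a smooth threefold with an SNC divisor. The main subtlety I anticipate is not the local calculation itself but the verification that intersection products of $\mathbb{Q}$-Cartier divisors on $X$ and orbifold Chern classes on $X_{\orb}$ compare correctly with their chart-wise counterparts; this is handled uniformly by the orbifold formalism developed in Section \ref{section:chern}, so no substantial geometric obstacle arises.
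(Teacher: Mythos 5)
Your local computation on a single smooth chart (residue sequence plus Whitney, giving $c_2-c_2(\log)+(K+B)\cdot B=\sum_{j<k}B_{i,j}\cdot B_{i,k}$) is the standard one, but the proposal has two genuine gaps. First, the step you call "gluing" does not exist: an identity of Chern classes verified in the cohomology of each orbifold chart carries no global information, because the charts may be taken Stein (even contractible), so every positive-degree class is trivial there; equality of classes on the members of an open cover never implies equality of the global classes in $H^4(X,\mathbb{R})$. To run your argument globally you would need a Whitney-type formula for \emph{orbifold coherent sheaves} (the quotients $\mcO_{B_{i,j}}$ are torsion sheaves, not orbifold bundles) on a compact, possibly non-algebraic, complex orbifold; no such theory of Chern classes for orbifold coherent sheaves is developed in the paper, and indeed even in the log smooth manifold case the paper has to invoke the theory of Chern classes of coherent sheaves on compact complex manifolds (\cite[Section 3]{Gri10}) as a \emph{global} statement. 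Second, your opening claim that the hypothesis "means precisely" that the reduced preimage of $B$ in every chart is normal crossings is unjustified: Lemma \ref{lemma:orbifold-snc} only proves the implication from normal crossings to the orbifold-bundle property, and your residue-sequence computation requires the (unproved) converse.

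For comparison, the paper does not argue chart by chart. It treats the log smooth case globally, and in general compares the orbifold classes on $X$ with the log smooth classes on a log resolution $\widetilde{X}\to X$ using the relative Chern classes of Subsection \ref{subsec:relative-class}: the argument of Lemma \ref{lemma:c_2-difference} localizes the discrepancy as a class $\sum_i b_i[C_i]$ supported on the one-dimensional components $C_i$ of $X_{\sing}$, and Lemmas \ref{lemma:a_i-intersection} and \ref{lemma:c2-calculation} show each $b_i$ is computed on a transverse surface slice; after compactifying the slice via \cite[Lemma 9.9]{Kaw88} one applies the projective surface computation \cite[Equation (10.8.8)]{Kol92} to get $b_i\le 0$, which yields the effective cycle. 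Your proposal has no substitute for this comparison-and-localization mechanism, which is precisely where the content of the lemma lies; without it (or without building a Chern class calculus for orbifold coherent sheaves from scratch), the argument does not go through.
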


\begin{proof}
We first assume that $(X,B)$ is log smooth.  
Then there are exact sequences 
\[
0\to \Omega_X^1 \to \Omega_X^1(\log\, B) \to \bigoplus \mcO_{B_i} \to 0, 
\]
\[
0 \to \mcO_X(-B_i) \to \mcO_X \to \mcO_{B_i}\to 0,
\]
where the $B_i$'s are the irreducible components of $B$. 
Thanks to the theory of Chern classes of coherent sheaves on compact complex manifolds, see  \cite[Section 3]{Gri10}, we can deduce that 
\[c_2(X) - c_2(\Omega_X^{1}(\mathrm{log}\, B)) + (K_X+B) \cdot B\num 0.\] 

In general,  let $r\colon \widetilde{X} \to X$ be a log resolution of $(X,B)$ and $\widetilde{B} = r^{-1}_*(B)$. 
We consider the relative class 
\begin{eqnarray*}
 q(\widetilde{X}, r) &=& \Big({c}_2(\widetilde{X}) -  {c_2}(\Omega_{\widetilde{X}}^{1}(\mathrm{log}\, \widetilde{B})) + (K_{\widetilde{X}}+\widetilde{B}) \cdot \widetilde{B}\Big) \\
 && - r^*\Big(\hat{c}_2(X) - \hat{c}_2(\Omega_X^{[1]}(\mathrm{log}\, B)) + (K_X+B) \cdot B \Big).
\end{eqnarray*}
The same argument of Lemma \ref{lemma:c_2-difference}  shows that, 
for any $\sigma \in H^2_c(X,\mathbb{R})$, 
\[
 q(\widetilde{X}, r) \cdot r^*\sigma = \Big(\sum_{i=1}^r b_iC_i \Big) \cdot \sigma,  
\]
where the $C_i$'s  are the 1-dimensional irreducible components of $X_{\sing}$.  
It suffices to show that $b_i\< 0$ for all $i$. To this end, we use the method of Lemma \ref{lemma:a_i-intersection} and Lemma \ref{lemma:c2-calculation}. 
Let $x$ be a general point in $C_i$. 
 Then by  \cite[Proposition 16.6]{Kol92},   there is a neighborhood $U\subseteq X$ of $x$ such that the following properties hold. 
There is a lc surface pair $(S,F)$ such that $(U,B)\cong (S,F)\times \mathbb{D}$, where $\mathbb{D}$ is the unit disk. 
There is a log smooth surface pair $(W,\Theta)$ and a group $G$ acting on it, freely in codimension 1 of $W$, such that 
$(W,\Theta)/G \cong (S,F)$.

Let $\widetilde{U}=r^{-1}(U)$. 
Shrinking $U$ if necessary, we may assume that 
$(\widetilde{U}, \widetilde{B}\cap \widetilde{U})$ is isomorphic to $(\widetilde{S}, \widetilde{F})\times \mathbb{D}$, where $\mu \colon \widetilde{S} \to S$ is a log resolution of $(S,F)$ and $\widetilde{F} = \mu^{-1}_*F$. 
Now by the same arguments of  Lemma \ref{lemma:a_i-intersection} and Lemma \ref{lemma:c2-calculation}, 
we see that $b_i$ can be calculated as follows,
\begin{eqnarray*}
 b_i &=& \Big( \Big({c}_2(\widetilde{S}) -  {c_2}(\Omega_{\widetilde{S}}^{1}(\mathrm{log}\, \widetilde{F})) + (K_{\widetilde{S}}+\widetilde{F}) \cdot \widetilde{F}\Big) \\
 & & - r^* \Big( \hat{c}_2(S) - \hat{c}_2(\Omega_S^{[1]}(\mathrm{log}\, F)) + (K_S+F) \cdot F \Big)  \Big) \smallfrown [\widetilde{S}] \\
 &\in & H^{BM}_0(\Delta, \mathbb{R}).
\end{eqnarray*}
  via  the natural isomorphism $H^{BM}_0(\Delta, \mathbb{R}) \cong \mathbb{R}$. 

 Furthermore, by \cite[Lemma 9.9]{Kaw88},   we may assume that 
 $W\subseteq \mathbb{C}^2$ is an Euclidean open neighborhood of the origin, that the action of $G$ extends to a linear action on $\mathbb{C}^2$, and that $\Theta=\Theta'\cap U$, where $\Theta'\subseteq \mathbb{C}^2$ is a union of coordinates hyperplanes.  
Let $(T, \overline{F})$ be a projective compactification of $(\mathbb{C}^2/G, \Theta'/G).$ 
Then there is an Euclidean open embedding $\iota\colon (S,F) \to (T, \overline{F})$. 
Let $o\in S$ be the unique singular point. 
By taking a  partial desingularization, we may assume that $(T,\overline{F})$ is log smooth outside $\iota(o)$.  
By abuse of notation, we still denote by $\mu\colon \widetilde{T} \to T$ a desingularization, such that $\mu^{-1}(S) = \widetilde{S}$. 
Since the Chern classes commute with pullbacks,   in order to compute $b_i$ we may replace $S$ by $T$ and assume that $S$ is a projective surface and $(S,F)$ is log smooth outside $o$.   
Then  we can apply  \cite[Equation (10.8.8)]{Kol92} to deduce that $b_i \< 0$. 
 This completes the proof of the lemma. 
\end{proof}

\section{Positivity of logarithmic cotangent sheaves}
\label{section:log-cotan}

The main objective of this section is to establish the following proposition. 
We will also study its variant for non-uniruled threefolds, 
see Proposition \ref{prop:psef-c2} in the last subsection, whose proof is simpler.     

\begin{proposition}
\label{prop-log-orbifold-c2-semipositive}
Let $Y$ be a  non-algebraic uniruled compact K\"ahler threefold with quotient singularities.  
Assume the following conditions hold:
\begin{enumerate}
    \item There is a bimeromorphic morphism $\pi\colon Y \to X$ to a normal compact K\"ahler threefold  such that $L=k(K_X+\D)$ is Cartier and nef for some  reduced boundary $\D$ and some integer $k>0$.  
    \item $L^2\not\equiv 0$ and $L^3 = 0$. 
    \item  $\pi^*L|_P \equiv 0$, in particular  $\pi^*L \cdot P \equiv 0$,  for any $\pi$-exceptional prime divisor $P$. 
    \item If $B = \pi^{-1}_*\D$, then $K_Y+(1-\lambda)B$ is pseudoeffective for some $\lambda >0$.
    \item $\Omega_Y^{[1]}(\mathrm{log}\,  B)$ induces an orbifold vector bundle on the standard orbifold structure of $Y$. 
\end{enumerate}    
Then  $\hat{c}_2(\Omega_Y^{[1]}(\mathrm{log}\,  B)) \cdot  \pi^*L \> 0$.   
\end{proposition}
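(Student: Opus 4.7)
The plan is to deduce the desired inequality from Proposition~\ref{prop:psef-c2-general} applied to $Y$ with the reflexive sheaf $\mcE := \Omega_Y^{[1]}(\mathrm{log}\, B)$ and the nef class $\alpha := \pi^*(K_X+\D) = \pi^*L/k$. Hypothesis~(5) of the present proposition supplies the orbifold-vector-bundle structure on $\mcE$ required by that proposition, and since $\hat{c}_2(\mcE)\cdot\pi^*L = k\bigl(\hat{c}_2(\mcE)\cdot\alpha\bigr)$, the conclusion will follow from $\hat{c}_2(\mcE)\cdot\alpha\>0$.

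Conditions (1) and (3) of Proposition~\ref{prop:psef-c2-general} are verified directly. For (1): $\alpha$ is nef as the pullback of a nef class, $\alpha^2 \not\num 0$ since $L^2\not\num 0$ and $\pi$ is bimeromorphic, and $\alpha^3 = L^3/k^3 = 0$. For (3): the log pullback formula, available because $B = \pi^{-1}_*\D$, gives $K_Y+B = \pi^*(K_X+\D) + E$ for some $\pi$-exceptional $\mbQ$-divisor $E$; thus $\hat{c}_1(\mcE) = \alpha - \beta$ with $\beta := -E$. For every prime component $P$ of $\Supp(E)$, hypothesis~(3) of the present proposition says $\pi^*L|_P \num 0$, hence $\alpha|_P \num 0$; therefore $\alpha\cdot P\cdot D = (\alpha|_P)\cdot (D|_P) = 0$ for every divisor class $D$ on $Y$, so $\alpha\cdot P \num 0$ in $H^4(Y,\mbR)$ and $\alpha\cdot\beta\num 0$.

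The heart of the proof is condition (2): generic $\alpha_\epsilon^{2}$-nefness of $\mcE$ for all $0<\epsilon\ll 1$, where $\alpha_\epsilon = \alpha+\epsilon\omega$. I would argue by contradiction. A destabilizing torsion-free quotient $\mcE \twoheadrightarrow \mcQ$ with $\hat{c}_1(\mcQ)\cdot\alpha_\epsilon^{2} < 0$ (extracted from the minimal piece of the orbifold Harder-Narasimhan filtration of Lemma~\ref{lemma:filtration}) dualizes to a saturated subsheaf $\mcF \subseteq T_Y(-\mathrm{log}\, B)$ of strictly positive $\alpha_\epsilon^{2}$-slope. Standard Bogomolov-Campana-P\u{a}un-type integrability in this setting (using the orbifold stability machinery of Section~\ref{section:chern}) forces $\mcF$ to be closed under the Lie bracket, so $\mcF$ defines a holomorphic foliation on $Y$. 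A Bogomolov-McQuillan rational-curve production adapted to the compact K\"ahler orbifold setting then yields a covering family of rational curves $\{C_t\}$ tangent to $\mcF$. Intersecting such a curve with $K_Y+(1-\lambda)B$ gives $(K_Y+(1-\lambda)B)\cdot C_t \> 0$ by hypothesis~(4); on the other hand, the excess positivity of $\mcF$ relative to the identity $\hat{c}_1(\mcE) = K_Y+B$ forces this same intersection to be strictly negative, giving the contradiction.

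The principal obstacle is transporting the Campana-P\u{a}un/Bogomolov-McQuillan machinery to the compact K\"ahler orbifold setting in the absence of a projective polarization, and handling the fact that $\alpha^2$ is only a boundary nef class on $Y$ (its degeneration is controlled by hypothesis~(3) and the condition $\alpha^3=0$). This will rely on the K\"ahler MMP for threefolds (Theorem~\ref{thm:existence-dlt-MMP}, Lemma~\ref{lem:special-mmp}) and the non-vanishing result (Theorem~\ref{thm-non-vanishing-general-setting}) for compact K\"ahler threefold pairs. Once condition~(2) is established, Proposition~\ref{prop:psef-c2-general} immediately yields $\hat{c}_2(\mcE)\cdot\alpha\>0$, as required.
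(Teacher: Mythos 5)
Your top-level reduction is the same as the paper's: apply Proposition~\ref{prop:psef-c2-general} with $\mcE=\Omega_Y^{[1]}(\log B)$, $\alpha=\pi^*(K_X+\D)$ and $\beta$ the $\pi$-exceptional divisor coming from $K_Y+B=\pi^*(K_X+\D)-E$, and your verification of conditions (1) and (3) is essentially what the paper does. The problem is condition (2), the generic nefness of $\Omega_Y^{[1]}(\log B)$, which is the only substantive hypothesis in this situation and which your proposal does not actually establish. You sketch a contradiction argument that requires (a) an integrability theorem making the dual destabilizing subsheaf $\mcF\subseteq T_Y(-\log B)$ a foliation, and (b) a Bogomolov--McQuillan type production of a covering family of rational curves tangent to $\mcF$ in the compact K\"ahler (orbifold) setting, and you yourself flag the transport of this machinery as ``the principal obstacle'' without resolving it. Moreover, even granting the rational curves, your final step is not justified: hypothesis (4) gives $(K_Y+(1-\lambda)B)\cdot C_t\>0$ for a covering family, but tangency to $\mcF$ only gives negativity of $K_\mcF$ (plus the horizontal boundary) along the curves, not of $K_Y+(1-\lambda)B$; bridging exactly this gap is the hard content.

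The paper closes this gap differently and more economically (Lemma~\ref{lem-orbifold-semi-positivity}): after passing to a log resolution, rank-one subsheaves of $\Omega^1(\log B)$ are handled directly by Campana--P\u{a}un \cite{CP16}, which is valid on compact K\"ahler manifolds; for a rank-two destabilizing subsheaf one dualizes to a \emph{rank-one} saturated subsheaf $\mcG\subseteq T_X(-\log B)$ of positive $\omega^2$-degree, saturates it in $T_X$ to a rank-one foliation (so no integrability theorem is needed), and invokes Brunella's criterion \cite{Bru06} --- which already plays the role of your ``rational-curve production'' in the K\"ahler rank-one case --- to identify it with the MRC foliation. The decisive input is then Proposition~\ref{prop-effective-adjoint-foliation-canonical}, proved in Section~\ref{section:log-cotan} by reducing the MRC fibration to a $\mathbb{P}^1$-fibration over a surface (via \cite{HP15} and the base change of \cite[Lemma 9.11]{DasOu2022}) and using positivity of direct images \cite{PT18}, which yields $(K_\mcF+B_{\hor})\cdot\omega^2\>0$ from the pseudoeffectivity of $K_Y+(1-\lambda)B$; this is precisely the statement your sketch needs and omits. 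As a minor additional point, concluding $\alpha\cdot P\num 0$ in $H^4(Y,\mathbb{R})$ by testing only against divisor classes is not sufficient on a possibly non-algebraic $Y$; what is actually needed (and what hypothesis (3) is used for in the proof of Proposition~\ref{prop:psef-c2-general}) is the vanishing of $\alpha\cdot\beta$ against $\alpha$ and against a K\"ahler class, which should be stated as such.
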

 
We remark that the condition that $L^3\equiv 0$ is automatic under the assumption that $Y$ is K\"ahler and non-algebraic.  
This proposition is a consequence of the generic nefness of the logarithmic cotangent sheaf  $\Omega_{Y}^{[1]}(\mathrm{log}\, B)$, see Lemma \ref{lem-orbifold-semi-positivity} below.  
A key ingredient is the following result.

\begin{proposition}
\label{prop-effective-adjoint-foliation-canonical}
Let $X$ be a smooth uniruled non-algebraic   compact  $\mathbb{Q}$-factorial K\"ahler threefold. 
Let $\mcF$ be the foliation corresponding to the MRC fibration.   
Assume that there is a $\mbQ$-divisor $\D$  such that $(X,\D)$ is klt  and  $K_X+\D$  is pseudoeffective.  
Then   $(K_{\mcF}+\D_{\rm hor}) \cdot \omega^{2} \> 0$ for any K\"ahler class $\omega$ on $X$, where $\D_{\rm hor}$ is the horzontal part of $\D$ over the base of the MRC fibration.  
\end{proposition}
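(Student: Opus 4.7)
The plan is to argue by contradiction using a logarithmic Campana-P\u{a}un type theorem on foliations in the compact K\"ahler setting. First, I would pass to a smooth bimeromorphic model $g\colon X'\to X$ on which the MRC fibration lifts to a morphism $f\colon X'\to Y$ onto a smooth compact K\"ahler surface $Y$; by the very definition of the MRC, $Y$ is not uniruled. The foliation $\mcF$ pulls back to $\mcF':=g^{-1}\mcF$, which coincides with the saturation of $T_{X'/Y}$ in $T_{X'}$ and whose general leaf is isomorphic to $\mathbb{P}^1$. By the projection formula together with the vanishing $E\cdot (g^*\omega)^2=0$ for any $g$-exceptional divisor $E$, the assertion on $X$ reduces to showing $(K_{\mcF'}+g^{-1}_*\Delta_{\text{hor}})\cdot(g^*\omega)^2\> 0$ on $X'$, and by continuity it suffices to test against any K\"ahler class on $X'$.

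Next, I would establish fiberwise positivity. On a general fiber $F\cong\mathbb{P}^1$ of $f$ one has $K_{\mcF'}\cdot F=-2$; since $F$ is movable and $K_X+\Delta$ is pseudoeffective, the projection formula gives $(K_X+\Delta)\cdot g_*F\> 0$, and hence $\Delta\cdot g_*F\> 2$. As vertical components of $\Delta$ do not meet a general fiber, it follows that $\Delta_{\text{hor}}\cdot g_*F\> 2$ and therefore $(K_{\mcF'}+g^{-1}_*\Delta_{\text{hor}})\cdot F\> 0$. This fiberwise positivity is the starting input.

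To propagate this to positivity against arbitrary squares of K\"ahler classes, I would suppose for contradiction that $K_{\mcF'}+g^{-1}_*\Delta_{\text{hor}}$ is not pseudoeffective on $X'$, and invoke a compact K\"ahler, logarithmic version of the Bogomolov-McQuillan / Campana-P\u{a}un theorem: non-pseudoeffectivity of the adjoint foliation canonical forces a covering family of rational curves on $X'$ not tangent to $\mcF'$. Projecting such a family under $f$ would yield a covering family of rational curves on $Y$, contradicting the non-uniruledness of $Y$. Consequently $K_{\mcF'}+g^{-1}_*\Delta_{\text{hor}}$ is pseudoeffective, and its intersection with any movable $(2,2)$-class---in particular with $(g^*\omega)^2$---is non-negative by the compact K\"ahler Boucksom-Demailly-P\u{a}un-Peternell duality.

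The hard part is the third step. The logarithmic Campana-P\u{a}un theorem is classical in the projective setting (via Bogomolov-McQuillan and its adjoint extension by Campana-P\u{a}un and Druel), but its compact K\"ahler analogue for klt boundaries requires a delicate combination of the direct image positivity results of H\"oring-Peternell with the Hodge-theoretic algebraic-integrability arguments of Campana-P\u{a}un, together with a careful treatment of foliation singularities where the orbifold machinery of Section~\ref{section:chern} may enter. Should the clean statement be unavailable in the needed generality, an alternative route is a direct MMP-based argument on an auxiliary pair using Theorem~\ref{thm:existence-dlt-MMP}, combined with the Miyaoka-type inequality of Proposition~\ref{prop:psef-c2-general} applied to the orbifold vector bundle induced by the logarithmic cotangent.
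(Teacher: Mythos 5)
Your steps 1, 2 and 4 are fine (the reduction to a model where the MRC fibration is a $\mathbb{P}^1$-fibration over a non-uniruled surface, the fiberwise inequality, and the trivial pairing of a pseudoeffective class against $\omega^2$), but the whole weight of the argument rests on step 3, and that step has a genuine gap. The ``compact K\"ahler logarithmic Bogomolov--McQuillan / Campana--P\u{a}un theorem'' you invoke is not available in the literature, and you do not prove it; avoiding exactly this kind of input is the point of the paper's argument. Worse, the statement you quote is not what those theorems assert even in the projective case: Bogomolov--McQuillan and Campana--P\u{a}un type results produce, from negativity of $K_{\mcF}+\Delta$ against a movable class, rational curves \emph{tangent} to the foliation (algebraic integrability with rationally connected leaves), not a covering family of rational curves transverse to $\mcF'$. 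Since the leaves of $\mcF'$ are already rational curves, that conclusion is vacuous here and yields no contradiction with the non-uniruledness of $Y$. In effect you are trying to establish the stronger statement that $K_{\mcF'}+\Delta'_{\rm hor}$ is pseudoeffective by citing a black box that is essentially equivalent to it, and your fallback (``an MMP-based argument combined with Proposition \ref{prop:psef-c2-general}'') is not an argument; note also that Proposition \ref{prop:psef-c2-general} in this paper is downstream of the present proposition (it feeds into Lemma \ref{lem-orbifold-semi-positivity} via exactly this result), so leaning on that circle of ideas risks circularity.

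For comparison, the paper proves only the needed inequality $(K_{\mcF}+\D_{\rm hor})\cdot\omega^{2}\>0$, not pseudoeffectivity. After reducing to $\D=\D_{\rm hor}$ by Theorem \ref{thm-non-vanishing} and passing, via \cite[Theorem 1.1]{HP15} and a resolution of the graph, to a projective $\mathbb{P}^1$-fibration $f'\colon X'\to Y'$, the positivity is extracted from P\u{a}un--Takayama positivity of direct images: $(g_*\mathcal{O}_{X'}(m(K_{X'/Y'}+\D')))^{**}$ carries a singular metric with semipositive curvature, hence is pseudoeffective. The discrepancy between $K_{X/Y}$ and $K_{\mcF}$ coming from multiple fibers is handled not by a Kawamata covering trick (unavailable in the K\"ahler category) but by the finite base change of \cite[Lemma 9.11]{DasOu2022}, which makes every boundary component a section, together with the blow-up and relative MMP comparisons of Lemmas \ref{lem-blowup-more-adjoint-foliation} and \ref{lem-same-direct-image-horizontal-bimeromorphic}, which identify the direct images of $m(K_{\mcF}+\D)$ and $m(K_{X/Y}+\D)$ in codimension one. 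If you want to salvage your approach, you would have to supply a K\"ahler substitute for the foliated positivity theorem you cite, which is a substantial open problem rather than a routine adaptation.
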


If we assume that  $X$ is a projective threefold and the base of its MRC fibration is a surface, then the same conclusion of the previous proposition still holds, and is  a special case of \cite[Proposition  4.1]{Dru17}. 
Indeed, our proof follows the ideas there, which we sketch as follows.  
{
For simplicity, we assume that the MRC fibration of $X$ is an equidimensional morphism $f\colon X \to Y$. 
Then from the positivity results of direct images, we may deduce certain positivity on $K_{X/Y}+\Delta_{\rm hor}$. 
We remark that  $K_{X/Y}-K_{\mcF}$ is an effective divisor, supported in the locus where $f$ has non reduced fibers. 
To deduce the positivity of $K_{\mcF}$ from the one of $K_{X/Y}$, the proof of \cite[Proposition  4.1]{Dru17} applies 
Kawamata's cyclic covering trick on $Y$, and reduces the problem to the case when $f$ has reduced fibers in codimension one. 
In such a case, we have $K_{X/Y}=K_{\mcF}$  and we can  conclude.  
In the situation of Proposition \ref{prop-effective-adjoint-foliation-canonical}, Kawamata's  covering trick is not known for compact K\"ahler varieties. 
Therefore, in this section,  we will use the basechange method of \cite[Lemma 2.11]{DasOu2022}, and argue with some elementary geometric computation. 
}

\subsection{$\mathbb{P}^1$-fibirations}
We will first prove some   results on fibrations whose general fibers are isomorphic to $\mathbb{P}^1$.

\begin{lemma}\label{lem:reflexive-pushforward}
	 Let $f:X\to Y$ be a proper fibration of complex manifolds. 
  Let $\mcL$ be a line bundle on $X$ whose restrictions on   the general fibers of $f$ are  trivial bundles. 
	Then there is a $f$-exceptional divisor $E_1$ and an effective divisor $E_2$ such that $f^*((f_*\mcL)^{**})\cong \mcL \otimes \mcO_X(E_1-E_2)$.
\end{lemma}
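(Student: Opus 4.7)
The strategy is to compare $\mcL$ with $f^*\mcM$, where $\mcM := (f_*\mcL)^{**}$, by using the natural evaluation map $f^*f_*\mcL \to \mcL$ on a suitable open subset of $X$ and showing that the correction required to extend the comparison to all of $X$ is $f$-exceptional. Since the general fibers are $\mathbb{P}^1$ and $\mcL$ is trivial on them, we have $h^0(X_y, \mcL|_{X_y}) = 1$ for general $y$, so $f_*\mcL$ is torsion-free of rank one, and $\mcM$ is a line bundle because $Y$ is smooth. Moreover, a torsion-free coherent sheaf on a smooth variety is locally free away from a subset of codimension at least two, so the locus $Y_0 \subseteq Y$ on which $f_*\mcL|_{Y_0} = \mcM|_{Y_0}$ is open, and its complement has codimension $\geq 2$.

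Setting $X_0 := f^{-1}(Y_0)$, the counit of the $(f^*,f_*)$-adjunction restricts to a morphism of line bundles $\alpha_0 \colon f^*\mcM|_{X_0} \to \mcL|_{X_0}$, which is an isomorphism on the generic fiber (both sides are trivial there and $\alpha_0$ is induced by the canonical evaluation). As a nonzero morphism of line bundles $\alpha_0$ is injective, and it corresponds to a section of $\mcL \otimes (f^*\mcM)^{-1}|_{X_0}$ whose divisor of zeros is some effective $E_2^\circ$. I would extend $E_2^\circ$ to an effective divisor $E_2$ on $X$ by taking Zariski closures of its prime components, and then observe that $\mcL \otimes (f^*\mcM)^{-1} \otimes \mcO_X(-E_2)$ is a line bundle on $X$ which is trivial on $X_0$, hence isomorphic to $\mcO_X(F)$ for some divisor $F$ with $\Supp(F) \subseteq f^{-1}(Y \setminus Y_0)$. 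Writing $F = E_1^+ - E_1^-$ as a difference of effective divisors, each of $E_1^\pm$ is $f$-exceptional, since its image is contained in the codimension-$\geq 2$ set $Y\setminus Y_0$. Rearranging yields $f^*\mcM \cong \mcL \otimes \mcO_X\big(E_1^- - (E_2+E_1^+)\big)$, so taking $E_1 := E_1^-$ and replacing $E_2$ by the still-effective $E_2+E_1^+$ gives the claimed decomposition.

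The only delicate point is the codimension-two estimate for the non-locally-free locus of $f_*\mcL$, which is what ultimately guarantees that the correction divisor produced when extending from $X_0$ to all of $X$ is $f$-exceptional. This codimension estimate uses both the smoothness of $Y$ and the torsion-freeness of $f_*\mcL$, and is precisely why the decomposition in the conclusion takes the specific form of an effective divisor minus an $f$-exceptional one.
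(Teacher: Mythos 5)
Your setup (rank one, $\mcM=(f_*\mcL)^{**}$ a line bundle, $\codim(Y\setminus Y_0)\ge 2$) is fine, but the two extension steps at the heart of your argument are unjustified in the analytic category, and they are exactly where the content of the lemma sits — not, as you say at the end, in the codimension-two estimate on $Y$. Since $f$ is not assumed equidimensional, $X\setminus X_0=f^{-1}(Y\setminus Y_0)$ may contain divisors (these are precisely the source of the $f$-exceptional divisor $E_1$ in the statement; if $X\setminus X_0$ had codimension $\ge 2$, Riemann extension of the section of $\mcL\otimes(f^*\mcM)^{-1}$ would finish the proof with $E_1=0$). Because of this, (i) the closure in $X$ of a prime component of $E_2^\circ$ need not be analytic — Remmert--Stein extension requires the set being crossed to have strictly smaller dimension, and ``Zariski closure'' is not available on a general complex manifold — and (ii), more seriously, the claim that a line bundle trivial on $X_0$ must be of the form $\mcO_X(F)$ with $\Supp F\subseteq f^{-1}(Y\setminus Y_0)$ is false analytically: it amounts to extending the trivializing section of $X_0$ to a meromorphic section of $X$ across a hypersurface, which is automatic for algebraic varieties but not in general (already on an elliptic curve $E$, a non-torsion degree-zero line bundle restricts trivially to $E\setminus\{p\}$ but is not $\mcO_E(kp)$ for any $k$). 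The lemma is applied to non-algebraic K\"ahler threefolds, so these algebraic-category facts cannot be invoked.

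Both problems disappear if you never try to invert anything on $X_0$ and instead work with the two globally defined morphisms out of $f^*f_*\mcL$, which is what the paper does: the evaluation $f^*f_*\mcL\to\mcL$ and the pullback $f^*f_*\mcL\to f^*((f_*\mcL)^{**})$ of the inclusion $f_*\mcL\hookrightarrow(f_*\mcL)^{**}$. Each factors through the line bundle $(f^*f_*\mcL)^{**}$, and a nonzero morphism of line bundles corresponds to a global holomorphic section of the associated Hom bundle, hence has an honest effective zero divisor on all of $X$. The first morphism gives $(f^*f_*\mcL)^{**}\cong\mcL\otimes\mcO_X(-E_2)$ with $E_2\ge 0$; the second is an isomorphism over $V=f^{-1}(U)$, where $U$ is the locally free locus of $f_*\mcL$, so it gives $f^*((f_*\mcL)^{**})\cong(f^*f_*\mcL)^{**}\otimes\mcO_X(E_1)$ with $E_1$ supported in $X\setminus V$, which is $f$-exceptional because its image lies in $Y\setminus U$ of codimension $\ge 2$. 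Combining the two isomorphisms yields the statement with no meromorphic extension needed; your argument becomes correct once the divisor on $X_0$ and the correction over $X\setminus X_0$ are produced this way.
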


\begin{proof}
We observe that $f_*\mcL$ is a torsion-free sheaf of rank 1 on $Y$. 
Since $Y$ is smooth, the reflexive hull $(f_*\mcL)^{**}$ is a line bundle on $Y$. 
Let $U\subseteq Y$ be the largest open subset over which $f_*\mcL$ is locally free. 
Then $\codim_Y(Y\setminus U)\ge 2$. 
Let $V=f^{-1}(U)$. 
Then every prime Weil divisor contained in $X\setminus V$ is $f$-exceptional. 

On one hand, we notice that the natural morphism $f^*f_*\mcL \to \mcL$   is generically an isomorphism.  
Hence the induced  morphism $(f^*f_*\mcL)^{**} \to \mcL$ of line bundles is  generically an isomorphism as well.
It follows that there is an effective divisor $E_2$, such that 
\[
(f^*f_*\mcL)^{**} \cong \mcL\otimes \mcO_X(-E_2). 
\]

On the other hand, the natural morphism $f_*\mcL \to (f_*\mcL)^{**}$ induces natural  morphisms
\[
f^*f_*\mcL \to f^*((f_*\mcL)^{**}) 
\]
and 
\[
(f^*f_*\mcL)^{**} \to f^*((f_*\mcL)^{**}) 
\]
which are isomorphisms on $V$. 
Hence there is a divisor $E_1$ with support in $X\setminus V$ such that 
\[
(f^*f_*\mcL)^{**} \otimes \mcO(E_1) \cong f^*((f_*\mcL)^{**}). 
\]

Therefore, we obtain that 
\[
f^*((f_*\mcL)^{**}) \cong \mcL\otimes \mcO_X(E_1-E_2). 
\]
Since $E_1$ is contained in $X\setminus V$, it is $f$-exceptional. 
This completes the proof of the lemma.  
\end{proof}


\begin{lemma}
\label{lem-blowup-more-adjoint-foliation}
Let $f\colon X\to Y$ be a projective fibration from a  smooth analytic surface to a germ $(y\in Y)$ of smooth curve. 
Assume that the general fibers of $f$ are isomorphic to $\mathbb{P}^1$. 
Let $\mcF$ be the foliation on $X$ induced by $f$.  
Suppose  that $\D$ is a  $\mbQ$-divisor such that $(X, \Delta)$ has canonical singularities and  every component of $\Delta$ is isomorphic to $Y$  via  $f$.  

Let $g\colon X' \to X$ be the blowup of a point $x  \in X$ lying over $y$, $f'\colon X'\to Y$   the composite morphism, $\Delta':=g^{-1}_*\Delta$, and $\mcF'$   the foliation induced by $f'$. 
Then the difference $K_{\mcF'}+\D' - g^*(K_{\mcF}+\D)$ is a $g$-exceptional effective $\mbQ$-divisor. 
In particular, there is a natural isomorphism 
\[g_*\mathcal{O}_{X'}(m(K_{\mcF'}+\D')) \cong \mathcal{O}_{X }(m(K_{\mcF} +\D ))\]
for any positive integer $m$ such that $m\D$ is a $\mbZ$-divisor. 
\end{lemma}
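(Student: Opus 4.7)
The plan is to compute $K_{\mcF'}+\Delta' - g^*(K_\mcF+\Delta)$ explicitly as a multiple of the exceptional curve $E$, show the coefficient is non-negative, and conclude the isomorphism of pushforward sheaves by the projection formula.

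For the first step, I would use the standard formula
\[
K_{\mcF} \;=\; K_X - f^*K_Y - D_f, \qquad D_f \;:=\; \sum_i (m_i-1)F'_i,
\]
where $F=f^{-1}(y)=\sum m_i F'_i$ is the scheme-theoretic fiber. This comes from the exact sequence $0\to T_\mcF\to T_X\xrightarrow{df} f^*T_Y$, noting that the image of $df$ coincides with $f^*T_Y\otimes\mcO(-D_f)$ in codimension one. The pullback of the scheme-theoretic fiber satisfies $g^*F = g^{-1}_*F + (\mult_xF)E$, so the prime components of $(f')^{-1}(y)$ are the $g^{-1}_*F'_i$ (with multiplicities $m_i$) together with $E$ (with multiplicity $\mult_xF$). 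Thus $D_{f'} = g^{-1}_*D_f + (\mult_xF - 1)E$, and combining with $K_{X'}=g^*K_X+E$ and $\mult_xF=\sum_{F'_i\ni x}m_i\cdot\mult_xF'_i$, a direct calculation yields
\[
K_{\mcF'}-g^*K_\mcF \;=\; (2-\mult_xF_{\mathrm{red}})\,E.
\]
Similarly, since every component $D_i$ of $\Delta$ is a section of $f$ and hence smooth, $\mult_xD_i\in\{0,1\}$, so $\Delta'-g^*\Delta = -\bigl(\sum_{D_i\ni x}a_i\bigr)E$. Adding these,
\[
K_{\mcF'}+\Delta' - g^*(K_\mcF+\Delta) \;=\; \bigl(2-\mult_xF_{\mathrm{red}}-\textstyle\sum_{D_i\ni x}a_i\bigr)E.
\]

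The crux is verifying the coefficient is $\geq 0$. Since $f$ is a $\mathbb{P}^1$-fibration of a smooth surface, each fiber is obtained inductively from a $\mathbb{P}^1$-bundle by blowing up points, so its reduced support is a tree of smooth rational curves meeting only at nodes; in particular $\mult_xF_{\mathrm{red}}\in\{1,2\}$. If $\mult_xF_{\mathrm{red}}=2$, then a section $D_i$ through $x$ would have local intersection with $F_y$ at $x$ at least $2$, contradicting $D_i\cdot F_y=1$; so the sum vanishes and the coefficient is $0$. If $\mult_xF_{\mathrm{red}}=1$, the canonical hypothesis on $(X,\Delta)$ applied at $x$ gives $\sum_{D_i\ni x}a_i\leq 1$, so the coefficient is $\geq 0$. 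This establishes the effectiveness.

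Finally, writing $R := K_{\mcF'}+\Delta'-g^*(K_\mcF+\Delta)\geq 0$, for any $m$ with $m\Delta$ integral we have $mR$ effective integral $g$-exceptional, and $\mcO_{X'}(m(K_{\mcF'}+\Delta'))\cong g^*\mcO_X(m(K_\mcF+\Delta))\otimes\mcO_{X'}(mR)$. Since $E$ is the $(-1)$-curve of a smooth-point blowup, $g_*\mcO_{X'}(mR)=\mcO_X$, and the projection formula yields the asserted isomorphism. The main technical obstacle is the foliation-canonical formula and its transformation under blowup: correctly identifying $D_{f'}$ and tracking the multiplicity of $E$ in the scheme-theoretic fiber $(f')^{-1}(y)$ require careful bookkeeping.
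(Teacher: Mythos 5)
Your proposal is correct and follows essentially the same route as the paper: both compute $K_{\mcF'}+\D'-g^*(K_\mcF+\D)$ as an explicit multiple of $E$ using $K_\mcF=K_{X/Y}-\sum(m_i-1)F_i'$, reduce to the structure of fibers of a $\mathbb{P}^1$-fibration (at most two components through $x$, obtained from a relatively minimal model by point blowups), note that sections of $f$ avoid nodes and non-reduced points of fibers, and use canonicity to bound $\mult_x\D\le 1$. The only difference is presentational — you package the two cases of the paper into the single coefficient $2-\mult_xF_{\mathrm{red}}-\sum_{D_i\ni x}a_i$ — which is fine.
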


\begin{proof} 
We claim that $x$ is contained in at most two components of the fiber $X_y := f^{-1}(y)$. To see this claim we   run a $K_X$-MMP over $Y$  and end with a $\mathbb{P}^1$-bundle $\varphi:Z\to Y$.  
We note that, a MMP here is just a sequence of contractions of $(-1)$-curves.  
Such a MMP exists as we can assume that $f$ has only finitely many singular fibers, up to shrinking $Y$ around $y$. 
Then every step of the MMP is a   contraction of a component of a singular fiber. 
Let $\psi:X\to Z$ be the induced morphism. Then $\psi$ can be seen as the composition of  blowups  at smooth points. 
Hence $x$ is contained in at most two components of $X_y$.  
In addition, we also see that every irreducible component of $X_y$ is a smooth rational curve. 

We first assume that $x$ is contained in   two irreducible components $F_1$ and $F_2$ of $X_y$. 
Let $m_1$ and $m_2$ be the coefficients of $F_1$ and $F_2$  in the  divisor  $f^*y$. 
Then locally around $x\in X$ we have
\[K_{\mcF}=K_{X/Y} - (m_1-1)F_1 -(m_2-1)F_2.\]
Let $E\subset X'$ be the $g$-exceptional divisor. 
 Then locally around $E$ in $X'$ we have 
\begin{eqnarray*}
K_{\mcF'} &=&  K_{X'/Y}-(m_1+m_2-1)E - (m_1-1)g_*^{-1}F_1 -(m_2-1)g_*^{-1}F_2\\
         &=& g^*K_{X/Y} -(m_1+m_2-2)E - (m_1-1)g_*^{-1}F_1 -(m_2-1)g_*^{-1}F_2\\
         &=& g^*K_{X/Y} - (m_1-1)g^* F_1 -(m_2-1)g^* F_2\\
         &=& g^*K_{\mcF}. 
\end{eqnarray*} Since $g$ is an isomorphism over $X\backslash \{x\}$, the equality $K_{\mcF'} \sim g^*K_{\mcF}$ holds  globally on $X'$. Moreover, since every component of $\D$ is isomorphic to $Y$ via $f$, the components of $\Delta$ are sections of $f:X\to Y$. 
In particular, the components of $\Delta$ do not pass through the singular points of   $X_y$, 
and hence $x$ is not contained in the support of $\Delta$. 
Therefore $\Delta'=g^{-1}_*\D=g^*\Delta$ and  we have 
\[K_{\mcF'}+ \D'=g^*(K_{\mcF}+\D).\]

Now we assume that $x$ is contained in exactly one component $F_1$ of $X_y$. Let $m_1$ be the coefficient of $F_1$ in $f^*y$. Then a similar computation as above shows that 
\[K_{\mcF'}=g^*K_{\mcF}+E.\] 
Then we have  
\[K_{\mcF'}+ \D'=g^*(K_{\mcF}+\D) + ({1-\beta}) E,\]
where $\beta$ is the multiplicity of $\D$ at $x$. Since $(X,\D)$ is canonical, we have $\beta \< 1$. 
It follows that  \[K_{\mcF'}+ \D' - g^*(K_{\mcF}+\D)  \> 0.\]
This completes our proof.
\end{proof}

\begin{lemma}
\label{lem-same-direct-image-horizontal-bimeromorphic}
Let $f\colon X\to Y$ be a projective fibration between  complex manifolds. Assume that general fibers are isomorphic to $\mathbb{P}^1$.  
Let $\mcF$ be the foliation of $f$. 
Assume  that there is a  $\mathbb{Q}$-divisor  $\D $ on $X$ whose components are all bimeromorphic to $Y$  via  $f$ such that  $(X,\D)$ is terminal and that $K_X+\D$ has intersection number $0$ with the general fibers of $f$.  Then for any positive integer $m$ such that $m\D$ is an integral divisor, the natural morphism  $$\varphi\colon f_*\mathcal{O}_{X}(m(K_{\mcF}+ \D)) \to  f_*\mathcal{O}_{X}(m(K_{X/Y}+ \D))$$ is an isomorphism in codimension $1$.
\end{lemma}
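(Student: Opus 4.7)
The claim is local on $Y$ in codimension one, so it suffices to show $\varphi$ is an isomorphism at the generic point of every prime divisor $D_Y\subseteq Y$. I would localize analytically around a general point $y\in D_Y$ and pick a general smooth curve germ $T\subseteq Y$ transverse to $D_Y$ at $y$. Since $f$ is flat (being equidimensional between smooth manifolds), flat base change gives
\[
f_*\mcO_X(m(K_\mcF+\D))|_T\cong (f_T)_*\mcO_{X_T}(m(K_{\mcF_T}+\D_T)),
\]
and similarly on the relative canonical side, where $X_T:=X\times_Y T$, $\D_T:=\D|_{X_T}$, $f_T\colon X_T\to T$ is the induced map, and $\mcF_T$ is its foliation. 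For $T$ general, $(X_T,\D_T)$ remains terminal and each component of $\D_T$ is isomorphic to $T$ via $f_T$, since the exceptional loci of the bimeromorphisms $\D_i\to Y$ project to codimension $\geq 2$ subsets of $Y$ and are avoided by $T$. Thus we reduce to the case where $Y$ is the germ of a smooth curve.

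\textbf{MMP to a $\mbP^1$-bundle.} On the surface $X_T$ I would run a $(K_{X_T}+(1-\epsilon)\D_T)$-MMP over $T$ for $\epsilon>0$ sufficiently small. Each step contracts a vertical $(-1)$-curve $E$ with $\D_T\cdot E<\tfrac{1}{1-\epsilon}$, and a standard discrepancy calculation shows that canonicality of the pair is preserved. The MMP terminates at a relative Mori fiber space $\pi\colon Z\to T$, which must be a $\mbP^1$-bundle since the general fibers of $f_T$ are $\mbP^1$. On $Z$ every fiber is reduced, hence $K_{\mcF_Z}=K_{Z/T}$, and the analogue of $\varphi$ on $Z$ is trivially the identity map.

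\textbf{Transfer through the blowups.} Write $X_T\to Z$ as a composition $X_T=W_0\to W_1\to\cdots\to W_n=Z$ with each $g_i\colon W_{i-1}\to W_i$ a blowup of a smooth point. Canonicality of $(W_i,\D_{W_i})$ allows me to apply Lemma~\ref{lem-blowup-more-adjoint-foliation} at each step to obtain
\[
(g_i)_*\mcO_{W_{i-1}}(m(K_{\mcF_{W_{i-1}}}+\D_{W_{i-1}}))\cong \mcO_{W_i}(m(K_{\mcF_{W_i}}+\D_{W_i})).
\]
The same canonicality, combined with the identity $K_{W_{i-1}/T}+\D_{W_{i-1}}=g_i^*(K_{W_i/T}+\D_{W_i})+(1-\beta_i)E_i$ with $\beta_i\leq 1$ (where $\beta_i$ is the multiplicity of $\D_{W_i}$ at the blowup point), shows that the difference is an effective $g_i$-exceptional divisor, so the parallel identity on the relative canonical side also holds by $(g_i)_*$-triviality of effective exceptional sheaves. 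Composing through all $g_i$ and invoking $K_{\mcF_Z}=K_{Z/T}$, I conclude $(f_T)_*\mcO_{X_T}(m(K_{\mcF_T}+\D_T))\cong (f_T)_*\mcO_{X_T}(m(K_{X_T/T}+\D_T))$, and the base-change reduction finishes the proof.

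\textbf{Main obstacle.} The central subtlety is picking an MMP that simultaneously reaches a $\mbP^1$-bundle \emph{and} preserves canonicality of the pair at every intermediate step, since a naive $K_{X_T}$-MMP could contract a curve $E$ with $\D_T\cdot E>1$, destroying canonicality and invalidating the inductive use of Lemma~\ref{lem-blowup-more-adjoint-foliation}. The perturbation by $(1-\epsilon)\D_T$ circumvents this at the cost of a routine termination argument: over the germ of a smooth curve, any relative Mori fiber space whose general fiber is $\mbP^1$ is automatically a $\mbP^1$-bundle. A secondary care-point is that the general choice of $T$ in the first step genuinely preserves the bimeromorphic-to-section structure of the components of $\D$, which relies on the codimension $\geq 2$ position of the bimeromorphism exceptional loci in $Y$.
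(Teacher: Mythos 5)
Your proposal is correct and follows essentially the same route as the paper's proof: reduce to a smooth surface fibred over a curve germ, run a relative MMP ending in a $\mathbb{P}^1$-bundle while keeping the pair canonical and the components of $\D$ sections, and transfer the statement through the intermediate point blowups by Lemma \ref{lem-blowup-more-adjoint-foliation} together with effectivity of the crepant difference; the only deviation is that the paper organizes the MMP in two stages (a relative $(K_X+\D)$-MMP followed by a $K$-MMP trivial with respect to $K_X+\D$) where you run a single $(K_{X_T}+(1-\epsilon)\D_T)$-MMP. Two details to tighten: flatness of $f$ is not automatic (fibre dimension can jump), but equidimensionality holds outside a codimension-two subset of $Y$, which suffices for a codimension-one statement; and preservation of canonicality requires $\D_T\cdot E\le 1$, which follows from your bound $\D_T\cdot E<\tfrac{1}{1-\epsilon}$ only after using $\D_T\cdot E\in\tfrac{1}{m}\mathbb{Z}$ and choosing $\epsilon<\tfrac{1}{m+1}$.
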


\begin{proof} 
Since the question is local on the base $Y$, we may assume that $Y$ is a germ of complex manifold. 
Cutting $Y$ by $\dim Y-1$ number of general sections of $H^0(Y, \mcO_Y)$, 
we assume that $Y$ is a smooth curve and $X$ is a smooth  surface.   
Furthermore, we may also assume that $f$ has only finitely many singular fibers.  
Now   we can run a $f$-relative $(K_X+\D)$-MMP over $Y$  and obtain a pair $(\widehat{X},\widehat{\D})$ such that $K_{\widehat{X}}+\widehat{\D}$ is nef over $Y$.  

Since all  the  components of $\D$ are horizontal over $Y$, none of them are contracted by this $f$-relative MMP. 
Hence, $(\widehat{X},\widehat{\D})$ is still terminal. In particular, $\widehat{X}$ is still a smooth surface. 
Now we run a $K_{\widehat{X}}$-MMP over $Y$, trivial with respect to  $K_{\widehat{X}}+\widehat{\D}$. 
Since  $K_{\widehat{X}}+\widehat{\D}$ is numerically trivial on the general fibers of $\widehat{X} \to Y$, and is nef over $Y$, 
it must be numerically trivial over $Y$. 
Hence the MMP terminates with a $\mathbb{P}^1$-bundle $f'\colon X'\to Y$. 
Let $\D'$ be the strict  transform of $\widehat{\D}$ onto $X'$. 
Then  $(X',\D')$ has canonical singularities,  since the MMP is $(K_{\widehat{X}}+\widehat{\D})$-trivial and does not contract any component of $\widehat{\D}$. 

Let $g\colon X\to X'$ be the induced bimeromorphic morphism.  
Since $f' \colon X'\to Y$ is a   $\mathbb{P}^1$-bundle, for an  integer $m>0$ sufficiently large and divisible,  there is a  natural isomorphism
\[
f'_*\mathcal{O}_{X'}(m(K_{\mcF'}+ \D')) \to  f'_*\mathcal{O}_{X'}(m(K_{X'/Y}+ \D')). 
\]  
Observe that the   morphism $g\colon X\to X'$ can be seen as the composition of  blowups of points. 
Thus by repeated application of Lemma \ref{lem-blowup-more-adjoint-foliation} we obtain the following isomorphism  
\[ 
g_*\mathcal{O}_{X}(m(K_{\mcF}+ \D)) \cong \mathcal{O}_{X'}(m(K_{\mcF'}+ \D')). \]
Since $(X',\D')$ is canonical, we also get  
\[
g_*\mathcal{O}_{X}(m(K_{X/Y}+ \D)) \cong  \mathcal{O}_{X'}(m(K_{X'/Y}+ \D')).
\]
Hence we deduce that 
\[ 
f_*\mathcal{O}_{X}(m(K_{\mcF}+ \D)) \cong  f_*\mathcal{O}_{X}(m(K_{X/Y}+ \D)). 
\]
This completes our proof.
\end{proof}

\subsection{Positivity of foliated canonical divisors}

We are now ready to prove Proposition \ref{prop-effective-adjoint-foliation-canonical} and  Proposition \ref{prop-log-orbifold-c2-semipositive}.    
We first consider a special case of Proposition \ref{prop-effective-adjoint-foliation-canonical} in the following lemma.
 
\begin{lemma}
\label{lem-effective-adjoint-foliation-canonical-horizontal-bimeromorphic}
Let $f\colon X \to Y$ be a projective fibration between normal  compact K\"ahler varieties such that general fibers of $f$ are isomorphic to $\mathbb{P}^1$   
and that $X$ is smooth. 
Assume that  there is a   $\mathbb{Q}$-divisor  $\D$  with  $\lfloor \D \rfloor =0$, 
such that   every component of  $\D$ is bimeromorphic  to $Y$  via  $f$, and that  $K_X+\D$ intersects the general fibers of $f$ non-negatively.
Let $\mcF$ be the foliation induced by  $f$ and $n$ the dimension of $X$. 
Then   $(K_{\mcF}+\D) \cdot \omega^{n-1} \> 0$ for any K\"ahler class $\omega$ on $X$.  
\end{lemma}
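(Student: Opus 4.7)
The plan is: first reduce to the case $(K_X+\D)\cdot F = 0$, then pass to a flat bimeromorphic model with reduced fibers in codimension one, and finally combine Lemmas \ref{lem:reflexive-pushforward} and \ref{lem-same-direct-image-horizontal-bimeromorphic} with pseudoeffectivity of the direct image on $Y$.

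Since each component of $\D$ is a section of $f$ in the bimeromorphic sense, $\D\cdot F = \sum_i a_i$ with $a_i\in(0,1)$, and the hypothesis reads $\sum_i a_i \> 2$. Setting $\lambda = 2/(\D\cdot F)\<1$ and $\D_0 = \lambda \D$, we obtain $(K_X+\D_0)\cdot F = 0$ and $(\D-\D_0)\cdot \omega^{n-1}\> 0$ because $(1-\lambda)\D$ is effective and $\omega^{n-1}$ is represented by a closed positive $(n-1,n-1)$-form. Therefore it suffices to prove $(K_\mcF+\D_0)\cdot \omega^{n-1}\> 0$. In particular, $(K_\mcF+\D_0)|_F = K_F + \D_0|_F$ is now a trivial $\mbQ$-line bundle on the general fiber $F\cong \mbP^1$.

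Next, by Hironaka flattening (Lemma \ref{lemma:p-q-exceptional}), together with a log resolution and further blowups, I would pass to a proper bimeromorphic model $\widetilde X\to X$ fitting into $\widetilde f\colon \widetilde X\to \widetilde Y$ which is equidimensional with reduced fibers in codimension one, and such that the strict transform $\widetilde \D_0$ makes $(\widetilde X, \widetilde \D_0)$ terminal with $(K_{\widetilde X}+\widetilde \D_0)\cdot \widetilde F = 0$ on a general fiber $\widetilde F$. On such a model $K_{\widetilde \mcF} = K_{\widetilde X/\widetilde Y}$ in codimension one, and Lemma \ref{lem-same-direct-image-horizontal-bimeromorphic} applies to give, for any $m>0$ with $m\widetilde \D_0$ integral,
\[ \widetilde f_*\mcO_{\widetilde X}(m(K_{\widetilde\mcF}+\widetilde\D_0))\cong \widetilde f_*\mcO_{\widetilde X}(m(K_{\widetilde X/\widetilde Y}+\widetilde\D_0)) \]
in codimension one. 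Applying Lemma \ref{lem:reflexive-pushforward} to $\mcL = \mcO_{\widetilde X}(m(K_{\widetilde\mcF}+\widetilde\D_0))$, whose restriction to general fibers is trivial, I obtain an effective $E_2$ and a $\widetilde f$-exceptional $E_1$ such that
\[ m(K_{\widetilde\mcF}+\widetilde\D_0)\sim \widetilde f^*N + E_2 - E_1, \qquad N := c_1((\widetilde f_*\mcL)^{**}). \]
The equidimensionality, together with the trivial fiberwise $H^0$ and the vanishing of $H^1$ on the general fiber, should force $\widetilde f_*\mcL$ to already be locally free, making $E_1 = 0$.

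The final ingredient is pseudoeffectivity of $N$ on $\widetilde Y$. By the identification above, $N$ agrees with $c_1((\widetilde f_*\mcO_{\widetilde X}(m(K_{\widetilde X/\widetilde Y}+\widetilde\D_0)))^{**})$, and positivity of direct images of pluri-relative log-canonical sheaves (Fujita--Kawamata--Viehweg, in the form available for projective morphisms of compact Kähler varieties, following Takayama, Cao--P\v{a}un and Fujino) yields that this class is pseudoeffective. Since $\omega$ is Kähler, $\widetilde f^*N\cdot \widetilde\omega^{\,n-1}\>0$ for the pullback $\widetilde \omega$ of $\omega$, and combined with $E_2\cdot \widetilde\omega^{\,n-1}\>0$ this gives $(K_{\widetilde\mcF}+\widetilde\D_0)\cdot \widetilde\omega^{\,n-1}\>0$. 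Descending via the projection formula along $\widetilde X\to X$ yields the desired inequality on $X$. The main obstacle will be the bimeromorphic reduction step: arranging terminality of $(\widetilde X,\widetilde \D_0)$, reduced fibers in codimension one, and equidimensionality of $\widetilde f$ simultaneously while preserving the Kähler property of $\widetilde X$, and then invoking positivity of direct images in the Kähler setting (rather than the projective one) with sufficient generality. This is precisely where the base-change method of \cite[Lemma 9.11]{DasOu2022} alluded to before Proposition \ref{prop-effective-adjoint-foliation-canonical} is expected to enter.
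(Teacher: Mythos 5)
Your overall architecture (reduce to fiber degree zero, compare the foliated log canonical with the relative log canonical at the level of direct images, invoke positivity of direct images, use Lemma \ref{lem:reflexive-pushforward}, and intersect with powers of the pulled-back K\"ahler class) matches the paper's proof, but the pivotal reduction step has a genuine gap. You claim that ``Hironaka flattening, together with a log resolution and further blowups'' produces a model $\widetilde X\to X$ with $\widetilde f\colon\widetilde X\to\widetilde Y$ having reduced fibers in codimension one, so that $K_{\widetilde\mcF}=K_{\widetilde X/\widetilde Y}$ in codimension one. This cannot be achieved by modifications of $X$ alone: if $\widetilde f^*D=\sum m_iF_i$ has a component with $m_i>1$ over a prime divisor $D$ of the base, any further blowup keeps that multiplicity on the strict transform (and typically creates exceptional components of even higher multiplicity), so fiber multiplicities over divisors can only be removed by a base change --- Kawamata's covering trick --- which, as the paper stresses before Proposition \ref{prop-effective-adjoint-foliation-canonical}, is precisely what is unavailable in the K\"ahler category. (For instance, start from $\mathbb P^1\times Y$ and blow up twice so that a fiber component over a divisor acquires multiplicity $2$; no blowup of the resulting $X$ makes its fibers reduced in codimension one.) The paper's route is designed to avoid this: it never reduces the fibers, but instead proves (Lemma \ref{lem-blowup-more-adjoint-foliation} plus Lemma \ref{lem-same-direct-image-horizontal-bimeromorphic}, after cutting to surfaces over curve germs and running an MMP to a $\mathbb P^1$-bundle) that the \emph{direct images} of $m(K_{\mcG}+\D')$ and $m(K_{X'/Y'}+\D')$ agree in codimension one on the base, which is all that is needed to transfer the P\u{a}un--Takayama pseudoeffectivity. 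In your write-up the appeal to Lemma \ref{lem-same-direct-image-horizontal-bimeromorphic} becomes redundant exactly because you have (incorrectly) assumed away the discrepancy it is meant to handle.

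Two further steps are also not justified as stated. First, the claim that $\widetilde f_*\mcL$ is locally free (so $E_1=0$) ``by equidimensionality, trivial fiberwise $H^0$ and vanishing of $H^1$ on the general fiber'': cohomology-and-base-change arguments require control over \emph{all} fibers, and over the degenerate (possibly non-reduced) fibers $h^0$ can jump, so the pushforward of a line bundle trivial on general fibers need not be locally free; the paper instead keeps $E_1$ and kills its contribution using property (2) of Lemma \ref{lemma:p-q-exceptional}, namely that every divisor exceptional over the new base is exceptional over $X$, so that it pairs to zero against $(p^*\omega)^{n-1}$. Second, ``descending via the projection formula'' at the end hides the same issue: the comparison between $K_{\widetilde\mcF}+\widetilde\D_0$ and $p^*(K_{\mcF}+\D_0)$, and the error terms $E_1,E_2$, must be intersected with $(p^*\omega)^{n-1}$, and only divisors that are exceptional over $X$ drop out; without arranging the exceptionality statement of Lemma \ref{lemma:p-q-exceptional} (or killing $E_1$ by a correct argument), the sign of the $E_1$-term is not controlled. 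Repairing your proof essentially forces you back onto the paper's path: keep the non-reduced fibers, prove the codimension-one isomorphism of direct images, and set up the flattened diagram so that $g$-exceptional divisors are $p$-exceptional.
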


\begin{proof}
Since $\Delta$ is an effective  divisor, it is enough to assume that $K_X+\Delta$ is numerically trivial on general fibers of $f$.   
By using Lemma \ref{lemma:p-q-exceptional},  we construct the following commutative diagram
\begin{equation}\label{eqn:terminal-reduction}
\xymatrixrowsep{3pc}\xymatrixcolsep{3pc}\xymatrix{
(X',\D')\ar[d]_{g} \ar[r]^p  & (X,\D) \ar[d]^{f} \\
 Y'\ar[r]_{q} & Y
}
\end{equation}

\noindent such that 
\begin{enumerate}
\item $X'$ and $Y'$ are smooth,
\item  $p$ and $q$ are projective bimeromorphic,
\item every $g$-exceptional divisor is also $p$-exceptional,
\item $\D'$ is the strict transform of $\D$ such that the components of $\Delta'$ are smooth and pairwise disjoint; in particular, $(X', \Delta')$ has terminal singularities.
\end{enumerate}
Observe that the general fiber of $g$ are isomorphic to $\mbP^1$ and $K_{X'}+\D'$ is numerically trivial on the general fibers of $g$.
In particular, there is a sufficiently divisible positive integer $m$ such that $g_*\mathcal{O}_{X'}(m(K_{X'/Y'}+ \D'))$ is a rank one torsion-free sheaf   on $Y'$.  
Let $\mcG$ be the foliation induced by $g$. 
Then $K_{\mcG}+\Delta'$ is trivial on the general fibers of $g$. By Lemma \ref{lem-same-direct-image-horizontal-bimeromorphic}, 
\begin{equation}
g_*\mathcal{O}_{X'}(m(K_{\mcG}+ \D')) \to  g_*\mathcal{O}_{X'}(m(K_{X'/Y'}+ \D'))
\end{equation}
is an isomorphism in codimension one. 
Taking reflexive hulls we get the following isomorphism of line bundles on $Y'$
\begin{equation}\label{eqn:isomorphism-in-codim-one}
    (g_*\mathcal{O}_{X'}(m(K_{\mcG}+ \D')))^{**} \to  (g_*\mathcal{O}_{X'}(m(K_{X'/Y'}+ \D')))^{**}.
 \end{equation}
By \cite[Corollary 5.2.1]{PT18}, the torsion-free sheaf $g_*\mathcal{O}_{X'}(m(K_{X'/Y'}+ \D'))$  has a singular metric with semipositive curvature current. Thus by \cite[Remark 2.4.2(3)]{PT18}, the line bundle $\mcM:=(g_*\mathcal{O}_{X'}(m(K_{X'/Y'}+ \D')))^{**}$ has a singular metric with semipositive curvature current; in particular, the line bundle $\mcM$ is pseudoeffetive,  and so is $\mcL:=(g_*\mathcal{O}_{X'}(m(K_{\mcG}+ \D')))^{**}$. 

By Lemma \ref{lem:reflexive-pushforward}, there is a $g$-exceptional $\mbQ$-divisor  $E_1$ and an effective divisor $E_2$ on $X'$ such that 
\[
g^*\mcL\cong \mathcal{O}_{X'}(m(K_{\mcG}+ \D'+E_1-E_2)).
\] 
Also, observe that there is a $p$-exceptional $\mbQ$-divisor $F$ such that  
\[p^*\mcO_X(m(K_{\mcF}+\Delta))\cong\mcO_{X'}(m(K_{\mcG}+\Delta'+F)).\]
Thus for any K\"ahler class $\omega$ on $X$ we have 
\begin{eqnarray*}
(K_{\mcF}+\D)\cdot \omega^{n-1} &=&  p^*(K_{\mcF}+\D)\cdot (p^*\omega )^ {n-1} \\
&=&(K_{\mcG}+\D'+F) \cdot (p^*\omega)^{n-1}\\
&=& (K_{\mcG}+\D') \cdot (p^*\omega)^{n-1}.
\end{eqnarray*}
Since $E_1$ is $g$-exceptional, it is also $p$-exceptional by our construction. 
Thus we obtain that
\begin{eqnarray*}
    (K_{\mcG}+\Delta')\cdot (p^*\omega)^{n-1} &=& (K_{\mcG}+\Delta'+E_1) \cdot (p^*\omega)^{n-1}  \\ 
     &\>& (K_{\mcG}+\Delta'+E_1-E_2)\cdot (p^*\omega)^{n-1}\\
     &= &  \frac{1}{m} c_1(g^*\mcL) \cdot (p^*\omega)^{n-1}\\ &\>&0,
\end{eqnarray*}
This completes the proof of the lemma.        
\end{proof}

We improve this lemma in the following assertion, 
which only requires that every component of $\D$ is horizontal over $Y$. 

\begin{lemma}
\label{lem-effective-adjoint-foliation-canonical-regular}
Let $f\colon X \to Y$ be a projective fibration between normal  compact K\"ahler varieties such that general fibers of $f$ are isomorphic to $\mathbb{P}^1$ and that  $X$ is smooth. 
Assume that there is a  $\mbQ$-divisor $\D$ with $\lfloor \D \rfloor =0$,  
such  that   every component of  $\D$ is horizontal over $Y$,
and that  $K_X+\D$  has non-negative  intersection number  with the general fibers of $f$. 
Let $\mcF$ be the foliation corresponding to $f$ and $n$   the dimension of $X$.  
Then   $(K_{\mcF}+\D) \cdot \omega^{n-1} \> 0$ for any  nef class  $\omega$ on $X$.  
\end{lemma}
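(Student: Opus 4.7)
The plan is to reduce to Lemma \ref{lem-effective-adjoint-foliation-canonical-horizontal-bimeromorphic} by a generically finite base change. The obstruction to applying that lemma directly is that a horizontal component $D$ of $\D$ may satisfy $\deg(f|_D)>1$, hence fails to be bimeromorphic to $Y$ via $f$. The idea is to ``unfold'' such components by pulling back through an appropriate generically finite cover $q\colon Y'\to Y$ over which each horizontal component splits into pieces birational to $Y'$.

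To construct the base change, I would first take, for each horizontal component $D_i$ of $\D$, the Stein factorisation $D_i \to \widehat D_i \to Y$, so that $\widehat D_i\to Y$ is finite surjective. I would then take $Y'$ to be a smooth compact K\"ahler model (obtained via desingularisation and Lemma \ref{lemma:p-q-exceptional}) of the main component of the fibre product $\widehat D_1\times_Y\cdots\times_Y\widehat D_k$. Let $X'$ be a resolution of the main component of $X\times_Y Y'$, and let $p\colon X'\to X$ and $g\colon X'\to Y'$ be the induced morphisms, with $q\circ g=f\circ p$. Then $p$ is generically finite of some degree $d$, $g$ is a projective fibration with general fibre $\mbP^1$, and, after possibly further blowups, the strict transform $\D':=p^{-1}_*\D$ has every component bimeromorphic to $Y'$ via $g$.

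Next I would apply Lemma \ref{lem-effective-adjoint-foliation-canonical-horizontal-bimeromorphic} to $(X',\D')$. The condition $\lfloor \D'\rfloor=0$ is inherited from $\lfloor\D\rfloor=0$ since $p$ is birational along the components of $\D$. Since the general fibres of $g$ map isomorphically onto general fibres of $f$ via $p$, the intersection number of $K_{X'}+\D'$ with a general fibre of $g$ equals that of $K_X+\D$ with a general fibre of $f$, which is non-negative by hypothesis. Hence for any K\"ahler class $\omega'$ on $X'$ we obtain
\[ (K_{\mcF'}+\D')\cdot (\omega')^{n-1} \> 0, \]
where $\mcF'$ is the foliation induced by $g$. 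Now I would compare $K_{\mcF'}+\D'$ with $p^*(K_\mcF+\D)$. Using the ramification formulas $K_{X'}=p^*K_X+R_p$ and $K_{Y'}=q^*K_Y+R_q$, together with $K_\mcF=K_{X/Y}$ and $K_{\mcF'}=K_{X'/Y'}$, a short computation gives
\[ K_{\mcF'}+\D' = p^*(K_\mcF+\D) + (R_p - E_\D - g^*R_q), \]
where $E_\D:=p^*\D-\D'$ is effective and records the ramification of $p$ along the components of $\D$. Taking $\omega'=p^*\omega+\epsilon\omega_0$ for a K\"ahler class $\omega_0$ on $X'$, letting $\epsilon\to 0$, and using the projection formula $p_*(p^*\omega)^{n-1}=d\cdot \omega^{n-1}$, we would conclude $(K_\mcF+\D)\cdot\omega^{n-1}\> 0$.

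The hard part will be to arrange the base change so that the error term $R_p-E_\D-g^*R_q$ is effective (or at least non-negative against $(p^*\omega)^{n-1}$). Heuristically, the horizontal part of $R_p$ should match $E_\D$ and the vertical part should match $g^*R_q$, so the remaining contribution is supported on exceptional divisors of the resolution step and can be made controllable. Ensuring this in the K\"ahler setting — where Kawamata's algebraic covering trick is unavailable — requires the basechange technique developed in \cite[Lemma 9.11]{DasOu2022}, and this is the step that carries most of the technical weight.
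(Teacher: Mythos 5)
Your overall strategy — base change so that every component of $\D$ becomes bimeromorphic to the base, apply Lemma \ref{lem-effective-adjoint-foliation-canonical-horizontal-bimeromorphic} upstairs, then descend — is exactly the route the paper takes (the paper performs the base change by applying \cite[Lemma 9.11]{DasOu2022} to the components of $\D$ successively, which plays the role of your fibre product of Stein factorisations). The gap is in the descent step. Your comparison formula
\[ K_{\mcF'}+\D' = p^*(K_\mcF+\D) + \bigl(R_p - E_\D - g^*R_q\bigr) \]
is derived from the identifications $K_\mcF=K_{X/Y}$ and $K_{\mcF'}=K_{X'/Y'}$, and these are false precisely in the situation the lemma must handle: when $f$ (or $g$) has non-reduced fibres over a divisor in the base, $K_{X/Y}-K_\mcF$ is a nonzero effective vertical divisor (this is stated explicitly in the discussion after Proposition \ref{prop-effective-adjoint-foliation-canonical}, and is visible in the local computation in Lemma \ref{lem-blowup-more-adjoint-foliation}). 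Even granting your formula, the genuinely problematic part of the error term is its vertical piece $R_p^{\mathrm{vert}}-g^*R_q$: vertical divisors do \emph{not} pair to zero with $(p^*\omega)^{n-1}$, and along multiple fibres this difference has no sign — this is the standard phenomenon that base change along a multiple fibre drops the relative canonical class. You acknowledge this as ``the hard part'' and leave it as a heuristic, so the proof is not complete as written.

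The paper's resolution is foliation-theoretic rather than a ramification bookkeeping: because the finite cover $p_1$ produced by \cite[Lemma 9.11]{DasOu2022} is \'etale over $f^{-1}(U)$ for a dense Zariski open $U\subseteq Y$, its ramification divisor is vertical over $Y_1$, hence invariant under the pulled-back foliation $\mcF_1=p_1^{-1}\mcF$; then \cite[Lemma 3.4]{Dru21} and \cite[Proposition 2.2]{CS20} give the exact equality $K_{\mcF_1}=p_1^*K_\mcF$, so $p_1^*(K_\mcF+\D)=K_{\mcF_1}+\D_1$ with no correction term at the finite stage. After the subsequent bimeromorphic modification the only discrepancy is a $g$-exceptional $\mbQ$-divisor, which pairs to zero with $(g^*\omega)^{n-1}$. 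If you replace your ramification computation by this invariance argument (or prove directly that the vertical error vanishes against $(p^*\omega)^{n-1}$, which amounts to the same input), your proof closes; as it stands, the key inequality is not established.
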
 

\begin{proof}
By continuity, we may assume that $\omega$ is K\"ahler. 
The idea is to reduce to the case of Lemma \ref{lem-effective-adjoint-foliation-canonical-horizontal-bimeromorphic}.  
By applying \cite[Lemma 2.11]{DasOu2022}  to the irreducible components of $\D$ successively, 
and by taking a desingularization in the end, we can obtain  a commutative diagram 

\centerline{
\xymatrix{
(X_2,\D_2)\ar[d]_{f_2} \ar[r]^{p_2}  & (X_1,\D_1) \ar[d]^{f_1}   \ar[r]^{p_1} 
& (X,\D) \ar[d]^{f}   \\
Y_2\ar[r]_{q_2} & Y_1 \ar[r]_{q_1}   & Y
}
}

\noindent such that 
\begin{enumerate}
\item $q_1$ and $p_1$ are finite,
\item $Y_1$ is normal, 
\item $X_1$ is the normalization of the main component of $X\times_{Y} Y_1$,
\item $X_2$ and $Y_2$ are smooth,
\item $p_2$ and $q_2$ are projective bimeromorphic,
\item 
       {$\D_1 = p^*_1 \D$},
\item every components of $\D_1$ is bimeromorphic to $Y_1$  via  $f_1$,
\item $\D_2$ is the strict transform of $\D_1$. 
\end{enumerate}

Let $p_1^{-1}\mcF$ be the pullback foliation on $X_1$ and let $\mcF_1$ be the foliation induced by $f_1$.  Since $p_1^{-1}\mcF$ is induced by $q_1\circ f_1$, it follows that $\mcF_1\subset p_1^{-1}\mcF$, and hence $\mcF_1=p_1^{-1}\mcF$ as both of them have same rank. 
Since  $p_1$ is \'etale over a dense Zariski open subset of $X$ of the form $f^{-1}(U)$, where $U\subset Y$ is a dense Zariski open subset of $Y$, it follows that the ramification divisor, say $R$,  of $p_1$ is vertical over  $Y_1$. 
Thus $R$ is $\mcF_1$-invariant. 
From   \cite[Lemma 3.4]{Dru21} and \cite[Proposition 2.2]{CS20} it follows that $K_{\mcF_1}=p_1^*K_{\mcF}$. 
Hence  we have
\[ 
 p_1^*(K_{\mcF }+\D ) = p_1^*K_{\mcF } + p_1^*\D  = K_{\mcF_1} + \D_1.
\]

Let $\mcF_2$ be the foliation induced by $f_2$ and let $g: X_2\to X$ be the composite morphism. 
Then from the computation above it follows that there is a $g$-exceptional $\mbQ$-divisor $E$ on $X_2$ such that 
\[g^* (K_{\mcF}+\D) = K_{\mcF_2} + \D_2+E.\] 
Since  $(K_{\mcF_2}+\D_2) \cdot (g^*\omega)^{n-1} \> 0 $  by  Lemma \ref{lem-effective-adjoint-foliation-canonical-horizontal-bimeromorphic},  
we obtain that 
\[ 
(K_{\mcF}+\D) \cdot \omega^{n-1} \> 0.
\]
This completes our proof. 
\end{proof}

Now we can deduce Proposition \ref{prop-effective-adjoint-foliation-canonical}.

\begin{proof}[{Proof of Proposition \ref{prop-effective-adjoint-foliation-canonical}}] 
By Theorem \ref{thm-non-vanishing}, $K_X+\D$ is pseudoeffective if and only if $K_X+\D_{\hor}$ is pseudoeffective. Hence, without loss of generality, we may assume that $\D=\D_{\hor}$. 
Moreover, $K_X+\D$ has nonnegative intersection number with the general fibers of the MRC fibration.  
By \cite[Theorem 1.1]{HP15}, $X$ is bimeromorphic to a   Mori fiber space $\hat{f}\colon \widehat{X} \to \widehat{Y}$ 
such that  $\widehat Y$ is a normal compact   surface.  
Furthermore,  the  general  fibers of $\hat{f}$  
map to the general leaves of $\mcF$. 
By blowing up the graph of $X\bir \widehat X$,  we get the following diagram, 
\centerline{
\xymatrix{
(X',\D')\ar[d]_{f'} \ar[r]^p  & (X,\D)   \\
 Y' &  
}
}
\noindent such that 
\begin{enumerate}
\item $X'$ is smooth and $Y'=\widehat Y$,
\item  $p$ is proper  bimeromorphic,
\item $f'$ is projective with general fibers isomorphic to $\mathbb{P}^1$,
\item general fibers of $f'$ map to the leaves of $\mcF$  via  $p$,
\item $\D'$ is the strict transform of $\D$,  
\item $\lfloor \D' \rfloor = 0$.
\end{enumerate}
Let $\omega'=p^*\omega$ and let $\mcF'$ be the foliation induced by $f'$. 
Observe that $\mcF'=p^{-1}\mcF$.
Then there is a $p$-exceptional $\mbQ$-divisor $E$ on $X'$ such that
\[
K_{\mcF'}+\D'+E = p^*(K_{\mcF}+\D ). 
\] 
We note that $p:X'\to X$ is an isomorphism around the general fibers of the MRC fibration of $X$. Thus  $K_{X'}+\D'$ has nonnegative intersection number with the general fibers of $f'$.  
By Lemma \ref{lem-effective-adjoint-foliation-canonical-regular}, we have $(K_{\mcF'}+\D') \cdot \omega'^{2} \> 0$.  Since  $E$ is $p$-exceptional, we  conclude that    $(K_{\mcF}+\D) \cdot \omega^{2} \> 0$.
\end{proof}

Now we  can establish the generic nefness of some logarithmic cotangent sheaves in the following lemma.

\begin{lemma}
\label{lem-orbifold-semi-positivity}
Let $(X,B)$ be a lc pair with a reduced boundary $B$, where $X$ is a  uniruled non-algebraic compact K\"ahler threefold, with quotient singularities. 
Assume that there is some $0<\lambda <1$ such that   $K_X+ (1-\lambda) B$ is pseudoeffective.  
Let $\omega$ be any K\"ahler class on $X$. 
Then the reflexive logarithmic cotangent  sheaf $\Omega_X^{[1]}(\mathrm{log}\, B)$ is $\omega$-generically nef.
\end{lemma}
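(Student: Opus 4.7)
Proof plan. We argue by contradiction: suppose $\Omega^{[1]}_X(\mathrm{log}\,B)$ is not $\omega$-generically nef. Applying Harder--Narasimhan and Jordan--H\"older on the standard orbifold structure (Lemma \ref{lemma:filtration}), we obtain a stable torsion-free orbifold quotient $\mcQ$ with $\hat{c}_1(\mcQ)\cdot\omega^2<0$. Dualizing yields a saturated subsheaf $\mcF\subseteq T_X(-\mathrm{log}\,B)\subseteq T_X$ with $\hat{c}_1(\mcF)\cdot\omega^2>0$, which we aim to contradict. A log resolution of $(X,B)$ reduces the proof to the case that $X$ is smooth. If $\mathrm{rank}(\mcF)=3$ then, up to codimension two, $\hat{c}_1(\mcF)=-(K_X+B)$, so $(K_X+B)\cdot\omega^2<0$ contradicts the pseudoeffectivity of $K_X+B$ (inherited from that of $K_X+(1-\lambda)B$ since $\lambda<1$).

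For $\mathrm{rank}(\mcF)\in\{1,2\}$ we bring in the MRC structure. By \cite[Lemma 3.10]{DasOu2022}, the base $Y$ of the MRC fibration is a non-uniruled compact K\"ahler surface, and the corresponding MRC foliation $\mcG\subseteq T_X$ has rank $1$ with general leaves $F\cong\mathbb{P}^1$. The splitting $T_X|_F\cong\mcO(2)\oplus\mcO^{\oplus 2}$ on a general leaf shows that the only direction of strict positivity in $T_X|_F$ is $\mcG|_F$, forcing every rank $\leq 2$ saturated subsheaf of $T_X$ of strictly positive $\omega^2$-slope to contain $\mcG$. In particular $\mcG\subseteq\mcF\subseteq T_X(-\mathrm{log}\,B)$, so $\mcG$ is tangent to $B$. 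But the general leaf of $\mcG$ is a rational curve moving in a family dominating $Y$; hence the horizontal part $B_{\mathrm{hor}}$ of $B$ over $Y$ must vanish, since any horizontal component would be cut transversally by a general leaf, contradicting tangency.

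Now $(X,(1-\lambda)B)$ is klt (because $B$ is reduced and $\lambda>0$), and $K_X+(1-\lambda)B$ is pseudoeffective by hypothesis, so Proposition \ref{prop-effective-adjoint-foliation-canonical} applied to $\Delta=(1-\lambda)B$ yields $(K_{\mcG}+(1-\lambda)B_{\mathrm{hor}})\cdot\omega^2\geq 0$, hence $\hat{c}_1(\mcG)\cdot\omega^2\leq 0$ using $B_{\mathrm{hor}}=0$. For $\mathrm{rank}(\mcF)=1$ this directly contradicts $\hat{c}_1(\mcF)\cdot\omega^2>0$. For $\mathrm{rank}(\mcF)=2$, the saturated rank-one quotient $\mcF/\mcG\hookrightarrow T_X/\mcG$ identifies in codimension two with a subsheaf of $f^*T_Y$, for $f\colon X\to Y$ the MRC morphism; since $Y$ is a non-uniruled compact K\"ahler surface, the generic nefness of $\Omega^1_Y$ (Miyaoka on K\"ahler surfaces) gives $\hat{c}_1(\mcF/\mcG)\cdot\omega^2\leq 0$. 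Summing yields $\hat{c}_1(\mcF)\cdot\omega^2\leq 0$, contradicting the assumption.

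The principal obstacle is the fiber-restriction step: justifying that any strictly positive-slope saturated subsheaf of $T_X$ must absorb the MRC foliation $\mcG$. In the algebraic setting this is standard via Miyaoka's restriction argument together with Bogomolov--McQuillan, but on a compact K\"ahler threefold with quotient singularities it requires the orbifold Bogomolov-type inequality of Lemma \ref{lemma:A-BG-inequality-orbifold} together with a careful adaptation of the restriction argument, performed on a smooth bimeromorphic model compatible with the MRC fibration.
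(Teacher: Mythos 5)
There is a genuine gap, and it sits exactly where you flag it. Your argument hinges on the claim that every saturated rank $\le 2$ subsheaf of $T_X$ with $\hat{c}_1\cdot\omega^2>0$ must contain the MRC foliation, justified by restricting to a general leaf $F$ and using $T_X|_F\cong\mcO(2)\oplus\mcO^{\oplus 2}$. But positivity against $\omega^2$ for an arbitrary K\"ahler class gives no control on the degree of the subsheaf restricted to a general MRC fiber: the class of $F$ is not a complete intersection of polarizations, and no Mehta--Ramanathan/Miyaoka-type restriction theorem is available in this K\"ahler setting, which is precisely why the paper does not argue this way. The paper's route avoids the claim entirely: for a rank-one subsheaf $\mcG\subseteq T_X(-\log B)$ of positive slope it saturates $\mcG$ inside $T_X$, observes the saturation is a rank-one foliation with $c_1\cdot\omega^2>0$, and invokes Brunella's criterion \cite[Theorem 1.1]{Bru06} to identify it with the MRC foliation; the remaining case (rank-one subsheaves of $\Omega_X^{1}(\log B)$ on the log smooth model) is disposed of by Campana--P\u{a}un \cite[Theorem 1.1]{CP16}, so your rank-two-subsheaf-of-$T_X$ analysis and the appeal to generic nefness of $\Omega^1_Y$ on a bimeromorphic model of the MRC base are never needed.

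A second, independent error is the deduction that $B_{\rm hor}=0$. Your $\mcF$ is saturated only inside $T_X(-\log B)$, not inside $T_X$; even granting the containment claim, what you get is that the saturation of $\mcF$ in $T_X$ equals the MRC foliation $\mcG$, i.e.\ $\mcF\subseteq \mcG\cap T_X(-\log B)$ --- not $\mcG\subseteq T_X(-\log B)$. Hence you cannot conclude that $\mcG$ is tangent to the horizontal components of $B$, and indeed $B_{\rm hor}$ need not vanish in this situation. The correct bookkeeping is the one in the paper: by \cite[Proposition 2.17]{Cla17}, $c_1(\mcG\cap T_X(-\log B))=-(K_{\mcG}+B_{\rm hor})$, and then Proposition \ref{prop-effective-adjoint-foliation-canonical} (applied with the klt pair $(X,(1-\lambda)B)$, which allows a nonzero horizontal boundary) gives $(K_{\mcG}+B_{\rm hor})\cdot\omega^2\ge 0$, yielding the contradiction without ever forcing $B_{\rm hor}=0$. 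With these two repairs your dual/quotient formulation would essentially collapse back onto the paper's proof.
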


\begin{proof}
Let $\mathcal{E} \subseteq \Omega_X^{[1]}(\mathrm{log}\, B)$ be a proper non zero saturated subsheaf. 
We need to show that 
\[ {c}_1(\mathcal{E}) \cdot \omega^2 \< (K_X+B)\cdot \omega^2.\] 
Let $r\colon\widetilde{X} \to X$ be a log resolution of $(X,B)$ and let $\widetilde{B} = r^{-1}_*B$. 
Then   $K_{\widetilde{X}}+(1-\lambda) \widetilde{B}$ is pseudoeffective by Theorem \ref{thm-non-vanishing}.   
We remark that 
\[
(r_*\Omega_{\widetilde{X}}^1(\mathrm{log}\, \widetilde{B}))^{**} = \Omega_X^{[1]}(\mathrm{log}\, B).
\]
Hence, there is a saturated subsheaf $\widetilde{\mcE} \subseteq \Omega_{\widetilde{X}}^1(\mathrm{log}\, \widetilde{B})$ such that 
$ (r_* \widetilde{\mcE})^{**} = \mcE$.   
As a consequence, by the projection formula,  we only need to show that
\[
c_1(\widetilde{\mathcal{E}}) \cdot (r^*\omega)^2 \< (K_{\widetilde{X}}+\widetilde{B})\cdot (r^*\omega)^2.
\]

Thus, replacing $(X,B)$ by $(\widetilde{X}, \widetilde{B})$  if necessary, we  may assume that $(X,B)$ is log smooth. 
If the rank of $\mcE$ is $1$, then the inequality follows from \cite[Theorem 1.1]{CP16}. 
Now we assume that $\mathcal{E}$ has rank $2$. 
Assume by contradiction that 
\[
c_1({\mathcal{E}}) \cdot  \omega^2 > (K_{X}+ B)\cdot  \omega^2.
\]
Let $\mathcal{G} = (\Omega^{1}_{X}(\mathrm{log}\,  {B}) / \mathcal{E})^*$.  
Then $\mcG$ is a rank one saturated subsheaf of  the logarithmic tangent bundle  $T_{X}(-\log\,  {B})$, and we have 
\[
c_1(\mcG) \cdot  \omega^2  = -(K_X+B)\cdot  \omega^2  + c_1(\mathcal{E})\cdot  \omega^2 >0.
\] 
Let $\mcG \injective \mcF$ be the saturation of $\mcG$ in the tangent bundle $T_{X}$.  
Then $\mcF$ is a foliation on $X$, and  we have   
\[
c_1( \mcF) \cdot   \omega ^2 \> c_1(\mcG) \cdot   \omega^2 >0.  
\] 
Since $\mcF$ has rank one, it follows that $c_1(\mcF) = -K_{\mcF}$. 
By a criterion of Brunella (see \cite[Theorem 1.1]{Bru06}), $ \mcF $ is a foliation by rational curves.  
Therefore, it must be the foliation corresponding to  the MRC fibration of $X$.  

We remark that  $\mcG = \mcF \cap  T_{X}(-\log\, B)$. 
By \cite[Proposition 2.17]{Cla17}, we obtain that 
\[ 
c_1(\mcG) = -(K_{\mcF} + B_{\hor}), 
\] 
where $B_{\hor}$ is the  horizontal part of $B$ over the base of the MRC fibration.  
Since $(X, (1-\lambda) B)$ is klt and $K_{X}+(1-\lambda) B$ is pseudoeffective, 
by Proposition \ref{prop-effective-adjoint-foliation-canonical}, we obtain that 
\[
(K_{\mcF}+B_{\hor} )\cdot  \omega^2 \> (K_{\mcF}+ (1-\lambda)  {B}_{\hor}) \cdot  \omega^2 \> 0. 
\] 
Hence $c_1(\mcG) \cdot  \omega^2 \< 0$. We obtain a contradiction.
\end{proof}

We can now conclude Proposition \ref{prop-log-orbifold-c2-semipositive}

\begin{proof}[{Proof of Proposition \ref{prop-log-orbifold-c2-semipositive}}]
By Lemma \ref{lem-orbifold-semi-positivity}, $\Omega_{Y}^{[1]}(\mathrm{log}\, B)$ is  generically nef with respect to any nef class on $Y$. 
Furthermore, we remark that 
\[
\hat{c}_1(\Omega_{Y}^{[1]}(\mathrm{log}\, B)) = K_Y+B = \pi^*(K_X+\Delta) - E,
\]
where $E$ is some $\pi$-exceptional divisor.  
By assumption, we have $\pi^*L\cdot E\num 0$. 
Thus we can apply Proposition \ref{prop:psef-c2-general}, with $\mcE= \Omega_{Y}^{[1]}(\mathrm{log}\, B)$,  $\alpha=   \pi^*(K_X+\D)$ and $\beta=E$, 
 to conclude that  $\hat{c}_2(\Omega_{Y}^{[1]}(\mathrm{log}\, B)) \cdot \pi^*L \> 0$.  
\end{proof}

\subsection{The case of non-uniruled threefolds} 
 
We also have  the following variant of Proposition \ref{prop-log-orbifold-c2-semipositive}.

\begin{proposition}
\label{prop:psef-c2}
Let $Y$ be a   compact K\"ahler threefold with quotient singularities.
Assume the   following properties hold:
\begin{enumerate}
    \item There is a bimeromorphic morphism $\pi\colon Y \to X$ to a normal compact K\"ahler threefold  such that $L=k(K_X+\D)$ is Cartier and nef for some  reduced boundary $\D$ and some integer $k>0$. 
    \item $L^2\not\equiv 0$ and $L^3 = 0$. 
    \item $\pi^*L|_P \equiv 0$, in particular  $\pi^*L \cdot P \equiv 0$,  for any $\pi$-exceptional prime divisor $P$. 
    \item $Y$ is not uniruled. 
    \item  If  $B=\pi_*^{-1}\D$, then $\Omega_Y^{[1]}(\mathrm{log}\,  B)$ induces an orbifold vector bundle on the standard orbifold structure $Y_{\orb}$. 
\end{enumerate} 
Then  $\hat{c}_2(\Omega_Y^{[1]}(\mathrm{log}\,  B)) \cdot  \pi^*L \> 0$.   
\end{proposition}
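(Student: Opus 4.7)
The plan is to follow the outline of Proposition \ref{prop-log-orbifold-c2-semipositive} essentially verbatim, replacing the appeal to Lemma \ref{lem-orbifold-semi-positivity} (which requires the uniruled hypothesis) with a direct argument for generic nefness of $\Omega_Y^{[1]}(\log\, B)$ that uses the non-uniruled hypothesis instead. Once generic nefness is in hand, the final application of Proposition \ref{prop:psef-c2-general} is identical.

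First I would establish that $\Omega_Y^{[1]}(\log\, B)$ is generically nef with respect to every K\"ahler class $\omega$ on $Y$. Since $Y$ is non-uniruled, $K_Y$ is pseudoeffective (by Brunella's theorem applied to a resolution, cf.\ \cite[Theorem 1.1]{Bru06}), so $K_Y + B$ is pseudoeffective as well. Passing to a log resolution $r\colon \widetilde{Y}\to Y$ of $(Y,B)$ and pulling back a given saturated subsheaf, I may assume $(Y,B)$ is log smooth. Let $\mcE\subset\Omega_Y^{1}(\log\, B)$ be a proper saturated subsheaf. If $\mcE$ has rank $1$, the slope bound $c_1(\mcE)\cdot\omega^{2}\<(K_Y+B)\cdot\omega^{2}$ follows directly from \cite[Theorem 1.1]{CP16}. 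If $\mcE$ has rank $2$, I would argue by contradiction: the dual $\mcG = (\Omega_Y^{1}(\log\, B)/\mcE)^{*}$ is a rank $1$ saturated subsheaf of $T_Y(-\log\, B)\subset T_Y$, and a failure of the slope bound would give $c_1(\mcG)\cdot\omega^{2}>0$. Taking the saturation of $\mcG$ in $T_Y$ produces a rank $1$ foliation $\mcF$ on $Y$ with $-K_{\mcF}\cdot\omega^{2}>0$, so $K_{\mcF}$ is not pseudoeffective. Then Brunella's theorem \cite[Theorem 1.1]{Bru06} forces $\mcF$ to be a foliation by rational curves, contradicting the assumption that $Y$ is non-uniruled.

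With the generic nefness established, I would finish as in the proof of Proposition \ref{prop-log-orbifold-c2-semipositive}. Set $\alpha = \pi^{*}(K_X+\Delta)$. By (1)--(2), $\alpha$ is nef with $\alpha^{2}\not\equiv 0$ and $\alpha^{3}=0$. Write
\[
\hat{c}_1(\Omega_Y^{[1]}(\log\, B)) \;=\; K_Y + B \;=\; \pi^{*}(K_X+\Delta) - E \;=\; \alpha - E,
\]
where $E$ is $\pi$-exceptional since $\pi_{*}B=\Delta$. Hypothesis (3) yields $\alpha\cdot E\equiv 0$. Applying Proposition \ref{prop:psef-c2-general} with $\mcE = \Omega_Y^{[1]}(\log\, B)$ and $\beta = E$ gives $\hat{c}_2(\Omega_Y^{[1]}(\log\, B))\cdot\alpha \> 0$, which is the desired inequality (up to the positive scalar $k$ relating $\alpha$ to $\pi^{*}L$).

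The genuine obstacle is the rank $2$ case of Step 1: this is precisely where the non-uniruled hypothesis is used, via the Brunella criterion applied to the saturated foliation $\mcF$. The rank $1$ case is black-boxed by Campana-P\u{a}un, and the rest of the argument is bookkeeping that matches the structure of Proposition \ref{prop-log-orbifold-c2-semipositive}. One minor technicality to verify is that generic nefness proven for K\"ahler classes $\omega$ on $Y$ transfers to the perturbations $\alpha_{\epsilon}=\alpha+\epsilon\omega$ used in Proposition \ref{prop:psef-c2-general}; this is handled exactly as in the uniruled case, since the Campana-P\u{a}un and Brunella inputs are insensitive to whether the polarization is strictly K\"ahler or merely a sum of nef and K\"ahler.
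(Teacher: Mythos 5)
Your argument is correct, but it reaches the key generic nefness statement by a different route than the paper. The paper's proof is a two-line reduction to its Lemma \ref{lemma:generic-nef-non-uniruled}: given a torsion-free quotient $\mcQ$ of $\Omega_Y^{[1]}(\log B)$, it compares $\hat{c}_1(\mcQ)$ with the first Chern class of the image of $\Omega_Y^{[1]}\subseteq \Omega_Y^{[1]}(\log B)$ in $\mcQ$ (the discrepancy is an effective class supported on $B$, harmless against a nef square), thereby discarding the log part and reducing to $B=0$; then non-uniruledness gives $K_{\widetilde Y}$ pseudoeffective on a resolution, Theorem \ref{thm-non-vanishing-general-setting} upgrades this to an effective $\mathbb{Q}$-divisor, and Enoki's semipositivity theorem \cite{Enoki1988} yields generic nefness of $\Omega^1_{\widetilde Y}$ with respect to nef classes. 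You instead rerun the proof of Lemma \ref{lem-orbifold-semi-positivity}: Campana--P\u{a}un \cite{CP16} for rank-one subsheaves and, in the rank-two case, Brunella's criterion \cite{Bru06} producing a foliation by rational curves, which now contradicts non-uniruledness directly (rather than being identified with the MRC foliation and handled via Proposition \ref{prop-effective-adjoint-foliation-canonical}). Both arguments are sound, and your final step — writing $\hat{c}_1(\Omega_Y^{[1]}(\log B))=\pi^*(K_X+\Delta)-E$ with $E$ exceptional, using hypothesis (3) to kill $\alpha\cdot E$, and feeding this into Proposition \ref{prop:psef-c2-general}, together with the continuity remark passing from K\"ahler classes to the nef perturbations $\alpha_\epsilon$ — coincides with the paper's. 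What the paper's route buys is brevity and independence from foliation theory and from \cite{CP16} in this case; what yours buys is uniformity with the uniruled case and no appeal to Enoki's theorem or to the non-vanishing theorem. One small citation point: the fact that a non-uniruled compact K\"ahler threefold has pseudoeffective canonical class is not literally \cite[Theorem 1.1]{Bru06} (which concerns rank-one foliations); it is the standard consequence of the K\"ahler MMP of \cite{HP16}, and the paper uses it without comment in Lemma \ref{lemma:generic-nef-non-uniruled} — so this is a matter of attribution, not a gap.
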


\begin{proof}
By Lemma \ref{lemma:generic-nef-non-uniruled} below, we see that $\Omega_Y^{[1]}(\mathrm{log}\,  B)$ is generically nef with respect to   any nef class.   
There is a $\pi$-exceptional divisor $E$ such that  
\[\hat{c}_1(\Omega_Y^{[1]}(\mathrm{log}\,  B)) = K_Y+ B =  \pi^*(K_X+ \D) - E.\] 
Hence we can deduce the proposition by applying Proposition \ref{prop:psef-c2-general}, with $\mcE= \Omega_{Y}^{[1]}(\mathrm{log}\, B)$,  $\alpha=   \pi^*(K_X+\D)$ and $\beta=E$.
\end{proof}

\begin{lemma}
\label{lemma:generic-nef-non-uniruled}
Let $X$ be a compact K\"ahler threefold with quotient singularities. 
Assume that $X$ is not uniruled. 
Let $B$ be a reduced divisor. 
Then for any nef class $\alpha$ and any  torsion-free quotient $\Omega_X^{[1]}(\log\, B) \to \mathcal{Q}$ of coherent sheaves, we have $\hat{c}_1(\mathcal{Q})\cdot \alpha^2   \> 0$.
\end{lemma}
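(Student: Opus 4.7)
The plan is to mirror the strategy of Lemma \ref{lem-orbifold-semi-positivity}, with the proof simplifying significantly because non-uniruledness replaces the delicate foliation analysis based on the MRC fibration. By the standard exact sequence argument (applied to the saturated kernel of $\Omega_X^{[1]}(\log\, B) \twoheadrightarrow \mathcal{Q}$), the claim is equivalent to showing that every saturated subsheaf $\mathcal{E} \subseteq \Omega_X^{[1]}(\log\, B)$ satisfies $\hat{c}_1(\mathcal{E})\cdot \alpha^2 \leq (K_X+B)\cdot \alpha^2$.

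First I will reduce to the log smooth case. Taking a log resolution $r\colon \widetilde X \to X$ of $(X,B)$ and setting $\widetilde B = r_*^{-1}B$, the defining formula $\Omega_X^{[1]}(\log\, B) = (r_*\Omega^1_{\widetilde X}(\log\, \widetilde B))^{**}$ shows that $\mathcal{E}$ lifts to a saturated subsheaf $\widetilde{\mathcal{E}} \subseteq \Omega^1_{\widetilde X}(\log\, \widetilde B)$ with $(r_*\widetilde{\mathcal{E}})^{**} = \mathcal{E}$. Since $r$-exceptional divisors have zero intersection against $(r^*\alpha)^2$, the projection formula reduces the problem to the log smooth inequality
\[
c_1(\widetilde{\mathcal{E}})\cdot (r^*\alpha)^2 \leq (K_{\widetilde X}+\widetilde B)\cdot (r^*\alpha)^2.
\]

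Next I will exploit the non-uniruledness hypothesis. Since non-uniruledness is a bimeromorphic invariant, $\widetilde X$ is again a non-uniruled compact K\"ahler threefold; hence $K_{\widetilde X}$ is pseudoeffective (by Brunella's theorem for compact K\"ahler threefolds, or via the MMP machinery of \cite{HP16}), and therefore so is $K_{\widetilde X}+\widetilde B$. I will then apply the generic nefness result for log cotangent sheaves of Campana-P\u{a}un \cite[Theorem 1.1]{CP16} to the log smooth pair $(\widetilde X, \widetilde B)$, giving the required inequality with respect to the square of any K\"ahler class on $\widetilde X$. A perturbation argument, replacing $r^*\alpha$ by $r^*\alpha+\epsilon\widetilde\omega$ with $\widetilde\omega$ K\"ahler on $\widetilde X$ and letting $\epsilon \to 0$, will extend the inequality from K\"ahler to the nef class $r^*\alpha$.

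The main obstacle will be the rank-two case of generic nefness, which is not directly covered by the rank-one application of \cite[Theorem 1.1]{CP16} used in Lemma \ref{lem-orbifold-semi-positivity}. For a rank-two saturated subsheaf $\widetilde{\mathcal{E}}$ I plan to dualize, obtaining a rank-one saturated subsheaf of $T_{\widetilde X}(-\log\, \widetilde B) \subseteq T_{\widetilde X}$, which automatically defines a one-dimensional foliation; Brunella's criterion \cite[Theorem 1.1]{Bru06} combined with the non-uniruledness of $\widetilde X$ then precludes its canonical class from having negative slope against $(r^*\alpha)^2$. This is the step where the non-uniruledness hypothesis is essentially used, and it replaces the MRC-based argument of Lemma \ref{lem-orbifold-semi-positivity}.
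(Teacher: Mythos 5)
Your proposal is correct, but it takes a genuinely different route from the paper. The paper's proof never passes to a log resolution of the pair: it reduces at once to the case $B=0$ by considering the image $\mathcal{N}\subseteq\mathcal{Q}$ of $\Omega_X^{[1]}$ under the inclusion $\Omega_X^{[1]}\subseteq\Omega_X^{[1]}(\log B)$ (the cokernel is supported on $B$, so $\hat{c}_1(\mathcal{Q})\cdot\alpha^2\geq\hat{c}_1(\mathcal{N})\cdot\alpha^2$), and then, on a desingularization $\widetilde X$, combines non-uniruledness with the non-vanishing Theorem \ref{thm-non-vanishing-general-setting} to produce an effective $D\sim_{\mathbb{Q}}K_{\widetilde X}$ and concludes by Enoki's generic semi-negativity theorem \cite{Enoki1988} applied to torsion-free quotients of $\Omega^1_{\widetilde X}$. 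You instead keep the log structure, reduce to a log smooth model exactly as in Lemma \ref{lem-orbifold-semi-positivity}, and rerun that lemma's rank dichotomy: \cite{CP16} for rank-one saturated subsheaves, and for rank two the dual rank-one subsheaf of $T_{\widetilde X}(-\log\widetilde B)$, saturated in $T_{\widetilde X}$ to a foliation whose canonical class would fail to be pseudoeffective, so that Brunella's criterion \cite{Bru06} contradicts non-uniruledness — this cleanly replaces the appeal to Proposition \ref{prop-effective-adjoint-foliation-canonical} used in the uniruled case, and your perturbation from K\"ahler to nef classes (or, equivalently, the fact that a pseudoeffective class pairs nonnegatively with a product of nef classes) is legitimate. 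Both arguments are sound; your route needs only pseudoeffectivity of $K_{\widetilde X}$ rather than an effective representative, and avoids Enoki's theorem, at the cost of redoing the rank-by-rank analysis, whereas the paper's route is shorter because the reduction to $B=0$ is a one-line observation and the $B=0$ case is exactly Enoki's classical result. For completeness you should record the extreme case $\mathcal{E}=0$ (i.e. $\mathcal{Q}=\Omega_X^{[1]}(\log B)$), where the required inequality $(K_X+B)\cdot\alpha^2\geq 0$ follows from pseudoeffectivity of $K_{\widetilde X}$ together with effectivity of $\widetilde B$.
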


\begin{proof}
Let $\mcN\subseteq \mathcal{Q}$ be the subsheaf induced by the inclusion $\Omega_X^{[1]} \subseteq \Omega_X^{[1]}(\log\, B)$. 
Then 
\[
 \hat{c}_1(\mathcal{Q})\cdot \alpha^2   \>   \hat{c}_1(\mathcal{N})\cdot \alpha^2. 
\]
Hence, it is enough to prove the case when $B=0$.

Assume that $B=0$. 
Let $\widetilde{X} \to X$ be a desingularization. 
Since $X$ is not uniruled, $K_{\widetilde{X}}$ is pseudoeffective. 
By Theorem \ref{thm-non-vanishing-general-setting}, there is some effective divisor $D\sim_{\mathbb{Q}} K_{\widetilde{X}}$. 
Hence, by \cite[Theorem 1.4]{Enoki1988}, if $\Omega^1_{\widetilde{X}} \to \mathcal{Q}'$ is the induced quotient, then we have  $c_1(\mathcal{Q}') \cdot (r^*\alpha)^2 \> 0$. 
This implies that 
$\hat{c}_1(\mathcal{Q})\cdot \alpha^2   \> 0$.
\end{proof}

\section{Lower bounds on Euler characteristics}
\label{section:Euler-char}

In this section we will work under the following setup.

\begin{setup}
\label{set:setup-1}
    Let $X$ be a  $\mathbb{Q}$-factorial compact  K\"ahler threefold, and $\Delta$ a reduced boundary on $X$. 
Assume that the following properties hold.
\begin{enumerate}
\item $(X ,\Delta )$ is lc.
\item $X$ has terminal singularities outside the support of  $\Delta$.
\item $L:=k(K_X+\Delta)$ is Cartier  and nef for some integer $k>0$.
\item There is an effective $\mbQ$-divisor $D \sim_\mathbb{Q} K_{X }+\Delta $ such that   $D_{\red}=\Delta$.
\item $\nu(X, K_X+\Delta) =2$. 
\item If $C\subset X$ is a curve such that $(K_X+\Delta)\cdot C>0$, then $(X, \Delta)$ is dlt near the general points of $C$. 
\end{enumerate}
\end{setup}

Our goal is to prove the following proposition. 
When $X$ is projective, this is an important step in \cite[Chapter 14]{Kol92}, where the algebraic abundance was established. 
In the case of K\"ahler threefolds, the difficulty is that   Bogomolov-Gieseker type inequalities for klt K\"ahler varieties are still unknown. 
Our solution is to construct a modification $Y$ of $X$, which has quotient singularities only, see Lemma \ref{lemma:construction-of-Y-1}. 
Afterwards, we can apply orbifold Bogomolov-Gieseker type inequalities on $Y$.

\begin{proposition}
\label{prop:Euler-characteristic-combined-1} 
With the notations and hypothesis as in Setup \ref{set:setup-1},  we assume further that $K_X+(1-\epsilon)\D$ is nef for any $0<\epsilon\ll 1$. 
Then there is a constant $\lambda$ such that  $\chi(X, \mathscr{O}_{{X}}(n L) ) \> \lambda$ for all $n \> 0$. 
Furthermore, if there is a curve $C\subseteq X_{\sing}$ such that $C\cdot L >0$, then $\chi(X, \mathscr{O}_{{X}}(n L) ) $ grows at least linearly in $n$.
\end{proposition}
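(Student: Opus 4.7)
The plan is to mirror \cite[Chapter 14]{Kol92} in the K\"ahler orbifold setting. First, I would invoke Lemma \ref{lemma:construction-of-Y-1} to produce a bimeromorphic morphism $\pi\colon Y\to X$ with $Y$ a compact K\"ahler threefold having only quotient singularities, $\pi^{*}L\cdot P\cdot D = 0$ for every $\pi$-exceptional prime $P$ and every divisor $D$, and such that $\Omega_{Y}^{[1]}(\log B)$ (with $B = \pi^{-1}_{*}\Delta$) defines an orbifold vector bundle on the standard orbifold structure $Y_{\orb}$. The hypothesis ``$K_{X}+(1-\epsilon)\Delta$ nef for $0<\epsilon\ll 1$'' pulls back to the pseudoeffectivity of $K_{Y}+(1-\lambda)B$ for some $\lambda>0$, so Proposition \ref{prop-log-orbifold-c2-semipositive} (uniruled case) or Proposition \ref{prop:psef-c2} (non-uniruled case) applies, giving $\hat{c}_{2}(\Omega_{Y}^{[1]}(\log B))\cdot \pi^{*}L\ge 0$. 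Moreover, since both $X$ and $Y$ have rational singularities, a Leray argument through a common resolution gives $R^{i}\pi_{*}\mcO_{Y}=0$ for $i\ge 1$ and $\pi_{*}\mcO_{Y}=\mcO_{X}$, hence $\chi(X,\mcO_{X}(nL))=\chi(Y,\mcO_{Y}(n\pi^{*}L))$ for every $n\ge 0$.

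Next, I would apply orbifold Hirzebruch--Riemann--Roch on $Y_{\orb}$:
\[
\chi(Y,\mcO_{Y}(n\pi^{*}L))=\tfrac{n^{3}}{6}(\pi^{*}L)^{3}-\tfrac{n^{2}}{4}(\pi^{*}L)^{2}\cdot K_{Y}+\tfrac{n}{12}\pi^{*}L\cdot(K_{Y}^{2}+\hat{c}_{2}(Y))+\chi(\mcO_{Y}).
\]
The cubic term vanishes since $L^{3}=0$. Writing $K_{Y}+B=\pi^{*}L/k+E$ with $E$ a $\pi$-exceptional $\mathbb{Q}$-divisor and using $\pi^{*}L\cdot E = 0$, the quadratic coefficient simplifies to $\tfrac{1}{4k}(\pi^{*}L)^{2}\cdot B\ge 0$. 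For the linear coefficient I would substitute Lemma \ref{lemma:compare-log-c_2} applied to $(Y,B)$, which yields an effective $1$-cycle $C_{Y}$ with
\[
\hat{c}_{2}(Y)=\hat{c}_{2}(\Omega_{Y}^{[1]}(\log B))-(K_{Y}+B)\cdot B+[C_{Y}].
\]
After expansion and repeated cancellation using $\pi^{*}L\cdot E=0$, the linear coefficient decomposes into $\hat{c}_{2}(\Omega_{Y}^{[1]}(\log B))\cdot \pi^{*}L\ge 0$, $\pi^{*}L\cdot[C_{Y}]\ge 0$, and a residual $B$-intersection term whose nonnegativity follows from adjunction on each lc surface component $B_{i}$ together with the identity $(K_{Y}+B)|_{B_{i}}\cdot \pi^{*}L|_{B_{i}}=0$ and the effectivity of the different. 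These ingredients yield the uniform lower bound: when the quadratic coefficient $a>0$, $an^{2}+bn+c$ is bounded below by $c-b^{2}/(4a)$ on $n\ge 0$; when $a=0$, the nonnegative linear coefficient gives $\chi\ge \chi(\mcO_{Y})$.

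For the linear-growth claim, I would use Lemma \ref{lemma:c2-calculation} together with the explicit surface computation \cite[Equation (10.8.8)]{Kol92}: at every quotient surface singularity the local contribution to the $1$-dimensional component of $Y_{\sing}$ in $[C_{Y}]$ is strictly positive. The strict transform in $Y$ of the curve $C\subseteq X_{\sing}$ with $C\cdot L>0$ carries transverse quotient surface singularities, so it appears in $[C_{Y}]$ with positive weight, and $\pi^{*}L\cdot \pi^{-1}_{*}C=L\cdot C>0$ since exceptional corrections are zero against $\pi^{*}L$; this forces the linear coefficient $b>0$ and hence at least linear growth of $\chi(X,\mcO_{X}(nL))$ in $n$. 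The main obstacle is the careful verification that the residual $B$-intersection term in the linear coefficient is nonnegative, which reduces to an adjunction computation on each lc surface component $B_{i}$; the remaining pieces (equality of Euler characteristics, vanishing of the cubic term, and nonnegativity of the quadratic coefficient) follow routinely once Lemma \ref{lemma:construction-of-Y-1} is in place.
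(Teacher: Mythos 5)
Your outline assembles the right ingredients (Lemma \ref{lemma:construction-of-Y-1}, the positivity of $\hat{c}_2(\Omega_Y^{[1]}(\log B))\cdot\pi^*L$ via Propositions \ref{prop-log-orbifold-c2-semipositive} and \ref{prop:psef-c2}, and Lemma \ref{lemma:compare-log_c2}), but the central step is a genuine gap: you assert the naive Hirzebruch--Riemann--Roch identity $\chi(Y,\mcO_Y(n\pi^*L))=\tfrac{n^3}{6}(\pi^*L)^3-\tfrac{n^2}{4}(\pi^*L)^2\cdot K_Y+\tfrac{n}{12}\pi^*L\cdot(K_Y^2+\hat{c}_2(Y))+\chi(\mcO_Y)$ on the singular space $Y$ with its orbifold Chern classes. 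This formula is not valid as stated: orbifold Riemann--Roch (Kawasaki) carries correction terms supported on the singular locus, and those corrections are precisely the quantity at stake here. The paper avoids this by computing $\chi$ on a desingularization $\widetilde{X}$, where HRR is exact, and then comparing $(K_{\widetilde{X}}^2+c_2(\widetilde{X}))\cdot\gamma^*L$ with $(K_Y^2+\hat{c}_2(Y))\cdot\pi^*L$ through the relative Chern class machinery of Section \ref{section:compare-Chern} (Lemmas \ref{lemma:c_2-difference}, \ref{lemma:c2-calculation}, \ref{lemma:possilbe-a_i-1}) culminating in Lemma \ref{lemma:comparison-c2-1}. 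Without that comparison (or an honest Kawasaki-type statement with its local terms analyzed), the equality you write is unjustified and your subsequent bookkeeping does not establish the lower bound. (A smaller point: the quadratic coefficient is in fact $0$, since $(\pi^*L)^2\cdot B=L^2\cdot\Delta=0$ by Lemma \ref{lem:nu-2-computation-1}, and the term $(K_Y+B)\cdot B\cdot\pi^*L$ vanishes outright, so no adjunction argument on the components of $B$ is needed.)

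The same gap undermines the linear-growth claim. In the paper, strict growth comes from the strict inequality in Lemma \ref{lemma:comparison-c2-1}, i.e.\ from the local contributions $a_i$ of the transverse quotient singularities along curves in $Y_{\sing}$ with $\pi^*L\cdot C_Y>0$, computed in Lemma \ref{lemma:possilbe-a_i-1} via \cite[(14.3.1.2)]{Kol92}. Your assertion that ``at every quotient surface singularity the local contribution is strictly positive'' is false: for index $m=6$ the contribution can be $-\tfrac{5}{6}$, and the paper needs the one-to-one pairing of each index-$6$ curve with an index-$2$ and an index-$3$ curve of the same $L$-degree (Lemma \ref{lemma:singularities-are-cyclic-1}) to conclude positivity of the sum. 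You also attribute the curve contribution to the cycle $[C_Y]$ of Lemma \ref{lemma:compare-log_c2} and to \cite[(10.8.8)]{Kol92}; that cycle measures the difference between $\hat{c}_2(Y)$ and $\hat{c}_2(\Omega_Y^{[1]}(\log B))$ on $Y$ itself (where it is simply dropped as a nonnegative term), not the resolution-versus-orbifold discrepancy that actually produces the positive linear coefficient. So both halves of the proposition require the comparison result you have bypassed.
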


\subsection{Elementary properties}
Before going into the proof of Proposition \ref{prop:Euler-characteristic-combined-1}, we  first show some   consequences of the hypothesis in Setup \ref{set:setup-1}.

\begin{lemma}\label{lem:nu-2-computation-1}
  Assume that we are in  Setup \ref{set:setup-1}.  
  Then $L^2 \cdot \Delta_i = 0$ for any irreducible component $\Delta_i$ of $\D$. 
  If in addition  $K_X+(1-\epsilon)\D$ is nef for any $0<\epsilon\ll 1$,
  then $K_X^2\cdot L = 0$. 
\end{lemma}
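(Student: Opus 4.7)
\medskip

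The plan is to prove both statements by elementary intersection-theoretic manipulations, exploiting only the nefness hypotheses together with $L^3=0$ (which follows from $\nu(X,L)=2$) and the $\mathbb{Q}$-linear equivalence $L\sim_{\mathbb{Q}} kD$ with $D_{\mathrm{red}}=\Delta$.

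For the first statement, I would start from $L^3=0$ (since $\nu(X,K_X+\Delta)=2$ means $L^{\dim+1}=0$ for a nef class on a threefold) and the fact that $kD\sim_{\mathbb{Q}} L$ with $D=\sum a_i\Delta_i$, $a_i>0$. Then
\[
0 \;=\; \frac{1}{k}L^{3} \;=\; L^{2}\cdot(K_X+\Delta) \;=\; L^{2}\cdot D \;=\; \sum_i a_i\,L^{2}\cdot\Delta_i.
\]
Since $L$ is nef and each $\Delta_i$ is an effective Weil divisor, $L^{2}\cdot\Delta_i=(L|_{\Delta_i})^{2}\geq 0$. A sum of nonnegative terms with positive coefficients vanishes iff each term vanishes, giving $L^{2}\cdot\Delta_i=0$ for every $i$.

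For the second statement, assume in addition that $K_X+(1-\epsilon)\Delta$ is nef for all $0<\epsilon\ll 1$. The strategy is to sandwich $L\cdot\Delta^{2}$ between $0$ and $0$ using two distinct nonnegativity inequalities. First, for each $i$, the class $(K_X+(1-\epsilon)\Delta)\cdot L$ is nef, so
\[
0 \;\leq\; (K_X+(1-\epsilon)\Delta)\cdot L\cdot \Delta_i \;=\; \tfrac{1}{k}L^{2}\cdot\Delta_i - \epsilon\, L\cdot\Delta\cdot\Delta_i \;=\; -\epsilon\, L\cdot\Delta\cdot\Delta_i,
\]
using part one. Summing over $i$ (with the positive coefficients from $D$ absorbed, or simply with multiplicity one since $\Delta$ is reduced) yields $L\cdot\Delta^{2}\leq 0$. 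Second, $L\cdot(K_X+(1-\epsilon)\Delta)^{2}\geq 0$; expanding and using $L\cdot(K_X+\Delta)^{2}=\frac{1}{k^{2}}L^{3}=0$ together with $L^{2}\cdot\Delta=0$ (which gives $L\cdot K_X\cdot\Delta=-L\cdot\Delta^{2}$) reduces this nonnegativity to $\epsilon^{2}\,L\cdot\Delta^{2}\geq 0$, hence $L\cdot\Delta^{2}\geq 0$. Combining the two bounds gives $L\cdot\Delta^{2}=0$.

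Finally, expanding $0=L\cdot(K_X+\Delta)^{2}=L\cdot K_X^{2}+2L\cdot K_X\cdot\Delta+L\cdot\Delta^{2}$ and substituting $L\cdot K_X\cdot\Delta=-L\cdot\Delta^{2}=0$ yields $L\cdot K_X^{2}=0$. The whole argument is a short intersection-number chase, and the only genuine point where one has to think is spotting that the one-parameter family $K_X+(1-\epsilon)\Delta$ provides two independent nonnegativity inequalities (one linear and one quadratic in $\epsilon$) that pinch $L\cdot\Delta^{2}$ to zero; everything else is bookkeeping, so I do not anticipate a real obstacle.
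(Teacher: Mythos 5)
Your proof of the first claim is the same as the paper's (expand $0=L^2\cdot D$ and use nefness of $L$ against the effective components), but for the second claim you take a genuinely different route. The paper perturbs to $K_X+(1-\epsilon)\Delta\sim_{\mathbb{Q}}D-\epsilon\Delta$ and uses that this nef class has numerical dimension at most $2$ (otherwise $K_X+\Delta$ would be big), so $(K_X+(1-\epsilon)\Delta)^3=0$; since $D-\epsilon\Delta$ is effective with support $\Delta$, this forces $(K_X+(1-\epsilon)\Delta)^2\cdot\Delta_i=0$ for every component and all small $\epsilon$, and reading off the coefficients of the resulting polynomial in $\epsilon$ yields the componentwise vanishings $K_X^2\cdot\Delta_i=K_X\cdot\Delta\cdot\Delta_i=\Delta^2\cdot\Delta_i=0$, from which $K_X^2\cdot L=0$ follows. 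You instead use only positivity of intersections of nef classes: the inequality $(K_X+(1-\epsilon)\Delta)\cdot L\cdot\Delta_i\geq 0$, which is linear in $\epsilon$ after the first part, gives $L\cdot\Delta^2\leq 0$, while $L\cdot(K_X+(1-\epsilon)\Delta)^2\geq 0$, which reduces to $\epsilon^2 L\cdot\Delta^2\geq 0$, gives the reverse bound; pinching yields $L\cdot\Delta^2=0$ and then $K_X^2\cdot L=0$ by expanding $L\cdot(K_X+\Delta)^2=0$. Your argument is correct and slightly more economical, since it never needs the bound $\nu(X,K_X+(1-\epsilon)\Delta)\leq 2$ (which in the paper implicitly rests on the fact that a nef class with positive top self-intersection is big); what it buys less of is the finer componentwise information the paper obtains along the way, which however is not needed for the statement. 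One phrasing caveat: "$(K_X+(1-\epsilon)\Delta)\cdot L$ is nef" is not literally a defined notion for a $(2,2)$-class; what you actually use is that the product of two nef $(1,1)$-classes paired with an effective surface is nonnegative (pass to a resolution of $\Delta_i$), and that a triple product of nef classes on $X$ is nonnegative -- both standard and exactly at the level of rigor the paper itself employs in this lemma.
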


\begin{proof}
 Since $\nu(X, K_X+\Delta)=2$, it follows that 
   $L^2\cdot D = L^3 = 0.$ 
  Since $L$ is nef and   $D$ is effective, by the property (4),  
  we deduce that $L^2\cdot \Delta_i = 0$ for any irreducible component $\Delta_i$ of $\D$.  

  Now we assume further that $K_X+(1-\epsilon)\D$ is nef for any $0< \epsilon \ll 1$. 
  Then $\nu(X, K_X+(1-\eps)\Delta)\<2$, and thus 
  \[(K_X+(1-\eps)\Delta)^2\cdot(D-\epsilon\Delta) = (K_X+(1-\eps)\Delta)^3=0\] for all $0<\eps\ll 1$.
  Since $(D-\epsilon\Delta)$ is effective for all $0<\eps\ll 1$,   we deduce that  
  \[(K_X+(1-\eps)\Delta)^2\cdot\Delta_i=0\] for every component $\Delta_i$ of $\Delta$. 
  The LHS above is a  polynomial in $\eps$ with infinitely many solutions,  so its coefficients must all be $0$: 
\[K_X^2\cdot \Delta_i = \Delta^2 \cdot \Delta_i= K_X\cdot \Delta \cdot \Delta_i = 0. 
\]  
It  then follows that $K_X^2\cdot L =0$.  
\end{proof}

The following  results   are extracted from  \cite[Lemma 14.2]{Kol92}. 

\begin{setup}\label{set:setup-2}
    Assume that we are in  Setup \ref{set:setup-1}. 
Let $\Gamma$ be an irreducible component of $\D$ such that $\nu(\Gamma, L|_\Gamma) >0$. 
By adjunction, there is a boundary divisor $\Theta$ on $\Gamma$ such that $(\Gamma, \Theta)$ is slc (see \cite[Remark 1.2(5)]{Fuj00}) and  
\[  
(K_X+\D)|_\Gamma \sim_{\mathbb{Q}} K_{\Gamma} + \Theta. 
\] 
Let $\pi\colon \widehat{\Gamma} \to \Gamma$ be the normalization. 
Then there is a boundary $\widehat{\Theta}$ such that  $(\widehat{\Gamma}, \widehat{\Theta})$ is lc and that $K_{\widehat{\Theta}} + \widehat{\Gamma} \sim_{\mathbb{Q}} \pi^*(K_X+\D)$.  
By log abundance for K\"ahler surfaces (see \cite[Theorem 1.33]{DasOu2022}), 
$K_{\widehat{\Theta}} + \widehat{\Gamma}$ induces a fibration $\hat{f}\colon \widehat{\Gamma} \to B$.
\end{setup}

\begin{lemma} 
\label{lemma:surface-in-Delta-Theta-hor}
With the notations and hypothesis of Setup \ref{set:setup-2}, assume that a general fiber $\widehat{F}$ of $\hat{f}$ is a  rational curve. 
\begin{enumerate}
    \item Then  $\Gamma$ is normal  around $F:= \pi(\widehat{F})$. 
\end{enumerate}
In particular, we have an almost proper meromorphic map $f\colon \Gamma \dashrightarrow B$ (i.e. the general fibers are proper) induced by $\hat{f}$. 
Let $\Theta_{\hor}$ be the horizontal part of $\Theta$ over $B$. 
We decompose  
\[\Theta_{\hor} = \lfloor \Theta_{\hor} \rfloor  + \Theta_{\hor}^{<1}\] 
into the sum of its integral part and its fractional part.  
Then the following properties hold. 
\begin{enumerate}
\item[(2)] The pair $(X,\D)$ is log smooth around general points of $\lfloor \Theta_{\hor} \rfloor$. 
\item[(3)] 
Let $\Xi$ be an  irreducible  component   of $\Theta_{\hor}$. 
Then $\Xi$ is contained in the support of $\D-\Gamma$ if and only if it is a component of $\lfloor \Theta_{\hor} \rfloor$.  
\item[(4)] $\Theta_{\hor}^{<1} \subseteq X_{\sing}$. 
\item[(5)] If $C \subseteq X_{\sing}$ is a curve which intersects $F$, then $C$ is a component of $\Theta_{\hor}^{<1}$. 
\item[(6)]
Assume that $C$ is an irreducible component of $\Theta_{\hor}^{<1}$. 
Then the coefficient of $C$ is $1-\frac{1}{m}$ for some positive integer $m$.
In an Euclidean neighborhood of a general point of  $C$, we have an analytic isomorphism $(X,\Delta, C)\cong (S\times \mathbb{D}, E\times \mathbb{D},  \{o\}\times \mathbb{D})$, 
where $\mathbb{D} \subseteq \mathbb{C}$ is the open unit disk, $(o\in S)$ is a germ of surface singularities isomorphic to $(0\in \mathbb{C}^2/\mathbb{Z}_m)$. 
The action of $\mathbb{Z}_m$ on $\mathbb{C}^2$ is of weight $(1,q)$ with $q$ relatively prime to $m$. 
The curve $E\subseteq S$ corresponds to $\{(a,b)\in \mathbb{C}^2 \ |\ a=0 \}$. 
Furthermore $m$ is also the  Cartier index of $\Delta$ around a general point of $C$.  
\end{enumerate}
\end{lemma}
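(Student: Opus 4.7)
The plan is to pull everything back to the normalization $\pi\colon \widehat{\Gamma}\to \Gamma$ and exploit the adjunction identity
\[
K_{\widehat{\Gamma}} + \widehat{\Theta} \sim_{\mbQ} \pi^{*}(K_X+\Delta)|_{\Gamma}
\]
together with the rigid numerical constraint on the general $\mathbb{P}^{1}$-fiber $\widehat{F}$ of $\hat{f}$. I would decompose $\widehat{\Theta} = \widehat{\Delta}^{c} + \Diff_{\widehat{\Gamma}}(\Delta - \Gamma)$, where $\widehat{\Delta}^{c}$ is the conductor of $\pi$ (appearing with coefficient $1$); the different splits further into a piece with integer coefficient $1$ supported on the strict transforms of components of $\Delta - \Gamma$ meeting $\Gamma$, and a fractional piece with coefficients $1 - \tfrac{1}{m}$ recording the $\mbQ$-Cartier index of $K_X+\Delta$ along the one-dimensional strata of $X_{\sing}\cap\Gamma$. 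Since $\widehat{F}\cong\mathbb{P}^{1}$ and $K_{\widehat{\Gamma}}+\widehat{\Theta}$ is numerically $\hat{f}$-trivial, adjunction yields the key equality $\widehat{\Theta}\cdot \widehat{F} = 2$, so only the horizontal components of $\widehat{\Theta}$ contribute.

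For item (1), I would argue by contradiction that $\widehat{\Delta}^{c}$ has no horizontal component. If some horizontal $\widehat{C}\subseteq \widehat{\Delta}^{c}$ existed, then $C := \pi(\widehat{C})\subseteq \Gamma$ would be a curve in the non-normal locus of $\Gamma$ dominating $B$, hence meeting $F$. Combining the positivity of $\pi^{*}(K_X+\Delta)$ along $\widehat{C}$ (via the nef class on $B$ given by log abundance on $(\widehat{\Gamma},\widehat{\Theta})$, cf.\ \cite[Theorem 6.3]{DasOu2022}) with the budget $\widehat{\Theta}\cdot \widehat{F} = 2$ and the fact that the conductor records local branches of $\Gamma$ along its non-normal locus, I expect to force a numerical contradiction. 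Once $\widehat{\Delta}^{c}$ is vertical, $\pi$ is an isomorphism over a neighborhood of $F$ in $\Gamma$, giving (1) and the descent of $\hat{f}$ to $f\colon\Gamma\dashrightarrow B$.

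For items (2)--(3), horizontal components of $\lfloor \Theta_{\hor}\rfloor$ correspond exactly to strict transforms of $\Delta_{j}\neq \Gamma$ meeting $\Gamma$, by the decomposition of the different together with (1). Log smoothness at a general point $p$ of such a component $\Xi$ follows from Setup \ref{set:setup-1}(6) applied to a curve $C$ through $p$ with $(K_X+\Delta)\cdot C > 0$ (obtained from the fact that $L$ is big on $\Gamma$ modulo $\Theta_{\ver}$), which forces $(X,\Delta)$ to be dlt near $p$; transversality of $\Gamma$ and $\Delta_{j}$ at $p$ is then automatic from log canonicity. For items (4)--(6), the fractional part $\Theta_{\hor}^{<1}$ is supported exactly on the one-dimensional components of $X_{\sing}\cap \Gamma$ dominating $B$, giving (4); conversely, any horizontal $C\subseteq X_{\sing}$ meeting $F$ contributes a fractional term to $\Diff_{\Gamma}(\Delta-\Gamma)$ and is thus a component of $\Theta_{\hor}^{<1}$, giving (5). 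For (6), I would apply \cite[Proposition 16.6]{Kol92} at a general $p\in C$ to obtain a product decomposition $(X,\Delta,C)\cong (S\times \mathbb{D}, E\times \mathbb{D}, \{o\}\times \mathbb{D})$ with $(S,o)$ a two-dimensional lc germ; $\mbQ$-factoriality of $X$ together with the classification of $2$-dimensional lc singularities forces $(S,o)\cong \mathbb{C}^{2}/\mathbb{Z}_{m}$ of type $(1,q)$ with $\gcd(q,m)=1$, and the standard formula for the different identifies the coefficient of $C$ as $1-\tfrac{1}{m}$ and the Cartier index of $\Delta$ at $p$ as $m$.

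The main obstacle I anticipate is item (1): ruling out a horizontal conductor. In the projective case \cite[Chapter 14]{Kol92} one can use intersection-theoretic manipulations that rely freely on projective techniques, while in the K\"ahler setting a more delicate argument is required, combining the numerical constraint on $\widehat{F}$, log abundance for the slc surface pair $(\widehat{\Gamma},\widehat{\Theta})$, and the global structure of $(X,\Delta)$ controlled by Setup \ref{set:setup-1}. Once (1) is in hand, items (2)--(6) reduce to the standard structure theory of the different and cyclic quotient singularities.
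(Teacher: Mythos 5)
There is a genuine gap, and it sits exactly where you flagged the main difficulty: item (1). Your plan is to rule out a horizontal component of the conductor by a \emph{numerical} contradiction, combining the budget $\widehat{\Theta}\cdot\widehat{F}=2$ with the nefness of $K_{\widehat{\Gamma}}+\widehat{\Theta}$. No such contradiction exists: a horizontal conductor curve enters $\widehat{\Theta}$ with coefficient $1$ and could perfectly well account for degree $1$ or $2$ against $\widehat{F}$ (compare Case 1 of Lemma \ref{lemma:surface-in-Delta-classification}, where $\lfloor\Theta_{\mathrm{hor}}\rfloor\cdot F=2$ is realized by genuine boundary components), so the intersection numbers alone cannot distinguish a conductor component from a component of $\mathrm{Diff}(\Delta-\Gamma)$. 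The obstruction is geometric, not numerical, and the needed ingredient is already in your toolbox but deployed only for items (2)--(3): any horizontal component $\widehat{\Xi}$ of $\lfloor\widehat{\Theta}\rfloor$ has image $\Xi=\pi(\widehat{\Xi})$ with $L\cdot\Xi>0$ (because $L|_{\widehat{\Gamma}}\equiv ka\widehat{F}$ with $a>0$), so condition (6) of Setup \ref{set:setup-1} makes $(X,\Delta)$ dlt near the general points of $\Xi$; since $\Xi$ is a one-dimensional lc center of the reduced pair, the local analysis of \cite[Proposition 16.6]{Kol92} then forces $(X,\Delta)$ to be log smooth there, and in particular $\Gamma$ is \emph{smooth} at general points of $\Xi$. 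Since $(\Gamma,\Theta)$ is slc, the one-dimensional non-normal locus of $\Gamma$ lies in $\pi(\lfloor\widehat{\Theta}\rfloor)$; by the above it cannot be horizontal, hence it is vertical over $B$ and misses the general fiber $F$, which is what normality of $\Gamma$ around $F$ means. This is the route the paper takes.

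Two smaller points. In your argument for (2)--(3) you invoke Setup \ref{set:setup-1}(6) ``applied to a curve $C$ through $p$''; note that condition (6) only gives dlt-ness near the \emph{general} points of an $L$-positive curve, so you must apply it to the horizontal curve $\Xi$ itself (which is $L$-positive because it dominates $B$), not to an auxiliary curve through a chosen point. For item (5), the statement does not assume $C$ horizontal; the paper's argument is that $C\cdot(K_X+\Delta)=a\,C\cdot F>0$, whence dlt-ness near $C$ by Setup \ref{set:setup-1}(6) and then \cite[(16.6.6)]{Kol92} identifies $C$ as a component of $\Theta_{\mathrm{hor}}^{<1}$; your version, restricted to horizontal $C$, proves less than what is claimed. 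Items (4) and (6) as you outline them (structure of the different plus \cite[Proposition 16.6]{Kol92}) match the paper.
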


\begin{proof}
We note that $\pi(\Supp \widehat{\Theta})$ is contained in the union of $X_{\sing}$ and the non-normal locus of $\Delta$.  
We also observe that 
 \[K_{\widehat \Gamma}+\widehat{\Theta}\num a\widehat{F}\] in $\widehat{\Gamma}$ for some $a>0$.   
Let $ \widehat{\Theta}_{\hor} $ be the horizontal part of $\widehat{\Theta}$ over $B$.  
Assume that $\widehat{\Xi}$ is an irreducible component of $\lfloor \widehat{\Theta}_{\hor} \rfloor$.
Then we write $\Xi=\pi(\widehat{\Xi})$ and we  get $L\cdot  \Xi  >0$. 
Hence $(X,\D)$ is dlt around general points of $\Xi$ by the property (6) of Setup \ref{set:setup-1}. 
From the discussion of \cite[Proposition 16.6]{Kol92}, we deduce that $(X,\D)$ is log smooth around the general points of $\Xi$,  
and that there is a  component $\Gamma_1$ of $\D-\Gamma$ such that $\Xi\subseteq \Gamma_1 \cap \Gamma$. 
Conversely, assume that there is a component $\Gamma_1$ of $\D-\Gamma$ such that there is a curve $\Xi \subseteq \Gamma_1\cap \Gamma$ with $L\cdot \Xi >0$. 
Then, since  $(X,\D)$ is dlt around the general points of $\Xi$,  and since $\Xi$ is a lc center of $(X, \D)$, it follows that $(X, \D)$ is log smooth  around general points of $\Xi$. 
Therefore, if $\widehat{\Xi}$ is the pullback of $\Xi$ in $\widehat{\Gamma}$, then it is a component of $\lfloor\widehat \Theta_{\hor} \rfloor$. 
We hence obtain a characterization of the irreducible components of $\lfloor \widehat \Theta_{\hor} \rfloor$.

We notice that $\Gamma$ has normal crossing singularities in codimension one as $(\Gamma, \Theta)$ is a slc pair. 
Thus the one-dimensional component of the non-normal locus of $\Gamma$ is contained in  $\pi(\lfloor \widehat{\Theta} \rfloor)$. 
Since $\Gamma$ is smooth around general points of each components of $\pi(\lfloor  \widehat{\Theta}{\hor} \rfloor)$, 
we   deduce that $\Gamma$ is normal around $F$. 
This proves the item (1). 

Since $\Gamma$ is normal around $F$, the argument of the first paragraph implies the items (2)-(4). 
The item (6) follows from \cite[Proposition 16.6]{Kol92}, more specifically, from \cite[(16.6.3)]{Kol92}. 
To see the item (5), we notice that 
\[ 
C\cdot (K_X+\D) = a\cdot C\cdot F >0. 
\]
Hence by  the property (6) of Setup \ref{set:setup-1}, $(X,\D)$ is dlt around $C$. 
In addition, since $\widehat{F}$ is a general fiber, we see that $C\subseteq \Gamma$. 
From \cite[(16.6.1) and (16.6.3))]{Kol92}, we deduce that $C$ is a component of $\Theta_{\hor}^{<1}$. 
This completes our proof. 
\end{proof}

\begin{lemma}
\label{lemma:surface-in-Delta-classification}
With the assumption in Lemma \ref{lemma:surface-in-Delta-Theta-hor}, we can classify the following three cases. 

\begin{enumerate}
  \item[1]   We have $\Theta_{\hor} = \lfloor \Theta_{\hor} \rfloor$. 
In this case, $(X,\D)$ is log smooth around $F$. 

\item[2]    We have $  \lfloor \Theta_{\hor} \rfloor \cdot F =1$.  
In this case,   we can decompose  
\[\Theta_{\hor}^{<1} = \sum_i \left(1-\frac{1}{m_i}\right) \Theta_i\] 
into irreducible components. 
If $b_i=\Theta_i \cdot F$, then we have 
\begin{equation} \label{eqn:adjunction-2}
     \sum_i \left(1-\frac{1}{m_i}\right)b_i = 1.
\end{equation}   
As a consequence, $m_i=2$ for each $i$, and $X$ has canonical singularities around $F$.

\item[3]  We have $ \lfloor \Theta_{\hor}\rfloor  =0$. 
   In this case, we can decompose 
   \[\Theta_{\hor}^{<1} = \sum_i \left(1-\frac{1}{m_i}\right) \Theta_i.\] 
   If $b_i=\Theta_i \cdot F$, then we have 
   \begin{equation}\label{eqn:adjunction-3}
       \sum_i \left(1-\frac{1}{m_i}\right)b_i = 2.
   \end{equation}
   As a consequence, $m_i\in \{2,3,4,6\}$ for each $i$. 
\end{enumerate} 
\end{lemma}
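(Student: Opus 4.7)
The plan is to apply adjunction along a general fiber $\widehat F$ of $\hat f$ to reduce the classification to an elementary Diophantine problem. Since $K_{\widehat\Gamma}+\widehat\Theta\num a\widehat F$ with $a>0$ and $\widehat F^2=0$, one obtains $(K_{\widehat\Gamma}+\widehat\Theta)\cdot\widehat F = 0$. Combined with $K_{\widehat\Gamma}\cdot\widehat F = \deg K_{\widehat F} = -2$ (since $\widehat F\cong\mbP^1$), this yields $\widehat\Theta\cdot\widehat F = 2$, and as $\pi$ is an isomorphism around $F$ by Lemma \ref{lemma:surface-in-Delta-Theta-hor}(1), we deduce $\Theta_{\hor}\cdot F = 2$ computed on $\Gamma$. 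I would then split into cases according to $s := \lfloor\Theta_{\hor}\rfloor\cdot F \in \{0,1,2\}$, using that every irreducible component of $\Theta_{\hor}$ is horizontal over $B$ and so meets a general fiber $F$ with strictly positive intersection.

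If $s=2$, the identity $\Theta_{\hor}^{<1}\cdot F = 0$ forces $\Theta_{\hor}^{<1} = 0$. Combining items (2), (3) and (5) of Lemma \ref{lemma:surface-in-Delta-Theta-hor} with the fact that a general fiber $F$ avoids the finite non-smooth locus of the normal surface $\Gamma$ shows that $(X,\D)$ is log smooth around $F$; this is Case 1 of the lemma.

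If $s=1$, writing $\Theta_{\hor}^{<1} = \sum_i(1-\tfrac{1}{m_i})\Theta_i$ and $b_i = \Theta_i\cdot F$, equation \eqref{eqn:adjunction-2} must be solved with $m_i\>2$, $b_i\>1$. Since each summand $b_i(1-\tfrac{1}{m_i})\>\tfrac{1}{2}$, the only possibilities are either one component with $(b,m)=(2,2)$ or two components each with $(b_i,m_i)=(1,2)$; in every case $m_i = 2$. By Lemma \ref{lemma:surface-in-Delta-Theta-hor}(6) the transverse model is then $\mbC^2/\mbZ_2$ with weight $(1,q)$ and $\gcd(q,2)=1$, forcing $q=1$, which is the Du Val $A_1$ singularity and in particular canonical.

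If $s=0$, horizontality forces $\lfloor\Theta_{\hor}\rfloor=0$, giving Case 3. Equation \eqref{eqn:adjunction-3} then expresses the vanishing of the orbifold Euler characteristic of $\mbP^1$ equipped with markings of orders $m_i$ at the points of $F\cap\Theta_i$; the classical classification of one-dimensional parabolic orbifolds yields exactly the signatures $(2,2,2,2)$, $(3,3,3)$, $(2,4,4)$ and $(2,3,6)$, so $m_i\in\{2,3,4,6\}$. I expect the most delicate step to be the log smoothness claim in Case 1, where one must carefully verify that a general fiber of $\hat f$ avoids both the isolated singular points of $\Gamma$ and any accidental intersections of components of $\D$ outside the predicted loci; the Diophantine enumerations in the other two cases are routine, the final one being the well-known Platonic classification.
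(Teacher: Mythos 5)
Your argument is correct and follows essentially the same route as the paper: derive $\Theta_{\hor}\cdot F=2$ from adjunction along the general fiber, split into the cases $\lfloor\Theta_{\hor}\rfloor\cdot F\in\{2,1,0\}$, and combine the resulting Diophantine constraints with the structure results of Lemma \ref{lemma:surface-in-Delta-Theta-hor}. Your explicit enumeration in Cases 2 and 3 (the parabolic signature classification) simply spells out what the paper dismisses as a direct computation, and your treatment of log smoothness and canonicity along $F$ matches the paper's use of items (1)--(6) of the preceding lemma.
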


\begin{proof}
We see that $F\cong \mathbb{P}^1$ and $F\cdot \Theta_{\hor} = 2$. 
Thus   the intersection number $ \beta = F\cdot \lfloor \Theta_{\hor} \rfloor$ belongs to $\{2,1,0\}$. 
If $\beta = 2$, then we are in the Case $1$. 
From Lemma \ref{lemma:surface-in-Delta-Theta-hor}, we deduce that $(X,\D)$ is log smooth around $F$.  

If $\beta=1$, then we are in the Case 2, and  get the equation (\ref{eqn:adjunction-2}). 
A direct computation shows that $m_i=2$ for all $i$. 
Since any surface quotient singularities by the  group $\mathbb{Z}_2$ is Du Val, from the classification of Lemma \ref{lemma:surface-in-Delta-Theta-hor},  we conclude that $X$ has canonical singularities around $F$. 

If $\beta=0$, then we are in the Case 3, and  get the equation (\ref{eqn:adjunction-3}).  
This completes the proof of the lemma.
\end{proof}

Thanks to the previous two lemmas, we can deduce the following assertions on curves contained in $X_{\sing}$.

\begin{lemma}
\label{lemma:singularities-are-cyclic-1}
Assume that we are in the Setup \ref{set:setup-1}. 
Let $C\subseteq X_{\sing}$ be a curve such that $L\cdot C >0$. 
Then in an Euclidean neighborhood of a general point of  $C$, we have an analytic isomorphism $(X,\Delta, C)\cong (S\times \mathbb{D}, E\times \mathbb{D},  \{o\}\times \mathbb{D})$, 
where $\mathbb{D} \subseteq \mathbb{C}$ is the open unit disk, $(o\in S)$ is a germ of surface singularities isomorphic to $(0\in \mathbb{C}^2/\mathbb{Z}_m)$ with $m\in \{2,3,4,6\}$.  
The action of $\mathbb{Z}_m$ on $\mathbb{C}^2$ is of weight $(1,q)$ with $q$ relatively prime to $m$. 
The curve $E\subseteq S$ corresponds to $\{(a,b)\in \mathbb{C}^2 \ |\ a=0 \}$. 
Furthermore, the index $m$ is the  Cartier index of $\Delta$ around a general point of $C$. 

Moreover, if $m=6$, then there is a unique component $\Gamma$ of $\D$ containing $C$, there are unique curves $C_1, C_2 \subseteq X_\textsubscript{\rm sing} \cap \Gamma$  such that 
\[
L\cdot C =L\cdot C_1 = L\cdot C_2. 
\] 
If $m_1$ and $m_2$ are the indices of $C_1$ and $C_2$, i.e. the Cartier indices of $\Delta$ around the general points of $C_1$ and $C_2$ respectively, then we have $m_1=2$ and $m_2=3$. 
Additionally, such associations $C \mapsto \Gamma$, $C \mapsto C_1$ and $C\mapsto C_2$ are one-to-one. 
\end{lemma}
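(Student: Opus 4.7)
The plan is to reduce to the local analysis already carried out in Lemma \ref{lemma:surface-in-Delta-Theta-hor} and Lemma \ref{lemma:surface-in-Delta-classification}. Since three-dimensional terminal singularities are isolated, Setup \ref{set:setup-1}(2) forces $C \subseteq \Supp \D$, so $C$ lies in some component $\Gamma$ of $\D$. Because $L \cdot C > 0$ we have $\nu(\Gamma, L|_\Gamma) \> 1$, and the framework of Setup \ref{set:setup-2} applies, producing the log abundance fibration $\hat{f}\colon \widehat{\Gamma} \to B$ with $K_{\widehat{\Gamma}} + \widehat{\Theta} \equiv a \widehat{F}$ for some $a>0$ and a general fiber $\widehat{F}$.

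The next step is to verify that $\widehat{F}$ is rational, so that Lemma \ref{lemma:surface-in-Delta-Theta-hor} and Lemma \ref{lemma:surface-in-Delta-classification} are available. Adjunction on $\widehat{F}$ gives $\deg(\widehat{\Theta}|_{\widehat{F}}) = 2 - 2 g(\widehat{F})$, which immediately rules out $g(\widehat{F}) \> 2$ by effectiveness of $\widehat{\Theta}$. The elliptic case is excluded as follows: the strict transform $\widetilde{C} := \pi^{-1}_*C$ is a component of $\widehat{\Theta}$ with strictly positive coefficient (arising from the different of $\pi$ along $C \subseteq \Gamma \cap X_{\sing}$), and it is horizontal over $B$ since $L \cdot C = \pi^*L|_\Gamma \cdot \widetilde{C} > 0$ while $\pi^*L|_\Gamma$ is numerically trivial on fibers of $\hat{f}$. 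Hence $\widehat{\Theta} \cdot \widehat{F} > 0$, contradicting $g(\widehat{F})=1$, and we conclude $g(\widehat{F})=0$.

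With rationality in hand, Lemma \ref{lemma:surface-in-Delta-Theta-hor}(5) identifies $C$ with a component of $\Theta_{\hor}^{<1}$, and Lemma \ref{lemma:surface-in-Delta-Theta-hor}(6) supplies the local product description $(X, \D, C) \cong (S \times \mathbb{D}, E \times \mathbb{D}, \{o\} \times \mathbb{D})$, with $m$ simultaneously the cyclic quotient index and the Cartier index of $\D$ along $C$. The trichotomy of Lemma \ref{lemma:surface-in-Delta-classification} then restricts $m$: Case~1 is impossible as $C \subseteq X_{\sing}$; Case~2 forces $m=2$; Case~3 forces $m \in \{2,3,4,6\}$. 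This yields the first assertion of the lemma.

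For the moreover statement, suppose $m=6$, placing us in Case~3 of Lemma \ref{lemma:surface-in-Delta-classification}. Writing $b$ for the intersection multiplicity of $\widetilde{C}$ with $\widehat{F}$, the contribution $\tfrac{5}{6}b$ to equation \eqref{eqn:adjunction-3} forces $b=1$ (else the remainder $2 - \tfrac{5}{6}b \< \tfrac{1}{3}$ lies below the minimum atomic contribution $\tfrac{1}{2}$). The only way to decompose the remaining $\tfrac{7}{6}$ into summands from $\{\tfrac{1}{2},\tfrac{2}{3},\tfrac{3}{4},\tfrac{5}{6}\}$ with positive integer multiplicities is $\tfrac{1}{2}+\tfrac{2}{3}$, producing exactly one curve $C_1$ of index $2$ and one curve $C_2$ of index $3$, each with fiber multiplicity $1$. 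Combining $K_{\widehat{\Gamma}} + \widehat{\Theta} \equiv a\widehat{F}$ with $\pi^*L|_\Gamma \sim_\mathbb{Q} k(K_{\widehat{\Gamma}} + \widehat{\Theta})$ gives $L \cdot C' = ka$ for each $C' \in \{C,C_1,C_2\}$, yielding the asserted equality. Uniqueness of $\Gamma$ is immediate from the local product description (only one branch of $\D$ passes through $C$); on a fixed $\Gamma$ the arithmetic above admits only one index-$6$ horizontal component and only one index-$2$ (resp.\ index-$3$) horizontal component in $\Theta_{\hor}^{<1}$, so each of the three associations is injective. The main obstacles are the rationality argument—pinning down $\widetilde{C}$ as a positive-coefficient component of $\widehat{\Theta}$—and the combinatorial step forcing the precise $2+3+6$ configuration.
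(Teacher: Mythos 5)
Your proposal takes essentially the same route as the paper: reduce to Lemma \ref{lemma:surface-in-Delta-Theta-hor} and Lemma \ref{lemma:surface-in-Delta-classification} through the adjunction/abundance framework of Setup \ref{set:setup-2}, show the general fibre is rational because the strict transform of $C$ is a horizontal component of $\widehat{\Theta}$ with positive coefficient, read off the local model and the admissible indices from parts (5)--(6) of Lemma \ref{lemma:surface-in-Delta-Theta-hor} and the trichotomy, and then do the arithmetic on \eqref{eqn:adjunction-3} to force the $(2,3,6)$ configuration, the equality $L\cdot C=L\cdot C_1=L\cdot C_2=ka$, and the uniqueness statements; the genus computation, the combinatorial step and the injectivity argument you spell out are exactly what the paper leaves terse. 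The one step you omit is why the abundance fibration $\hat f\colon \widehat{\Gamma}\to B$ has one-dimensional fibres: $L\cdot C>0$ only gives $\nu(\widehat{\Gamma},K_{\widehat{\Gamma}}+\widehat{\Theta})\ge 1$, whereas your identity $K_{\widehat{\Gamma}}+\widehat{\Theta}\equiv a\widehat{F}$ (and the whole subsequent fibre analysis) presupposes $\nu=1$; if $\nu$ were $2$ the fibration would be bimeromorphic and Lemmas \ref{lemma:surface-in-Delta-Theta-hor} and \ref{lemma:surface-in-Delta-classification} would not apply. Excluding $\nu=2$ is where the paper invokes Lemma \ref{lem:nu-2-computation-1}, which gives $L^2\cdot \Gamma=0$ for every component $\Gamma$ of $\Delta$; adding that one line closes the gap. (Your appeal to the different to get a positive coefficient along $\widehat{C}$, in place of the paper's citation of Koll\'ar's adjunction, and your use of the local product model instead of Lemma \ref{lemma:surface-in-Delta-Theta-hor}(3) for the uniqueness of $\Gamma$, are both legitimate variants.)
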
 

\begin{proof}
Since $X$ has terminal singularities outside of the support of $\Delta$, 
 there is a component $\Gamma$ of $\Delta$ such that $C \subseteq \Gamma$. 
Let $\pi \colon \widehat{\Gamma} \to \Gamma$ be the normalization.
Since $(X,\D)$ is dlt around  general points of $C$ (see (6) of Setup \ref{set:setup-1}), we see that $\Gamma$ is normal around general points of $C$. 
Thus we may set $\widehat{C} = \pi^{-1}_*C$. 

By adjunction, there is a boundary $\widehat{\Theta}$ on $\widehat{\Gamma}$ such that 
\[\pi^* (K_X+\Delta)  \sim_{\mathbb{Q}} K_{\widehat{\Gamma}} + \widehat{\Theta}\] 
and that $(\widehat{\Gamma}, \widehat{\Theta})$ is lc. 
We claim that the numerical dimension $\nu(\Gamma, K_{\widehat \Gamma}+\widehat{\Theta})=1$. 
Indeed, if not, then it is either $0$ or $2$. 
If $\nu(\widehat{\Gamma}, K_{\widehat \Gamma}+\widehat{\Theta})=0$, then 
\[ L\cdot C= k \cdot \pi^*(K_X+\Delta) \cdot\widehat C=0,\]  which is a contradiction. 
If $\nu(\widehat{\Gamma}, K_{\widehat \Gamma}+\widehat{\Theta})=2$, then $L^2\cdot \Gamma>0$. 
This contradicts Lemma \ref{lem:nu-2-computation-1}. 

Thus by log abundance for K\"ahler surfaces (see \cite[Theorem 1.33]{DasOu2022}), $\pi^*L$ induces a fibration $\hat{f}\colon \widehat{\Gamma} \to B$ to a smooth projective curve $B$. 
From \cite[Proposition 16.6]{Kol92}, we deduce that $\widehat{C}$ is a component of $\widehat{\Theta}$. 
The condition $L\cdot C >0$ implies that $\widehat{C}$ is horizontal over $B$. 
It then follows that general fibers of $\hat{f}$ are smooth rational curves. 
Therefore, for the surface $\Gamma$, we are in the situation of Lemma \ref{lemma:surface-in-Delta-Theta-hor} and Lemma \ref{lemma:surface-in-Delta-classification}.  
We can hence deduce the first part of the lemma.

Let $F$ be a general fiber of the  map $\Gamma \dashrightarrow B$ induced by $\hat{f}$. 
Then there is some $a>0$ such that 
\[(K_X+\D)|_\Gamma \equiv aF.\]
For the second part of the lemma, we assume that the coefficient of ${C}$ in $\Theta_{\hor}$ is $(1-\frac{1}{6})$. 
Then we must be in the Case 3 of Lemma \ref{lemma:surface-in-Delta-classification}, and the equation (\ref{eqn:adjunction-3}) takes the following form:
 \[
 \left(1-\frac{1}{2}\right)+\left(1-\frac{1}{3}\right)+\left(1-\frac{1}{6}\right)=2.
 \]
Therefore,  $\Theta_{\hor}$ has exactly $3$ components including $ C$.  
 Let the other two be $C_1$ and $ C_2$. 
 Then \[\Theta_{\hor}=\left(1-\frac12\right)  C_1+\left(1-\frac13\right) C_2+\left(1-\frac 16\right) C,\] 
 and 
 \[ C\cdot F= C_1\cdot F= C_2\cdot F=1.\]  
 Then by Lemma \ref{lemma:surface-in-Delta-Theta-hor} again, it follows that $C_1, C_2 \subseteq X_{\sing}$, and the Cartier index of $\Delta$ near the general points of $C_1$ and $C_2$ are $2$ and $3$, respectively. 
 Furthermore 
 \[L\cdot C_1= k(K_{ \Gamma}+\Theta)\cdot C_1 =ka\cdot F\cdot C_1=ka=L\cdot C_2=L\cdot C>0.\] 
 Finally, to see that the associations are one-to-one, we first observe that, by (3) of Lemma \ref{lemma:surface-in-Delta-Theta-hor},  $\Gamma$ is uniquely determined. 
 Afterwards,   the curves $C,C_1,C_2$ are exactly the irreducible components  of $\Theta_{\hor}$, with distinct coefficients. 
 This completes the proof of the lemma.
\end{proof}

The next two lemmas show that the conditions of Setup  \ref{set:setup-1} are  preserved under certain MMP.  
We will discuss  divisorial contractions and flips separately.

\begin{lemma}
\label{lemma:contraction-dlt}
Assume that we are in the Setup \ref{set:setup-1}.  
Suppose that $\varphi \colon (X,\D) \to (X', \D')$ is an elementary divisorial contraction, which is $K_X$-negative and $(K_X+\D)$-trivial. 
Then $(X',\D')$ also satisfies the conditions of Setup \ref{set:setup-1}. 
\end{lemma}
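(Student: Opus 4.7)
The plan is to verify each of the six conditions of Setup \ref{set:setup-1} in turn for $(X',\Delta')$, where $\Delta' := \varphi_*\Delta$. The starting identity, coming from the $(K_X+\Delta)$-triviality of $\varphi$, is
\[
\varphi^*(K_{X'}+\Delta') = K_X+\Delta.
\]
Conditions (1), (3), (4), (5) are then relatively routine. For (1), discrepancies of divisors over $X$ and $X'$ coincide, while the exceptional divisor $E$ of $\varphi$ has discrepancy $-\mathrm{coeff}_E(\Delta)\in\{-1,0\}$, so $(X',\Delta')$ is lc. For (3), $X'$ remains $\mathbb{Q}$-factorial by Theorem \ref{thm:existence-dlt-MMP}, so a suitable multiple $L':=k'(K_{X'}+\Delta')$ is Cartier; nefness of $L'$ is checked by lifting each curve $C'\subseteq X'$ to a curve $\tilde C\subseteq X$ (strict transform, or a multisection of $\varphi|_E$ when $C'=\varphi(E)$) and using $L\cdot \tilde C = (\deg\tilde C/C')\cdot L'\cdot C'\geq 0$. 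Condition (4) holds with $D':=\varphi_*D$, since $(\varphi_*D)_{\mathrm{red}}=\varphi_*\Delta=\Delta'$. Condition (5) reduces to observing that $L^2=\varphi^*(L'^2)\not\equiv 0$ forces $(L')^2\not\equiv 0$ by the injectivity of $\varphi^*$ on cohomology, and $(L')^3=0$ follows from $L^3=0$.

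For condition (2), the combination $K_X\cdot\ell<0$ and $(K_X+\Delta)\cdot\ell=0$ for contracted curves $\ell$ gives $\Delta\cdot\ell>0$. Writing $\Delta=E+\Delta_1$ if $E\subseteq\mathrm{Supp}(\Delta)$ (or $\Delta_1=\Delta$ otherwise), we get $\Delta_1\cdot\ell>0$ in either case, so $\Delta_1$ meets every fiber of $\varphi|_E$; hence $\varphi(E)\subseteq\mathrm{Supp}(\Delta')$. Consequently $\varphi$ restricts to an isomorphism $X\setminus(\mathrm{Supp}(\Delta)\cup E)\xrightarrow{\sim}X'\setminus\mathrm{Supp}(\Delta')$; since $X$ is terminal outside $\mathrm{Supp}(\Delta)$ and terminality is preserved under $K_X$-negative elementary divisorial contractions, $X'$ is terminal outside $\mathrm{Supp}(\Delta')$.

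The main obstacle is condition (6). Given a curve $C'\subseteq X'$ with $L'\cdot C'>0$, if $C'\not\subseteq\varphi(E)$ then the strict transform $\tilde C\subseteq X$ satisfies $L\cdot\tilde C = L'\cdot C'>0$, so $(X,\Delta)$ is dlt near general points of $\tilde C$; these lie off $E$ where $\varphi$ is an isomorphism, and dlt of $(X',\Delta')$ near the corresponding points of $C'$ follows immediately. The delicate case is $C'=\varphi(E)$, which forces $\varphi$ to contract $E$ onto a curve. Here a multisection $\tilde C\subseteq E$ of degree $d$ over $C'$ yields $L\cdot\tilde C=d\,L'\cdot C'>0$, so $(X,\Delta)$ is dlt at general points of $\tilde C\subseteq E$. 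To transfer this to a general point $q\in C'$, I would take a log resolution $Y\to X$ that also serves as a log resolution of $(X',\Delta')$ via the composition $Y\to X\to X'$: the $Y\to X$ exceptional divisors keep their discrepancies (hence exceed $-1$ over $q$), while the strict transform of $E$ becomes the sole additional exceptional divisor over $X'$, with discrepancy $-\mathrm{coeff}_E(\Delta)\geq -1$; snc of the log resolution data at a general fiber then descends to snc at $q$, yielding dlt of $(X',\Delta')$ near general points of $C'$.
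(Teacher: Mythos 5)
Your handling of conditions (1)--(5), of condition (2) via the isomorphism $X\setminus(\mathrm{Supp}\,\Delta\cup E)\cong X'\setminus\mathrm{Supp}\,\Delta'$, and of the curves $C'\not\subseteq\varphi(E)$ in condition (6) is fine and matches the routine part of the paper's argument. The genuine gap is precisely the case you call delicate, $C'=\varphi(E)$ with $(K_{X'}+\Delta')\cdot C'>0$, which is the entire content of the paper's proof, and your transfer step does not close it. First, the assertion that the $Y\to X$ exceptional divisors over a general $q\in C'$ have discrepancy $>-1$ is unjustified: their centers on $X$ are curves or points inside $E$, namely vertical fibers (on which $L$ is trivial, so Setup \ref{set:setup-1}(6) gives no information) or multisections, and multisections genuinely can be lc centers of $(X,\Delta)$ (in the paper's classification these are the components of $\lfloor\Theta_{\mathrm{hor}}\rfloor$ in Lemma \ref{lemma:surface-in-Delta-Theta-hor}). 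Second, and more seriously, in the main case where $E$ is a component of $\Delta$ (the paper reduces to exactly this case), the strict transform of $E$ is an lc place of $(X',\Delta')$ with center $C'$; hence ``dlt near general points of $C'$'' is equivalent to $(X',\Delta')$ being log smooth (snc) at general points of $C'$. Producing a log resolution of $(X',\Delta')$ over $q$ whose exceptional discrepancies are $\geq -1$, with one equal to $-1$, proves only lc, not dlt (Szab\'o's criterion needs strict inequality), so your sentence ``snc of the log resolution data at a general fiber then descends to snc at $q$'' is exactly the statement that requires proof: a priori $X'$ could carry, say, a quotient singularity along $C'$, giving an lc but non-dlt pair there. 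Knowing $(X,\Delta)$ is dlt near general points of one multisection $\tilde C$ controls only finitely many points of each general fiber $F$, not a neighborhood of the whole fiber, and therefore not the germ of $X'$ at $q=\varphi(F)$.

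What actually closes this case in the paper is geometric input you never invoke. Restricting to the surface $\Gamma=E$, using adjunction, log abundance for K\"ahler surfaces and Lemmas \ref{lemma:surface-in-Delta-Theta-hor} and \ref{lemma:surface-in-Delta-classification}, the paper classifies $(X,\Delta)$ along a general fiber $F$; crucially, the $K_X$-negativity of the contraction (via $D\cdot F=0$, $D_{\mathrm{red}}=\Delta$, hence $\Gamma\cdot F<0$ and $F$ meeting $\Delta-\Gamma$) excludes Case 3 of Lemma \ref{lemma:surface-in-Delta-classification}; in the surviving cases $X$ is smooth, respectively canonical, in a neighborhood of the entire fiber $F$, so $X'$ is terminal and hence smooth at general points of $C'$, and then the discussion of \cite[Proposition 16.6]{Kol92} yields log smoothness of $(X',\Delta')$ there. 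In your proposal the $K_X$-negativity enters only in condition (2) and never in condition (6); without it (or a substitute) nothing rules out the excluded configuration with quotient singularities of index $2,3,4,6$ along multisections, where the contracted pair would fail to be dlt along $C'$. This missing mechanism is the core of the lemma, so the proposal as written has a genuine gap.
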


\begin{proof}
Since the contraction $\varphi$ is $(K_X+\D)$-trivial, we deduce that $(X', \D')$ satisfies the conditions (1)-(5) of Setup \ref{set:setup-1}.  
It remains to show the condition (6). 
Without loss of generality, we may assume that the exceptional locus of $\varphi^{-1}$ is a curve $C'\subseteq X'$ with 
\[ (K_{X'}+\D') \cdot C' >0. \]
We need to show that $(X', \D')$ is dlt around general points of $C'$. 
Let $\Gamma$ be the exceptional divisor of $\varphi$. 
Then there is some $b>0$ such that 
\[ K_X=\varphi^*K_{X'} +b\Gamma \mbox{ and }  K_X+\D = \varphi^*(K_{X'}+\D') .\]
Hence, we only need to consider the case when $\Gamma$ is a component of $\D$. 

Let $\pi\colon \widehat{\Gamma} \to \Gamma$ be the normalization. 
By adjunction, there is a boundary divisor $\widehat{\Theta}$ on $\widehat{\Gamma}$ such that 
\[
\pi^*(K_X+\D) \sim_{\mathbb{Q}} K_{\widehat{\Gamma}} + \widehat{\Theta}, 
\]
and $(\widehat{\Gamma}, \widehat{\Theta})$ is lc. 
Since $(K_{X'}+\D') \cdot C' >0$, by Lemma \ref{lem:nu-2-computation-1},  
we see that $\nu(\widehat{\Gamma}, K_{\widehat{\Gamma}}+\widehat{\Theta}) =1$. 
Hence, by log abundance for K\"ahler surfaces (see \cite[Theorem 1.33]{DasOu2022}), 
$\pi^*L$ induces a fibration $\hat{f}\colon \widehat{\Gamma} \to B$ to a smooth projective curve $B$. 
Let $F$ be a general fiber of $\Gamma \to C'$. 
Then it is a rational curve since $\varphi\colon X\to X'$ is an elementary contraction. 
Since $L\cdot F= 0$, we see that  $\widehat{F}:= \pi^{-1}(F)\subseteq \widehat{\Gamma}$ is a  fiber of $f\colon \widehat{\Gamma} \to B$.    We note that $\widehat F$ is a rational curve as well.  
Hence, for the surface $\Gamma$, we are in the situation of Lemma \ref{lemma:surface-in-Delta-Theta-hor} and Lemma \ref{lemma:surface-in-Delta-classification}.  
In the remainder of the proof,  we will go through each case of   Lemma \ref{lemma:surface-in-Delta-classification}. 

The condition $K_X\cdot F <0$ implies that $(D-\epsilon \D) \cdot F<0$   for $\epsilon>0$ small enough. 
Since $D-\epsilon \D$ is effective, it follows that $\Gamma\cdot F<0$. 
Moreover, since $D\cdot F=0$, $F$ must intersect a component of $\D-\Gamma$. 
In particular, by the item (3) of Lemma \ref{lemma:surface-in-Delta-Theta-hor}, we are not in the Case 3 of the proof of Lemma \ref{lemma:surface-in-Delta-classification}. 

If we are in the Case 1, then $X$ is   smooth in an open neighborhood of $F$. 
It follows that $X'$ has terminal singularities around $\varphi(F)$, hence around general points $C'$. 
Since $(X',\D')$ is a reduced lc pair, from the discussion of \cite[Proposition 16.6]{Kol92}, 
we obtain that $(X', \D')$ is log smooth around general points of $C'$. 

If we are in the Case 2, then $X$ has canonical singularities along  $F$. 
Thus we deduce that $X$ has terminal singularities in an open neighborhood of $\varphi(F)$. 
Hence $X'$ is smooth around general points of $C'$. 
Now we can argue as in the Case 1 to show that $(X', \D')$ is log smooth around general points of $C'$.  
This completes the proof of the lemma. 
\end{proof}

\begin{lemma}\label{lem:flip-invariance}
 Assume that we are in the Setup \ref{set:setup-1}. 
 Suppose that $\vphi \colon  X\bir X'$ is a $(K_X+\Delta)$-trivial $K_X$-negative flip and $\Delta':=\vphi_*\Delta$. 
 Then for every curve $C' \subseteq X'$ such that $ (K_{X'}+\Delta')\cdot C'  >0$, the map $\varphi^{-1}$ is an isomorphism around the general points of $C'$.  
 In particular, $(X', \Delta')$ also satisfies the conditions of Setup \ref{set:setup-1}.    
\end{lemma}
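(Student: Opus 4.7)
The $(K_X+\Delta)$-trivial $K_X$-negative flip is encoded by a diagram $f\colon X\to Z\leftarrow X' \colon\! f'$ of small bimeromorphic morphisms together with a $\mbQ$-Cartier divisor $M$ on $Z$ such that $K_X+\Delta = f^*M$ and $K_{X'}+\Delta' = (f')^*M$. Writing $F\subseteq X$ and $F'\subseteq X'$ for the flipping and flipped loci and $z_0 := f(F)=f'(F')$, the map $\varphi^{-1}$ is a biholomorphism on $X'\setminus F'$, so the asserted local isomorphism around $C'$ is equivalent to $C'\cap F'=\emptyset$, i.e. to $z_0\notin f'(C')$. Observe first that $C'$ itself is not a flipped curve, for otherwise $L'\cdot C' = kM\cdot f'_*C' = 0$, contradicting the hypothesis; thus $f'(C')$ is a curve in $Z$.

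The plan is to argue by contradiction: assume $z_0 \in f'(C')$ and transport the obstruction back to $X$. The proper transform $C := \overline{f^{-1}(f'(C')\setminus\{z_0\})}$ is a curve on $X$ with $f(C) = f'(C')$ meeting the flipping locus $F$ above $z_0$, and by the projection formula
\[
L\cdot C = kM\cdot f_*C = kM\cdot f'(C') = L'\cdot C'>0.
\]
Hence Setup \ref{set:setup-1}(6) yields that $(X,\Delta)$ is dlt at the general points of $C$. Choosing an irreducible component $\Gamma$ of $\Delta$ containing $C$, the adjunction setup preceding Lemma \ref{lemma:surface-in-Delta-Theta-hor} furnishes an adjoint fibration $\Gamma\dashrightarrow B$ with rational general fibers, relative to which $C$ is horizontal because $L\cdot C>0$. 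The analytic germ of $(X,\Delta)$ at a general point of $C$ is then pinned down by one of the three cases of Lemma \ref{lemma:surface-in-Delta-classification}, refined along singular curves by Lemma \ref{lemma:singularities-are-cyclic-1}.

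The hard part will be to extract a contradiction from the coexistence of the $K_X$-negative, $(K_X+\Delta)$-trivial ray $[F]$ with this rigid local model of $(X,\Delta)$ at a point of $C\cap F$. The intended mechanism is that $L\cdot F = 0$ forces $F$ to be vertical for the adjoint fibration on $\Gamma$, so $F$ would have to sit inside a fiber passing through a dlt germ of $C$, while the smallness of $f$ together with $K_X\cdot F<0$ prevent such an $F$ from existing in that germ, in a manner parallel to the final step of Lemma \ref{lemma:contraction-dlt}. Once this contradiction is secured, $\varphi^{-1}$ is a biholomorphism on a neighborhood of $C'$. For the ``in particular'' clause, conditions (1)--(5) of Setup \ref{set:setup-1} transport across the flip (lc-ness and terminality off $\Delta$ are preserved; $L$ descends to a Cartier class on $Z$, hence remains Cartier on $X'$; nefness is preserved; and $D':=\varphi_*D$ witnesses condition (4)), while condition (6) for $(X',\Delta')$ follows by transporting the dlt property at general points of $C$ to general points of $C'$ through the isomorphism just established.
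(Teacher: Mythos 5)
There is a genuine gap, and it sits exactly where you flagged it: the contradiction in your ``hard part'' is never derived. You set out to prove the full-neighborhood statement $C'\cap \Ex(f')=\emptyset$ by contradiction, but the only mechanism you offer is a sketch (``$L\cdot F=0$ forces $F$ to be vertical for the adjoint fibration on $\Gamma$, \dots in a manner parallel to the final step of Lemma \ref{lemma:contraction-dlt}''), and this sketch does not go through as stated. Nothing forces the flipping curve $F$ to lie in the component $\Gamma$ of $\Delta$ containing $C$: from $D\cdot F=0$, $D_{\red}=\Delta$ and $K_X\cdot F<0$ one only gets $\Delta\cdot F>0$, so $F$ may merely meet $\Gamma$ at points, and then ``vertical for the adjoint fibration on $\Gamma$'' is not even meaningful. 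Nor do the local models of Lemmas \ref{lemma:surface-in-Delta-Theta-hor}--\ref{lemma:singularities-are-cyclic-1} forbid a $K_X$-negative, $(K_X+\Delta)$-trivial curve from passing through a point of $C$; the analogous configuration occurs harmlessly in Lemma \ref{lemma:contraction-dlt}. So the stronger disjointness claim remains unproved in your write-up.

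The second problem is strategic: the strong claim is not needed, and the paper does not prove it either. The observation you already made in your first paragraph --- $C'$ is not a flipped curve because flipped curves are $(K_{X'}+\Delta')$-trivial over the base, while $(K_{X'}+\Delta')\cdot C'>0$ --- is the entire content of the paper's argument: it gives $C'\not\subseteq \Ex(f')$, hence $\varphi^{-1}$ is an isomorphism near the \emph{general} points of $C'$, and that is all the ``in particular'' clause requires, since condition (6) of Setup \ref{set:setup-1} only concerns general points of the curve. From there the paper proceeds exactly as in the tail of your proposal: the strict transform $C$ satisfies $(K_X+\Delta)\cdot C=(K_{X'}+\Delta')\cdot C'>0$ (via the graph of $\varphi$), condition (6) on $(X,\Delta)$ gives dlt at general points of $C$, and this transports to $C'$; conditions (1)--(5) pass across the flip automatically. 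If you drop the detour through the unproven disjointness statement and run your first and last paragraphs together, you recover the paper's proof; as written, the argument is incomplete.
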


\begin{proof}
One sees that the conditions (1)-(5) of Setup \ref{set:setup-1} are automatically satisfied by $(X', \Delta')$. 
Let  $f\colon X\to Y$ be the $K_X$-negative flipping contraction and $f' \colon  X'\to Y$ the corresponding flip. 
Observe that $K_X+\Delta\num_f 0$ and $K_{X'}+\Delta'\num_{f'} 0$. 
So if there is a curve $C'\subset X'$ such that $(K_{X'}+\Delta')\cdot C'>0$, then $C'$ is not contained in $\Ex(f')$.  
In particular, $\vphi^{-1}\colon X'\bir X$ is an isomorphism near the general points of $C'$. 

Let $C\subset X$ be the strict transform of $C'$ under $\vphi^{-1}$. 
Then by passing to the graph of $\vphi$, we see that 
\[ (K_X+\Delta)\cdot C  = (K_{X'}+\Delta')\cdot C'  >0.\]
Thus, if $C'\subseteq X'_{\sing}$, then  $(X, \Delta)$ has dlt singularities near the general points of $C$  by our hypothesis.   
Hence $(X', \Delta')$ has dlt singularities near the general points of $C'$ as $\vphi^{-1}\colon X' \bir X$ is an isomorphism there.  
This completes our proof. 
\end{proof}

The following lemma computes some relative Chern classes.

\begin{lemma}
\label{lemma:possilbe-a_i-1}
Let $(o\in S,E)$ be a germ of the surface quotient singularities obtained in Lemma \ref{lemma:singularities-are-cyclic-1}, 
and let $\mu\colon \widetilde{S} \to S$ be a desingularization. 
Let $\Delta_S:=\Ex(\mu)$, and let $\ell(\widetilde{S}, \mu) = c_1^2(\widetilde{S}, \mu) + c_2(\widetilde{S}, \mu)$ be the relative class defined in \eqref{eqn:l-function}.
Then we have the following possible values for $\ell(\widetilde{S}, \mu)$,  via  the natural isomorphisms  
$H^4(\widetilde{S}, \widetilde{S}\setminus \Delta_S, \mathbb{R}) \cong H^{BM}_0(\Delta_S, \mathbb{R}) \cong \mathbb{R}.$ 
\begin{enumerate}
\item $\ell(\widetilde{S}, \mu) = \frac{3}{2}$ if $m=2$.
\item $\ell(\widetilde{S}, \mu) \in \{ \frac{4}{3}, \frac{8}{3} \}$ if $m=3$.
\item $\ell(\widetilde{S}, \mu) \in \{ \frac{3}{4}, \frac{15}{4} \}$ if $m=4$. 
\item $\ell(\widetilde{S}, \mu) \in \{ \frac{35}{6}, -\frac{5}{6} \}$ if $m=6$.
\end{enumerate}
\end{lemma}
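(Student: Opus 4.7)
The plan is to exploit the fact, stated after \eqref{eqn:l-function}, that for a surface $S$ the class $\ell(\widetilde{S},\mu)$ depends only on the singularity $(o\in S)$, and to compute it directly by enumerating the finitely many possibilities for $q$ modulo $m$.

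First I would list the germs in question. The action has weight $(1,q)$ with $\gcd(q,m)=1$, so the cases reduce to: $m=2$ with $q=1$; $m=3$ with $q\in\{1,2\}$; $m=4$ with $q\in\{1,3\}$; $m=6$ with $q\in\{1,5\}$. In each case the minimal resolution $\mu\colon\widetilde{S}\to S$ is a Hirzebruch--Jung string $\Delta_S=\sum E_i$, determined by the continued fraction of $m/q$. For $q=1$ one gets a single curve $E_1$ with $E_1^2=-m$; for the remaining (``Du Val'') cases one gets a chain of $(-2)$-curves of length $m-1$.

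Next I would compute each of $c_1^2(\widetilde{S},\mu)$ and $c_2(\widetilde{S},\mu)$ separately, viewed as elements of $H^{BM}_0(\Delta_S,\mathbb{R})\cong\mathbb{R}$. For the $c_1^2$-part, write $K_{\widetilde S}=\mu^*K_S+\sum a_i E_i$; adjunction $(\mu^*K_S+\sum a_i E_i)\cdot E_j=-2-E_j^2$ determines the $a_i$. Then
\[
c_1^2(\widetilde{S},\mu)=\Bigl(\sum a_i E_i\Bigr)^2,
\]
which yields $0$ in every Du Val case, $-\tfrac{1}{3}$ for $\tfrac{1}{3}(1,1)$, $-1$ for $\tfrac{1}{4}(1,1)$, and $-\tfrac{8}{3}$ for $\tfrac{1}{6}(1,1)$. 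For the $c_2$-part, I would argue that in the natural identification $H^{BM}_0(\Delta_S,\mathbb{R})\cong\mathbb{R}$ one has
\[
c_2(\widetilde{S},\mu)=e(\Delta_S)-\tfrac{1}{m},
\]
where $e(\Delta_S)$ is the topological Euler characteristic of the exceptional set (so $2$ for a single curve, $m-1$ for a chain of $(-2)$-curves intersecting transversally, hence total Euler characteristic $2(m-1)-(m-2)=m$); the term $-1/m$ encodes the orbifold Euler contribution at $o$. One can justify this either directly from the orbifold chart $(\mathbb{C}^2,\mathbb{Z}_m,\pi)$ (where $c_2(\tilde h)$ integrated against the Thom class of $\{0\}$ equals $1$ and the orbifold integral equals $1/m$), or via Lemma \ref{lemma:Q-Chern-class} by applying Kollár's formula \cite[(10.8.8)]{Kol92} for $\overline{c}_2$ at a cyclic quotient.

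Finally I would add the two contributions in each of the seven cases. The arithmetic gives exactly the values listed: $\tfrac{3}{2}$ for $m=2$; $\tfrac{4}{3}$ and $\tfrac{8}{3}$ for $m=3$ (with $q=1$ and $q=2$ respectively); $\tfrac{3}{4}$ and $\tfrac{15}{4}$ for $m=4$; and $-\tfrac{5}{6}$ and $\tfrac{35}{6}$ for $m=6$. The main obstacle will be justifying the identity $c_2(\widetilde{S},\mu)=e(\Delta_S)-1/m$ cleanly in the Borel--Moore formalism introduced in Section~\ref{section:compare-Chern}; once this is in place, the rest reduces to case-by-case discrepancy calculations on Hirzebruch--Jung strings.
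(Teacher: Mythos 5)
Your proposal is correct in substance, and the arithmetic reproduces exactly the listed values, but it takes a different route from the paper. The paper's proof is essentially a globalization-plus-citation argument: using \cite[Lemma 9.9]{Kaw88} it embeds the germ $(o\in S)$ into a projective compactification $T$ of $\mathbb{C}^2/G$ with a single singular point, notes that the relative class $\ell$ is compatible with the open embedding (Chern classes commute with pullback), and then simply reads off the values from Koll\'ar's formula \cite[Equation (14.3.1.2)]{Kol92}. You instead rederive those numbers by hand: enumerate the germs $\tfrac1m(1,q)$ with $m\in\{2,3,4,6\}$, $\gcd(q,m)=1$ (which, as you note, forces $q\equiv\pm1 \bmod m$, so only $\tfrac1m(1,1)$ and the Du Val chains $A_{m-1}$ occur), compute $c_1^2(\widetilde S,\mu)=(\sum a_iE_i)^2$ from discrepancies on the Hirzebruch--Jung resolution, and compute $c_2(\widetilde S,\mu)=e(\Delta_S)-\tfrac1m$ from the orbifold Euler characteristic; your totals $\tfrac32$; $\tfrac43,\tfrac83$; $\tfrac34,\tfrac{15}4$; $-\tfrac56,\tfrac{35}6$ match the statement. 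What your approach buys is explicitness and a self-contained verification (in effect a rederivation of Koll\'ar's table for these cases); what the paper's buys is brevity and, importantly, a clean way to make the relative classes into honest numbers: since $S$ is only a germ, evaluating $c_1^2(\widetilde S,\mu)$ and $c_2(\widetilde S,\mu)$ in $H^{BM}_0(\Delta_S,\mathbb{R})\cong\mathbb{R}$ via global intersection theory or Gauss--Bonnet requires either a careful local argument or precisely the compactification step the paper performs. You correctly flag the identity $c_2(\widetilde S,\mu)=e(\Delta_S)-\tfrac1m$ as the main point needing justification; the most economical fix is the paper's own device (compactify so that Noether/Gauss--Bonnet or Lemma \ref{lemma:Q-Chern-class} together with Koll\'ar's Chapter 10 computations apply), after which your case-by-case calculation, and in particular the independence of the choice of resolution noted after \eqref{eqn:l-function}, goes through as written.
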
 
\begin{proof}
By \cite[Lemma 9.9]{Kaw88}, there is an Euclidean open embedding from $ S$ to  $\mathbb{C}^2/G$, where $G$ is a finite group acting linearly on $\mathbb{C}^2$. 
Let $T$ be a projective compactification of $\mathbb{C}^2/G.$ 
Then there is an Euclidean open embedding $\iota\colon S \to T$. 
By taking a partial desingularization, we may assume that $T$ has exactly one singular point $\iota(o)$.  
By abuse of notation, we still denote by $\mu\colon \widetilde{T} \to T$ a desingularization, such that $\mu^{-1}(S) = \widetilde{S}$. 
Since the Chern classes commute with pullbacks, we obtain that 
\[
\ell(\widetilde{S}, \mu) = \iota^*\ell(\widetilde{T}, \mu) \in H^4(\widetilde{S}, \widetilde{S}\setminus \D_S, \mathbb{R}). 
\]
Therefore, replacing $S$ by $T$, we may assume that $S$ is a projective surface with exactly one singular point $o$. 
Now we can apply the formula of \cite[Equation (14.3.1.2)]{Kol92} to conclude the lemma. 
\end{proof}

We will need the following lemma in the next subsection. 

\begin{lemma}
    \label{lemma:orbifold-snc}
   Let $X$ be a complex manifold and $G$ a finite group acting faithfully on $X$. 
  Let $\D'$ be a reduced divisor in $X$ such that the action of $G$ is free in codimension one on $X\setminus \mathrm{Supp}\, \D'$. 
   Assume that  $\D$ is a reduced $G$-invariant divisor such that $(X,\D+\D')$ is a reduced snc pair. 
   
   Let $(Y,B) = (X/G, \D/G)$ be the  pair of quotient spaces.  
   If $(V, H,\pi)$ is a standard orbifold chart over $Y$, then $\pi^{-1}(B)$  has normal crossing  singularities. 
   In particular, $\Omega_Y^{[1]}(\log\, B)$ induces  an orbifold vector bundle on the standard orbifold structure of $Y$. 
\end{lemma}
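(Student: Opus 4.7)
The plan is to reduce to a local computation at a point of $Y$ and apply the Chevalley-Shephard-Todd theorem. First I would fix $y \in \pi(V) \subseteq Y$, a preimage $x \in X$, and its stabilizer $G_x \subseteq G$. By Cartan's holomorphic linearization theorem, pass to a $G_x$-invariant Euclidean neighborhood $U$ of $x$ on which $G_x$ acts linearly via an identification $U \hookrightarrow T_x X \cong \mathbb{C}^n$. Using that $(X, \D + \D')$ is snc at $x$, choose linear coordinates $z_1, \ldots, z_n$ on $T_x X$ so that the components of $\D + \D'$ through $x$ become coordinate hyperplanes $\{z_i = 0\}$, with indices partitioned into $I$ (coming from $\D$) and $J$ (coming from $\D'$).

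The crux is to identify the pseudoreflection subgroup $N_x \subseteq G_x$. Any pseudoreflection $g \in G_x$ has a codimension-one fixed hyperplane which, by the hypothesis that $G$ acts freely in codimension one off $\D'$, must be a component of $\D'$; since $\D'$ is snc at $x$, that hyperplane is one of the $\{z_j = 0\}$ with $j \in J$. As a pseudoreflection, $g$ acts trivially on this $(n-1)$-dimensional subspace and by a root of unity on $z_j$; in particular it is diagonal in $z_1, \ldots, z_n$ and trivial on every $z_i$ with $i \in I$. Consequently $N_x$ is abelian and of the form $\prod_{j \in J'} \mathbb{Z}/m_j$ for some $J' \subseteq J$ and positive integers $m_j$, and is normal in $G_x$ (since $G_x$-conjugates of pseudoreflections remain pseudoreflections with fixed hyperplane in $\D'$). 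By Chevalley-Shephard-Todd the quotient $W := U/N_x$ is smooth, with explicit quotient map $w_j = z_j^{m_j}$ for $j \in J'$ and $w_i = z_i$ otherwise; under it the image $\D_W$ of $\D$ equals $\bigcup_{i \in I} \{w_i = 0\}$, which is snc in $W$. The residual group $H_x := G_x/N_x$ then acts on $W$ freely in codimension one, since $N_x$ absorbs every pseudoreflection of $G_x$, so $(W, H_x)$ is a standard orbifold chart of $Y$ at $y$ in which the preimage of $B$ is exactly $\D_W$.

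To finish, by the local uniqueness of the standard orbifold structure (\cite[Theorem 1]{Prill1967}, recalled in item (2) of Remark \ref{rmk:quotient-singularities}), any given standard chart $(V, H, \pi)$ at $y$ is isomorphic after shrinking to $(W, H_x)$; hence $\pi^{-1}(B)$ is a union of smooth coordinate hyperplanes, proving the normal crossing assertion. For the final clause, on each standard chart the sheaf $\Omega_V^1(\log \pi^{-1}(B))$ is therefore locally free and carries a natural $H$-linearization, and these local data glue on overlaps by functoriality of the logarithmic cotangent bundle under equivariant isomorphisms of log smooth pairs, yielding an orbifold vector bundle on $Y_{\orb}$ whose $G$-invariant reflexive pushforward to $Y$ is $\Omega_Y^{[1]}(\log B)$ by definition. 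I expect the main obstacle to be the identification of $N_x$ as a product of cyclic subgroups acting only on the $\D'$-coordinates; this is the step where the freeness hypothesis on $G$ off $\D'$ is used decisively, and the remainder is a direct application of reflection-group theory together with the uniqueness of the standard orbifold structure.
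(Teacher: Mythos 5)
Your strategy is essentially the paper's: reduce to a linear local model, use the freeness hypothesis to force every reflection mirror to lie in $\Delta'$, take the quotient by the subgroup generated by pseudoreflections as the standard chart via Chevalley--Shephard--Todd, and transfer the conclusion to an arbitrary standard chart by Prill's uniqueness; the final gluing of $\Omega_Y^{[1]}(\log B)$ is also fine. Two intermediate steps, however, do not hold as written. First, Cartan linearization followed by a \emph{linear} change of coordinates does not make the components of $\Delta+\Delta'$ into coordinate hyperplanes: after linearizing the $G_x$-action these components are only smooth invariant hypersurface germs, and a linear substitution cannot straighten them. The simultaneous statement (linear action together with the invariant snc divisor being a union of coordinate hyperplanes) is precisely the equivariant straightening of \cite[Lemma 9.9]{Kaw88}, which is what the paper invokes here, and your later explicit manipulations genuinely need it.

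Second, and more substantively, the claim that every pseudoreflection $g$ with mirror $\{z_j=0\}$, $j\in J$, is diagonal --- and hence that $N_x=\prod_j \mathbb{Z}/m_j$ acts via $w_j=z_j^{m_j}$, $w_i=z_i$ --- is false in general. Take $X=\mathbb{C}^3$, $\Delta=\{z_1=0\}$, $\Delta'=\{z_2=0\}$ and $g(z_1,z_2,z_3)=(z_1,\zeta z_2, z_2+z_3)$ with $\zeta$ a primitive $m$-th root of unity: $g$ has order $m$, its fixed locus is exactly $\{z_2=0\}$, all hypotheses of the lemma hold, yet $g$ is not diagonal and $z_3$ is not invariant, so your explicit description of $U/N_x$ fails. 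What your argument does establish, and what suffices, is weaker: each pseudoreflection preserves every component of $\Delta$ through $x$ and fixes the coordinate functions $z_i$, $i\in I$ (the off-diagonal tail can only involve directions not contained in $\mathrm{Supp}(\Delta+\Delta')$); the elements fixing a given component of $\Delta'$ pointwise form a cyclic group, and these cyclic groups pairwise commute, so $N_x$ is their product. To finish you must then either rediagonalize the complementary directions, which is possible because the $z_i$, $i\in I$, are $N_x$-invariant and the representation is semisimple, exhibiting $U/N_x$ as $\mathbb{C}^{|I|}\times (C/N_x)$ with $C/N_x$ smooth and the image of $\Delta$ a union of coordinate hyperplanes of the first factor; or argue as the paper does, applying Chevalley--Shephard--Todd to the action of the reflection subgroup on each stratum of $\Delta$, which is again generated by pseudoreflections, so that each stratum has smooth image and the image of $\Delta$ has normal crossings.
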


\begin{proof} 
Without loss of generality, we may assume that $\D'$ is $G$-invariant.  
We remark that the problem is local on $X$.  
Indeed, for a point $x\in X$, there is an open  connected neighborhood $U$ of $x$ such that $g(U)= U$ whenever $g(x)=x$ for any element $g\in G$.  
Let $G'\subseteq G$ be the maximal subgroups fixing $x$.
Then $U/G' \cong X/G$ around the image of $x$ in $Y$. 
Hence, up to replacing $(X,G)$ by $(U,G')$, and by \cite[Lemma 9.9]{Kaw88}, we may assume that $X$ is an open neighborhood of the origin in $\mathbb{C}^n$, that $G$ is a subgroup of $\mathrm{GL}_n(\mathbb{C})$, and that $\D+\D'$ is the union of coordinates hyperplanes.  
Let $R\subseteq G$ be the   subgroup generated by pseudo-reflections, which must be a normal subgroup.  
Thanks to the Chevalley–Shephard–Todd theorem, we deduce that $(V:=X/R,  H:= G/R)$ is a standard orbifold chart  of  $Y$.   

Without loss of generality, we can assume that $R$ is not trivial. 
In particular, $\D'\neq 0$. 
Let $\D_i$ be an irreducible component of $\D$. 
We claim that every pseudo-reflections $g\in G$ leaves $\D_i$ invariant. 
Suppose the opposite, then we must have $g(\D_i)=\D_j$ for some different component $\D_j$ of $\D+\D'$. 
If $H_g$ is the hyperplane of fixed points of $g$, then $H_g\neq \Delta_i$, $H_g\neq \Delta_j$, $H_g$ is a component of $\Delta'$ and $g(\D_i\cap H_g) \subseteq \D_j \cap H_g$. 
Since $g$ fixes every point in $H_g$, we get  $\D_i\cap H_g \subseteq \D_j \cap H_g$.  
This is a contradiction, for $\D_i+\D_j+H_g$ is snc and reduced.

We denote by $q\colon X\to V$ the natural morphism. 
Then $\pi^{-1}(B) = q(\D)$. 
Let   $S$ be a stratum of $\D$,  it is enough to show that the image $q(S)$ is smooth.  
The previous paragraph implies that $R$ leaves $S$ invariant, and the action of $R$ on $S$ is generated by quasi-reflections. 
Hence $q(S)=S/R$ is smooth by the  Chevalley–Shephard–Todd theorem.  
This completes the proof of the lemma.
\end{proof}

\subsection{Existence of lower bound for the Euler characteristics}

As mentioned at the beginning of the section, to prove Proposition \ref{prop:Euler-characteristic-combined-1},   we will construct a modification $Y$ of $X$, which has only quotient singularities.

\begin{lemma}
\label{lemma:construction-of-Y-1}
Assume that we are in the Setup \ref{set:setup-1}. 
Then there exists a  projective bimeromorphic morphism $\pi\colon Y \to X$ such that the following properties hold:  
\begin{enumerate}
\item $Y$ has cyclic quotient singularities only, and $(Y,B)$ is lc, where $B=\pi_*^{-1}\Delta$.  
\item The reflexive sheaf $\Omega_Y^{[1]}(\log\, B)$ induces an orbifold vector bundle on the standard orbifold structure.
\item If $C\subseteq X_{\sing}$ is a curve with $C\cdot L >0$, then $\pi^{-1}$ is an isomorphism  around the general points of $C$.
\item If $C_Y\subseteq Y_{\sing}$ is a curve with    $C_Y \cdot \pi^* L >0$, 
then $\pi$ is an isomorphism around the general points of $C_Y$. 
\item For any $\pi$-exceptional divisor $P$, we have $\pi^*L|_P \equiv 0$.
\item If we suppose further that $K_X+(1-\epsilon)\D$ is nef for all $0<\epsilon\ll 1$, then  $K_Y^2 \cdot \pi^*L = 0$. 
\end{enumerate}
\end{lemma}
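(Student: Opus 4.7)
The plan is to produce $\pi\colon Y\to X$ by a combination of log resolution and relative MMP, ensuring that the ``nice'' cyclic quotient singularities of $X$ along curves $C$ with $C\cdot L>0$ are preserved while the remaining singularities are resolved into cyclic quotient form.

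First I would classify $X_{\sing}$: since $X$ has terminal singularities outside $\D$, the singular locus in $X\setminus \Supp\D$ is zero-dimensional. On $\Supp\D$, Lemma \ref{lemma:singularities-are-cyclic-1} shows that at general points of a curve $C\subseteq X_{\sing}$ with $C\cdot L>0$, $X$ admits the nice analytic model $(\mathbb{C}^2/\mathbb{Z}_m)\times \mathbb{D}$ with $m\in\{2,3,4,6\}$, and the pair $(X,\D)$ locally matches the hypothesis of Lemma \ref{lemma:orbifold-snc}. Let $U\subseteq X$ denote the open locus consisting of $X_{\sm}$ together with such general points of ``good'' curves; over $U$, I want $\pi$ to be an isomorphism.

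Next I would take a log resolution $r\colon \widetilde X\to X$ of $(X,\D)$ which is an isomorphism over $U$. Let $\widetilde B=r^{-1}_*\D$ and let $F=\sum F_i$ denote the $r$-exceptional prime divisors. Choose small rational numbers $\epsilon_i>0$ so that $(\widetilde X, \widetilde B+\sum \epsilon_i F_i)$ is dlt. The divisor $K_{\widetilde X}+\widetilde B+\sum \epsilon_i F_i$ differs from $r^*(K_X+\D)$ by an $r$-exceptional $\mathbb{Q}$-divisor that is not $r$-nef. Run a $(K_{\widetilde X}+\widetilde B+\sum \epsilon_i F_i)$-MMP relative to $X$ via Theorem \ref{thm:existence-dlt-MMP}; since $r^*L\equiv_r 0$, this MMP is trivial with respect to $r^*L$, and it terminates at $\pi\colon Y\to X$. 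By the negativity lemma, the contracted $r$-exceptional divisors are precisely those one wishes to remove, leaving cyclic quotient singularities at the resolved loci.

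Then I would verify the six properties. Property (5) follows from the $r^*L$-triviality of the MMP. Property (3) follows since $\pi$ is an isomorphism over $U$. Property (4) follows because a curve $C_Y\subseteq Y_{\sing}$ with $C_Y\cdot \pi^*L>0$ descends by (5) to a curve in $X_{\sing}$ with positive intersection with $L$, which lies in $U$. Property (2) follows from Lemma \ref{lemma:orbifold-snc} applied at each cyclic quotient singularity of $Y$, using that $r$ was a log resolution and the MMP preserves the snc structure of the preimage of $\D$. For property (6), the extra hypothesis gives $K_X^2\cdot L=0$ by Lemma \ref{lem:nu-2-computation-1}; writing $K_Y=\pi^*K_X+E$ with $E$ an effective $\pi$-exceptional $\mathbb{Q}$-divisor (since $Y$ has quotient, hence klt, singularities) and using $\pi^*L\cdot E\equiv 0$ from (5), the projection formula yields $K_Y^2\cdot \pi^*L=K_X^2\cdot L=0$.

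The main obstacle will be establishing property (1): guaranteeing that $Y$ has only cyclic quotient singularities rather than merely $\mathbb{Q}$-factorial dlt ones. This demands careful local analysis of each step of the MMP, using the classification of terminal threefold singularities (to handle isolated singularities of $X\setminus\Supp\D$) and the explicit local structure at non-nice singular curves on $\D$; alternatively, one may need to replace the relative MMP of Step 2 by direct, explicit cyclic-quotient partial resolutions at each non-cyclic-quotient singularity of $X$, pieced together globally.
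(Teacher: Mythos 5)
There is a genuine gap, and it is exactly the one you flag at the end: property (1) is not a technical afterthought but the heart of the lemma, and the relative-MMP construction you propose cannot deliver it. Running a $(K_{\widetilde X}+\widetilde B+\sum\epsilon_iF_i)$-MMP over $X$ from a log resolution produces a $\mathbb{Q}$-factorial dlt model, and dlt (or even terminal) threefold singularities are in general not finite quotient singularities (compound Du Val points, for instance, are hypersurface singularities); with small $\epsilon_i$ such an MMP essentially returns a dlt modification of $(X,\Delta)$, reproducing the very singularities one needs to eliminate. Your verification of (2) has the same problem: to invoke Lemma \ref{lemma:orbifold-snc} you must exhibit, around every point of $Y$, a smooth chart with a finite group action that is free in codimension one away from an invariant snc divisor containing the preimage of $B$; an MMP does not ``preserve the snc structure'' and provides no such charts. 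Your deduction of (4) also implicitly uses (1), since without control of $Y_{\sing}$ a singular curve of $Y$ with $C_Y\cdot\pi^*L>0$ need not map to a curve inside $X_{\sing}$.

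The paper's construction supplies precisely the missing mechanism. It first takes a $\mathbb{Q}$-factorial terminalization $\rho\colon Z\to X$ and runs a $\rho$-relative MMP contracting exactly the $\rho$-exceptional divisors lying over curves $C\subseteq X_{\sing}$ with $L\cdot C>0$; a discrepancy argument then shows that every singular curve of the resulting model $Z'$ maps isomorphically to such a curve of $X$, where Lemma \ref{lemma:singularities-are-cyclic-1} gives the explicit $(\mathbb{C}^2/\mathbb{Z}_m)\times\mathbb{D}$ structure. Next, locally on $Z'$ one takes the index-one cover of the strict transform of $\Delta$: by Lemma \ref{lemma:cyclic-factor} and the local description above, these cyclic covers are smooth along the preimages of the singular curves, hence have only isolated singularities, and the uniqueness of index-one covers (Lemma \ref{lemma:unique-cyclic-cover}) lets one glue equivariant log resolutions of these covers into a global orbifold whose quotient space is $Y$. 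Properties (1) and (2) then hold by construction (the charts are smooth with cyclic groups, and Lemma \ref{lemma:orbifold-snc} applies to the equivariant snc configurations on the charts), while (3)--(6) follow as in your outline. So the approach is not salvageable as written; the index-one-cover-plus-equivariant-resolution step, which you only gesture at as a possible alternative, is the required argument.
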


\begin{proof}
Let $\rho \colon Z \to X$ be a $\mathbb{Q}$-factorial terminalization of $X$ such that $K_Z+\Gamma = \rho^* K_X$ (see \cite[Theorem 1.15]{DasOu2022}). 
We consider the following set of prime $\rho$-exceptional divisors, 
\[ \mathcal{S} = \{ E \ |\  \rho(E)=C \mbox{ is a curve such that } L\cdot C >0 \}.\] 
Let $F = \sum_{E\in \mathcal{S}} E$. 
By running a $\rho$-relative MMP for $(Z, \Gamma + \eta F)$, for some  $0<\eta\ll 1$, 
we obtain a  $\mathbb{Q}$-factorial bimeromorphic model $\rho' \colon Z' \to X$ such that the divisors contracted by $\varphi: Z\bir Z'$ are exactly  those of $\mathcal{S}$. 
We note that, if $C\subseteq X_{\sing}$ such that $L\cdot C > 0$, then $\rho'$ is an isomorphism over the general points of $C$.

We claim that if $C' \subseteq Z'_{\sing}$ is a curve, then $\rho'$ is an isomorphism around the general points of $C'$ and $L\cdot C > 0$, where $C = \rho'(C')$. 
Indeed, since $Z'$ is singular along the curve  $C'$, it cannot have terminal singularities around the general  points of $C'$. 
In particular, there is an exceptional divisor $E$ over $Z'$ with $\Center_{Z'}(E)=C'$ such that the discrepancy $a(E, Z')\< 0$. This implies that $a(E, X)=a(E, Z', \Gamma')\<a(E, Z')\< 0$, where $\Gamma':=\varphi_*\Gamma$. 
Consequently, $E_Z:=\Center_Z(E)\subset Z$ is a divisor. 
We note that $E_Z$ is contracted by $\varphi:Z\bir Z'$, as $\Center_{Z'}(E)=C'$. 
Therefore $E_Z\in \mathcal{S}$; in particular, $\Center_X(E)= C\subset X_{\sing}$ is a curve such that $L\cdot C >0$. 
Hence, $\rho'$ is an isomorphism around the general points of $C'$. 
\\


We set $\Theta = (\rho')^{-1}_*\Delta$.  
Let $z\in Z'$ be any point and $U \subseteq Z'$ a small enough analytic open neighborhood of $z$. 
Let $\gamma\colon V\to U$ be the index-one cover of the divisor $\Theta|_U$ with Galois group $G$.  
The previous paragraph implies that if $C'\subseteq Z'_{\sing}$ is a curve, then $\rho'$ is an isomorphism around the general points of $C'$.  
By the description on the singularities of $X$ along $\rho'(C')$ in Lemma \ref{lemma:singularities-are-cyclic-1}, 
and by using the universal property of index-one cover in Lemma \ref{lemma:cyclic-factor}, 
we deduce that  $V$ is smooth around the general points of $\gamma^{-1}(C')$.  
It follows that $V$ has at most isolated singularities. 
Let $\Xi$ be the pullback of $\Theta|_U$ in $V$. 
Then $(V,\Xi)$ is lc. 
Since $V$ has isolated singularities, from \cite[Proposition 16.6]{Kol92},  up to shrinking $U$, 
we may assume that the pair $(V,\Xi)$ is log smooth outside $\gamma^{-1}(z)$. 

From the compactness of $Z'$, 
we can find finitely many open subsets $U_1,...,U_n$ of $Z'$ with $\bigcup_{i=1}^n U_i =Z'$ such that the following properties hold. 
Each $U_i$ is an open neighborhood of some point $z_i\in Z'$, as described   in the previous paragraph.  
Furthermore, if $w \in U_i\cap U_j$ for some $i\neq j$, then $w$ is not equal to any of the $z_i$'s. 
In particular,   $(V_i,\Xi_i)$ is log smooth at $\gamma_i^{-1}(w)$, where $\gamma_i \colon (V_i, \Xi_i )\to (U_i, \Theta|_{U_i})$ is the index-one cover constructed in the previous paragraph, with Galois group $G_i$.

We note that,  thanks to the universal property of cyclic cover in Lemma \ref{lemma:cyclic-factor}, the collection  $\{ (V_i \setminus \gamma_i^{-1}(z_i), G_i )\}$ induces an orbifold structure on $Z'\setminus \{z_1,...,z_n\}$.  
Let $\overline{V}_i \to V_i$ be a $G_i$-equivariant log resolution of $(V_i,\Xi_i)$,  which is an isomorphism on the log smooth locus of $(V_i,\Xi_i)$.  
Then the overlaps among the $\overline{V}_i$'s coincide with those among the $V_i$'s.  
Thus  the collection  $\{(\overline{V}_i,G_i)\}$  induces a compact complex orbifold $Y_{\orb}'$ with quotient space $Y$.  
Here we use the notation $Y_{\orb}'$ as it may not be standard.
There is a natural bimeromorphic morphism $g\colon Y  \to Z'$, as explained in (4) of Remark \ref{rmk:quotient-singularities}.
Let $\pi \colon Y\to X$    be the composite morphism.   
We note that $(Y,B)$ is lc by \cite[Proposition 5.20]{KM98}. 
We write $\overline{\Xi}_i$ for the strict transform of $\Xi_i$ in $\overline{V}_i$. 
\\

We will prove the condition (2) for $Y$.  
We remark that, on an orbifold chart $\overline{V}_i$ of $Y_{\orb'}$, the preimage of $B $ is exactly $\overline{\Xi}_i$.
Furthermore, the $G_i$ action on $\overline{V}_i$ is free in codimension one outside the exceptional locus $W_i$ of $\overline{V}_i \to V_i$. 
Since $W_i+\overline{\Xi}_i$ is snc, by Lemma \ref{lemma:orbifold-snc},  
We deduce  that  $\Omega_Y^{[1]}(\log\, B)$ induces an orbifold vector bundle on the standard orbifold structure of  $Y$.

We will  verify the conditions  (3) and (4). 
If $C\subseteq X_{\sing}$ is a curve with $C\cdot L >0$, then $\rho'$ is an isomorphism over general points of $C$ by the construction of $Z'$ in the first paragraph. 
We notice that $g^{-1}\colon Z' \dashrightarrow Y$ is an isomorphism outside $\{z_1,...,z_n\}$. 
Hence $\pi$ is an isomorphism over general points of $C$. This shows the condition (3). 
If $C_Y$ is a curve in  $Y_{\sing}$ with $C_Y \cdot \pi^*L >0$, then  $g$ must be an isomorphism around general points of $C_Y$. 
Thus $g(C_Y)$ is a curve contained in $Z'_{\sing}$, 
and $\rho'$ is an isomorphism around general points of $g(C_Y)$ by the second paragraph.  
This shows the condition (4). 

For (5), observe that if $\pi(P)$ is not a point, then $g(P)$ is  a divisor in $Z'$, and $\pi(P)$  is a curve   contained in $X_{\sing}$. Thus by construction of $Z'$,  we have $L\cdot \pi(P) =0$. 
Hence $\pi^*L|_P \equiv 0$. This also implies that $K_Y^2 \cdot \pi^*L = K_X^2 \cdot L$ and then (6) follows from Lemma \ref{lem:nu-2-computation-1}.
\end{proof}

The following lemma is adapted from \cite[Lemma 14.3.1]{Kol92}.

\begin{lemma}
\label{lemma:comparison-c2-1}
Assume that we are in the Setup \ref{set:setup-1}. Let $\pi:Y\to X$ be the proper bimeromorphic morphism constructed in Lemma \ref{lemma:construction-of-Y-1}. Let $\rho\colon \widetilde{X} \to Y$ be a desingularization of $Y$ and $\gamma\colon \widetilde{X} \to X$ the composite morphism. 
Then $(K_{\widetilde{X}}^2+c_2(\widetilde{X})) \cdot \gamma^*L  \ge (K_{Y}^2+\hat{c}_2(Y)) \cdot \pi^*L$. 
Furthermore,  if the equality holds, then for any curve $C \subseteq X_{\sing}$, we have $L\cdot C = 0$.
\end{lemma}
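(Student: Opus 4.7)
The plan is to rewrite the difference between the two sides of the inequality as a single intersection number on $\widetilde{X}$ using the relative class introduced in~\eqref{eqn:l-function}, and then to establish nonnegativity by a case analysis on the singular curves of $Y$. Since $c_1(\widetilde{X}) = -K_{\widetilde{X}}$ and $\hat{c}_1(Y) = -K_Y$, unwinding the definition gives
\[
\ell(\widetilde{X},\rho) \;=\; (K_{\widetilde{X}}^2 + c_2(\widetilde{X})) \;-\; \rho^*(K_Y^2 + \hat{c}_2(Y)).
\]
The projection formula for $\rho$ then identifies the desired difference with $\ell(\widetilde{X},\rho)\cdot \gamma^*L$, so it suffices to show this intersection number is nonnegative, and strictly positive whenever some curve in $X_{\sing}$ meets $L$ positively.

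Applying Lemma~\ref{lemma:c_2-difference} with $\sigma = \pi^*L$ (using $Y$ in place of the $X$ there), I would express this intersection number as a finite sum $\sum_i a_i\,(C^Y_i\cdot \pi^*L)$ over the one-dimensional components $C^Y_i$ of $Y_{\sing}$. The factors $C^Y_i\cdot \pi^*L$ are nonnegative since $\pi^*L$ is nef; any $C^Y_i$ contained in a $\pi$-exceptional divisor gives a zero contribution by Lemma~\ref{lemma:construction-of-Y-1}(5). For every remaining $C^Y_i$, Lemma~\ref{lemma:construction-of-Y-1}(4) ensures that $\pi$ is an isomorphism at its generic point, so its image $C_i := \pi(C^Y_i) \subseteq X_{\sing}$ inherits the same transverse surface germ and satisfies $L\cdot C_i = C^Y_i\cdot \pi^*L > 0$. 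Lemma~\ref{lemma:singularities-are-cyclic-1} then identifies this transverse slice as a cyclic quotient of index $m_i \in \{2,3,4,6\}$, and Lemma~\ref{lemma:c2-calculation} combined with Lemma~\ref{lemma:possilbe-a_i-1} yields $a_i = \ell(\widetilde{S}_i,\mu_i)$, one of the explicit rationals listed there.

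The delicate point is that Lemma~\ref{lemma:possilbe-a_i-1} permits $a_i = -\tfrac{5}{6}$ in the case $m_i = 6$, so individual terms can be negative. What rescues the argument is the second half of Lemma~\ref{lemma:singularities-are-cyclic-1}: every index-$6$ singular curve $C$ with $L\cdot C > 0$ comes with uniquely associated curves $C_1, C_2 \subseteq X_{\sing}$ of indices $2$ and $3$ satisfying $L\cdot C = L\cdot C_1 = L\cdot C_2$, and the assignments $C \mapsto C_1$ and $C\mapsto C_2$ are injective. I would group the summation into such triples together with the unpaired singular curves of indices $2, 3, 4$. A triple contributes at least $\bigl(-\tfrac{5}{6} + \tfrac{3}{2} + \tfrac{4}{3}\bigr) L\cdot C = 2\,L\cdot C$, while every unpaired term carries a coefficient from Lemma~\ref{lemma:possilbe-a_i-1} that is strictly positive. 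This proves the desired inequality. Moreover, since every group's contribution is strictly positive as soon as any participating curve has positive $L$-degree, vanishing of the total sum forces $L\cdot C = 0$ for every $C\subseteq X_{\sing}$, giving the equality statement.
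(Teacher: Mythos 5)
Your proof is correct and follows essentially the same route as the paper: you rewrite the difference as $\ell(\widetilde{X},\rho)\cdot\gamma^*L$, express it via Lemma \ref{lemma:c_2-difference} as $\sum_i a_i\,C_i\cdot\pi^*L$ over the singular curves of $Y$, compute the relevant $a_i$ through Lemmas \ref{lemma:c2-calculation}, \ref{lemma:singularities-are-cyclic-1} and \ref{lemma:possilbe-a_i-1}, and absorb the only negative coefficient $-\tfrac{5}{6}$ by grouping each index-$6$ curve with its uniquely associated index-$2$ and index-$3$ curves, exactly as in the paper. The one step worth making explicit in the equality case is the transfer back from $Y$ to $X$: vanishing of the sum gives $\pi^*L\cdot C_Y=0$ for every curve $C_Y\subseteq Y_{\sing}$, and to conclude $L\cdot C=0$ for every $C\subseteq X_{\sing}$ you should invoke Lemma \ref{lemma:construction-of-Y-1}(3), since a curve $C\subseteq X_{\sing}$ with $L\cdot C>0$ would have its strict transform inside $Y_{\sing}$ with positive $\pi^*L$-degree, a contradiction --- which is precisely how the paper closes the argument.
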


\begin{proof}
Let $\ell(\widetilde{X}, \rho) = c_1^2(\widetilde{X}, \rho) + c_2(\widetilde{X}, \rho)$ be the  relative class defined in Subsection \ref{subsec:relative-class}. 
Then by Lemma \ref{lemma:c_2-difference},  there are real numbers $a_i$ and curves $C_i\subseteq Y_{\sing}$ such that 
\[ \ell(\widetilde{X}, \rho) \cdot \gamma^*L = \pi^*L\cdot \left(\sum_i a_iC_i\right).\]
We only care about the $a_i$'s which correspond to $C_i\cdot \pi^*L >0$. 
By Lemma \ref{lemma:c2-calculation} and  the property (4) of Lemma \ref{lemma:construction-of-Y-1}, we see that these $a_i$'s are computed by the list of Lemma \ref{lemma:possilbe-a_i-1}. 
The only possible case when certain $a_i$ can take a nonpositive value is the case when the index of  $C_i$ in Lemma \ref{lemma:singularities-are-cyclic-1} is $6$ (see Part (4) of Lemma \ref{lemma:possilbe-a_i-1}). 
If that happens, then by the same lemma there are two unique curves $C_j$ and $C_k$ with indices $2$ and $3$ respectively such that 
\[ \pi^*L\cdot C_i=\pi^*L\cdot C_j=\pi^*L\cdot C_k>0. \] 
Hence $(a_iC_i+a_jC_j+a_kC_k)\cdot \pi^*L  >0$, as $a_i=-\frac 56$, $a_j=\frac 32$ and $a_k\in\left\{\frac 43, \frac 83\right\}$. 
Since the associations $C_i \mapsto C_j$ and $C_i\mapsto C_k$, from a curve of index 6 to a curve of index 2 and a curve of index 3, are one-to-one, 
after all, we always have  $\ell(\widetilde{X}, \rho) \cdot \gamma^*L \> 0$.  

We assume that the equality holds. 
Then we have  $\ell(\widetilde X, \rho)\cdot\gamma^*L=0$. 
From the previous paragraph, this implies that  $\pi^*L\cdot C_Y=0$ for every curve $C_Y\subset Y_{\sing}$. 
We can then deduce the second part of the lemma from  the property (3) of Lemma \ref{lemma:construction-of-Y-1}.
\end{proof}

Now we can deduce Proposition \ref{prop:Euler-characteristic-combined-1}.

\begin{proof}
[{Proof of Proposition \ref{prop:Euler-characteristic-combined-1}}]
When $X$ is projective it follows from \cite[Equation (14.4.1.2)]{Kol92}, so we may assume that $X$ is non-algebraic. Let $\pi:Y\to X$ be the proper bimeromorphic morphism constructed in Lemma \ref{lemma:construction-of-Y-1}. 
Let $\rho:\widetilde{X} \to Y$ be a desingularization and $\gamma\colon \widetilde{X} \to X$ be the composite morphism.  
Since $X$ is $\mathbb{Q}$-factorial, the conditions (1) and (2) of Setup \ref{set:setup-1} imply that  $X$ has klt singularities. 
Thus  $X$ has rational singularities, and it follows that 
\[\chi(X, \mathscr{O}_{{X}}(n L)) = \chi(\widetilde{X}, \mathscr{O}_{\widetilde{X}}(n\gamma^*L) )\]
for all $n \> 0$.  
We have  
\begin{eqnarray*}
    \chi(\widetilde{X}, \mathscr{O}_{\widetilde{X}}(n\gamma^*L) ) &= & \frac{n^3}{6}(\gamma^*L)^3-\frac{n^2}{4} (K_{\widetilde X}\cdot(\gamma^*L)^2)\\ 
     &&  +\frac{n}{12}(K_{\widetilde{X}}^2+c_2(\widetilde{X})) \cdot \gamma^*L  + \chi(\widetilde{X}, \mathscr{O}_{\widetilde{X}})\\
     &=& \frac{n}{12}(K_{\widetilde{X}}^2+c_2(\widetilde{X})) \cdot \gamma^*L + \chi(\widetilde{X}, \mathscr{O}_{\widetilde{X}}),
\end{eqnarray*}
where second equality follows from that $\nu(X, L)=2$ and that $K_X\cdot L^2=0$, see Lemma \ref{lem:nu-2-computation-1}.
By Lemma \ref{lemma:comparison-c2-1}, we have 
\[(K_{\widetilde{X}}^2+c_2(\widetilde{X}))\cdot \gamma^*L  \> (K_{Y}^2+\hat{c}_2(Y)) \cdot \pi^*L = \hat{c}_2(Y) \cdot \pi^*L,\]
where the last equality follows from the property (6)  in  Lemma \ref{lemma:construction-of-Y-1}.
Thus it is enough to show that $\hat{c}_2(Y) \cdot \pi^*L \> 0$.

By Lemma \ref{lemma:compare-log-c_2}, we have 
\[
\hat{c}_2(Y)  \cdot \pi^*L \> 
\hat{c}_2(\Omega_Y^{[1]}(\mathrm{log}\, B)) \cdot \pi^*L - (K_Y+B)\cdot B\cdot \pi^*L,
\]
where $B:=\pi^{-1}_*\Delta$ is defined in Lemma \ref{lemma:construction-of-Y-1}. 
From (5) of Lemma \ref{lemma:construction-of-Y-1},  we  know that if $P\subset Y$ is a $\pi$-exceptional divisor, then $\pi^*L|_P\equiv 0$, and thus 
\[
k(K_Y+B)\cdot B\cdot \pi^*L = \pi^*L \cdot B \cdot \pi^*L= \Delta \cdot L^2 = 0,
\]
where the last equality follows from the fact that $0=L^3=L^2\cdot kD$ and that $D_{\red}=\Delta$. 

If $X$ is not uniruled, then  $\hat{c}_2(\Omega_Y^{[1]}(\mathrm{log}\, B)) \cdot \pi^*L \> 0$ by Proposition \ref{prop:psef-c2}. 
If $X$ is uniruled, then we note that  $K_Y+(1-\epsilon) B$ is pseudoeffective for all $0<\epsilon\ll 1$ by Theorem \ref{thm-non-vanishing}.  
Hence by Proposition \ref{prop-log-orbifold-c2-semipositive}, we still have $\hat{c}_2(\Omega_Y^{[1]}(\mathrm{log}\, B)) \cdot \pi^*L \> 0$.  
In conclusion, we obtain that $\hat{c}_2(Y)  \cdot \pi^*L \> 0$. 

For the second part of the proposition, we assume that  there is a curve $C\subseteq X_{\sing}$ with $L\cdot C >0$. 
Then from Lemma \ref{lemma:comparison-c2-1}, we see that 
\[(K_{\widetilde{X}}^2+c_2(\widetilde{X}))\cdot \gamma^*L  > (K_{Y}^2+\hat{c}_2(Y)) \cdot \pi^*L,\]
and the RHS is nonnegative from the previous discussion. 
Hence $\chi(X, \mathscr{O}_{{X}}(n L))$ grows at least linearly in $n$. 
This completes our proof.
\end{proof}

\section{Construction of bimeromorphic models}

In this section, we will construct two bimeromorphic models, which will serve the proof of Theorem \ref{thm:lc-log-abundance}. 
The first one is an analogue of \cite[Lemma 14.2]{Kol92}.

\begin{lemma}
\label{lemma:reduction-2.1}
Let $(X,\Delta)$ be a dlt pair such that $X$ is a $\mathbb{Q}$-factorial compact K\"ahler threefold, and that $K_X+\Delta$ is nef of numerical dimension $2$. 
Assume that  $K_X+(1-\lambda)\Delta$ is pseudoeffective for some $0<\lambda< 1$. 
Then there is a bimeromorphic model $X'$ of $X$ and a reduced boundary $\Delta'$ on $X'$ such that the following properties hold.
\begin{enumerate}
 \item $(X',\D')$  satisfies the conditions of Setup \ref{set:setup-1}. 
\item $\kappa(X, K_X+\Delta) = \kappa(X', K_{X'}+\Delta').$
\item $K_{X'}+(1-\epsilon)\Delta'$ is nef for all $0<\epsilon\ll 1$.
\end{enumerate}
\end{lemma}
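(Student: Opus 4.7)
The plan is to adapt Kollár's argument from \cite[Lemma 14.2]{Kol92} to the K\"ahler setting, replacing projective MMP techniques with the K\"ahler MMP machinery of Section~\ref{section:pre}. The principal inputs are non-vanishing (Theorem~\ref{thm-non-vanishing-general-setting}), termination of MMPs for $\mathbb{Q}$-factorial dlt K\"ahler threefolds (Theorem~\ref{thm:existence-dlt-MMP}), and the special trivial MMP of Lemma~\ref{lem:special-mmp}.

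First I would apply Theorem~\ref{thm-non-vanishing-general-setting} to produce an effective $\mathbb{Q}$-divisor $D \sim_{\mathbb{Q}} K_X + \Delta$; this is the candidate for the divisor required by condition~(4) of Setup~\ref{set:setup-1}. To obtain condition~(3) of the present lemma, apply Lemma~\ref{lem:special-mmp} to the pair $(X, (1-\epsilon_0)\Delta)$ with $H = \epsilon_0 \Delta$ for some $0 < \epsilon_0 \ll \lambda$; then $K_X + (1-\epsilon_0)\Delta + H = K_X + \Delta$ is nef, and the pseudoeffectivity of $K_X + (1-\lambda)\Delta$ places us in case~(2) of that lemma. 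The resulting MMP terminates at a $\mathbb{Q}$-factorial dlt pair $(X_1, \Delta_1)$ with $K_{X_1} + (1-t)\Delta_1$ nef for every sufficiently small $t > 0$. Since each step is $(K_X + \Delta)$-trivial, $K_{X_1} + \Delta_1$ is still nef with numerical dimension two and $\kappa(X, K_X + \Delta) = \kappa(X_1, K_{X_1} + \Delta_1)$. Hence $(X_1, \Delta_1)$ already satisfies conditions~(1), (3), (5), (6) of Setup~\ref{set:setup-1}.

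It remains to enforce conditions~(2) and~(4) of Setup~\ref{set:setup-1}. For condition~(4), the boundary $\Delta_1$ must be adjusted so that $\Supp \Delta_1$ coincides with the support of some effective $D_1 \sim_{\mathbb{Q}} K_{X_1} + \Delta_1$: a component $\Gamma$ of $\Delta_1$ not contained in $\Supp \phi_*D$ can be eliminated by running an additional $(K_{X_1} + \Delta_1 - t\Gamma)$-MMP trivial with respect to $K_{X_1} + \Delta_1$, and conversely a component of $\phi_*D$ not in $\Supp \Delta_1$ can be absorbed into the boundary, since it is disjoint from the existing components in codimension one and so lc-ness is preserved. For condition~(2), take a $\mathbb{Q}$-factorial partial terminalization of the resulting model that extracts exactly those exceptional divisors of non-positive discrepancy whose centers do not meet the boundary; such a terminalization is produced by a relative dlt MMP afforded by Theorem~\ref{thm:existence-dlt-MMP}. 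All of these adjustments are $(K + \Delta)$-trivial, so both the numerical and Iitaka dimensions are preserved.

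I expect the main obstacle to be the support-adjustment for condition~(4). In the projective case one can exploit linear systems of ample divisors and effective non-vanishing results; in the K\"ahler setting each support modification has to be carried out through a carefully chosen $(K + \Delta)$-trivial MMP, and one must simultaneously track the $\mathbb{Q}$-factoriality and dlt property, the nefness of $K + (1-\epsilon)\Delta$ established previously, and the Iitaka dimension. The termination of the auxiliary MMPs is guaranteed by Theorem~\ref{thm:existence-dlt-MMP}, but the bookkeeping needed to verify all six conditions of Setup~\ref{set:setup-1} in the final model is the most delicate part of the argument.
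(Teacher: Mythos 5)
The central gap is condition (4) of Setup \ref{set:setup-1} together with the requirement that $\Delta'$ be \emph{reduced}. Your plan treats (4) as a support-matching problem between $\operatorname{Supp}\Delta_1$ and $\operatorname{Supp} D$, but never explains how a fractional dlt boundary becomes a reduced one while keeping $K_{X'}+\Delta'$ nef of numerical dimension $2$ and $\mathbb{Q}$-linearly equivalent to an effective $D$ with $D_{\red}=\Delta'$; simply rounding coefficients up, or ``absorbing'' a component of $D$ into the boundary, changes the class $K+\Delta$ and so destroys nefness, $\nu=2$, and the linear equivalence all at once. Likewise, running a $(K_{X_1}+\Delta_1-t\Gamma)$-MMP trivial with respect to $K_{X_1}+\Delta_1$ does not by itself contract the chosen component $\Gamma$ (in the paper a related elimination, Step 2 of Lemma \ref{lemma:main-thm-1}, needs a nefness-restriction plus connectedness argument to force this). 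The paper's mechanism is different and is the key idea you are missing: after the special MMP one perturbs the boundary to a \emph{klt} divisor of the form $(1-\mu)\Delta_1+\eta D_1$ whose support equals $\operatorname{Supp} D_1$, passes to a log resolution $r\colon Z\to X$, and takes $\Gamma$ to be the reduced sum of \emph{all} $r$-exceptional divisors and the strict transforms of the boundary components; then $K_Z+\Gamma\sim_{\mathbb{Q}} r^*D+E$ with $E\geq 0$ and $(r^*D+E)_{\red}=\Gamma$, which produces the reduced boundary, condition (4), and terminality outside the boundary (condition (2)) in one stroke, after which a $(K_Z+\Gamma)$-MMP gives a nef minimal model.

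There is also an ordering problem. You secure conclusion (3) (nefness of $K+(1-\epsilon)\Delta$) first and then propose surgeries (support adjustment, a partial terminalization extracting divisors of non-positive discrepancy) for conditions (2) and (4); but such an extraction introduces exceptional boundary corrections with fractional coefficients (ruining reducedness and (4)) and there is no reason the pulled-back perturbed class stays nef, so conclusion (3) would have to be re-established, sending you in a circle. The paper runs the special MMP of Lemma \ref{lem:special-mmp} \emph{last}, and Lemmas \ref{lemma:contraction-dlt} and \ref{lem:flip-invariance} are proved precisely to show that the Setup \ref{set:setup-1} conditions — in particular (2) and (6) — survive this final $K$-negative, $(K+\Delta)$-trivial MMP. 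Your proposal does not invoke these lemmas and implicitly assumes dlt-ness (hence (6)) persists after the special MMP, which is not justified: $(K+\Delta)$-trivial steps can destroy dlt-ness, which is exactly why Setup \ref{set:setup-1} only demands dlt near curves positive against $K_X+\Delta$ and why those two lemmas are needed.
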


\begin{proof} 
By taking a dlt modification as in \cite[Corollary 1.30]{DasOu2022}, 
we may first assume that $X$ has terminal singularities. 
By running a $(K_X+(1-\lambda)\Delta)$-MMP, trivial with respect to $K_X+\Delta$, we obtain a lc pair $(X_1,\D_1)$ such that $\D_1$ is the strict transform of $\D$ and that  $K_{X_1}+(1-\epsilon)\Delta_1$ is nef for all $0<\epsilon \ll 1$ (see Lemma  \ref{lem:special-mmp}). 
Observe that every step of this MMP is in fact $K_X$-negative, and hence  $X_1$ has $\mbQ$-factorial terminal singularities. Moreover, $(X_1,\Delta_1)$ is lc, and thus $(X_1, t\Delta_1)$ is klt for any $0\<t<1$. 
We claim that, for all rational number $0<\epsilon\ll 1$, the following equalities hold:
\begin{eqnarray*}
   \nu(X_1, K_{X_1}+(1-\epsilon)\Delta_1) &=& \nu(X_1, K_{X_1}+\Delta_1)\ = \ \nu(X, K_X+\Delta) \ =\ 2, \\
  \kappa (X_1, K_{X_1}+(1-\epsilon)\Delta_1) &=& \kappa (X_1, K_{X_1}+\Delta_1) \  = \  \kappa(X, K_X+\Delta). 
\end{eqnarray*}
To see this, fix $0<\eps\ll 1$ so that $K_{X_1}+(1-2\eps)\Delta_1$ is nef. 
Then by Theorem \ref{thm-non-vanishing-general-setting}, 
there is an effective $\mbQ$-divisor $D_1\sim_{\mbQ} K_{X_1}+(1-2\eps)\Delta_1$. 
Then clearly $D_1 +\eps\Delta_1\sim_{\mbQ} K_{X_1}+(1-\eps)\Delta_1$ and $D_1+2\eps \Delta_1\sim_{\mbQ} K_{X_1}+\Delta_1$. 
Since $\Supp\, (D_1+\eps\Delta_1) = \Supp\, (D_1+2\eps \Delta_1)$, our claim follows from \cite[Lemma 11.3.3]{Kol92} (by using K\"ahler classes in place of ample divisors).\\

Now we fix a rational number $0<\mu\ll 1$ so that $K_{X_1}+(1-2\mu)\Delta_1$ is nef, and hence $K_{X_1}+(1-\mu)\Delta_1$ is nef. 
By Theorem \ref{thm-non-vanishing-general-setting}, there is an effective $\mbQ$-divisor $D_1\sim_{\mbQ} K_{X_1}+(1-2\mu)\Delta_1$. 
Then $(X_1, (1-\mu)\Delta_1+\eta D_1)$ is klt for some rational number $0<\eta\ll 1$. 
Moreover, 
\[(1+\eta)D_1+\mu\Delta_1\sim_{\mbQ} K_{X_1}+(1-\mu)\Delta_1+\eta D_1\] 
 and $D_1+2\mu\Delta_1\sim_{\mbQ} K_{X_1}+\Delta_1$. 
Thus by a similar argument as above, we see that 
\[\nu(X, K_X+\Delta)=\nu(X_1, K_{X_1}+(1-\mu)\Delta_1)=\nu(X_1, K_{X_1}+(1-\mu)\Delta_1+\eta D_1), \] 
\[\kappa(X, K_X+\Delta)=\kappa(X_1, K_{X_1}+(1-\mu)\Delta_1)=\kappa(X_1, K_{X_1}+(1-\mu)\Delta_1+\eta D_1).\]
Hence replacing $(X, \Delta)$ by $(X_1, (1-\mu)\Delta_1+\eta D_1)$,  we may assume that  $(X, \Delta)$ is a klt pair and there is an effective nef $\mbQ$-Cartier divisor $D\sim_{\mbQ} K_X+\Delta$ such that $D_{\red}=\Delta_{\red}$. 
\\

Let $r\colon Z\to X$ be a log resolution of $(X,\Delta)$. 
Let $\Gamma$ be the reduced sum of all $r$-exceptional divisors and the strict transform of the components of $\Delta$. 
Then $E = K_Z+\Gamma - r^*(K_X+\Delta) \> 0$, and its support contains the $r$-exceptional locus. 
Hence  $K_Z+\Gamma \sim_\mathbb{Q} D_Z := r^*D + E$, with $(D_Z)_{\red} = \Gamma$.

We run a $(K_Z+\Gamma)$-MMP and end with a minimal model $(Z, \Gamma)\to (Z', \Gamma')$ such that $(Z',\Gamma')$ is dlt, $K_{Z'}+\Gamma'\sim_{\mbQ} D_{Z'}$ is nef,    $D_{Z'}$ is the strict transform of $D_Z$,   
and $(D_{Z'})_{\red} = \Gamma'$. 
Furthermore, we see that   $\kappa(X, K_X+\Delta) = \kappa(Z', K_{Z'}+\Gamma')$, and that $\nu(X, K_X+\Delta) = \nu(Z', K_{Z'}+\Gamma')$. 
Indeed,  pulling back the   divisors $D_{Z'}$ and $D$ to a common resolution of $X\bir Z'$, 
we see that the two effective nef divisors have the same support, 
and thus the arguments of \cite[Lemma 11.3.3]{Kol92} apply.  
Therefore, $(Z',\Gamma')$ satisfies the condition of Setup \ref{set:setup-1}. 
\\

Now we run a $(K_{Z'}+(1-\lambda)\Gamma')$-MMP, trivial with respect to $K_{Z'}+\Gamma'$ (see Lemma  \ref{lem:special-mmp}) and end with a bimeromorphic map $Z'\bir X'$ such that $K_{X'}+(1-\epsilon)\Delta'$ is nef for any $\epsilon>0$ small enough,  where $\Delta'$ is the pushforward of $\Gamma'$ onto $X'$. 
Moreover, the numerical dimension and the Kodaira dimension of $K_{X'}+\D'$ are the same as those of $K_X+\D$. 
We remark that this  MMP is also a $K_{Z'}$-MMP, trivial with respect to $K_{Z'}+\Gamma'$.
Thanks to Lemma \ref{lemma:contraction-dlt} and Lemma \ref{lem:flip-invariance}, we deduce that $(X',\D')$ also satisfies the conditions of  Setup \ref{set:setup-1}. 
This completes our proof of the lemma. 
\end{proof}

\begin{lemma}
\label{lemma:reduction-2.2}
Let $(X,\Delta)$ be a lc pair such that $X$ is a $\mathbb{Q}$-factorial non algebric compact K\"ahler threefold, and that $K_X+\Delta$ is nef. 
Assume that  $K_X+(1-\lambda)\Delta$ is not pseudoeffective for any $\lambda>0$.
Then there is a bimeromorphic model $g\colon X \dashrightarrow X'$     such that the following properties hold. 
\begin{enumerate}
    \item $(X',  \Delta')$ is a lc  pair for some divisor $\Delta'$. 
    \item $K_{X'} + \Delta'$  is nef.     
    \item $\nu(X, K_X+\Delta) = \nu(X', K_{X'}+\Delta').$
    \item $\kappa(X, K_X+\Delta) = \kappa(X', K_{X'}+\Delta').$
    \item There is a Mori fiber space $f\colon X'\to Y$ over a compact K\"ahler surface $Y$ 
          such that $K_{X'}+\Delta'\sim_{f,\mbQ} 0$. 
    \item  $\lfloor  \Delta'_{\ver} \rfloor = 0$, where $\Delta'_{\ver}$ is the vertical part of $\Delta'$ over $Y$. 
    \item  If  $\lfloor  \Delta'_{\hor} \rfloor = 0$, then $(X', \D')$ is klt, where $\Delta'_{\hor}$ is the horizontal part of $\Delta'$ over $Y$. 
\end{enumerate} 
\end{lemma}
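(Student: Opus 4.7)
The plan is to mirror Lemma \ref{lemma:reduction-2.1} but to land in case (3) of Lemma \ref{lem:special-mmp} rather than case (2). First, I would reduce to the dlt case by taking a $\mathbb{Q}$-factorial dlt modification $\pi\colon \widetilde X \to X$ (via \cite[Corollary 5.3]{DasOu2022}) with $\widetilde\Delta = \pi^{-1}_*\Delta + \Ex(\pi)$, so that $K_{\widetilde X} + \widetilde\Delta = \pi^*(K_X+\Delta)$ remains nef. The non-pseudoeffectivity transfers because $\pi_*(K_{\widetilde X} + (1-\lambda)\widetilde\Delta) = K_X + (1-\lambda)\Delta$ and pushforward preserves pseudoeffectivity. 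Fixing a small rational $\lambda > 0$, I would then apply Lemma \ref{lem:special-mmp} with $H = \lambda\Delta$, identifying the ``$K + \Delta + H$'' of that lemma with the nef class $K_X + \Delta$ and its ``$K + \Delta$'' with $K_X + (1-\lambda)\Delta$, which fails to be pseudoeffective under any further perturbation. This forces case (3), producing a $(K_X + (1-\lambda)\Delta)$-MMP trivial with respect to $K_X + \Delta$ and terminating in a Mori fibration $f\colon X' \to Y$ with $K_{X'}+\Delta'$ $f$-numerically trivial.

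Since each step of the MMP is $(K_X+\Delta)$-trivial, the nefness, numerical dimension, and Kodaira dimension of $K_X + \Delta$ are preserved, establishing (1)--(4). For (5), the relative Picard rank one of the Mori contraction upgrades $\equiv_f 0$ to $\sim_{f,\mathbb{Q}} 0$, giving $K_{X'}+\Delta' \sim_{\mathbb{Q}} f^*A$ for some $\mathbb{Q}$-Cartier $A$ on $Y$. To pin down $\dim Y = 2$, I would rule out $\dim Y = 0$ (else $X'$ is Fano, hence projective, contradicting the bimeromorphic invariance of non-algebraicity) and $\dim Y = 1$ (else the fibers of $f$ are rationally connected Fano surfaces, forcing the MRC fibration of $X'$ to factor through $f$ with one-dimensional base, contradicting \cite[Lemma 3.10]{DasOu2022} after a suitable klt modification).

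Conditions (6) and (7) are the delicate point. Note that (7) is immediate once (6) is granted, since a dlt pair with $\lfloor\Delta'\rfloor = 0$ is automatically klt. The substantive obstacle is (6): excluding components $V$ of $\lfloor\Delta'\rfloor$ that are vertical over $Y$. The positivity $\Delta' \cdot R > 0$ on the Mori extremal ray $R$ (deduced from $\lambda\Delta' \cdot R > 0$), combined with $V \cdot R = 0$ for any vertical $V$, only shows that $\Delta'_{\hor}$ is nonzero and does not by itself prevent vertical coefficient-one components. To eliminate such components I would run an auxiliary relative MMP over $Y$, for instance a $(K_{X'} + \Delta' - \eta \lfloor \Delta'_{\ver}\rfloor)$-MMP for small rational $\eta > 0$; vertical components with coefficient one should then either be contracted divisorially or be pushed into non-vertical position by a flip, while the $(K_{X'}+\Delta')$-trivial nature of the contractions preserves all previously established properties. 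Verifying that this secondary MMP terminates at a model satisfying (1)--(7) simultaneously, and that the Mori fibration structure survives it, is the principal technical hurdle.
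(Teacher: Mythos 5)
Your reduction to the dlt case and your single application of Lemma \ref{lem:special-mmp} (landing in case (3)) do give properties (1)--(5), and your argument that $\dim Y=2$ is essentially the paper's (non-algebraicity forces the MRC base to be a surface). But there is a genuine gap exactly where you locate the difficulty, namely (6), and the repair you propose cannot work. Once you are on the Mori fibre space $f\colon X'\to Y$, the relative Picard number is one: the cone $\NE(X'/Y)$ is spanned by the class $R$ of a general fibre, and any vertical divisor satisfies $\lfloor\Delta'_{\ver}\rfloor\cdot R=0$. Hence $K_{X'}+\Delta'-\eta\lfloor\Delta'_{\ver}\rfloor$ is already $f$-nef, your auxiliary relative MMP over $Y$ contracts nothing, and in any case a flip only replaces a divisor by its strict transform, which stays vertical, so coefficient-one vertical components can neither be contracted nor be ``pushed into non-vertical position''. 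Since you take $\Delta'$ to be the strict transform of the original $\Delta$, property (6) is simply false in general for your model: the rays contracted by your MMP only satisfy $\Delta\cdot R_i>0$, which a coefficient-one component of $\Delta$ vertical over the MRC base can easily avoid, so such a component survives untouched.

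What you are missing is that the boundary is allowed to change, and the paper changes it \emph{before} constructing the fibration. Writing $\Delta=\Delta_{\hor}+\Delta_{\ver}$ with respect to the MRC base, $K_X+\Delta_{\hor}$ is pseudoeffective by Theorem \ref{thm-non-vanishing}, so a $(K_X+\Delta_{\hor})$-MMP trivial with respect to $K_X+\Delta$ falls into case (2) of Lemma \ref{lem:special-mmp} and makes $K_{X_1}+(\Delta_1)_{\hor}+(1-\epsilon)(\Delta_1)_{\ver}$ nef for small $\epsilon$; one then replaces the boundary by $(\Delta_1)_{\hor}+(1-\epsilon)(\Delta_1)_{\ver}$, so that all vertical coefficients are $<1$. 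The invariance of $\nu$ and $\kappa$ under this replacement is not formal: it uses Theorem \ref{thm-non-vanishing-general-setting} to produce effective representatives and the support-comparison argument of \cite[Lemma 11.3.3]{Kol92}, exactly as in Lemma \ref{lemma:reduction-2.1}. Only afterwards does one run the $K_X$-MMP trivial with respect to $K_X+\Delta$ to reach the Mori fibre space; since strict transforms preserve coefficients and horizontality is a bimeromorphic invariant of the MRC fibration, (6) then holds automatically. This also repairs your claim about (7): after a $(K_X+\Delta)$-trivial MMP the pair $(X',\Delta')$ is only lc, not dlt, so ``$\lfloor\Delta'\rfloor=0$ implies klt'' is not available downstream; the paper instead observes that $\lfloor\Delta\rfloor=0$ already upstream, where the pair is dlt and hence klt, and klt-ness descends because the map is $(K_X+\Delta)$-crepant.
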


\begin{proof} 
By taking a dlt modification as in \cite[Theorem 1.29]{DasOu2022}, we may assume that $(X,\D)$ is dlt. 
The assumption implies that  $X$ is uniruled. 
Since $X$ is not algebraic, the base  $Z$ of the MRC fibration $X\bir Z$ has dimension two.
Let $\Delta_{\hor}$ be the horizontal part of $\Delta$ over $Z$. 
Then $K_X+\Delta_{\hor}$ is pseudoeffective by Theorem \ref{thm-non-vanishing}. 
Let $\Delta_{\ver}:=\Delta - \Delta_{\hor}$ be the vertical part of $\Delta$ over $Z$.
We may run a $(K_X+\Delta_{\hor})$-MMP, trivial with respect to $K_X+\Delta$, as in Lemma \ref{lem:special-mmp}. 
Then we obtain a  bimeromorphic model $(X_1,\Delta_1)$ such that $\D_1$ is the strict transform of $\D$ and that
$K_{X_1}+ (\Delta_1)_{\hor} + (1-\epsilon)(\Delta_1)_{\ver}$ is nef for all $\epsilon>0$ small enough. 
As in the first paragraph of the proof of Lemma \ref{lemma:reduction-2.1}, 
by the same argument of \cite[Lemma 11.3.3]{Kol92}, we obtain that  for all $\epsilon >0$ small enough, 
\[
\nu(X_1, K_{X_1}+ (\Delta_1)_{\hor} + (1-\epsilon)(\Delta_1)_{\ver}) = \nu(X_1, K_{X_1}+\Delta_1) = \nu(X, K_X+\Delta),
\]
\[\kappa(X_1, K_{X_1}+ (\Delta_1)_{\hor} + (1-\epsilon)(\Delta_1)_{\ver}) = \kappa(X_1, K_{X_1}+\Delta_1) = \kappa(X, K_X+\Delta).\]
We remark that $(X_1, (\Delta_1)_{\hor})$ is dlt and $(X_1,  \Delta_1)$  is lc; in particular, for some rational number $0<\epsilon\ll 1$,  $({X_1}, (\Delta_1)_{\hor} + (1-\epsilon)(\Delta_1)_{\ver})$ is dlt.
Replacing $(X,\Delta)$ by $(X_1, (\Delta_1)_{\hor} + (1-\epsilon)(\Delta_1)_{\ver})$, we may assume that 
$\lfloor \Delta_{\ver} \rfloor = 0.$   
This will imply the property (6). 
\\

We run a $K_X$-MMP, trivial with respect to $K_X+\D$, as in Lemma \ref{lem:special-mmp}. 
Since $K_X+(1-\lambda)\D$ is not pseudoeffective for any $\lambda >0$, this MMP  terminates with  a Mori fiber space $X'\to Y$, where $Y$ is a compact K\"ahler surface. 
Moreover, if  $\D'$ is the strict transform of $\D$, then    $K_{X'}+\Delta'\sim_{f,\mbQ} 0$. 
Since $ X \dashrightarrow X'$ is  $(K_{{X}}+{\D})$-trivial, we obtain that $(X', \D')$ is lc. 
Furthermore,   if $\lfloor \Delta'_{\hor} \rfloor =0$, then $ \lfloor \Delta  \rfloor =0$ as  $\lfloor \Delta_{\ver} \rfloor = 0.$  
Therefore $(X,\D)$ is klt in this case, and so is $(X',\D')$. 
This proves the property (7), and completes the proof of the lemma. 
\end{proof}

\section{Proof of the main theorem}
In this section we will prove the results stated in the introduction.  
We will divide the proof Theorem \ref{thm:lc-log-abundance} into two cases,  depending on whether $K_X+(1-\lambda)\Delta$ is pseudo-effective for some $\lambda>0$ or not.
In the first case we adapt the methods of \cite[Chapter 14]{Kol92} and \cite[Section 8.B]{CHP16}. 

\begin{lemma}
    \label{lemma:main-thm-1}
 Theorem \ref{thm:lc-log-abundance} holds if  $K_X+(1-\lambda)\Delta$ is pseudo-effective for some $0<\lambda \< 1$. 
\end{lemma}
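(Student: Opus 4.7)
The plan is to adapt the strategy of \cite[Chapter 14]{Kol92} and \cite[Section 8.B]{CHP16}, with Proposition \ref{prop:Euler-characteristic-combined-1} as the key new input. First I would invoke Lemma \ref{lemma:reduction-2.1} to replace $(X,\Delta)$ by a bimeromorphic model $(X',\Delta')$ satisfying the conditions of Setup \ref{set:setup-1}, with $K_{X'}+(1-\epsilon)\Delta'$ nef for all $0<\epsilon\ll 1$, and preserving the Iitaka and numerical dimensions of $K_X+\Delta$. Because semi-ampleness of a nef divisor of maximal Iitaka = numerical dimension is compatible with such birational modifications, it suffices to prove the conclusion on $(X',\Delta')$; rename $(X,\Delta)$.

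Set $L=k(K_X+\Delta)$. By Lemma \ref{lem:nu-2-computation-1} we have $L^3=0$ and $K_X\cdot L^2=0$, so the Riemann--Roch polynomial $\chi(X,\mathcal{O}_X(nL))$ reduces to a linear function of $n$, whose linear coefficient is $\tfrac{1}{12}(K_X^2+c_2(X))\cdot L$ after passing to a desingularization as in Section \ref{section:Euler-char}. Proposition \ref{prop:Euler-characteristic-combined-1} gives a uniform lower bound $\chi(X,\mathcal{O}_X(nL))\geq \lambda$, strict linear growth whenever $X_{\sing}$ contains a curve $C$ with $C\cdot L>0$. Combining this with a Kawamata--Viehweg-type vanishing for nef classes on compact K\"ahler dlt threefolds (controlling $h^i(X,\mathcal{O}_X(nL))$ for $i\geq 1$ using $\nu(L)=2$ and the dlt structure), one deduces $h^0(X,\mathcal{O}_X(nL))\geq 1$ for some sufficiently divisible $n$, and in the singular-curve case $h^0(X,\mathcal{O}_X(nL))\to \infty$.

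Once $\kappa(X,K_X+\Delta)\geq 1$, I would study the Iitaka fibration $\varphi\colon X\dashrightarrow Y$ of $L$. Since $\nu(L)=2$, comparing $L^2\cdot H$ against a general horizontal ample-type class $H$ forces $\dim Y=2$, hence $\kappa(K_X+\Delta)=\nu(K_X+\Delta)=2$. The divisor $L$ is numerically trivial on general fibers of $\varphi$, and on a suitable bimeromorphic resolution of $\varphi$ it is numerically the pullback of a nef class from a compact K\"ahler surface. Applying log abundance for K\"ahler surfaces (\cite[Theorem 6.3]{DasOu2022}) to produce a semi-ample model on the base, then descending via a base-point-free type argument on $X$ using that $L\cdot F=0$ for every $\varphi$-vertical curve $F$, yields semi-ampleness of $L$, and hence of $K_X+\Delta$.

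The main obstacle is the cohomological step: turning the Euler-characteristic lower bound into actual existence, and ideally growth, of sections. In the projective case this is routine via Kawamata--Viehweg vanishing; here $L$ is nef of numerical dimension only $2$ on a threefold, hence not big, so classical vanishing fails and one must use finer K\"ahler vanishing, leveraging the generic nefness of $\Omega_X^{[1]}(\log\,B)$ from Section \ref{section:log-cotan}. A secondary technical point is verifying that the Iitaka fibration factors through a genuine compact K\"ahler surface so that the two-dimensional log abundance theorem is applicable; this requires a careful use of the K\"ahler dlt MMP afforded by Theorem \ref{thm:existence-dlt-MMP}, together with the structure of singular curves described in Lemma \ref{lemma:singularities-are-cyclic-1}.
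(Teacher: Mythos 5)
Your opening move (Lemma \ref{lemma:reduction-2.1} to reach Setup \ref{set:setup-1} with $K_X+(1-\epsilon)\Delta$ nef, then Lemma \ref{lem:nu-2-computation-1} and Proposition \ref{prop:Euler-characteristic-combined-1} for the Euler characteristic bound) matches the paper. But the step that turns the bound on $\chi(X,\mathcal{O}_X(nL))$ into sections is a genuine gap, and you have located it yourself: there is no Kawamata--Viehweg-type vanishing available here, since $L$ is nef of numerical dimension $2$, not big, and $nL$ is not of adjoint type. More importantly, vanishing of $h^1$ is not even the right thing to aim for: since $h^0(X,\mathcal{O}_X(nL)) \geq \chi(X,\mathcal{O}_X(nL)) + h^1(X,\mathcal{O}_X(nL)) - h^2(X,\mathcal{O}_X(nL))$, and the lower bound $\lambda$ on $\chi$ need not be positive, killing $h^1$ would leave you with nothing. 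The paper's mechanism is the opposite: one shows $h^2(X,\mathcal{O}_X(nL))$ and $h^2(X,\mathcal{O}_X(nL-\Delta))$ are eventually \emph{constant} (\cite[Corollary 8.1]{CHP16}, using that the boundary is slc, hence Cohen--Macaulay) and that $h^1(X,\mathcal{O}_X(nL))$ \emph{grows at least linearly}. The linear growth of $h^1$ comes from the boundary: after an additional reduction (your sketch omits it) in which one contracts, by a carefully chosen $(K_X+\Delta-\lambda\Delta''-\mu\Delta')$-MMP trivial on $K_X+\Delta$, every component $\Delta'$ of $\Delta$ with $L|_{\Delta'}\equiv 0$, one may assume $L|_{\Delta_i}\not\equiv 0$ for all components. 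Then on $\Delta$ one has $h^2(\Delta,\mathcal{O}_\Delta(nL|_\Delta))=0$ for $n\gg 0$, $\chi(\Delta,\mathcal{O}_\Delta(nL|_\Delta))$ constant (Riemann--Roch for slc surfaces plus $L^2\cdot\Delta=0$ and $L\cdot C=0$ for curves in $X_{\sing}$, which is exactly what the constancy of $\chi(X,\mathcal{O}_X(nL))$ forces via the ``furthermore'' of Proposition \ref{prop:Euler-characteristic-combined-1}), and $L|_\Delta$ semi-ample and nontrivial by surface abundance, so $h^0(\Delta,\mathcal{O}_\Delta(nL|_\Delta))$, hence $h^1(\Delta,\mathcal{O}_\Delta(nL|_\Delta))$, grows linearly; the ideal sheaf sequence of $\Delta$ then transfers this growth to $h^1(X,\mathcal{O}_X(nL))$. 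Without this boundary-restriction argument your proof does not close, and the elimination step for $L$-trivial components of $\Delta$ is essential for it.

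A secondary point: your final paragraph reconstructs an Iitaka fibration analysis to pass from $\kappa\geq 1$ to semi-ampleness, but this is not needed; the paper simply invokes \cite[Theorem 7.2]{DasOu2022}, which says that for these pairs it suffices to prove $\kappa(K_X+\Delta)\geq 1$. Your sketch of descending $L$ to a surface base is both unproven and redundant given that reference.
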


\begin{proof} 
Passing to a dlt model as in \cite[Theorem 1.29]{DasOu2022} we may assume that $(X,\Delta)$ is a $\mbQ$-factorial dlt pair. 
By \cite[Theorem 1.35]{DasOu2022}, it is enough to show that $\kappa(K_X+\Delta)\> 1.$  
The proof is split into several steps. 
In the first two  we will replace $(X,\D)$ by some bimeromorphic model while keeping $\kappa(X,K_X+\Delta)$ and 
$\nu(X,K_X+\Delta)$ invariant.\\

\noindent
\textit{Step 1.} 
Replacing $(X,\Delta)$ with the bimeromorphic model obtained in Lemma \ref{lemma:reduction-2.1}, we may assume that the following hold:
\begin{enumerate}
\item $X$ is $\mathbb{Q}$-factorial, 
\item $(X,\Delta)$ is a reduced lc  pair and $X$ has terminal singularities outside  $\Supp\Delta$,  
\item $L = k(K_X+\Delta)$ is a nef Cartier divisor for some integer $k>0$,   
      and  $L \sim D \> 0$ with $D_{\red} = \Delta$.
\item $\nu(X, K_X+\Delta) = 2$,
\item $K_X+(1-\eps)\Delta$ is nef for all $0<\eps\ll 1$, 
\end{enumerate} 
From Lemma \ref{lem:nu-2-computation-1} and Proposition \ref{prop:Euler-characteristic-combined-1}, we obtain the following properties:
\begin{enumerate}
    \item[(6)] $(K_X+\Delta)^2 \cdot \Delta= L^2 \cdot \Delta = 0$.
    \item[(7)] there is a constant $\eta\in\mbQ$ such that
\[
\chi(X,\mcO_X({nL})) \> \eta 
\] 
for all $n\> 0$.
Furthermore, if there is a curve $C\subseteq X_{\sing}$ such that $L\cdot C >0$, then $\chi(X, \mathscr{O}_{{X}}(n L) ) $ grows at least linearly in $n$. 
\end{enumerate}~\\

\noindent
\textit{Step 2.} 
In this step, we will eliminate the components of $\Delta$  on which the restrictions of $L$ are numerically trivial.
Suppose that $\Delta'$ is one such irreducible component of $\Delta$, i.e. $L|_{\Delta'}\equiv 0$. 
We set $\Delta'':=\Delta-\Delta'$. 
Fix some rational number $0<\lambda \ll 1$ such that $K_X+(1-\lambda)\D$ is nef. 
Let $a$ be the coefficient of $\D'$ in $D$.
Then there is some rational number $0<\mu \ll \lambda$ such that  
\begin{equation}\label{eqn:theta-perturbation} 
\Theta := \lambda \Delta'' + \mu\D' - \frac{\mu}{a} D
\end{equation} 
is effective with support equal to $\D''$. 

Since $(X, \D - \lambda \D'' - \mu \D')$ is klt, we can run a $(K_X+\Delta - \lambda \D'' - \mu \D')$-MMP, trivial with respect to $K_X+\Delta$  (see Lemma  \ref{lem:special-mmp}).  
Then we  obtain a bimeromorphic model  $f\colon  X\dashrightarrow Y$. 
Let $\Gamma$, $\Gamma'$ and $\Gamma''$ be the pushforward of $\Delta$, $\Delta'$ and $\Delta''$ onto $Y$, respectively. 
Then $K_Y+\Gamma - \epsilon (\lambda\Gamma''+\mu\Gamma')$ is nef for all $0<\eps\ll 1$, $K_Y+\Gamma$ is nef of numerical dimension $2$  and  $(K_Y+\Gamma)|_{\Gamma'}\num 0$. We also note that  $k(K_Y+\Gamma) \sim f_*D$ and $(f_*D)_{\red} = \Gamma$. Thus we have
\[
(- \lambda\Gamma'' - \mu\Gamma')|_{\Gamma'}\num \frac{1}{\eps}(K_Y+\Gamma-\eps(\lambda\Gamma''+\mu\Gamma'))|_{\Gamma'}
\]
which is nef. 
However, we see from \eqref{eqn:theta-perturbation} that 
\[
- \lambda\Gamma'' - \mu\Gamma' = - f_*\Theta - \frac{\mu}{a} f_*D.
\]
Thus $(- \lambda\Gamma'' - \mu\Gamma')|_{\Gamma'} \equiv - (f_*\Theta)|_{\Gamma'}$, as $(f_*D)|_{\Gamma'} \equiv 0$ by our construction above.  The only way the effective divisor $f_*\Theta|_{\Gamma'}$ can be anti-nef is that $f_*\Theta|_{\Gamma'}=0$. 
In particular, $\Supp(f_*\Theta)\cap\Gamma'=\emptyset$, as $f_*\Theta$ is a $\mbQ$-Cartier divisor on $Y$. Since $\Supp(f_*\Theta)=\Gamma''$ by our construction, we have $\Gamma''\cap\Gamma'= \emptyset$. 

On the other hand, since $(Y,\Gamma - \lambda\Gamma'-\mu\Gamma'')$ is klt, $Y$ has rational singularities. 
Since $f_*D$ is nef of numerical dimension $2$, from \cite[Lemma 6.7]{CHP16} it follows that $\Supp(f_*D)$ is connected. 
Since $(f_*D)_{\red}=\Gamma$ by our construction, we deduce  that
  \[\Gamma' = 0  \mbox{ and }  \Gamma = \Gamma''.\]

We  remark that   $(Y,\Gamma)$ 
satisfies the conditions (1)-(6) of Step 1.  
It also satisfies the first part of the condition (7), as 
\[
\chi(X,\mcO_X({nk(K_X+\D)})) = \chi(Y,\mcO_Y({nk(K_Y+\Gamma)}))\mbox{ for all } n\>1.  
\]
We claim that the only divisor in $X$ contracted by $f$ is $\D'$. 
Indeed, assume that $E$ is a prime divisor contracted by $f$.  
Let $W$ be a resolution of singularities of the graph of $f:X\bir Y$, 
and let $p\colon W\to X$ and $q\colon W\to Y$ be the projection morphisms. 
Then we have   
\[
p^*(K_X+(1-\lambda)\D) = q^*(K_Y+(1-\lambda)\Gamma) - E_1
\]
and 
\[
p^*(K_X+\Delta - \lambda \D'' - \mu \D') = q^*(K_Y+\Gamma - \lambda \Gamma'') + E_2,
\]
where $E_1$ and $E_2$ are $q$-exceptional divisors. 
From \cite[Lemma 3.38]{KM98}, whose proof still works in the setting of complex  analytic varieties, 
we see that $E_2$ is effective and $\Supp(p_*E_2)$ contains all the divisors contracted by $f:X\bir Y$.    
In particular, $E\subset \Supp(p_*E_2)$. 
Moreover, since $K_X+(1-\lambda)\D$ is nef, from the negativity lemma (see \cite[Lemma 3.39]{KM98}), 
it follows that $E_1$ is also effective. 
Subtracting the above two equations, we get 
\[
(\lambda-\mu)p^*\D'  = E_1 + E_2.  
\]
By applying $p_*$ to the above equation and by using the fact that $\Delta'$ is a prime divisor, we deduce that $E=\D'$. 
This proves our claim. 

Now, since $L|_{\D'} \equiv 0$, it follows  that $(K_Y+\Gamma)|_{q(p^{-1}_*{\D'})} \equiv 0$.  
Combining this with the argument of Lemma \ref{lem:flip-invariance}, we see that, 
if $C_Y \subseteq Y_{\sing}$ is a curve with $(K_Y+\Gamma)\cdot C_Y>0$, then $f$ is an isomorphism near the general points of $C_Y$. 
Hence, the second part of the condition (7) also holds for $(Y,\Gamma)$. 

Repeating this procedure for every component  of $\Delta$  on which the restriction  of $L$ is numerically trivial,  
we may assume that $X$ satisfies  all the conditions of Step 1, as well as  the following one:  
\begin{enumerate}
    \item[(8)] $L|_{\Delta_i} \not\equiv 0$ for any irreducible component $\Delta_i$ of $\Delta$.
\end{enumerate}
In the following steps,  we will argue as in Step 5 of the proof of \cite[Theorem 8.2]{CHP16}. \\

\noindent
\textit{Step 3.}  
In this step, we reduce the problem to the case when $\chi(X,\mcO_X(nL))$ is a constant, independent of $n$. 
We  remark that  the conditions (1) and (2) imply that $X$ has klt singularities. 
Let $S\subset X$ be a component of $\Delta$.  
Then $S$ is Cohen-Macaulay by \cite[Corollary 5.25]{KM98}. 
We note that in \cite[Corollary 5.25]{KM98}, $X$ is assumed to be quasi-projective. 
Nevertheless, the proof still works in our setting. 
Indeed, being Cohen-Macaulay is a local property. 
Moreover,   the only step in \cite[Corollary 5.25]{KM98} which uses the projectivity assumption is to construct a ramified cyclic cover with a branched locus in general position. 
Such a construction is still valid locally.   

Thus \cite[Lemma 8.1]{CHP16} is applicable here and we have  
\begin{equation}  \label{equation:main-thm-1-1}
    h^2(\Delta,\mcO_{\Delta}(nL|_{\Delta})) = 0
\end{equation} 
for all $n\gg 0$.
Note that, by abuse of notation, here we  also use $\D$ to mean the reduced analytic space structure of the support of $\D$.
By \cite[Corollary 8.1]{CHP16}, there are two constants $C_1$ and $C_2$ such that, for all $n \gg 0,$
we have 
\begin{equation} \label{equation:main-thm-1-2}
    h^2(X,\mcO_{X}(nL )) = C_1,
\end{equation}
\begin{equation}  \label{equation:main-thm-1-3}
    h^2(X,\mcO_{X}(nL - \Delta)) = C_2.
\end{equation} 

We  recall that 
\begin{eqnarray*} 
h^0(X, \mcO_{X}(nL)) 
&=& \chi(X,  \mcO_{X}(nL)) + h^1(X, \mcO_{X}(nL )) \\
& &    - h^2(X, \mcO_{X}(nL )) + h^3(X, \mcO_{X}(nL )).
\end{eqnarray*} 
Thus, from equation \eqref{equation:main-thm-1-2} above, we deduce that 
\begin{equation}\label{equation:Euler-estimate}
  h^0(X, \mcO_{X}(nL))  \ge    \chi(X,  \mcO_{X}(nL)) + h^1(X, \mcO_{X}(nL ))  - C_1
\end{equation}
for $n \gg 0$.
Therefore, if  $ \chi(X,\mcO_X({nL}))$ grows at least linearly in $n$, then  we get that $h^0(X,\mcO_{X}(nL ))$ also grows at least linearly in $n$, which implies that $\kappa(K_X+\Delta) \> 1$. 
Hence, by the condition (7), 
we may assume that $\chi(X,\mcO_X({nL})) $ is a constant independent of $n$ for the rest of the proof. 
This implies that 
\begin{enumerate}
    \item[(9)] For every curve $C\subseteq X_{\sing}$, we have $L\cdot C =0$.  
\end{enumerate}
Our  goal is to show that $h^1(X,\mcO_X(nL))$ grows at least linearly in $n$.\\

\textit{Step 4.} In this step, we show  that $h^1(\Delta,  \mcO_{\Delta}(nL|_{\Delta}))$ grows at least linearly in $n$. 
To see this, let $(K_\D + \Xi)\sim_{\mbQ} (K_X+\Delta)|_\Delta$ be defined by adjunction. Then from \cite[Remark 1.2(5)]{Fuj00} it follows that $(\Delta, \Xi)$ has slc singularities, in particular, $S$ is demi-normal (see \cite[Definition 5.1]{Kol13}). Thus 
by \cite[Theorem 3.1]{LiuRollenske2016}, we have 
\begin{equation}\label{eqn:rr-on-boundary}
\chi(\Delta,  \mcO_{\Delta}(nL|_{\Delta})) = \chi(\Delta, \mcO_{\Delta}) +  \frac{1}2 (nL|_{\Delta}) \cdot (nL|_{\Delta} - K_{\Delta}). 
\end{equation}
Moreover, from \cite[Proposition 16.6]{Kol92}, we see that $\Supp(\Xi)$ is contained in $X_{\sing}$. 
Hence the condition (9) above implies that 
\[
K_\D \cdot L|_\D  = (K_\D + \Xi)\cdot L|_\D  = (K_X+\Delta)|_\Delta \cdot L|_\D  =  (K_X+\Delta) \cdot L \cdot \D=0, 
\]
where the last equality follows from the condition (6). 
By the  condition (6) again, we deduce that   $(nL|_{\Delta}) \cdot (nL|_{\Delta} - K_{\Delta})=0$. 
Hence, from equation \eqref{eqn:rr-on-boundary} it follows that $\chi(\Delta, \mcO_{\Delta}(nL|_{\Delta}))$ is a constant independent of $n$.  Now we claim that $L|_{\Delta}=k(K_\Delta+\Xi)$ semi-ample. To see this observe that by  the  condition  (6) we have $\nu(\Delta, L|_{\Delta})\<1$. Thus from  the  condition  (8) and \cite[Proposition 5.2]{CHP16} it follows $L|_{\Delta}$ is semi-ample. 
Again, from  the  condition  (8) 
we have $\mcO_{\Delta}(nL|_{\Delta})\not\cong\mcO_{\Delta}$ for any $n\in\mbZ^+$; in particular,  $h^0(\Delta,  \mcO_{\Delta}(nL|_{\Delta}))$ grows  at least linearly in $n$. 
Thanks to   equation (\ref{equation:main-thm-1-1}),  
we deduce 
that $h^1(\Delta,  \mcO_{\Delta}(nL|_{\Delta}))$ grows at least linearly in $n$.\\

\textit{Step 5.} We complete the proof in this step.  
Consider the exact sequence 
\[
0 \to \mcO_X(nL-\Delta) \to \mcO_X(nL) \to \mcO_{\Delta}(nL|_\Delta) \to 0.
\]
From the long exact sequence of cohomology we deduce that 
\begin{equation}\label{eqn:linear-growth-for-L}
h^1(X,\mcO_{X}(nL)) \> h^1(\Delta, \mcO_{\Delta}(nL|_\Delta)) -  h^2(X,\mcO_{X}(nL-\Delta)).
\end{equation}
Now since $h^1(\Delta, \mcO_\Delta(nL|_\Delta))$ grows at least linearly in $n$ and $h^2(X, \mcO_X(nL-\Delta))$ is a constant (see equation \eqref{equation:main-thm-1-3}), it follows that $h^1(X, \mcO_X(nL))$ grows at least linearly in $n$. Combining this along with the condition (7) and equation \eqref{equation:Euler-estimate}, we see that $h^0(X, \mcO_X(nL))$ grows at least linearly in $n$, and hence $\kappa(X, K_X+\Delta)\>1$. This completes our proof. 
\end{proof}

We treat the second case in the following lemma.

\begin{lemma}
    \label{lemma:main-thm-2}
 Theorem \ref{thm:lc-log-abundance} holds if  $K_X+(1-\lambda)\Delta$ is not pseudoeffective for any  $0<\lambda \< 1$. 
\end{lemma}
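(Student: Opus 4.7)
The plan is to leverage the Mori fiber space structure provided by Lemma \ref{lemma:reduction-2.2} and reduce the question to log abundance on a two-dimensional K\"ahler base.

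First, I apply Lemma \ref{lemma:reduction-2.2} to replace $(X, \Delta)$ with a bimeromorphic model $(X', \Delta')$ satisfying $\kappa(X', K_{X'}+\Delta') = \kappa(X, K_X+\Delta)$, equipped with a Mori fiber space $f \colon X' \to Y$ onto a compact K\"ahler surface $Y$ with $K_{X'}+\Delta' \sim_{f,\mathbb{Q}} 0$ and $\nu(X', K_{X'}+\Delta') = 2$. By \cite[Theorem 7.2]{DasOu2022} it suffices to prove $\kappa(X', K_{X'}+\Delta') \geq 1$.

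Since $f$ contracts a $(K_{X'}+\Delta')$-trivial extremal ray and its general fibers are rational curves, the divisor $K_{X'}+\Delta'$ descends to the base: there exists a nef $\mathbb{Q}$-Cartier divisor $L_Y$ on $Y$ with $K_{X'}+\Delta' \sim_{\mathbb{Q}} f^*L_Y$. The hypothesis $\nu=2$ forces $L_Y^2 > 0$, so $L_Y$ is big and nef on $Y$. Taking a resolution $\widetilde{Y} \to Y$, we see that $\widetilde{Y}$ carries a big class, hence is Moishezon; since $\widetilde{Y}$ is also smooth K\"ahler, it is projective by Moishezon's theorem, so $Y$ is a projective surface. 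Next, I invoke the canonical bundle formula of Fujino--Mori (in its analytic form for $\mathbb{P}^1$-fibrations) to write $L_Y \sim_{\mathbb{Q}} K_Y + B_Y + M_Y$, where $(Y, B_Y+M_Y)$ is a generalized log canonical pair with nef moduli part $M_Y$. Then $K_Y+B_Y+M_Y$ is big and nef on the projective surface $Y$, so by log abundance for generalized lc surface pairs (in the projective setting), $L_Y$ is semi-ample. Pulling back via $f$, the divisor $K_{X'}+\Delta'$ is semi-ample, giving $\kappa(X', K_{X'}+\Delta') = 2 \geq 1$, and the reduction in the first paragraph concludes the argument.

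The main obstacle will be justifying the analytic canonical bundle formula with a nef moduli part and the descent of $K_{X'}+\Delta'$ to $Y$ in the K\"ahler category. Both should be tractable because $f$ is a $\mathbb{P}^1$-fibration: the required positivity of the direct image $f_*\mcO_{X'}(m(K_{X'/Y}+\Delta'))$ follows from positivity theorems for direct images of relative pluricanonical sheaves in the K\"ahler setting, as already exploited elsewhere in this paper (cf.\ the proof of Lemma \ref{lem-effective-adjoint-foliation-canonical-horizontal-bimeromorphic} via \cite[Corollary 5.2.1]{PT18}), while the descent of a nef line bundle across an extremal K\"ahler contraction follows from the contraction theorem of Theorem \ref{thm:existence-dlt-MMP}. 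A minor subtlety is the lc case when $\lfloor \Delta'_{\hor} \rfloor \neq 0$: here the discriminant part $B_Y$ carries coefficients equal to one, and one must use Ambro's extension of the canonical bundle formula to lc pairs together with its K\"ahler adaptation, but this is well-behaved thanks to the classification of horizontal coefficients along a $\mathbb{P}^1$-fibration.
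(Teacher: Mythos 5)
Your opening move (pass to the model of Lemma \ref{lemma:reduction-2.2} and try to descend $K_{X'}+\Delta'$ to the base $Y$ of the Mori fiber space) is the same as the paper's, but from there your route has two genuine gaps. The central one is the canonical bundle formula step: you write $L_Y\sim_{\mathbb{Q}}K_Y+B_Y+M_Y$ with a generalized lc structure and \emph{nef moduli part} via an ``analytic form'' of Fujino--Mori/Ambro, and you yourself flag this as the main obstacle, proposing that positivity of direct images (\cite[Corollary 5.2.1]{PT18}) makes it tractable. That does not suffice: weak positivity of $f_*\mathcal{O}_{X'}(m(K_{X'/Y}+\Delta'))$ gives neither the lc structure of the discriminant nor b-nefness of the moduli part for lc pairs, and the Kähler version of this formula is precisely what is missing in the literature --- the paper itself points out (after Proposition \ref{prop-effective-adjoint-foliation-canonical}) that Kawamata's covering trick is not available for compact Kähler varieties. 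The paper's proof is engineered to avoid exactly this: it invokes \cite[Theorem 9.5]{DasOu2022}, which gives only the weaker statement $K_{X'}+\Delta'=f^*(K_Y+\Gamma)$ with $\Gamma$ effective and $(Y,\Gamma)$ klt when $(X',\Delta')$ is klt; in the klt case it then applies log abundance for Kähler surfaces to $(Y,\Gamma)$, and in the lc case it bypasses the base altogether, using property (6) of Lemma \ref{lemma:reduction-2.2} to find a horizontal component $S$ of $\lfloor\Delta'\rfloor$, applying surface log abundance to $(S^n,\Delta_{S^n})$, and descending semi-ampleness along the surjection $S^n\to Y$ via \cite[Lemma 2.6]{DasOu2022}. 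Your treatment of the lc case (``Ambro's extension \dots\ together with its Kähler adaptation'') is exactly the unavailable ingredient. Note also that the paper's argument never needs $\nu=2$ here.

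The second gap is the projectivity claim for $Y$: from $L_Y$ big and nef you pass to a resolution $\widetilde{Y}$, conclude $\widetilde{Y}$ is projective, and then assert $Y$ is projective. That implication fails for normal surfaces in general (normal Moishezon surfaces need not be projective); you would need an extra input such as rational singularities of $Y$ plus Namikawa's projectivity criterion, or $\mathbb{Q}$-factoriality of $Y$, neither of which you establish (and in the lc case the paper does not even assert $(Y,\Gamma)$ is klt). It is worth noting that if you \emph{did} close this gap, your $\nu=2$ observation would in fact short-circuit everything: $Y$ projective together with the projectivity of the contraction morphism $f$ (the relative anti-log-canonical is $f$-ample) would make $X'$ projective, and you could then quote projective log canonical abundance for threefolds directly, with no canonical bundle formula at all. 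As written, however, the proof rests on the unproven analytic formula and the unjustified projectivity jump, so it does not go through.
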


\begin{proof}   
We note that $X$ is uniruled in this case.  
By log abundance for projective  threefolds, we may assume that $X$ is not algebraic. 
Replacing $(X,\Delta)$ with the bimeromorphic model of Lemma \ref{lemma:reduction-2.2}, we may assume that 
there is a Mori fiber space $f\colon X\to Y$ of relative dimension $1$ such that $K_X+\Delta$ is $f$-numerically trivial.  
Then by \cite[Theorem 2.5]{DasOu2022} there is an effective divisor $\Gamma$ on $Y$ such that $K_X+\Delta = f^*(K_Y+\Gamma)$. 
Moreover, $(Y,\Gamma)$ is klt if $(X, \Delta)$ is.

If $\lfloor \Delta \rfloor\neq 0$, then by property (6) of Lemma \ref{lemma:reduction-2.2}  there is a component $S$ of $\lrd \Delta\rrd$ which dominates $Y$. Let $S^n\to S$ be the normalization, then $(S^n, \Delta_{S^n})$ is lc, where $K_{S^n}+\Delta_{S^n}\sim_{\mbQ}(K_X+\Delta)|_{S^n}$ is defined by adjunction. 
Then by log abundance for K\"ahler surfaces (see \cite[Theorem 1.33]{DasOu2022}), 
we deduce that $K_{S^n}+\Delta_{S^n}$ is semi-ample. Let $g:S^n\to Y$ be the induced surjective morphism, then $K_{S^n}+\Delta_{S^n}\sim_{\mbQ} g^*(K_Y+\Gamma)$. Hence $K_Y+\Gamma$ is semi-ample by \cite[Lemma 1.8]{DasOu2022}. 
It  follows that $K_X+\Delta$ is semi-ample. 

Now assume that  $\lrd \Delta\rrd=0$. 
Then the pair   $(X,\Delta)$ is klt by property (7) of Lemma \ref{lemma:reduction-2.2}. 
Thus   $(Y,\Gamma)$ is  klt as well. 
Hence again by log abundance for K\"ahler surfaces (see \cite[Theorem 1.33]{DasOu2022}), we deduce that $K_Y+\Gamma$ is semi-ample, and consequently, $K_X+\Delta$ is semi-ample.  
This completes the proof of this lemma. 
\end{proof}

\begin{proof}
[{Proof of Theorem \ref{thm:lc-log-abundance}}]
It follows from Lemma \ref{lemma:main-thm-1} and Lemma \ref{lemma:main-thm-2}.
\end{proof}

\begin{proof}
[{Proof of Corollary  \ref{cor:log-abundance}}]
It follows from  Theorem \ref{thm:lc-log-abundance} and \cite[Theorem 1.1]{DasOu2022}.
\end{proof}

\begin{proof}
[{Proof of Corollary  \ref{cor:simple-3fold}}]
By applying Corollary \ref{cor:log-abundance}, we can conclude with the same argument  of \cite[Theorem 1.2]{CHP16}. 
\end{proof}

\bibliographystyle{hep}
\bibliography{references}

\end{document}